\DeclareMathSymbol{\invques}{\mathord}{operators}{`>}
\DeclareRobustCommand{\tmquestiondown}{%
  \ifmmode\invques\else\textquestiondown\fi
}
\newtheorem{theorem}{Theorem}[section]
\newtheorem{lemma}[theorem]{Lemma}
\newtheorem{conj}[theorem]{Conjecture}
\newtheorem{proposition}[theorem]{Proposition}
\newtheorem{corollary}[theorem]{Corollary}
\newtheorem{defn}[theorem]{Definition}
\newtheorem{remark}[theorem]{Remark}
\newcommand{\pr}{\mathrm{pr}}
\newcommand{\Gal}{\operatorname{Gal}}
\newcommand{\Fil}{\operatorname{Fil}}
\newcommand{\NN}{\mathbb{N}}
\newcommand{\QQ}{\mathbb{Q}}
\newcommand{\Qp}{\mathbb{Q}_p}
\newcommand{\Zp}{\mathbb{Z}_p}
\newcommand{\ZZ}{\mathbb{Z}}
\renewcommand{\AA}{\mathbb{A}}
\newcommand{\FFF}{\mathcal{F}}
\newcommand{\g}{\mathbf{g}}
\newcommand{\ord}{\mathrm{ord}}
\newcommand{\PP}{\mathfrak{P}}
\newcommand{\fp}{\mathfrak{p}}
\newcommand{\fq}{\mathfrak{q}}
\newcommand{\vp}{\varphi}
\newcommand{\cL}{\mathcal{L}}
\newcommand{\cH}{\mathcal{H}}
\newcommand{\cO}{\mathcal{O}}
\newcommand{\Iw}{\mathrm{Iw}}
\newcommand{\HIw}{H^1_{\mathrm{Iw}}}
\newcommand{\GL}{\mathrm{GL}}
\newcommand{\AQp}{\AA_{\Qp}^+}
\newcommand{\col}{\mathrm{Col}}
\newcommand{\image}{\mathrm{im}}
\newcommand{\cyc}{\textup{cyc}}
\newcommand{\ucol}{\underline{\col}}
\newcommand{\ff}{\mathfrak{f}}
\newcommand{\fL}{\mathfrak{L}}
\newcommand{\fm}{\mathfrak{M}}
\newcommand{\Hom}{\mathrm{Hom}}
\newcommand{\Char}{\mathrm{char}}
\newcommand{\Ind}{\mathrm{Ind}}
\newcommand{\ac}{\textup{ac}}
\newcommand{\LL}{\Lambda}
\newcommand{\TT}{\mathbb{T}}
\newcommand{\RR}{\mathcal{R}}
\newcommand{\fn}{\mathfrak{n}}
\newcommand{\lra}{\longrightarrow}
\newcommand{\ra}{\rightarrow}
\newcommand{\res}{\textup{res}}
\newcommand{\TSym}{\textup{TSym}}
\newcommand{\bc}{\bar{c}}
\newcommand{\BLtwo}{$\mathbf{(L2)}$}
\newcommand{\Bg}{\mathbf{g}}
\newcommand{\BF}{\textup{BF}}
\newcommand{\cF}{\mathcal{F}}
\newcommand{\Dcris}{\mathbb{D}_{\rm cris}}
\newcommand{\Mlog}{M_{\log}}
\newcommand{\fM}{\m}
\newcommand{\adj}{\mathrm{adj}}
\newcommand{\Tw}{\mathrm{Tw}}
\newcommand{\wBF}{\widehat{\BF}}
\newcommand{\mom}{\mathrm{mom}}
\newcommand{\id}{\mathrm{id}}
\newcommand{\an}{\textup{an}}
\newcommand{\han}{H_\textup{an}}
\newcommand{\LLac}{\Lambda_{\cO_L}(\Gamma^\ac)}
\newcommand{\calHac}{\mathcal{H}_L(\Gamma^\ac)}
\newcommand{\calHcyc}{\mathcal{H}(\Gamma^\cyc)}
\newcommand{\calHacn}{\mathcal{H}_n(\Gamma^\ac)}
\newcommand{\Qpinf}{\QQ_{p,\infty}}
\newcommand{\p}{\mathfrak{p}}
\newcommand{\q}{\mathfrak{q}}
\newcommand{\vt}{\vartheta}
\newcommand{\gm}{{\g_\m}}
\newcommand{\m}{\mathfrak{m}}
\newcommand{\n}{\mathfrak{n}}
\begin{document}

\title[Non-ordinary Iwasawa Theory over Imaginary Quadratic Fields]{Iwasawa theory of elliptic modular forms over imaginary quadratic fields at non-ordinary primes}
\begin{abstract}
This article is a continuation of our previous work~\cite{buyukbodukleianticycloord} on the Iwasawa theory of an elliptic modular form over an imaginary quadratic field $K$, where the modular form in question was assumed to be ordinary at a fixed odd prime $p$. We formulate integral Iwasawa main conjectures at non-ordinary primes $p$ for suitable twists of the base change of a newform $f$ to an imaginary quadratic field $K$ where $p$ splits, over the cyclotomic $\Zp$-extension, the anticyclotomic $\ZZ_p$-extensions (in both the \emph{definite} and the \emph{indefinite} cases) as well as the  $\ZZ_p^2$-extension of $K$.  In order to do so, we define Kobayashi-Sprung-style signed Coleman maps, which we use to introduce doubly-signed Selmer groups. In the same spirit, we construct signed (integral) Beilinson-Flach elements (out of the collection of unbounded Beilinson-Flach elements of Loeffler-Zerbes), which we use to define doubly-signed $p$-adic $L$-functions. The main conjecture then relates these two sets of objects. Furthermore, we show that the integral Beilinson-Flach elements form a locally restricted Euler system, which in turn allows us to deduce (under certain technical assumptions) one inclusion in each one of the four main conjectures we formulate here (which may be turned into equalities in favourable circumstances). 
\end{abstract}

\author{K\^az\i m B\"uy\"ukboduk}
\address{K\^az\i m B\"uy\"ukboduk\newline UCD School of Mathematics and Statistics\\ University College Dublin\\ Ireland}
\email{kazim.buyukboduk@ucd.ie}

\author{Antonio Lei}
\address{Antonio Lei\newline
D\'epartement de Math\'ematiques et de Statistique\\
Universit\'e Laval, Pavillion Alexandre-Vachon\\
1045 Avenue de la M\'edecine\\
Qu\'ebec, QC\\
Canada G1V 0A6}
\email{antonio.lei@mat.ulaval.ca}

\thanks{The first author is partially supported by the EU Grant CriticalGZ. The second author is supported by the NSERC Discovery Grants Program 05710.}
\subjclass[2010]{11R23 (primary); 11F11, 11R20 (secondary) }
\keywords{Non-ordinary Iwasawa theory, Rankin-Selberg convolutions, Birch and Swinnerton-Dyer formulas}

\maketitle

\section{Introduction}
Fix forever a prime $p\geq 5$ and an imaginary quadratic field $K$ where $(p)=\fp\fp^c$ splits. The superscript $c$ will always stand for the action of a fixed complex conjugation. We fix a modulus $\mathfrak{f}$ coprime to $p$ with the property that the ray class number of $K$ modulo $\mathfrak{f}$ is not divisible by $p$. We also fix once and for all an embedding $\iota_p: \overline{\QQ}\hookrightarrow \mathbb{C}_p$ and suppose that the prime $\fp$ of $K$ lands inside the maximal ideal of $\cO_{\mathbb{C}_p}$.  Fix also  a ray class character $\chi$ modulo $\ff p^\infty$ with {$\chi(\p)\neq \chi(\p^c)$. Ray class characters satisfying the latter property shall be called \emph{$p$-distinguished}. }

Let $K_\infty$ denote the  $\ZZ_p^2$-extension of $K$ with $\Gamma:=\Gal(K_\infty/K)\cong \ZZ_p^2$. We let $K_\cyc/K$ and $K^\ac/K$ denote the cyclotomic and the anticyclotomic $\Zp$-extensions of $K$  contained in $K_\infty$ respectively. We write $\Gamma^\cyc:=\Gal(K_\cyc/K)$ and $\Gamma^{\ac}:=\Gal(K^\ac/K)$.  
For a finite flat extension $\cO$ of $\ZZ_p$ and for $?=\emptyset,\ac, \cyc$, we define the Iwasawa algebra  
$\LL_{\cO}(\Gamma^?):=\cO[[\Gamma^?]]$ with coefficients in $\cO$. If $F$ is the field of fractions of $\cO$, we write $\LL_F(\Gamma^?)$ for $\LL_{\cO}(\Gamma^?)\otimes F$.  Let $\iota:\Gamma^? \ra \Gamma^?$ denote the involution given by $\gamma\mapsto \gamma^{-1}$. We write $\LL_{\cO_L}(\Gamma)^\iota$ for the free $\LL_{\cO_L}(\Gamma)$-module of rank one  equipped with the $G_K$-action given via 
$$G_K\twoheadrightarrow \Gamma\stackrel{\iota}{\lra}\Gamma\hookrightarrow \LL_{\cO_L}(\Gamma)^\times\,.$$
Let $f \in S_k(\Gamma_0(N_f),\varepsilon_f)$ be a normalized cuspidal eigen-newform of level $N_f$, even weight $k\ge2$ and nebentypus $\varepsilon_f$. We assume that $p\nmid N_f$. In \cite{buyukbodukleianticycloord}, we have studied the Iwasawa theory of the base change $f_{/K}$ of $f$ to $K$ over $K^\ac$ using the Beilinson-Flach elements as constructed in \cite{KLZ1,KLZ2}, under the assumption that $a_p(f)$ is a $p$-adic unit (the ordinary case). Our goal in this article is to eliminate this  condition. That is, we  assume  that  $a_p(f)$ is \textit{not} a $p$-adic unit. Kobayashi has introduced the signed Selmer groups over the $\Zp$-cyclotomic extension of $\QQ$ when $k=2$ and $a_p(f)=0$. This has subsequently been generalized to the $a_p(f)\ne0$ case by Sprung \cite{sprungfactorizationinitial} and to the higher weight case in \cite{lei11compositio,LLZ0,LLZ0.5}. For the cyclotomic $\Zp$-extension of $\QQ$, the Beilinson-Kato Euler system  constructed in \cite{kato04} is utilized to show that the signed Selmer groups are cotorsion over the cyclotomic Iwasawa algebra. In the anticyclotomic setting, Kobayashi's signed Selmer groups can be readily defined under the assumption that $a_p(f)=0$ and $k=2$. The structure of these groups have been studied extensively in
\cite{darmoniovita} (the definite case) and \cite{castellawan1607,longovigni} (the indefinite case) with the aid of the Heegner-point Euler system (used in place of the Beilinson-Kato Euler system). 
In this article, we treat the cyclotomic, anticyclotomic (both definite and indefinite cases) and two-variable main conjectures simultaneously, for higher weight $k\ge2$ and general $a_p(f)$: One of our main results is one divisibility of the Iwasawa main conjecture in all these situations (c.f., Theorems~\ref{thm:leopoldtconjecturesforTfchiintro1}, \ref{thm:leopoldtconjecturesforTfchiintro2} and \ref{thm:mainconjindefiinteintro} below). The natural replacement for the special elements employed in the previous settings  mentioned above (i.e., Beilinson-Kato classes and Heegner points)  are the Beilinson-Flach elements  studied in \cite{LZ1}. 

Let $\alpha$ and $\beta$ be the two roots of the Hecke polynomial $X^2-a_p(f)X+\varepsilon_f(p)p^{k-1}$. We assume that $\alpha\neq \beta$ and let $f^\alpha$ and $f^\beta$ denote the two $p$-stabilizations of $f$.
We also assume throughout that $p>k$. We fix a finite extension $L$ of $\Qp$ that contains the values of our fixed ray class character $\chi$, the Hecke field of $f$ as well as $\alpha$ and $\beta$. {Let $L_0$ be the subextension of $L/\Qp$ generated by  the values of $\chi$ and the Hecke field of $f$.} Let $\varpi$ be a fixed uniformizer of $\cO_L$. We write $W_f$ for Deligne's $2$-dimensional  $L$-representation, whose Hodge-Tate weights are $0$ and $1-k$ (with the normalization that the Hodge-Tate weight of the cyclotomic character is $+1$). We fix a Galois-stable $\cO_L$-lattice $R_f$ inside $W_f$. Let $W_f^*$ and $R_f^*$ denote  the dual representations $\Hom(W_f,L)$ and $\Hom(R_f,\cO_L)$, respectively. We shall regard the ray class character $\chi$ also as a character of $G_K$ via the Artin map of class field theory, which we normalize by sending uniformizers to geometric Frobenius elements. For technical reasons, we will be working with the $p$-distinguished twist $R_{f,\chi^{-1}}^*:=R_f^*\otimes \chi^{-1}$ of $R_f^*$ rather than  $R_f^*$ itself. We note that $R_f^*$ is a representation of $G_{\QQ}$, whereas $R_{f,\chi^{-1}}^*$ is only a representation of $G_K$.
Let $T_{f,\chi}:=R_{f,\chi^{-1}}^*(1-k/2)=R_{\overline{f}}(k/2)\otimes\chi^{-1}$ (where $\overline{f}=f\otimes\varepsilon_f^{-1}$ is the dual form) denote the central critical twist  of Deligne's representation and let $\TT_{f,\chi}$ denote $T_{f,\chi}\otimes\LL_{\cO_L}(\Gamma)^\iota$. Here,  $\TT_{f,\chi}$  is equipped with the diagonal action.
We similarly define $\TT_{f,\chi}^\cyc$ and $\TT_{f,\chi}^\ac$ on replacing $\Gamma$ by $\Gamma^\cyc$ and $\Gamma^\ac$ respectively. 
Let ${\rho}_f: G_\QQ\ra \GL_2(\cO_L)$ denote the representation afforded by the $\cO_L[[G_\QQ]]$-module $R_{f}^*$. Since we assumed $p>k$ and $f$ is non-ordinary at $p$, it follows that the residual representation $\overline{\rho}_f$ (as well as its twists by characters) is absolutely irreducible even when restricted to $G_{\QQ_p}$ according to a result due to Fontaine and Edixhoven.

For $\lambda\in\{\alpha,\beta\}$ and $\q\in\{\p,\p^c\}$, we may project the two-variable Perrin-Riou big logarithm map attached to $W_f^*$ constructed by Loeffler-Zerbes \cite{LZ0,LZ1} to some $\lambda^{-1}$-eigenspace of the Dieudonn\'e module of $W_f^*$. After twisting, this results in a $\Lambda_{\cO_L}(\Gamma)$-morphism
\[
\cL_{\lambda,\fq}:H^1(K_{\fq},\TT_{f,\chi})\rightarrow \cH_L(\Gamma),
\]
where  $\cH_L(\Gamma)$ is the distribution algebra on $\Gamma$ with coefficients in $L$.  Our first result is the following factorization theorem of these maps (to an explicit linear combination of two signed\footnote{This generalizes the work of Kobayashi in the context of cyclotomic Iwasawa theory of elliptic curves. Although we adopt the notation of Sprung to denote these objects by $\#$ and $\flat$ (and not by $\pm$), we still refer to them as \emph{signed} objects, in order to remind the reader that these constructions are in fact in the spirit of Kobayashi's work.} Coleman maps with bounded image), in the spirit of \cite{kobayashi03, LLZ0, LLZ0.5, lei11compositio, sprungfactorizationinitial, sprung16}. Along the way, we provide a precise description of the  the images of these maps, which might be of independent interest.
\begin{theorem}\label{prop:Hidadecompintro}
For each $\mathfrak{q}=\fp,\fp^c$, there exists a pair of  $\Lambda_{\cO_L}(\Gamma)$-morphisms
$$\col_{\#,\q}\,,\,\col_{\flat,\q}\,: H^1\left(K_{\fq},\TT_{f,\chi}\right)\lra \Lambda_{\cO_L}(\Gamma)$$
 such that
\[
\begin{pmatrix}
\cL_{\alpha,\fq}\\ \cL_{\beta,\fq}
\end{pmatrix}=
M\cdot 
\begin{pmatrix}
\col_{\#,\fq}\\
\col_{\flat,\fq}
\end{pmatrix},
\]
where $M$ is some logarithmic matrix that we shall introduce in \S\ref{S:wach}.
\end{theorem}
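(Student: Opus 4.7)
The plan is to adapt the Wach-module approach developed by Kobayashi, Sprung, and Lei-Loeffler-Zerbes to the present two-variable, $p$-distinguished setting. Since $p$ splits in $K$ as $\p\p^c$ and $\fq\mid p$, the completion $K_\fq$ is canonically identified with $\Qp$, and the restriction of $R_f^*$ to the decomposition group at $\fq$ is a crystalline $G_{\Qp}$-representation with Hodge-Tate weights $\{0,k-1\}$. The $p$-distinguished hypothesis on $\chi$ is precisely what permits us to extract the possibly ramified twist by $\chi^{-1}$ as an Iwasawa-algebra factor, so that the Wach-module machinery applies to the crystalline ``core'' of $\TT_{f,\chi}$ while the character contributes only to $\Lambda_{\cO_L}(\Gamma)^\iota$. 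Under this identification, $\cL_{\lambda,\fq}$ is the $\lambda^{-1}$-projection of the Loeffler-Zerbes two-variable big logarithm for $W_f^*\big|_{G_{\Qp}}$, suitably twisted.

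Granting this, I would attach to $R_f^*$ its Wach module $N=N(R_f^*)$ over $\AQp$ and choose a $\vp$-stable $\AQp$-basis $\{n_1,n_2\}$ whose reduction modulo $\pi$ is a $\Dcris$-basis in which Frobenius is represented by the companion matrix of the Hecke polynomial $X^2-a_p(f)X+\varepsilon_f(p)p^{k-1}$. The \emph{logarithmic matrix} $M$ is then built, via the Wach-module reformulation of Perrin-Riou's big logarithm, as the transition matrix between the integral Wach basis and the $\vp$-eigenbasis $\{v_\alpha,v_\beta\}$ of $\Dcris\otimes L$, rescaled by a suitably normalized infinite twisted product of $\vp^n(A)\cdots\vp(A)$, where $A$ is the matrix of $\vp$ on $N$. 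This product converges in $\mathrm{Mat}_2(\cH_L(\Gamma))$ thanks to the Hodge-Tate bound $p>k$, and the precise construction will be carried out in \S\ref{S:wach} as the theorem anticipates.

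The signed Coleman maps $\col_{\#,\fq}$ and $\col_{\flat,\fq}$ are then defined by decomposing an Iwasawa cohomology class along the natural integral basis of $N^{\psi=0}$: via Berger's isomorphism $H^1_{\mathrm{Iw}}(K_\fq,R_f^*)\cong N^{\psi=1}$ composed with $1-\vp$ and the Mellin transform identifying $N^{\psi=0}$ with $\Lambda_{\cO_L}(\Gamma)\cdot(1+\pi)\vp(n_1)\,\oplus\,\Lambda_{\cO_L}(\Gamma)\cdot(1+\pi)\vp(n_2)$, every class acquires two coordinates in $\Lambda_{\cO_L}(\Gamma)$. After the twist by $\chi^{-1}(1-k/2)\otimes\iota$, these coordinates define $\col_{\#,\fq}$ and $\col_{\flat,\fq}$. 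The claimed factorization is then a direct unwinding: both sides express the Loeffler-Zerbes map with respect to two different bases of $\cH_L(\Gamma)\otimes\Dcris$, one integral (via $\{n_1,n_2\}$) and one diagonalizing Frobenius (via $\{v_\alpha,v_\beta\}$), and $M$ is by construction the transition matrix between them.

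The main obstacle will be to establish the boundedness of $\col_{\#,\fq}$ and $\col_{\flat,\fq}$, i.e., that they land in $\Lambda_{\cO_L}(\Gamma)$ rather than merely in $\cH_L(\Gamma)$. Concretely one must show that the entries of $M$ absorb all of the unbounded behaviour of $\cL_{\alpha,\fq}$ and $\cL_{\beta,\fq}$, or equivalently that $N(R_f^*)$ realizes the integral sub-$\Lambda$-module of $\cH_L(\Gamma)\otimes\Dcris$ cut out by Iwasawa cohomology. This is by now standard in the one-variable cyclotomic setting by work of Sprung and Lei-Loeffler-Zerbes; it extends to the two-variable situation because the anticyclotomic direction acts trivially on the crystalline data at $\fq$, so only the cyclotomic variable interacts nontrivially with the Wach structure and all unboundedness is of cyclotomic origin. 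A secondary but necessary technical point is to make the twist by the potentially wildly ramified character $\chi$ compatible with the Wach module construction, which is where the hypothesis $\chi(\fp)\ne\chi(\fp^c)$ enters essentially.
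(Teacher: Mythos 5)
Your overall strategy matches the paper's: identify $K_\fq$ with $\QQ_p$, run the Wach-module machinery of Berger/Kobayashi/Sprung/Lei--Loeffler--Zerbes, define the signed Coleman maps by expanding $1-\vp$ of an Iwasawa cohomology class along the integral basis $(1+\pi)\vp(n_1),(1+\pi)\vp(n_2)$ of $(\vp^*\NN)^{\psi=0}$, and read off $M_{\log}$ as the transition to the $\vp$-eigenbasis, with boundedness coming from the fact that the $(1+\pi)\vp(n_i)$ form a genuine $\Lambda$-basis of $(\vp^*\NN)^{\psi=0}$.

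There are, however, a few inaccuracies in how you set this up that would become genuine gaps if pressed on details. First, the twist by $\chi$ at $\fq$ is handled very differently than you describe: $\chi|_{G_{K_\fq}}$ is \emph{unramified} (it is a ray class character modulo $\ff$ coprime to $p$), so the paper simply applies the Wach module construction to $T=R_f^*\otimes\vt$ where $\vt=\chi^{-1}|_{G_{K_\fq}}$ is a finite-order unramified character; the effect is just to rescale the Frobenius matrices $A_\vp$ and $P$ by the unit $c=\vt(\mathrm{Fr}_p^{-1})$. There is no ``potentially wildly ramified'' part to control, and the $p$-distinguished hypothesis $\chi(\fp)\ne\chi(\fp^c)$ plays no role whatsoever in the local Coleman-map construction --- it is a global hypothesis needed for the CM Hida family and the non-Eisenstein, $p$-distinguished maximal ideal in Section 3. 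Attributing it to a Wach-module compatibility issue is a misunderstanding of where it enters. Second, you wave at the two-variable aspect by saying ``the anticyclotomic direction acts trivially on the crystalline data,'' but this glosses over the actual mechanism: the unramified $\ZZ_p$-direction at $\fq$ is incorporated via the Yager module $S_{F_\infty/F}$, a rank-one free $\Lambda_{\cO_F}(U)$-module, so that $\NN_{F_\infty}(T)=\NN_F(T)\,\widehat\otimes\,S_{F_\infty/F}$ and the two-variable Coleman maps land in $\cO_F\otimes\Lambda_{\cO_L}(G)$ after choosing a generator $\Omega_F$. Omitting this step means the codomain $\Lambda_{\cO_L}(\Gamma)$ is not actually constructed, and you cannot descend from the cyclotomic result to the two-variable statement. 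Finally, a minor point: the unbounded product that produces $M_{\log}$ is built from $\vp^n(P^{-1})\cdots\vp(P^{-1})$ (where $P$ is the matrix of $\vp$ on the Wach module), not from powers of the $\Dcris$-matrix $A_\vp$ alone; the divisibility of the second row of $\vp^n(P^{-1})$ by $\vp^n(q^{k-1})$ is precisely what feeds into the congruences controlling the growth rates.
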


Let $\mathcal{N}$ denote the collection of square-free integral ideals of $\cO_K$ which are prime to $\ff p$. Given a prime $\mathfrak{q}$ of $K$, we let (as in \cite[\S II.1]{rubin00}) $K(\mathfrak{q})$ denote the maximal $p$-extension of $K$ contained in the ray class field modulo $\mathfrak{q}$. For  $\mathfrak{c}=\mathfrak{q}_1\cdots\mathfrak{q}_s\in\mathcal{N}$, we set $K(\mathfrak{c}):=K(\mathfrak{q}_1)\cdots K(\mathfrak{q}_s)$. In particular, our assumption on  the ray class number modulo $\mathfrak{f}$ implies that $K(1)=K$. Consequently, the extensions $K(\mathfrak{q}_i)$  are linearly disjoint over $K$. For $\mathfrak{c}$ as above, set $\Delta(\mathfrak{c})=\Gal(K(\mathfrak{c})/K)=\prod_{i=1}^s\Gal(K(\mathfrak{q}_i)/K)$.

In \cite{kobayashi03}, Kobayashi applied his  signed  Coleman maps to the Beilinson-Kato classes along the cyclotomic tower to recover Pollack's signed $p$-adic $L$-functions defined in \cite{pollack03}. We shall apply our Coleman maps from Theorem~\ref{prop:Hidadecompintro} to a collection of Beilinson-Flach elements 
$$
\BF^\alpha_{\fn},\BF^\beta_{\fn}\in H^1\left(K(\mathfrak{n}),T_{f,\chi}\otimes\cH_{L}(\Gamma)^\iota\right),\ \fn\in\mathcal{N},
$$
 whose construction is derived from \cite{KLZ1,KLZ2,LZ1}.  The classes constructed in \textit{op. cit.} are a priori associated to the Rankin-Selberg convolution of two modular forms over cyclotomic extensions of $\QQ$. We shall explain in \S\ref{S:ESimag} how we may construct the desired cohomology classes over a sufficiently large collection of ray class extensions of $K$. The fact that these cohomology classes satisfy the Euler system distribution relation will follow from a slight extension of \cite[Proposition 3.8]{buyukbodukleianticycloord} (which is a generalization of \cite[Theorem 3.5.1]{LLZ2} to higher weight modular forms via \cite{KLZ2}). 
 
 The Beilinson-Flach elements thus constructed will not be integral under our assumption that $f$ is non-ordinary at $p$. It turns out that we may factorize these elements in the same way we factorize the Perrin-Riou maps \emph{uniformly} in all tame levels $\mathfrak{n}$ (which is also crucial for our purposes). This in turn equips us with an Euler system with which we may run the Euler system machine and at which we may evaluate our signed Coleman maps. 

\begin{theorem}\label{thm:decompintro}
 For every $\mathfrak{n}\in \mathcal{N}$, there exists a pair of signed Beilinson-Flach elements
$$\BF^{\#}_{\mathfrak{n}}\,,\, \BF^{\flat}_{\mathfrak{n}}\in H^1\left(K(\mathfrak{n}),\TT_{f,\chi}\otimes L\right)$$ 
 verifying the following Euler system distribution relation: For every $\frak{n}\in \mathcal{N}$, prime $\frak{l}$ of $\cO_K$ coprime to $\frak{nf}p$ and $\bullet\in \{\#,\flat\}$ we have
 \begin{equation}
 \label{EQN_ESrelation}
 {\rm cor}^{K(\mathfrak{nl})}_{K(\mathfrak{n})}\left(\BF^{\bullet}_{\mathfrak{nl}}\right)=P_{\frak{l}}(\textup{Fr}_\frak{l})\,\BF^{\bullet}_{\mathfrak{n}}
 \end{equation}
where $P_{\frak{l}}(X):=\det(1-{\rm Fr}_{\frak{l}}X\mid R_f(k/2)\otimes\chi)$ and $\textup{Fr}_\frak{l}$ is the geometric Frobenius at $\frak{l}$. Moreover, we have 
\[
\begin{pmatrix}
\BF^{\alpha}_{\mathfrak{n}}\\ \BF^{\beta}_{\mathfrak{n}}
\end{pmatrix}= M\cdot 
\begin{pmatrix}
\BF^{\#}_{\mathfrak{n}}\\ \BF^{\flat}_{\mathfrak{n}}
\end{pmatrix}.
\]
and there exists a constant $C\in L$, that is independent of $\mathfrak{n}$, such that
\[
C\cdot\BF_{\mathfrak{n}}^{\bullet}\in H^1\left(K(\mathfrak{n}),\TT_{f,\chi}\right)
\]
for both $\bullet =\#$ and $\flat$.
\end{theorem}
This statement is proved in Section~\ref{sec:BFelementsandfactorization} below. We note that special cases of this construction when $\fn=1$ were given by Wan~\cite{wansupersingularellipticcurves} and Sprung \cite{sprung16} in the case of elliptic curves, % In an earlier version of this article, the proof of the factorization result above  relied in a crucial way on a certain identification that follows from the argument presented in the proof of \cite[Theorem 3.5.9]{LZ1}. However, the said formula  contains a typo (the classes denoted by $y_{r,j}$ in \emph{op. cit.} should be the $p$-stabilized classes, contrary to the statement in \emph{op. cit}).
%As part of Theorem~\ref{thm:decomp}, we give a proof of the desired factorization which instead makes use of an extension of \cite[Proposition~3.5.5]{LZ1} and our proof, 
though our proof is independent of theirs. We remark also that the slight difference between the Euler system distribution relation in Theorem~\ref{thm:decompintro} and the one given in \cite[Theorem 3.5.1]{LLZ2} comes from the fact that we are working with a different twist of the motive associated to $f_{/K}\otimes\chi$ than the one considered in op.cit.

Fix $C$ as in Theorem~\ref{thm:decompintro} and with minimal $p$-adic valuation. We shall drop $\mathfrak{n}$  from the notation whenever it equals $1$. We set 
$$c^\bullet:=C\cdot\BF^{\bullet}\in H^1\left(K,\TT_{f,\chi}\right)$$ 
for $\bullet \in\{\#,\flat\}$ and likewise define the projections of these elements $c^\bullet_?\in H^1\left(K,\TT_{f,\chi}^?\right)$ (for $?=\ac,\cyc$) to the cyclotomic and anticyclotomic deformations.
For each of the four choices of the pair $\bullet,\star \in \{\#,\flat\}$, we define the two-variable \emph{doubly-signed Beilinson-Flach $p$-adic $L$-function} by setting
$$\mathfrak{L}_{\star,\bullet}:=\col_{\star,\fp}(c^\bullet) \in \LL_{\cO_L}(\Gamma).$$
We likewise define the cyclotomic (respectively, anticyclotomic) doubly-signed $p$-adic $L$-function $\mathfrak{L}_{\star,\bullet}^\cyc \in \LL_{\cO_L}(\Gamma^\cyc)$ (respectively, $\mathfrak{L}_{\star,\bullet}^\ac  \in \LL_{\cO_L}(\Gamma^\ac)$). Using the signed Coleman maps, we may also define the doubly-signed (integral) Selmer groups $\mathfrak{X}_{\star,\bullet}^?$ (see Definition~\ref{def:doublysignedselmergroups} below) for $?=\ac,\cyc,\emptyset$.
The following is the first step towards the proof of a doubly-signed Iwasawa main conjecture, both over $K_\cyc/K$ and over $K_\infty/K$. 
\begin{theorem}
\label{thm:leopoldtconjecturesforTfchiintro1}
Suppose that $\rho_f(G_K)$ contains a conjugate of $\textup{SL}_2(\ZZ_p)$. Suppose for $?=\cyc$ or $\emptyset$ that we have $\mathfrak{L}_{\star,\bullet}^?\ne 0$.
Then, the $\LL_{\cO_L}(\Gamma^?)$-module $\mathfrak{X}^?_{\star,\bullet}$ is torsion and
$$\mathfrak{L}_{\star,\bullet}^?\subset \xi_{\star}^?\cdot\Char\left(\mathfrak{X}_{\star,\bullet}^{?}\right).$$
\end{theorem}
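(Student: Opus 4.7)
The plan is to derive this divisibility from an application of the Kolyvagin--Rubin--Perrin-Riou Euler system machine (in the imaginary-quadratic formulation adapted in the companion paper \cite{buyukbodukleianticycloord}) to the rescaled signed Beilinson-Flach classes $c^\bullet_\fn = C \cdot \BF^\bullet_\fn$ produced in Theorem~\ref{thm:decompintro}. Since these classes are integral and satisfy an Euler system distribution relation uniformly in the tame level $\fn$, they meet the structural input requirements of the machine; the task is to verify the cohomological hypotheses, run the machine to obtain a bound on a strict Selmer group, and then translate that bound into one on $\mathfrak{X}^?_{\star,\bullet}$ via Poitou-Tate duality.

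First I would verify the cohomological input. The hypothesis $\rho_f(G_K) \supseteq \textup{SL}_2(\ZZ_p)$, together with the $p$-distinguished twist by $\chi^{-1}$ and the absolute irreducibility of $\overline{\rho}_f|_{G_{\QQ_p}}$ noted in the excerpt, ensures that $\overline{\rho}_{f,\chi}|_{G_K}$ is absolutely irreducible, that there exists $\tau \in G_K$ whose action on $T_{f,\chi}$ exhibits the eigenvalue pattern required to produce useful Kolyvagin derivative classes, and that the relevant mod-$\varpi$ tame-cohomology groups vanish. For $?=\cyc$ one then runs the cyclotomic Euler system argument over $\LL_{\cO_L}(\Gamma^\cyc)$; for $?=\emptyset$ one uses the two-variable version, which is available because $\LL_{\cO_L}(\Gamma)$ is a regular local ring of Krull dimension $3$ with a well-behaved theory of characteristic ideals for torsion modules.

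Next I would run the machine. Its output, after dualizing via Poitou-Tate to pass from the strict Selmer group into which the Euler system naturally feeds to the doubly-signed Selmer group of Definition~\ref{def:doublysignedselmergroups}, is a containment
$$\left(\mathfrak{L}^?_{\star,\bullet}\right) \;=\; \left(\col_{\star,\fp}(c^\bullet_?)\right) \;\subseteq\; \xi_\star^? \cdot \Char_{\LL_{\cO_L}(\Gamma^?)}\!\left(\mathfrak{X}^?_{\star,\bullet}\right),$$
which is exactly the assertion of the theorem. The auxiliary factor $\xi_\star^?$ records the local discrepancy at $\fp^c$ (where the signed $p$-adic $L$-function is \emph{not} evaluated): it is the ideal measuring the cokernel of $\col_{\star,\fp^c}$ on Iwasawa cohomology at $\fp^c$, and it enters precisely through the Poitou-Tate comparison between the strict local condition at $\fp^c$ imposed by the Euler system descent and the signed local condition at $\fp^c$ defining $\mathfrak{X}^?_{\star,\bullet}$. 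The hypothesis $\mathfrak{L}^?_{\star,\bullet} \neq 0$ is then used twice: once to ensure that the bottom class $c^\bullet_?$ is non-torsion (so the machine produces a non-trivial bound), and once to conclude that $\mathfrak{X}^?_{\star,\bullet}$ must be $\LL_{\cO_L}(\Gamma^?)$-torsion, since otherwise $\Char(\mathfrak{X}^?_{\star,\bullet}) = 0$ could not contain a non-zero element.

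The main obstacle is the local bookkeeping at $\fp^c$. In the ordinary case of \cite{buyukbodukleianticycloord}, the analogous correction factor is a clean Euler factor; here one must exploit the explicit description of the image of the Perrin-Riou map $\cL_{\lambda,\fp^c}$ underlying Theorem~\ref{prop:Hidadecompintro}, together with the structure of the Wach module of $W_f^*$ at $\fp^c$, to guarantee that $\xi_\star^?$ actually lies in $\LL_{\cO_L}(\Gamma^?)$ rather than merely in $\cH_L(\Gamma^?)$, and to pin down its contribution to the characteristic ideal. A subordinate but non-trivial point is to check that the signed local conditions at $\fp$ and $\fp^c$ in Definition~\ref{def:doublysignedselmergroups} are exact orthogonal complements under local Tate duality, so that the Pontryagin dual of the Selmer group bounded by the Euler system really matches $\mathfrak{X}^?_{\star,\bullet}$; this compatibility is ultimately a formal consequence of the factorization in Theorem~\ref{prop:Hidadecompintro}.
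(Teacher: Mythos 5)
Your overall strategy matches the paper: rescale the signed Beilinson--Flach classes to obtain a locally restricted Euler system (the paper cites \cite[Appendix A]{buyukbodukleiPLMS}), run the Kolyvagin-type descent, and pass via Poitou--Tate duality to the doubly-signed Selmer group. The paper in fact proves this in two stages: Theorem~\ref{thm:halfofMainConjtwovarCycloDefinite}(i) first bounds the \emph{partially}-signed Selmer group $\mathfrak{X}_\bullet^?$ (whose defining Selmer structure $\FFF_{\emptyset,\bullet}$ imposes no condition at $\fp$ and the $\bullet$-signed condition at $\fp^c$), and then part (ii) transfers this to the doubly-signed $\mathfrak{X}_{\star,\bullet}^?$ via the Poitou--Tate exact sequence (\ref{eqn:4termexactpartialagainstdouble}), whose new term lives at $\fp$.

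The genuine flaw is in your account of the fudge factor $\xi_\star^?$. You place it at $\fp^c$, claiming it measures the cokernel of $\col_{\star,\fp^c}$ and records the mismatch between a strict local condition at $\fp^c$ and the signed condition there. This is incorrect on both counts. First, the Euler system descent does not impose a strict condition at $\fp^c$: it already uses the $\bullet$-signed condition at $\fp^c$, since the collection $\{c_\n^\bullet\}$ is constructed to lie in $\FFF_{\emptyset,\bullet}$-Selmer groups at every tame level (Proposition~\ref{prop:positionofBFelements}(i), which rests on Corollary~\ref{cor:ColBF}). The Selmer structure at $\fp^c$ is therefore the \emph{same} for the Euler system input and for $\mathfrak{X}_{\star,\bullet}^?$; no correction factor arises there. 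Second, $\xi_\star^?$ is defined at the end of \S\ref{S:semilocal} in terms of the image of $\col_{\star,\fp}$ (for $?=\emptyset$ it is literally $\xi_\star^\fp$). It enters the argument because the map $\col_{\star,\fp}^?$ is not surjective onto $\LL_{\cO_L}(\Gamma^?)$ but only pseudo-surjective onto $\xi_\star^?\cdot\LL_{\cO_L}(\Gamma^?)$ (Lemma~\ref{lem:2varimage}), so that $\mathfrak{L}_{\star,\bullet}^?=\col_{\star,\fp}^?(\res_\fp(c_?^\bullet))$ overcounts the characteristic ideal of $H^1_{/\star}(K_\fp,\TT_{f,\chi}^?)/\LL\cdot\res_{\star/\fp}(c^\bullet_?)$ by exactly $\xi_\star^?$. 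The discrepancy is entirely a feature of the local Coleman map at $\fp$, which is where the $p$-adic $L$-function and the additional local condition in passing from $\mathfrak{X}_\bullet^?$ to $\mathfrak{X}_{\star,\bullet}^?$ both live. Your closing concern about orthogonality of the signed local conditions is likewise moot: dual Selmer structures are defined as orthogonal complements, so there is nothing to check beyond what the formalism of Definition~\ref{def:doublysignedselmergroups} already provides.
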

Here, the fudge factor $ \xi_{\star}^?$ is a product of certain linear polynomials and its detailed description is given at the end of \S\ref{S:semilocal}  below. The presence of this fudge factor is due to the failure of the surjectivity of signed Coleman maps. Theorem~\ref{thm:leopoldtconjecturesforTfchiintro1} corresponds to the assertions proved in Theorem~\ref{thm:leopoldtconjecturesforTfchi}(iii) and Theorem ~\ref{thm:halfofMainConjtwovarCycloDefinite}(ii) in the main body of our article. 
\begin{remark} 
\label{rem:thmleopoldtconjecturesforTfchiintro1}It follows from Corollary~\ref{cor:lonecorrect} below that $\mathfrak{L}_{\star,\bullet}^\cyc\neq 0$ for at least one choice of $\star,\bullet \in\{\#,\flat\}$. Moreover, the containment in Theorem~\ref{thm:leopoldtconjecturesforTfchiintro1} may be upgraded to an equality (up to a power of $p$) using \cite[Theorems 1.7 and 3.8]{wan16} under the assumptions of op.cit.  A particular case when $k=2$ is the subject of~\cite{castellawan1607}.
\end{remark}
{We now fix our sights to the anticyclotomic setting.} We have the following result towards the doubly-signed anticyclotomic main conjecture when the sign in the functional equation of the Hecke $L$-function $L(f/K,\chi,s)$ is $+1$.  The assertions in this theorem follow on combining Theorems~\ref{thm:leopoldtconjecturesforTfchi}(iv) and~\ref{thm:halfofMainConjtwovarCycloDefinite}(ii) below. 

\begin{theorem}
\label{thm:leopoldtconjecturesforTfchiintro2}
Assume that $\rho_f(G_K)$ contains a conjugate of $\textup{SL}_2(\ZZ_p)$. Suppose also that $\mathfrak{L}_{\star,\bullet}^\ac\ne 0$. %there exists an anticyclotomic Hecke character $\psi$ of finite order with the property that $L(f,\chi\psi,k/2)\neq 0$. 
Then the $\LL_{\cO_L}(\Gamma^\ac)$-module $\mathfrak{X}_{\star,\bullet}^{\ac}$ is torsion and 
$$\mathfrak{L}_{\star,\bullet}^\ac\subset \xi_{\star}^\ac\cdot\Char\left(\mathfrak{X}_{\star,\bullet}^{\ac}\right)\,.$$
\end{theorem}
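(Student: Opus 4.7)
The plan is to mirror the strategy used for Theorem~\ref{thm:leopoldtconjecturesforTfchiintro1}, but carried out over the anticyclotomic tower: combine the integral signed Beilinson-Flach classes produced by Theorem~\ref{thm:decompintro} with a Kolyvagin--Rubin-style Euler system argument adapted to $K^\ac/K$, and then translate the resulting bound into a statement about $\mathfrak{L}_{\star,\bullet}^\ac$ via the signed Coleman map $\col_{\star,\fp}$.

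First, I would project the entire family $\{C\cdot \BF_{\fn}^\bullet\}_{\fn \in \mathcal{N}}$ to the anticyclotomic direction, obtaining an integral family of classes in $H^1(K(\fn),\TT_{f,\chi}^\ac)$ that inherits the Euler system distribution relation from Theorem~\ref{thm:decompintro}. Since the ray class number of $K$ modulo $\ff$ is coprime to $p$ and since $p$ splits in $K$, the collection of extensions $K(\fn)$ forms an admissible base for running the Euler system machine over $\LL_{\cO_L}(\Gamma^\ac)$. The big image assumption that $\rho_f(G_K)$ contains a conjugate of $\textup{SL}_2(\Zp)$, together with the (absolute) residual irreducibility of $\overline{\rho}_f|_{G_{\QQ_p}}$ furnished by Fontaine--Edixhoven, gives the Chebotarev-type conditions on Kolyvagin primes required by the machine.

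Next, I would set up the doubly-signed Selmer structure $\mathfrak{X}_{\star,\bullet}^{\ac}$ using the local conditions cut out by $\ker(\col_{\star,\fp})$ and $\ker(\col_{\bullet,\fp^c})$, and verify that $c^\bullet_\ac$ lies in the correct local subspace at $\fp^c$ so that the Euler system machine applies. Running the machine yields a divisibility in $\LL_{\cO_L}(\Gamma^\ac)$ between $\Char(\mathfrak{X}_{\star,\bullet}^{\ac})$ and the characteristic ideal generated by the image of $c^\bullet_\ac$ in the relevant quotient module. By the very definition $\mathfrak{L}_{\star,\bullet}^\ac = \col_{\star,\fp}(c^\bullet_\ac)$, and since $\col_{\star,\fp}$ is not surjective but has cokernel controlled by the auxiliary element introduced at the end of \S\ref{S:semilocal}, the discrepancy between the image of $c^\bullet_\ac$ and $\mathfrak{L}_{\star,\bullet}^\ac$ is exactly the fudge factor $\xi_\star^\ac$. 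Combining these two divisibilities produces the claimed containment
\[
\mathfrak{L}_{\star,\bullet}^\ac \subset \xi_\star^\ac \cdot \Char\left(\mathfrak{X}_{\star,\bullet}^{\ac}\right),
\]
and the hypothesis $\mathfrak{L}_{\star,\bullet}^\ac \neq 0$ then forces $c^\bullet_\ac$ to be non-torsion, which by the Euler system bound forces $\mathfrak{X}_{\star,\bullet}^{\ac}$ to be $\LL_{\cO_L}(\Gamma^\ac)$-torsion.

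The main obstacle will be the Euler system argument in the anticyclotomic direction. Unlike the cyclotomic case, where one can appeal to Rubin's formalism essentially off-the-shelf, the anticyclotomic deformation is subtler because both primes $\fp$ and $\fp^c$ above $p$ must be handled simultaneously but play asymmetric roles: one of them carries the Selmer local condition defined by a signed Coleman kernel, while the other is the prime at which $\col_{\star,\fp}$ is evaluated to produce the $p$-adic $L$-function. Verifying that the integral signed class $c^\bullet_\ac$ sits in the correct local subspace at $\fp^c$ (so that Kolyvagin's descent can proceed and the resulting bound matches $\mathfrak{X}_{\star,\bullet}^{\ac}$ as defined in Definition~\ref{def:doublysignedselmergroups}), and checking that the two sources of imprecision---the cokernel of $\col_{\star,\fp}$ and the local condition at $\fp^c$---combine into the single factor $\xi_\star^\ac$, is the most delicate part of the argument.
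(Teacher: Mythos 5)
Your proposal follows the same high-level strategy as the paper: project the integral signed Beilinson--Flach classes to the anticyclotomic deformation, run a locally-restricted Euler system argument using the big-image hypothesis and the local position of $c_\n^\bullet$ at $\fp^c$, and then translate the resulting divisibility into a statement about $\mathfrak{L}_{\star,\bullet}^\ac$ via the signed Coleman map at $\fp$, with $\xi_\star^\ac$ absorbing the non-surjectivity of $\col_{\star,\fp}^\ac$. The paper indeed executes exactly this, deducing the theorem from Theorem~\ref{thm:leopoldtconjecturesforTfchi}(iv) and Theorem~\ref{thm:halfofMainConjtwovarCycloDefinite}(ii), whose proofs invoke the $L$-restricted Euler system machinery of \cite[Appendix~A]{buyukbodukleiPLMS} together with Proposition~\ref{prop:positionofBFelements}(i) and the pseudo-surjectivity of the Coleman map onto $\xi_\star^\ac\LL_{\cO_L}(\Gamma^\ac)$ from Lemma~\ref{lem:2varimage}.

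One small but conceptually important imprecision in your write-up is worth flagging: the Euler system machine does \emph{not} directly bound $\Char(\mathfrak{X}_{\star,\bullet}^\ac)$. Because the hypothesis $\mathfrak{L}_{\star,\bullet}^\ac\neq0$ says precisely that $c^\bullet_\ac\notin H^1_{\FFF_{\star,\bullet}}(K,\TT_{f,\chi}^\ac)$, the class does not satisfy the doubly-signed local condition at $\fp$, and so cannot serve directly as an Euler system for $\FFF_{\star,\bullet}$. What the locally restricted machinery bounds is the \emph{partially-signed} Selmer group $\mathfrak{X}_\bullet^\ac$, attached to $\FFF_{\emptyset,\bullet}$ (relaxed at $\fp$, $\bullet$-signed at $\fp^c$): the collection $\{c_\n^\bullet\}$ is an $L$-restricted Euler system for this structure by Proposition~\ref{prop:positionofBFelements}(i), yielding Theorem~\ref{thm:halfofMainConjtwovarCycloDefinite}(i). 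The passage from there to $\mathfrak{X}_{\star,\bullet}^\ac$ goes through the Poitou--Tate global duality sequence \eqref{eqn:4termexactpartialagainstdouble}, which compares $\Char(\mathfrak{X}_\bullet^\ac)$ with $\Char(\mathfrak{X}_{\star,\bullet}^\ac)$ via the quotient module $H^1_{/\star}(K_\fp,\TT_{f,\chi}^\ac)/\LL_{\cO_L}(\Gamma^\ac)\cdot\res_{\star/\fp}(c^\bullet_\ac)$. Your phrase ``the image of $c^\bullet_\ac$ in the relevant quotient module'' gestures at this correctly, but the two-step structure (Euler system bound, then duality transfer) should be made explicit, since the Euler system machine on its own never sees the $\star$-condition at $\fp$.
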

\begin{remark}
\label{rem:thmleopoldtconjecturesforTfchiintro2}
The containment in Theorem~\ref{thm:leopoldtconjecturesforTfchiintro2} may also be upgraded to an equality (up to a power of $p$) using \cite[Theorems~1.7 and 3.8]{wan16}  under the assumptions of \textit{op.cit}.
\end{remark}
\begin{remark}
\label{rem:anticyclodefinitedetails} Under mild hypotheses (which imply that the sign of the functional equation for $L(f/K,\chi,s)$ is $``+"$), we prove in Corollary~\ref{cor:lthreeandhalfcorrect} below that $\mathfrak{L}_{\star,\bullet}^\ac\neq 0$ for at least one choice of $\star,\bullet \in\{\#,\flat\}$. 
\end{remark}
We next turn our attention to the remaining case, that is, the indefinite anticyclotomic Iwasawa theory of the form $f$. We have the following result in this set-up, which corresponds to Theorem~\ref{thm:leopoldtconjecturesforTfchi}(v) combined with Proposition~\ref{prop:Lalpha} below.
\begin{theorem}
\label{thm:leopoldtconjecturesforTfchiintro3}
Assume that $\rho_f(G_K)$ contains a conjugate of $\textup{SL}_2(\ZZ_p)$.  %Suppose also that $p\nmid N\rm{disc}(K/\QQ)$, $N^-$ is a square-free product of an even number of primes (where we factor $N=N^+N^-$ so that $N^+$ (resp., $N^-$) is only divisible by primes that are split (resp., inert or ramified) in $K$) and there exists a prime $\ell \nmid pN\rm{disc}(K/\QQ)$ such that the \textup{mod} $\ell$ representation associated to $f$ is absolutely irreducible. 
Suppose that the sign of the functional equation for $L(f/K,\chi,s)$ is $``-"$, as well as that $\varepsilon_f=\mathbb{1}$, $a_p(f)=0$ and that the ramification index of {$L_0/\QQ_p$} is odd. Assume also that $c^\bullet_\ac\neq 0$. Then both modules $\mathfrak{X}_{\bullet,\bullet}^{\ac}$ and $H^1_{\FFF_{\bullet,\bullet}}(K,\TT_{f,\chi}^{\ac})$ have rank one over $\LL_{\cO_L}(\Gamma^\ac)$.
\end{theorem}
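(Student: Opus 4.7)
The plan is to invoke the signed Beilinson-Flach Euler system of Theorem~\ref{thm:decompintro} in the rank-one Euler system framework adapted to the indefinite anticyclotomic setting, along the lines of the Bertolini-Darmon/Howard style of argument used in \cite{buyukbodukleianticycloord}. The basic shape of the proof is: a lower bound on one Selmer module from the non-vanishing of $c^\bullet_\ac$, an upper bound on the dual Selmer module from the Euler system, and a global duality argument matching the two.

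First I would show that $c^\bullet_\ac$ defines a non-trivial class in $H^1_{\FFF_{\bullet,\bullet}}(K,\TT_{f,\chi}^\ac)$. Under the hypotheses $a_p(f)=0$, $\varepsilon_f=\mathbb{1}$ and odd ramification index of $L/\QQ_p$, the logarithmic matrix $M$ from \S\ref{S:wach} decouples into the Pollack-Kobayashi diagonal form, so that the doubly-signed Coleman maps $\col_{\bullet,\fp}$ and $\col_{\bullet,\fp^c}$ produce well-posed local conditions at $\fp,\fp^c$. Proposition~\ref{prop:Lalpha} (which is cited as the other input to the theorem) ensures that the factorization of Theorem~\ref{thm:decompintro} is compatible with these signed local conditions at both primes above $p$, so the assumption $c^\bullet_\ac\neq 0$ yields
\[
\mathrm{rank}_{\LL_{\cO_L}(\Gamma^\ac)}\, H^1_{\FFF_{\bullet,\bullet}}(K,\TT_{f,\chi}^\ac) \;\geq\; 1.
\]

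Second, I would feed the signed Euler system $\{\BF^{\bullet}_{\fn}\}_{\fn\in\mathcal{N}}$ from Theorem~\ref{thm:decompintro} into the Kolyvagin-Rubin derivative machine in the anticyclotomic direction. The hypothesis that $\rho_f(G_K)$ contains a conjugate of $\mathrm{SL}_2(\Zp)$ provides the absolute irreducibility of $\bar\rho_f|_{G_K}$ and all its Tate twists as well as the required big-image input for Chebotarev arguments on Kolyvagin primes. The locally restricted Euler system property guaranteed by Theorem~\ref{thm:decompintro} then converts the non-vanishing of $c^\bullet_\ac$ into the upper bound
\[
\mathrm{rank}_{\LL_{\cO_L}(\Gamma^\ac)}\, \mathfrak{X}_{\bullet,\bullet}^\ac \;\leq\; 1,
\]
in the spirit of Howard's adaptation of Rubin's machine to anticyclotomic settings. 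Finally, a Poitou-Tate/Wiles-Greenberg Euler characteristic computation for the doubly-signed local conditions over $\Gamma^\ac$ (taking into account that the sign of the functional equation is $-$, forcing the generic rank to be odd) equates the ranks of $H^1_{\FFF_{\bullet,\bullet}}(K,\TT_{f,\chi}^\ac)$ and $\mathfrak{X}_{\bullet,\bullet}^\ac$, and combined with the two bounds above forces both ranks to equal one.

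The main obstacle is the second step. The $\#$ and $\flat$ local conditions at $\fp$ and $\fp^c$ are not defined by functoriality from Bloch-Kato submodules, but rather through the signed Coleman maps, so one must verify that Kolyvagin-derived classes at Kolyvagin primes continue to land in the correct signed subspaces at the primes above $p$. This is where the shape of the logarithmic matrix $M$ under $a_p(f)=0$ and the odd-ramification hypothesis become essential: they ensure that the signed conditions are split symmetrically between $\fp$ and $\fp^c$ and that derived Beilinson-Flach classes remain locally Kobayashi-type at both primes, so the Euler system bounds transfer cleanly from the classical Bloch-Kato setting to the doubly-signed setting.
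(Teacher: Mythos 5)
Your proposal takes a genuinely different route from the paper, and it has a gap at its central step.

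The paper does \emph{not} use the Euler system to bound the doubly-signed dual Selmer group $\mathfrak{X}_{\bullet,\bullet}^{\ac}$. The Euler system machinery (via the locally restricted framework of \cite[Appendix~A]{buyukbodukleiPLMS}) only applies to the partially-signed structure $\FFF_{\emptyset,\bullet}$, because the classes $c_{\fn}^\bullet$ are shown to lie in $H^1_\bullet$ at $\fp^c$ (Corollary~\ref{cor:ColBF}) for all $\fn$, whereas the local condition at $\fp$ is relaxed. This yields that $\mathfrak{X}_{\bullet}^\ac$ is torsion and $H^1_{\FFF_{\emptyset,\bullet}}(K,\TT^\ac_{f,\chi})$ has rank one. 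The paper then feeds in the crucial level-one vanishing $\fL_{\bullet,\bullet}^\ac = \col_{\bullet,\fp}^\ac(\res_\fp(c^\bullet_\ac))=0$ from $\mathbf{(L5)}$ (Proposition~\ref{prop:Lalpha}, proved via a functional equation in \cite{BF_Super_Addendum}). Combined with the fact that $H^1_{/\bullet}(K_\fp,\TT^\ac_{f,\chi})$ is torsion-free (being a submodule of $\LL_{\cO_L}(\Gamma^\ac)$ via $\col_{\bullet,\fp}^\ac$) and that $H^1_{\FFF_{\emptyset,\bullet}}/\LL\cdot c^\bullet_\ac$ is torsion, this forces $\res_{\bullet/\fp}$ to vanish identically on $H^1_{\FFF_{\emptyset,\bullet}}(K,\TT^\ac_{f,\chi})$. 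The Poitou-Tate sequence
\[
0\ra H^1_{\FFF_{\bullet,\bullet}}(K,\TT_{f,\chi}^{\ac})\lra H^1_{\FFF_{\emptyset,\bullet}}(K,\TT_{f,\chi}^{\ac})\stackrel{\res_{\bullet/\fp}}{\lra} H^1_{/\bullet}(K_\fp,\TT_{f,\chi}^{\ac})
\lra \mathfrak{X}_{\bullet,\bullet}^{\ac} \lra \mathfrak{X}_{\bullet}^{\ac}\ra 0
\]
then gives $H^1_{\FFF_{\bullet,\bullet}}=H^1_{\FFF_{\emptyset,\bullet}}$ of rank one and $\mathfrak{X}_{\bullet,\bullet}^\ac$ of rank one (since the middle arrow is zero, the rank-one module $H^1_{/\bullet}$ injects into $\mathfrak{X}_{\bullet,\bullet}^\ac$ with torsion cokernel).

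The gap in your second step is twofold. First, you would need $c_{\fn}^\bullet$ to lie in $H^1_{\FFF_{\bullet,\bullet}}$ for \emph{all} $\fn$ in order to run a Kolyvagin argument with the doubly-signed conditions, but $\mathbf{(L5)}$ is a level-$1$ statement (a vanishing of a single $p$-adic $L$-function coming from a functional equation), and there is no analogue of this vanishing established at higher tame levels. Second, and more fundamentally, $\mathfrak{X}_{\bullet,\bullet}^\ac$ has rank one precisely because the Euler system class lies in the core Selmer group — the Euler system does not produce an upper bound of rank $\leq 1$ on $\mathfrak{X}_{\bullet,\bullet}^\ac$ independently; rather, the rank-one statement is a consequence of the Poitou-Tate sequence once one knows (i) $\mathfrak{X}_\bullet^\ac$ is torsion, (ii) $\res_{\bullet/\fp}$ vanishes on $H^1_{\FFF_{\emptyset,\bullet}}$, and (iii) $H^1_{/\bullet}(K_\fp)$ has rank one. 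Your appeal to an Euler characteristic computation for $\FFF_{\bullet,\bullet}$ and to "the generic rank being odd because the sign is $-$" does not substitute for the structural argument involving $\mathbf{(L5)}$: the sign hypothesis enters exactly in order to prove $\mathbf{(L5)}$, not to set up an abstract parity/Euler characteristic argument.
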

\begin{remark}
\label{rem:anticycloBFnonvanishingindefinite}
Under mild hypotheses $($that are somewhat stronger than our assumption that the functional equation for $L(f/K,\chi,s)$ is $``-"$), we prove in Proposition~\ref{prop:lfourcorrect} that $\res_{\fp^c}\left(c^\bullet_\ac\right)\neq 0$ for at least one choice of $\bullet \in\{\#,\flat\}$.
\end{remark}
\begin{remark}
\label{rem:anticycloBFvanishingindefinite}
Under the hypothesis of Theorem~\ref{thm:leopoldtconjecturesforTfchiintro3}, we prove in Proposition~\ref{prop:Lalpha} that $\mathfrak{L}_{\bullet,\bullet}^\ac=0$. Our proof rests on a functional equation for the $p$-adic $L$-function $\frak{L}_{\bullet,\bullet}$, which is the main result of  \cite{BF_Super_Addendum}.
\end{remark}

{\begin{remark} Suppose in this paragraph that $\varepsilon_f=\mathds{1}$. Then the requirement that the global root number for the Rankin-Selberg product $f_{/K}\otimes \chi$ be $\pm 1$ is equivalent to condition that $\chi\cdot \chi\circ c =\mathds{1}$. Note that a character $\chi$ with this property is $p$- distinguished if and only if $\chi(\p) \neq  \pm 1$. In other words, the only Rankin-Selberg convolution $f_{/K}\otimes \chi$ $($with $\varepsilon_f=\mathds{1}$$)$ whose anticyclotomic Iwasawa theory cannot be studied with the methods of the article is limited to the case when $\chi(\p)=\pm 1$. We shall treat these remaining cases (as well as some cases when the prime $p$ is inert) in a forthcoming article. \end{remark} }

Theorem~\ref{thm:leopoldtconjecturesforTfchiintro3} calls for a refinement along the lines of~\cite[Theorem 1.1(ii)]{buyukbodukleianticycloord}, where in an analogous set up (but assuming $f$ is $p$-ordinary) we proved a $\LL$-adic Gross-Zagier formula expressing the anticyclotomic restriction of the cyclotomic derivative of a two-variable Hida-$p$-adic $L$-function in terms of a $\LL_{\cO_L}(\Gamma^\ac)$-adic regulator and the torsion-submodule of an Iwasawa module. As we found no obvious way to define $\LL_{\cO_L}(\Gamma^\ac)$-adic height pairings on doubly-signed Selmer groups (contrary to the $p$-ordinary situation, where one deals with the Greenberg Selmer groups), we were led to consider this problem in the context of trianguline Selmer groups (as introduced by Pottharst \cite{jaycyclotmotives,jayanalyticfamilies}), which come equipped with a natural $p$-adic height pairing.
Let $\lambda,\mu\in\{\alpha,\beta\}$. We set $c^\mu:=C\cdot \BF^{\mu}_{1} \in H^1(K,\TT_{f,\chi}\widehat{\otimes} \mathcal{H}_L(\Gamma)) $  and define the \emph{analytic Beilinson-Flach $p$-adic $L$-function} 
$$\mathfrak{L}_{\lambda,\mu}:=\cL_{\lambda,\fp}(c^\mu) \in \mathcal{H}_L(\Gamma)$$
for $\lambda,\mu\in \{\alpha,\beta\}$. We likewise define the restrictions $c^\mu_\ac \in H^1(K,\TT_{f,\chi}^\ac \widehat{\otimes} \mathcal{H}_L(\Gamma^\ac))=H^1(K,\TT_{f,\chi}^\ac) \widehat{\otimes} \mathcal{H}_L(\Gamma^\ac)$ (the equality here is a consequence of \cite[Prop. 2.4.5]{LZ1}) and $\mathfrak{L}_{\lambda,\mu}^\ac \in \mathcal{H}_L(\Gamma^\ac)$ of $c^\mu$ and $\mathfrak{L}_{\lambda,\mu}$ to the anticyclotomic tower.

Let $\fm_\ac$ denote the maximal ideal of $\LL_{\cO_L}(\Gamma^\ac)$.  For each positive integer $n$, we let $\mathcal{H}_n(\Gamma^\ac)$ denote the $p$-adic completion of $\Lambda_{\cO_L}[\fm^n_\ac/p]$, the $\Lambda_{\cO_L}(\Gamma^\ac)$-subalgebra of $\Lambda_{\cO_L}(\Gamma^\ac)[1/p]$ generated by all $r/p$ with $r\in \fm^n_\ac$. Each $\calHacn[1/p]$ is a strict affinoid algebra (and in fact, also a Euclidean domain). They give an admissible affinoid covering of $\textup{Sp}\,\LL_\infty$ and $\calHac=\varprojlim \calHacn[1/p]$. See \cite[\S1.7]{dejong1995PIHES} for details. 
We fix a positive integer $n$ and $\lambda\in\{\alpha,\beta\}$. Let $A:=\calHacn[1/p]$. Following \cite{jaycyclotmotives, jayanalyticfamilies}, there exists a trianguline Selmer group $\mathfrak{X}_{\lambda,\lambda}^A$ over the affinoid algebra $A$. It arises as the degree-$2$ cohomology of a Selmer complex that is denoted by $\widetilde{\textbf{R}\Gamma}(G_{K,S},\Delta_{\lambda,\lambda},V_A)$ in the main text, where $V_A:=T_{f,\chi}\otimes_{\cO_L} A^\iota$.  We review this construction in Section~\ref{subsec:analyticSelmergroups}. Furthermore, as we explain in Theorem~\ref{thm:p-adicheight} below, there is an $A$-adic height pairing (interpolating $p$-adic height pairings along a $p$-adic family parametrized by $A$)
$$\langle\,,\,\rangle_{\lambda,\lambda}\,:\,H^1_{\lambda,\lambda}(K,V_A)\otimes H^1_{\lambda,\lambda}(K,\mathscr{D}(V_A)) \lra A$$
where $H^1_{\lambda,\lambda}(K,V_A)$ (respectively, $H^1_{\lambda,\lambda}(K,\mathscr{D}(V_A))$) denotes the cohomology of $\widetilde{\textbf{R}\Gamma}(G_{K,S},\Delta_{\lambda,\lambda},V_A)$ (respectively, that attached to the Kummer dual $\mathscr{D}(V_A)$ of $V_A$) in degree one. 
The following theorem is a partial result towards an $A$-adic Gross-Zagier formula. It corresponds to a combination of the results we prove as part of Corollary~\ref{cor:comparisonofBKselmerwithrelaxedBKintheindefinitecase} and Theorem~\ref{thm:padicGZ}.
\begin{theorem}
\label{thm:mainconjindefiinteintro}
Suppose that $\rho_f(G_K)$ contains a conjugate of $\textup{SL}_2(\ZZ_p)$. Suppose also that the number of primes dividing $N_f$ that does not split in $K/\QQ$ is even and the squares of these primes do not divide $N_f$. If we further assume that $\varepsilon_f=\mathbb{1}$, then $\mathfrak{X}_{\lambda,\lambda}$ has rank one over $A$ and 
$$\mathfrak{L}^\prime_{\lambda,\lambda,\ac}\subset\textup{Fitt}_{A}\left((\mathfrak{X}_{\lambda,\lambda}^A)_{\textup{tor}}\right) \mathscr{R}_p.$$ 
Here, $ \mathscr{R}_p$ is the $p$-adic regulator given in Definition~\ref{def:regulator} and $\mathfrak{L}^\prime_{\lambda,\lambda,\ac}$ is the restriction of the partial derivative of $\frak{L}_{\lambda,\lambda}$ with respect to the cyclotomic variable to the anticyclotomic line (as given in Definition~\ref{def:algebraicderivedpadicLfunction}).
\end{theorem}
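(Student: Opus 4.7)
The plan is to follow the template of \cite[Theorem 1.1(ii)]{buyukbodukleianticycloord}, but working in Pottharst's trianguline framework over the affinoid algebra $A=\calHacn[1/p]$ instead of with Greenberg Selmer groups, so that the ``$A$-adic Gross-Zagier'' formula is obtained from the interplay between the Beilinson-Flach Euler system, the height pairing $\langle\,,\,\rangle_{\lambda,\lambda}$, and the derived $p$-adic $L$-function $\mathfrak{L}^\prime_{\lambda,\lambda,\ac}$.

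Step one is the rank computation for $\mathfrak{X}_{\lambda,\lambda}^A$. The hypothesis that the number of primes dividing $N_f$ that are inert in $K$ is even, combined with $\varepsilon_f=\mathbb{1}$, forces the sign of the functional equation of $L(f/K,\chi,s)$ to be $``-"$, placing us in the indefinite anticyclotomic setting. Under these hypotheses one can invoke Proposition~\ref{prop:lfourcorrect} to find $\bullet\in\{\#,\flat\}$ with $\res_{\fp^c}(c^\bullet_\ac)\neq 0$; after base change to $A$ and relating the signed classes to the $\lambda$-indexed ones via the factorization of Theorem~\ref{thm:decompintro}, we get a nonzero class $c^\lambda_\ac \in H^1_{\lambda,\lambda}(K,V_A)$. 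Applying the locally restricted Euler system argument of Theorem~\ref{thm:leopoldtconjecturesforTfchiintro3} base-changed to $A$ (together with Poitou-Tate duality for the Selmer complex $\widetilde{\mathbf{R}\Gamma}(G_{K,S},\Delta_{\lambda,\lambda},V_A)$) bounds $\mathfrak{X}_{\lambda,\lambda}^A$ and forces $\rk_A\,\mathfrak{X}_{\lambda,\lambda}^A=\rk_A H^1_{\lambda,\lambda}(K,V_A)=1$, where $c^\lambda_\ac$ generates a free rank-one $A$-submodule of the latter.

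Step two is to identify the cyclotomic derivative $\mathfrak{L}^\prime_{\lambda,\lambda,\ac}$ with a height-pairing computation. Since $\mathfrak{L}_{\lambda,\lambda}^\ac$ vanishes along the anticyclotomic line (this is the functional equation ingredient of \cite{BF_Super_Addendum}, exactly as in Remark~\ref{rem:anticycloBFvanishingindefinite}, now transported to the trianguline $\lambda$-variant), the cyclotomic derivative makes sense. The key intermediate is a Rubin-style formula: the $\p$-component of the Perrin-Riou map $\cL_{\lambda,\fp}$ applied to $c^\lambda$ and then cyclotomically differentiated at the anticyclotomic line should equal the pairing of $c^\lambda_\ac$ with its Kummer dual partner under $\langle\,,\,\rangle_{\lambda,\lambda}$, up to the local unit contributions which assemble into the $p$-adic regulator $\mathscr{R}_p$ of Definition~\ref{def:regulator}. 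This step will use the compatibility between the Perrin-Riou big logarithm and the Bloch-Kato exponential, together with Nekov\'a\v{r}'s construction of $p$-adic heights from Bockstein maps applied to the trianguline Selmer complex (as worked out by Pottharst in \cite{jaycyclotmotives,jayanalyticfamilies}).

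Step three combines these ingredients to deduce the Fitting ideal containment. Since $\mathfrak{X}_{\lambda,\lambda}^A$ has rank one and $c^\lambda_\ac$ generates (up to $A$-torsion) a complementary rank-one submodule of $H^1_{\lambda,\lambda}(K,V_A)$, the structure theorem over the Euclidean domain $A$ yields
\[
\textup{Fitt}_A\bigl((\mathfrak{X}_{\lambda,\lambda}^A)_{\textup{tor}}\bigr)\cdot \mathscr{R}_p \supseteq \langle c^\lambda_\ac,\,\cdot\,\rangle_{\lambda,\lambda}(H^1_{\lambda,\lambda}(K,\mathscr{D}(V_A))).
\]
Combined with Step two, the image on the right contains $\mathfrak{L}^\prime_{\lambda,\lambda,\ac}$, which is the required containment. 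The main obstacle I expect is Step two: a clean $A$-adic Rubin-style reciprocity law in the trianguline setting over an affinoid algebra is more delicate than its classical $\Lambda$-adic counterpart, because one must ensure that the Bockstein-theoretic construction of the height pairing truly agrees with the derivative of the Perrin-Riou regulator along the cyclotomic direction. The correct framework here is likely Pottharst's formalism of $(\varphi,\Gamma)$-module cohomology over affinoid algebras, where one can derive the required $A$-adic Bloch-Kato exponential in families; the verification of this compatibility (and the precise tracking of local fudge factors into $\mathscr{R}_p$) is where the bulk of the technical work will lie.
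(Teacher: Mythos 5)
Your overall architecture is correct---Pottharst's trianguline Selmer complexes, the vanishing of $\mathfrak{L}_{\lambda,\lambda}^\ac$ coming from the functional equation, a Rubin-style reciprocity formula for the cyclotomic height, and a Fitting-ideal computation over the affinoid $A$. But there is a genuine gap in Step three. The inequality
\[
\textup{Fitt}_A\bigl((\mathfrak{X}_{\lambda,\lambda}^A)_{\textup{tor}}\bigr)\cdot \mathscr{R}_p \supseteq \langle c^\lambda_\ac,\,\cdot\,\rangle_{\lambda,\lambda}\bigl(H^1_{\lambda,\lambda}(K,\mathscr{D}(V_A))\bigr)
\]
does \emph{not} follow from the structure theorem over $A$ together with the rank-one statements. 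What the structure theorem gives, once one has the rank computations, is only
\[
\langle c^\lambda_A,\,\cdot\,\rangle_{\lambda,\lambda}\bigl(H^1_{\lambda,\lambda}(K,\mathscr{D}(V_A))\bigr)
= \mathscr{R}_p(c^\lambda_A)
= \mathscr{R}_p\cdot\textup{Fitt}_A\Bigl(H^1_{\lambda,\lambda}(K,V_A)\big/A\cdot c^\lambda_A\Bigr).
\]
To pass from $\textup{Fitt}_A(H^1_{\lambda,\lambda}(K,V_A)/A\cdot c^\lambda_A)$ to $\textup{Fitt}_A((\mathfrak{X}_{\lambda,\lambda}^A)_{\textup{tor}})$ you need the locally restricted Euler system divisibility (the analogue of Theorem~\ref{thm:integralBFundersignedlog}, transported to the analytic side). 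You invoke the Euler system machine in Step one only to establish the rank statement, but it is equally indispensable in Step three: without it the class $c^\lambda_A$ could a priori generate a ``small'' rank-one submodule and the claimed containment would fail. In the paper this is precisely the non-trivial containment in the chain of Fitting-ideal (in)equalities, and everything else is an equality.

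The paper also routes the comparison through the intermediate modules $\mathfrak{X}^\an_{\emptyset,\lambda}$, $\mathfrak{X}^\an_{0,\lambda}$, and the Greenberg--Pottharst duality $\mathscr{D}^1\mathfrak{X}^\an_{0,\lambda}\cong\mathfrak{X}^\an_{\emptyset,\lambda}$, because the Euler system bound is naturally a statement about $\mathfrak{X}^\an_{\emptyset,\lambda}$ (via $\mathfrak{X}_{\emptyset,0}^\ac$), not directly about $(\mathfrak{X}^A_{\lambda,\lambda})_{\textup{tor}}$. A local error ideal $\mathscr{J}_\fp$---the characteristic ideal of $\han^1(K_\fp,F_\lambda)/\res_\fp(H^1_{\lambda,\lambda}(K,\TT^\dagger))$---appears on both sides of the chain and has to cancel at the end; nothing in your sketch anticipates this cancellation or the need to verify that $\mathscr{J}_\fp\neq 0$. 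Finally, a smaller but real imprecision: you describe $\mathfrak{L}^\prime_{\lambda,\lambda,\ac}$ as the output of pairing $c^\lambda_\ac$ against the dual class, whereas in the paper it is defined as $\mathcal{L}_{\lambda,A}(\partial_\fp c^\lambda_A)$ with $\partial_\fp c^\lambda_A$ the cyclotomic ``derived'' local class; the equality $\mathfrak{L}_{\lambda,\lambda}=\frac{\gamma_\cyc-1}{\log_p\chi_0(\gamma_\cyc)}\mathfrak{L}^\prime_{\lambda,\lambda,\ac}\ \mathrm{mod}\ (\gamma_\cyc-1)^2$ then requires a separate argument relating this to the genuine partial derivative.
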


%See Remark~\ref{rem:anticycloBFnonvanishingindefinite} concerning our assumption that $\res_{\fp^c}(c_\ac^\bullet)\neq 0$.
\begin{remark}
\label{rem:equality}
As was the case for Theorems~\ref{thm:leopoldtconjecturesforTfchiintro1}, \ref{thm:leopoldtconjecturesforTfchiintro2} and \ref{thm:leopoldtconjecturesforTfchiintro3}, it should be possible to upgrade the containment in Theorem~\ref{thm:mainconjindefiinteintro} to an equality (up to a power $p$) using Wan's work on the non-ordinary Iwasawa theory of Rankin-Selberg products in \cite{wan16}. When $k=2$ and $a_p(f)=0$, a $\LL[1/p]$-adic variant of Theorem~\ref{thm:mainconjindefiinteintro} with $\chi=\mathds{1}$ $($which is enhanced via Wan's work to an equality$)$ has been announced in \cite{castellawan1607}. 
%Furthermore, these two ideals are the same if and only if we have equality in Theorem~\ref{thm:integralBFundersignedlog} and $\mathscr{D}_{\flat,\lambda}=\calHac$.
\end{remark}
{\begin{remark}
\label{rem:CCSS}
In a recent preprint \cite{CCSS}, Castella-\c{C}iperiani-Skinner-Sprung announced a proof that the inclusion in Theorem~\ref{thm:leopoldtconjecturesforTfchiintro1} is in fact an equality in the case $k=2$ and $\chi=\mathbb{1}$. %Note that  our factorization result  $($Theorem~\ref{thm:decompintro} above$)$ for $\frak{n}=1$ indeed applies in this setting without the requirement that $\chi$ be $p$-distinguished $($this is utilized in \cite{CCSS} in this form$)$. The $p$-distinguished hypothesis is only needed in a patching argument $($more precisely, for the argument in Proposition~\ref{prop:LLZ2Cor526enhanced} to go through$)$, that allows us to promote the Beilinson-Flach classes of \cite{KLZ2,LZ1} to an Euler system for $f_{/K}\otimes\chi$ $($where $f_{/K}$ stands for the base change of $f$ to $K$$)$; but not for the construction of the pair $\{\BF_1^\#,\BF_1^{\flat}\}$. We hope that Theorem~\ref{thm:leopoldtconjecturesforTfchiintro1} will likewise provide one of the necessary ingredients for higher-weight generalizations of the work of Castella-\c{C}iperiani-Skinner-Sprung.

%One missing key component is the signed-splitting for $p$-stabilized $($generalized$)$ Heegner cycles. These cycles are currently being studied by Kobayashi and we hope to study their signed-splitting in the future. The final key input is the divisibility in the direction opposite to what Euler system machinery provides. This seems much harder, as Wan's method  relies in a crucial way on a result of Ribet and Takahashi in order to prove the vanishing of the $\mu$-invariant of the Bertolini-Darmon-Prasanna $p$-adic $L$-function, which is available only in weight two $($we thank Ming-Lun Hsieh for pointing out this technical difficulty$)$.
\end{remark}}

\begin{remark}
{When dealing with Rankin-Selberg convolutions $f_{/K}\otimes\chi$ $($rather than the base change $f_{/K}$ itself$)$ with $p$-distinguished $\chi$, note that Theorem~\ref{thm:decompintro} above furnishes us with a full-fledged Euler system in this set up $($which does not only extend in the cyclotomic or the anticyclotomic tower, but in fact over the $\ZZ_p^2$-extension of $K)$. This is one of the main advantages that the use of Beilinson-Flach Euler systems present.}

{In particular, non-ordinary versions of the main results of Castella-Hsieh in \cite{castellahsieh} towards Bloch-Kato conjectures for Rankin-Selberg motives $f_{/K}\otimes\psi$ (where $\psi$ is a locally algebraic anticyclotomic Hecke character of $K$) can be deduced from the results  of the current article towards the anticyclotomic main conjectures, so long as the mod $p$ reduction $\overline{\psi}$ of $\psi$ verifies $\overline\psi(\p)\neq \pm 1$. $($We shall address the remaining few cases for the choice of $\overline{\psi}$ and some instances when $p$ is inert in a forthcoming article.$)$}
\end{remark}
\subsection*{Acknowledgments}We thank Laurent Berger, Francesc Castella, Henri Darmon, Ming-Lun Hsieh, David Loeffler and Tadashi Ochiai for interesting discussions during the preparation of this article. We are also grateful to the anonymous referee for valuable suggestions on an older version of this article, which helped us greatly to improve our paper. 
\subsection*{Notation and hypotheses}
Let $\Sigma$ denote the set of all places of $K$ that divide $pN_f\mathfrak{f}\infty$. For each prime $\nu$ of $K$, we fix a decomposition group at $\nu$ which we identify with $G_{\nu}:=\mathrm{Gal} (\overline K_\nu/K_\nu),$ and denote by $\mathscr{I}_{\nu}$ the inertia subgroup of 
$G_{\nu}$ and by $\textup{Fr}_\nu \in G_\nu/\mathscr{I}_\nu$ the geometric Frobenius. 

Given a complete local regular ring $R$ and a finitely generated torsion $R$-module $M$, we write $\Char_R(M)=\prod_{\mathfrak{P}}\mathfrak{P}^{\textup{length}_{\mathfrak{P}}(M_\mathfrak{P})}$ (where the product runs through height-one primes of $R$) to denote the characteristic ideal of $M$. We will also work with its analytic version, which we define and study in Section~\ref{subsec:refinedindefinitemainconj}. 
When we do not specify what $?=\{\ac,\cyc,\emptyset\}$ is and when there is no danger of confusion, we shall always write $\Char(X)$ in place of writing $\Char_{\LL_{\cO_L}(\Gamma^?)}(X)$ for a ${\LL_{\cO_L}(\Gamma^?)}$-module $X$.
If $F$ is a $p$-adic field and $F'$ is a $p$-adic Lie extension of $F$, then we write $H^i_{\Iw}(F',\bullet)$ for the projective limit $\varprojlim H^i(F'',\bullet)$, where $F''$ runs through all the finite subextension of $F'/F$ and the connecting maps are the corestriction maps. %. For $\fq\in\{\fp,\fp^c\}$, we write $K_{\fq,\infty}$ for the unique $\Zp^2$-extension of $K_\fq$. We remark that there is a canonical isomorphism $H^1(K_{\fq},T_{f,\chi})\stackrel{\sim}{\lra}H^1_{\Iw}(K_{\fq,\infty},\TT_{f,\chi})$, c.f. \cite[Proposition 8.4.4.2]{nekovar06}. {Do we assume that there is only one prime above $\q$? See 2.4 below.}

%Given $x \in {\tt{k}}_L$ (the residue field of $L$) and $\fq\in\{\fp,\fp^c\}$, we write $\mu_x^{(\mathfrak{q})}$ for the unramified character of $G_{K_{\fq}}$ sending the geometric Frobenius to $x$. We also denote the mod $p$ Teichm\"uller character by $\omega$. 
Let $\epsilon(f/K,\chi)$ be the global root number in the functional equation of the Rankin-Selberg $L$-function $L(f/K,\chi,s)$. We shall consider the following two conditions. 
\begin{equation}\tag{Sign $-$}
\varepsilon_f\chi\vert_{\mathbb{A}_{\QQ}^\times}=\mathbbm{1} \,\,\hbox{ and }\,\, \epsilon(f/K,\chi)=-1\,.
\end{equation}
\begin{equation}\tag{Sign $+$}
\varepsilon_f\chi\vert_{\mathbb{A}_{\QQ}^\times}=\mathbbm{1} \,\,\hbox{ and }\,\,\epsilon(f/K,s)=+1\,.
\end{equation}
 Throughout this article, we shall assume that the following four conditions hold.
\\\\
\textup{\textbf{(H.Reg.)}} $\alpha\neq \beta$.
\\\\
\textup{\textbf{(H.SS.)}} The order of the ray class group of $K$ modulo $\mathfrak{f}$ is prime to $p$.
\\\\
\textup{\textbf{(H.Dist.)}} {$\chi(\p) \neq\chi(\p^c)$}.
\\\\
%\textup{\textbf{(H.nA.)}} For $\mathfrak{q}=\fp,\fp^c$ we have $H^0(K_{\mathfrak{q}},\overline{T}_{f,\chi})=H^0(K_{\mathfrak{q}},\overline{T}_{\overline{f},\chi^{-1}})=0$ for the residual Galois representations.
The following hypothesis will also be in force from Section~\ref{sec:boundsonselmergroups} onwards:
\\\\
\textup{\textbf{(H.Im.)}} $\rho_f(G_K)$ contains a conjugate of $\textup{SL}_2(\ZZ_p)$.
\begin{remark}
\label{rem:hnaimpliesfreeness}
As a consequence of \textup{\textbf{(H.Im.)}}, it follows that the $\LL_{\cO_L}(\Gamma^?)$-module $H^1(K,\TT_{f,\chi}^?)$ is torsion-free. Also, the assumption that $p>k$  ensures that $H^2(K_{\mathfrak{q}},\TT_{f,\chi}^?)=0$ $($thanks to a theorem of Fontaine and Edixhoven~\cite{edixhoven92}, where they verified that the $p$-local residual representation in our set up is absolutely irreducible$)$ and the $\LL_{\cO_L}(\Gamma^?)$-module $H^1(K_{\mathfrak{q}},\TT_{f,\chi}^?)$ is free for $\mathfrak{q}=\fp,\fp^c$ $($c.f. \cite[Remark 2.8]{kbbESrankr}, which in turn relies on   \cite[Propositions 4.2.9 and 5.2.4]{nekovar06}$)$. 
\end{remark}
%{Can we remove (H.nA) altogether to save space?}
\begin{remark}
Let $\mathscr{H}_n$ denote the ring class field of $K$ corresponding to the order $\ZZ+p^n\mathfrak{o}_K$ (in particular, $\mathscr{H}_0=H_K$ is the Hilbert class field of $K$) and set $\mathscr{H}_\infty=\cup_n \mathscr{H}_n$. The anticyclotomic $\ZZ_p$-extension $K^\ac/K$ is the unique $\ZZ_p$-extension contained in $\mathscr{H}_\infty$. Since $p$ is coprime to the class number of $K$ by our assumptions, both $\p$ and $\p^c$ are totally ramified in $K^\ac/K$. %http://www.ams.org/journals/mcom/2007-76-260/S0025-5718-07-01964-3/S0025-5718-07-01964-3.pdf Page 2131, final paragraph.
For $\mathfrak{q}=\fp, \fp^c$, we denote the unique prime of $K^\ac$ over $\mathfrak{q}$ also by $\mathfrak{q}$. The extension $K^\ac_\mathfrak{q}/K_\mathfrak{q}$ $($being a ramified abelian extension of $\QQ_p$$)$ is a $\ZZ_p$-extension obtained from a Lubin-Tate formal group over $\ZZ_p$ of height one. We record these well known facts here since we will rely on them in a crucial way in Section~\ref{subsec:refinedindefinitemainconj} while studying the indefinite anticyclotomic Iwasawa theory over $K$.
\end{remark}

  \tableofcontents
\section{Logarithm matrix and modular forms}\label{S:coleman}
 
\subsection{Properties of $p$-adic power series} 
Recall from the introduction that $L/\Qp$ is a finite extension. Given any power series $F\in L[[X]]$ and $0<\rho<1$, we define the sup-norm $||F||_\rho=\sup_{|z|_p\le \rho}|f(z)|_p$. For any real number $r\ge 0$, we write  $\cH_{L,r}$ for the set of power series $F$ satisfying $\sup_{t} p^{-tr}||F||_{\rho_t}<\infty$,
where $\rho_t=p^{-1/p^{t-1}(p-1)}$. It is common to write $F=O(\log_p^r)$ when $F$ satisfies this condition. Similarly, we write  $F=o(\log_p^r)$ if ${\displaystyle \lim_{t\ra \infty} p^{-tr}||F||_{\rho_t}=0}$. We write $\cH_L$ for the union $\cup_{r\ge 0}\cH_{L,r}$. We say that a sequence of elements in $\cH_L$ converges when it does so under  the topology of pointwise convergence (the weak topology). Note that $\cH_{L,0}=\cO_L[[X]]\otimes_{\cO_L}L$.

Recall the $p$-adic logarithm 
$$\log_p(1+X)=X\prod_{n\ge 1}\frac{\Phi_n(1+X)}{p} \in \mathcal{H}_{\Qp,1},$$
where $\Phi_n$ denotes the $p^n$-th cyclotomic polynomial. If $n$ is a non-negative integer, we write $\omega_n(1+X)=(1+X)^{p^n}-1$.  Fix a topological generator $u\in 1+p\Zp$. For an integer $m\ge1$, we define
\begin{align*}
\Phi_{n,m}(1+X)&=\prod_{j=0}^{m-1}\Phi_{n}(u^{-j}(1+X)-1);\\
\omega_{n,m}(1+X)&=\prod_{j=0}^{m-1}\omega_{n}(u^{-j}(1+X)-1);\\
\delta_m(1+X)&=\prod_{j=0}^{m-1}(u^{-j}(1+X)-1);\\
\log_{p,m}(1+X)&=\prod_{j=0}^{m-1}\log_{p}(u^{-j}(1+X)-1).
\end{align*}
Note that $\Phi_{n,m}, \omega_{n,m}$ and $\delta_m$ are all polynomials over $\ZZ$, whereas $\log_{p,m}\in \cH_{\Qp,m}$.

When $P$ is a polynomial over $L$, we write $||P||=\sup_{|z|_p\le 1}|P(z)|_p$. Recall the following result from \cite[Lemme~1.2.1]{perrinriou94}.
\begin{lemma}\label{lem:PRseries}
Let $P_n$ be a sequence of polynomials in $L[X]$ and $h\ge0$ an integer such that
\begin{itemize}
\item[(i)] ${\displaystyle \lim_{n\rightarrow \infty}||p^{nh}P_n||=0}$;
\item[(ii)] $P_{n+1}\equiv P_n\mod \omega_{n,h}(1+X)L[X]$. 
\end{itemize}
Then, the sequence $P_n$ converges to an element  $P_\infty \in L[[X]]$ that is $o(\log_p^h)$.
\end{lemma}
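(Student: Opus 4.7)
The plan is to construct $P_\infty$ as an infinite series by telescoping through condition (ii), and then to verify both weak convergence in $L[[X]]$ and the sharp $o(\log_p^h)$ growth. Condition (ii) lets us write, for each $k \ge 0$,
\[ P_{k+1}(X) - P_k(X) \;=\; \omega_{k,h}(1+X)\, R_k(X) \]
for some $R_k \in L[X]$, so the candidate limit is $P_\infty(X) := P_0(X) + \sum_{k \ge 0} \omega_{k,h}(1+X)\, R_k(X)$. Since $\omega_{k,h}$ is a composition of polynomials with $\ZZ$-coefficients with $\cO_L$-linear changes of variable, a direct inspection shows its Gauss norm equals $1$; multiplicativity of the Gauss norm on $L[X]$ then gives
\[ ||R_k|| \;=\; ||P_{k+1} - P_k|| \;\le\; \max\bigl(||P_{k+1}||,\,||P_k||\bigr) \;=\; o\bigl(p^{(k+1)h}\bigr) \]
by condition (i). Write $||R_k|| = \epsilon_k\, p^{(k+1)h}$ with $\epsilon_k \to 0$.

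The core technical input is a sup-norm estimate for $\omega_{k,h}$ on the disk of radius $\rho_t$ when $k \ge t$. Factoring $\omega_k(1+Y) = \prod_{\zeta \in \mu_{p^k}} (Y - (\zeta - 1))$ and sorting the zeros by their valuation (primitive $p^j$-th roots lie at $|\zeta-1|_p = \rho_j$), a direct ultrametric computation on $|Y|_p \le \rho_t$ yields
\[ ||\omega_k\bigl(u^{-j}(1+X)-1\bigr)||_{\rho_t} \;\le\; p^{-(k-t) - p/(p-1)} \qquad (k \ge t, \;\; 0 \le j \le h-1), \]
and hence $||\omega_{k,h}||_{\rho_t} \le p^{-h(k-t) - hp/(p-1)}$. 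Combining the two bounds,
\[ ||\omega_{k,h}\, R_k||_{\rho_t} \;\le\; \epsilon_k \cdot p^{\,h(t+1) - hp/(p-1)}, \]
which tends to $0$ as $k \to \infty$ for each fixed $t$. Since $L$ is non-archimedean, this gives uniform convergence of the defining series on every closed disk $|X|_p \le \rho_t$, hence weak convergence in $L[[X]]$ to a well-defined $P_\infty \in \cH_L$.

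To finish, I would verify $P_\infty = o(\log_p^h)$ by splitting $P_\infty = P_t + \sum_{k \ge t} \omega_{k,h} R_k$ and combining (i), which gives $||P_t||_{\rho_t} \le ||P_t|| = o(p^{th})$, with the tail bound $\sup_{k \ge t} ||\omega_{k,h} R_k||_{\rho_t} \le p^{h(t+1) - hp/(p-1)}\,\sup_{k \ge t} \epsilon_k$; dividing by $p^{th}$, both terms tend to $0$ as $t \to \infty$, which is exactly the $o(\log_p^h)$ condition. The main obstacle is the Newton-polygon estimate for $||\omega_k||_{\rho_t}$: one must separately treat zeros lying strictly inside, on, and outside the disk $|Y|_p \le \rho_t$, and check that the change of variables $Y = u^{-j}(1+X)-1$ does not enlarge the disk, which follows from $|u^{-j}-1|_p \le p^{-1} < \rho_t$ for $p \ge 5$. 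Once that estimate is pinned down, convergence and the growth bound assemble mechanically from the telescoping.
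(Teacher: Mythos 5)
Your argument is correct and is essentially Perrin-Riou's original proof, which the paper cites rather than reproves (the same telescoping plus the estimate $||\omega_{m,h}||_{\rho_n}\le p^{-(m-n)h}||\omega_{n,h}||_{\rho_n}$ reappears in the paper's proof of the subsequent Lemma~\ref{lem:generalPR}). The only difference is that you make the Newton-polygon bookkeeping fully explicit, showing $||\omega_k(u^{-j}(1+X)-1)||_{\rho_t}=p^{-(k-t)-p/(p-1)}$ for $k\ge t$ by grouping the zeros $\zeta-1$ according to the order of $\zeta$ and using $\rho_t=p^{-p^{1-t}/(p-1)}$; this gives the same conclusion, and your observation that $|u^{-j}-1|_p\le p^{-1}<\rho_t$ (so the affine substitution preserves the disk) is the one point worth spelling out, which you did.
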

We prove a slightly more general version of this result.
\begin{lemma}\label{lem:generalPR}
Let $P_n$ be a sequence of polynomials in $L[X]$ and $h\ge0$ an integer satisfying the same conditions in Lemma~\ref{lem:PRseries}. If there exists a real number $0\le r<h$ such that $\sup||p^{rn}P_n||<\infty$, then the limit $P_\infty$ of the sequence $P_n$ is $O(\log_p^r)$.
\end{lemma}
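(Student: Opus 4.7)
The plan is to follow the structure of Perrin-Riou's proof of Lemma~\ref{lem:PRseries}, replacing the qualitative bound coming from condition (i) with the quantitative bound $\|P_n\| = O(p^{rn})$ afforded by the new hypothesis. First, the series
\[
P_\infty - P_t = \sum_{n\geq t}(P_{n+1} - P_n)
\]
converges uniformly on the disk $|z|_p \leq \rho_t$ by (ii) and Lemma~\ref{lem:PRseries}. Condition (ii) gives a factorization $P_{n+1} - P_n = \omega_{n,h}(1+X)\,Q_n(X)$ in $L[X]$; since $\omega_{n,h}(1+X)$ has leading term $X^{hp^n}$ with unit coefficient, its Gauss norm equals $1$, so by multiplicativity of the Gauss norm on $L[X]$ one has $\|Q_n\| = \|P_{n+1} - P_n\| \leq \max(\|P_{n+1}\|,\|P_n\|) = O(p^{rn})$, where the last estimate uses the hypothesis $\sup_n \|p^{rn}P_n\| < \infty$.

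The main analytic input is the estimate
\[
\|\omega_{n,h}(1+X)\|_{\rho_t} = O\bigl(p^{h(t-n)}\bigr) \qquad (n \geq t),
\]
which follows from the factorization $\omega_n(1+X) = X\prod_{k=1}^n \Phi_k(1+X)$ on observing that $\|\Phi_k(1+X)\|_{\rho_t} \leq \rho_t^{p^{k-1}(p-1)}$ for $k \leq t$ and $\|\Phi_k(1+X)\|_{\rho_t} = |p|_p$ for $k > t$ (in the latter case because the roots of $\Phi_k(1+X)$ have $|\zeta-1|_p = \rho_k > \rho_t$, so $|1+z-\zeta|_p = \rho_k$ uniformly on the disk). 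Using $\|Q_n\|_{\rho_t} \leq \|Q_n\|$, this yields
\[
\|P_{n+1}-P_n\|_{\rho_t} \leq \|\omega_{n,h}(1+X)\|_{\rho_t}\cdot \|Q_n\|_{\rho_t} = O\bigl(p^{rt}\cdot p^{(n-t)(r-h)}\bigr).
\]
Since $r < h$, the factor $p^{(n-t)(r-h)}$ decays geometrically in $n-t$, so the non-Archimedean triangle inequality gives $\|P_\infty - P_t\|_{\rho_t} \leq \sup_{n\geq t}\|P_{n+1}-P_n\|_{\rho_t} = O(p^{rt})$. Combining with the trivial bound $\|P_t\|_{\rho_t} \leq \|P_t\| = O(p^{rt})$ yields $\|P_\infty\|_{\rho_t} = O(p^{rt})$, and hence $\sup_t p^{-tr}\|P_\infty\|_{\rho_t} < \infty$, which is precisely $P_\infty = O(\log_p^r)$.

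The core step is the sup-norm estimate on $\omega_{n,h}(1+X)$ on the closed disk of radius $\rho_t$; once this (standard, and essentially already present in the proof of Lemma~\ref{lem:PRseries}) estimate is in place, the whole argument is a straightforward strengthening of Perrin-Riou's, in which the hypothesis $r < h$ is exactly what is needed to guarantee that the geometric decay of the summands $\|P_{n+1}-P_n\|_{\rho_t}$ outpaces the permitted growth of the individual norms $\|P_n\|$.
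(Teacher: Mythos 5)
Your proof is correct and takes essentially the same route as the paper: both follow Perrin-Riou's calculation, feed in the quantitative hypothesis $\sup\|p^{rn}P_n\|<\infty$ to obtain $\|Q_n\|=O(p^{rn})$ via Gauss-norm multiplicativity, and then bound $\|P_{n+1}-P_n\|_{\rho_t}$ using the estimate $\|\omega_{n,h}\|_{\rho_t}\le p^{h(t-n)}$ together with $r<h$ to control the sup over $n\ge t$. The only difference is presentational — you spell out the cyclotomic factorization of $\omega_{n,h}(1+X)$ to justify that key estimate, where the paper simply reuses Perrin-Riou's intermediate computations.
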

\begin{proof}Our proof is entirely based on the calculations carried out in Perrin-Riou's proof of Lemma~\ref{lem:PRseries}. Let $R_n$ be the polynomial given by 
\[
P_{n+1}-P_n=\omega_{n,h}R_n
\]
and let $C$ be  a fixed constant such that
\begin{equation}\label{eq:PR1}
||P_{n}||\le Cp^{rn}
\end{equation}
for all $n$. Since $\omega_{n,h}$ is monic, we have
\[
||R_n||\le \sup(||P_n||,||P_{n+1}||)\le Cp^{rn}.
\]
Consequently,
\begin{equation}\label{eq:PR2}
||P_{m+1}-P_m||_{\rho_n}\le C p^{rm}||\omega_{m,h}||_{\rho_n}\le Cp^{rm-(m-n)h}||\omega_{n,h}||_{\rho_n}<Cp^{rn}
\end{equation}
for all $m\ge n$, where the last inequality follows from our assumption that $r<h$. Therefore, if $P_\infty$ is the limit of the sequence $P_n$, we deduce from \eqref{eq:PR1} and \eqref{eq:PR2} that  $||P_\infty||_{\rho_n}\le Cp^{rn}$, as required.
\end{proof}
We shall also need the following generalization of \cite[Lemme 1.2.2]{perrinriou94}.
\begin{lemma}\label{lem:PRtwists}
Let $h\ge 0$ be an integer and $0\le r <h$. For $0\le j\le h-1$, let $Q_{n,j}$ be a sequence of polynomials in $L[X]$ satisfying
\begin{itemize}
\item[(a)] $\sup ||p^{rn}Q_{n,j}||<\infty$;
\item[(b)] $Q_{n+1,j}\equiv Q_{n,j}\mod \omega_n(1+X)L[X]$
\end{itemize}
for all $n$. Furthermore, suppose that
\[
\sup\left|\left| p^{(r-j)n} \sum_{i=0}^j(-1)^i\binom{j}{i}Q_{n,i}(u^{-i}(1+X)-1)\right|\right|<\infty
\]
for all $0\le j\le h-1$. Let $P_n\in L[X]$  the unique polynomial of degree $<hp^n$ such that
\[
P_n\equiv Q_{n,j}(u^{-j}(1+X)-1)\mod \omega_n(u^{-j}(1+X)-1)L[X]
\]
for all $0\le j\le h-1$ (such $P_n$ exists thanks to Chinese remainder theorem). Then, the sequence $P_n$ satisfies the hypotheses of Lemma~\ref{lem:generalPR}.
\end{lemma}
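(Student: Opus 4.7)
The plan is to verify the three hypotheses required by Lemma~\ref{lem:generalPR}: the congruence $P_{n+1}\equiv P_n\pmod{\omega_{n,h}(1+X)}$, the uniform bound $\sup_n\|p^{rn}P_n\|<\infty$, and the vanishing $\|p^{nh}P_n\|\to 0$. The third is automatic from the second, since $r<h$ gives $\|p^{nh}P_n\|\le Cp^{(r-h)n}\to 0$, so the work splits into checking the congruence and establishing the uniform bound.

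The congruence is the easier piece. Substituting $X\mapsto u^{-j}(1+X)-1$ in hypothesis (b) and using $\omega_n\mid\omega_{n+1}$ will give $P_{n+1}\equiv P_n\pmod{\omega_n(u^{-j}(1+X)-1)}$ for each $0\le j\le h-1$. Because $u$ has infinite order in $\Zp^{\times}$, no nontrivial power $u^k$ with $|k|<h$ lies in $\mu_{p^n}$, so these $h$ polynomials have pairwise disjoint root sets; they are therefore coprime and their product $\omega_{n,h}(1+X)$ divides $P_{n+1}-P_n$.

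For the uniform bound, the strategy will be to expand $P_n$ in Newton form
\[
P_n(X)=\sum_{j=0}^{h-1}\omega_{n,j}(1+X)\,F_{n,j}(X),\qquad \deg F_{n,j}<p^n.
\]
Since $\omega_{n,j}(1+X)$ vanishes at $X=u^{i}\zeta-1$ (for $\zeta\in\mu_{p^n}$) precisely when $i<j$, the interpolation conditions determine the $F_{n,j}$ recursively through a triangular system. A careful induction on $j$ will identify the forced values $F_{n,j}(u^j\zeta-1)$, after multiplication by $\prod_{i<j}\omega_n(u^{j-i}\zeta-1)$, with the values at $X=u^j\zeta-1$ of the $j$-th finite difference
\[
D_j(X)=\sum_{i=0}^{j}(-1)^{i}\binom{j}{i}Q_{n,i}(u^{-i}(1+X)-1)
\]
appearing in hypothesis (c). Since $|u^k\zeta-1|_p\ge|k|_p\,p^{-1}$ is bounded below uniformly in $n$ for $1\le k<h$, the loss from dividing by these factors stays inside a constant. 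Combining the resulting bound $\|D_j\|\le Cp^{(r-j)n}$ with a standard Lagrange interpolation estimate for polynomials of degree $<p^n$ on the $p^n$-th roots of unity will yield $\|F_{n,j}\|\le C'p^{rn}$, and hence $\|P_n\|\le C''p^{rn}$ since $\|\omega_{n,j}\|\le 1$ on the closed unit disk.

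The main obstacle will be the combinatorial identification of the forced values of $F_{n,j}$ with the $j$-th finite difference; this is precisely the mechanism that lets hypothesis (c) feed the uniform bound, and carrying it out requires tracking the triangular CRT recursion through standard identities for iterated finite differences in one variable.
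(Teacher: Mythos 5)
Your overall architecture is right and matches what the paper does: verify the congruence $P_{n+1}\equiv P_n\bmod\omega_{n,h}(1+X)$ (which is immediate from (b) plus coprimality of the $\omega_n(u^{-j}(1+X)-1)$) and then establish the uniform bound $\sup_n\|p^{rn}P_n\|<\infty$. The paper simply attributes the key estimate $\|p^{rn}P_n\|\le C\sup_j\|p^{(r-j)n}\delta_j\|$ to the proof of \cite[Lemme 1.2.2]{perrinriou94}; you try to reconstruct it, which is the right instinct, and the Newton-form decomposition with coefficients tied to the finite differences $D_j$ is indeed the mechanism. But your $p$-power bookkeeping is off in two compensating places, and this is where the actual content of the lemma lives.

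First, you claim the loss comes from dividing by factors of the form $u^k\zeta-1$, which you correctly observe are bounded below uniformly. But that is not what divides $F_{n,j}$. In the Newton recursion $\omega_{n,j}(u^j\zeta)F_{n,j}(u^j\zeta-1)=Q_{n,j}(\zeta-1)-\sum_{i<j}\omega_{n,i}(u^j\zeta)F_{n,i}(u^j\zeta-1)$, the divisor is
\[
\omega_{n,j}(u^j\zeta)=\prod_{s=0}^{j-1}\bigl((u^{j-s}\zeta)^{p^n}-1\bigr)=\prod_{m=1}^{j}\bigl(u^{mp^n}-1\bigr),
\]
a scalar (independent of $\zeta$) whose absolute value is $\approx p^{-j(n+1)}$. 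This goes to zero with $n$; the loss from inverting it is $\approx p^{jn}$, not $O(1)$. That loss is exactly what the weight $p^{(r-j)n}$ in hypothesis (c) is designed to cancel: $p^{jn}\cdot\|D_j\|\le p^{jn}\cdot Cp^{(r-j)n}=Cp^{rn}$. If you bound the divisor by a constant, your exponent accounting won't match, and you will not see why the hypothesis has precisely that $j$-dependence.

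Second, a naive Lagrange interpolation estimate for a degree-$<p^n$ polynomial from its values at $p^n$-th roots of unity costs a genuine factor $p^n$ (test on $\omega_n(X)/X$: its values at $\zeta-1$ are $0$ or $p^n$, all of size $\le p^{-n}$, yet $\|\omega_n(X)/X\|=1$). Paying that $p^n$ on top of the $p^{jn}$ from the divisor overshoots the target. The way to avoid it — and this is the point of Perrin-Riou's argument — is to observe that the forced values $F_{n,j}(u^j\zeta-1)$ are the values of an explicit polynomial (namely $D_j(X)$ divided by the scalar $\omega_{n,j}(u^j\zeta)$, plus correction terms coming from the earlier reductions $Q_{n,i}\mapsto F_{n,i}$), evaluated at the nodes. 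Hence $F_{n,j}$ is the remainder of Euclidean division of that polynomial by the monic degree-$p^n$ node polynomial $(1+X)^{p^n}-u^{jp^n}$, and Euclidean division by a monic polynomial of Gauss norm one does not increase the Gauss norm. That yields $\|F_{n,j}\|\le Cp^{rn}$ with no extraneous $p^n$. Also, the identity you state (forced values $=$ values of $D_j$) is exact only when all $Q_{n,i}$ have degree $<p^n$; in general there are $\omega_n$-multiples coming from the reductions, and you must check these contribute only terms controlled by $\|\delta_i\|$ for $i<j$, which are already in the supremum. So the plan is salvageable, but as written both the identification of the small denominators and the interpolation step are misstated in ways that matter quantitatively.
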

\begin{proof}
For $0\le j\le h-1$, let
\[
\delta_j=\sum_{i=0}^j(-1)^i\binom{j}{i}Q_{n,i}(u^{-i}(1+X)-1).
\]
Then, as in the proof of  \cite[Lemme 1.2.2]{perrinriou94}, there exists a constant $C$  such that
\[
||p^{rn}P_n||\le C\sup_{0\le j\le h-1}||p^{(r-j)n}\delta_j||,
\]
which is bounded above independently of $n$ by assumption. Hence,  condition (i) in Lemma~\ref{lem:generalPR} holds. Condition (ii) follows from condition (b), hence the result.
\end{proof}
\begin{remark}\label{rk:integral}
We can in fact say a little bit more about the norm of $P_n$ in the following specific case of interest. Suppose that $||p^{rn}Q_{n,j}||\le1$ and that
\[
\left|\left| p^{(r-j)n} \sum_{i=0}^j(-1)^i\binom{j}{i}Q_{n,i}(u^{-i}(1+X)-1)\right|\right|\le 1\]
for every $n$. The proof of Lemma~\ref{lem:PRtwists} in fact tells us that $||p^{rn}P_n||\le C$, where $C$ is a constant independent of the collection of polynomials $\{Q_{n,j}:0\le j \le h-1\}$ and the integer $n$.
\end{remark}

It is in fact possible to prove the same results if we replace the coefficient field $L$ by a $p$-adic Banach space. The same proofs would go through since we did not use any properties of $L$ being a field.
Let $\Gamma_0^\cyc=\Gal(\Qp(\mu_{p^\infty})/\Qp)\cong \Gal(\QQ(\mu_{p^\infty})/\QQ)$ and let $\Delta$ be the torsion subgroup of $\Gamma_0^\cyc$. We write $\chi_0$ for the cyclotomic character on $\Gamma_0^\cyc$ and  fix a topological generator $\gamma_0$ of $\Gamma_0^\cyc/\Delta$ such that $\chi_0(\gamma_0)=u$. For the rest of the article, we shall identify $\Gamma^{\cyc}$ with $\langle \gamma_0\rangle$. We define $\cH_{L,r}(\Gamma_0^\cyc)$ to be the set of power series
\[
\sum_{n\ge0,\sigma\in\Delta} c_{n,\sigma}\cdot\sigma\cdot (\gamma_0-1)^n\in L[\Delta][[\gamma_0-1]]
\]
such that $\sum_{n\ge0}c_{n,\sigma}X^n\in \cH_{L,r}$ for all $\sigma\in \Delta$. When $r=0$, we shall simply write $\Lambda_L(\Gamma_0^\cyc)$ instead of $\cH_{L,0}(\Gamma_0^\cyc)$. The set of integral power series
\[
\sum_{n\ge0,\sigma\in\Delta} c_{n,\sigma}\cdot\sigma\cdot (\gamma_0-1)^n\in \cO[\Delta][[\gamma_0-1]]
\]
will be denoted by $\Lambda_{\cO_L}(\Gamma_0^\cyc)$. As before, we write $\cH_L(\Gamma_0^\cyc)=\cup_{r\ge0}\cH_{L,r}(\Gamma_0^\cyc)$.

For $\q\in\{\p,\p^c\}$, we define $\Gamma_\q$ to be the Galois group of the  $\Zp$-extension $K(\q^\infty):=\cup\, K(\q^n)$ of $K$, where $K(\q^n)$  denotes the maximal $p$-extension of $K$ contained in the ray class field modulo $\mathfrak{q}^n$ as in the introduction. We may define $\cH_{L,r}(\Gamma_\q)$, $\cH_L(\Gamma_\q)$, $\Lambda_L(\Gamma_\q)$ and $\Lambda_{\cO_L}(\Gamma_\q)$ similarly. We similarly define $\Lambda_{\cO_L}(\Gamma^\cyc)$, $\Lambda_{\cO_L}(\Gamma^\ac)$, $\cH_{L,r}(\Gamma^\cyc)$, $\cH_{L,r}(\Gamma^\ac)$, etc.
%On replacing $X$ by $\Gamma_0^\cyc-1$ and taking $u$ to be the image of $\Gamma_0^\cyc$ under the cyclotomic character on $\Gamma_0^\cyc$, we shall consider $\Phi_{n,m}(\Gamma_0^\cyc), \omega_{n,m}(\Gamma_0^\cyc),\delta_m(\Gamma_0^\cyc)$ and $\log_{p,m}(\Gamma_0^\cyc)$ as elements of $\cH_{L,m}(\Gamma_0^\cyc)$. 
\subsection{Wach module and logarithm matrix}\label{S:wach}
Let $f$ be a modular form as given in the introduction. We recall that  we assume that $p>k$ and that the Hecke eigenvalue $a_p(f)$ satisfies $\ord_p(a_p(f))>0$ throughout. We shall also fix an $L$-valued unramified character $\vt$ of $G_{\Qp}$ of finite order and write $T=R_f^*\otimes \vt$.
Let $\NN(T)$ and $\Dcris(T)$ be the Wach module and the Dieudonn\'e module of $T$ respectively (c.f., \cite{berger04}).
 We write $n_1,n_2$ and $v_1$, $v_2$ for the  $\cO_L\otimes\AQp$- and $\cO_L$-bases of $\NN(T)$ and $\Dcris(T)$, which are chosen as the twists of the bases of $\NN(R_f^*)$ and $\Dcris(R_f^*)$ given in \cite[\S3.1]{LLZ3} by the canonical bases of $\NN(\vt)$ and $\Dcris(\vt)$ given in \cite[Appendice A]{berger04}. Here, $\AQp=\Zp[[\pi]]$ and $\pi$ is equipped with actions by $\vp:\pi\mapsto(1+\pi)^p-1$  and $\sigma:\pi\mapsto(1+\pi)^{\chi_0(\sigma)}-1$ for $\sigma\in\Gamma_0^\cyc$. In the rest of this section $q$ denotes $\vp(\pi)/\pi$.
 
Let $c=\vt({\rm Fr}_p^{-1})\in \cO_L^\times$ and let $A_\vp$ and $P$  be the matrices of $\vp$ with respect to these two bases. 
  Then  
 \begin{equation}  A_\vp=c\cdot\begin{pmatrix}
0&\frac{-1}{\epsilon_f(p)p^{k-1}}\\
1&\frac{a_p(f)}{\epsilon_f(p)p^{k-1}}
\end{pmatrix}\quad\text{and}\quad\label{eq:formulaP}
P=c\cdot\begin{pmatrix}
0&\frac{-1}{\epsilon_f(p)q^{k-1}}\\
\delta^{k-1}&\frac{a_p(f)}{\epsilon_f(p)q^{k-1}}
\end{pmatrix}
\end{equation}
for some unit $\delta\in(\AQp)^\times$ by \cite[Proposition~V.2.3]{berger04}. 

We recall from \cite[Definition~3.2]{LLZ3} that there exists a logarithm matrix $\Mlog \in \textup{Mat}_{2\times2}\left(\cH_{L,k-1}(\Gamma^\cyc)\right)$ which is given by
\[
\Mlog:=\fM^{-1}((1+\pi)A_\vp M),
\]
where $M$ is the change of basis matrix
\[
\begin{pmatrix}
n_1& n_2
\end{pmatrix}
=\begin{pmatrix}
v_1& v_2
\end{pmatrix}M
\]
and $\fM$ is the Mellin transform. 
{\begin{remark}\label{rk:one}Note that  $A_\vp$ is defined over $L_0$ and $P$ is defined over $\cO_{L_0}\otimes \AQp[\frac{1}{q}]$. Thus, the matrix $\Mlog$ is in fact defined over $\cH_{L_0,k-1}(\Gamma^\cyc)$. \end{remark}}
\begin{lemma}\label{lem:Cn}
For $n\ge1$, the matrix $\Mlog$ is congruent to
$A_\vp^{n+1}\cdot C_n\mod\omega_{n,k-1}(\gamma_0)\cH_{L,k-1}(\Gamma^\cyc)$
for some $C_n\in \textup{Mat}_{2\times2}\left(\cO_L[\gamma_0-1]\right)$, whose entries are all polynomials of degree $<(k-1)p^n$. Furthermore, both entries on the second row of $C_n$ are divisible by $\Phi_{n,k-1}(\gamma_0)$.
\end{lemma}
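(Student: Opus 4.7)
The plan is to combine the $\vp$-recursion on the Wach-module basis with the $\pi$-adic analysis of the Mellin transform, and then to track the extra $q^{k-1}$-factors coming from $P^{-1}$.

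First, I would iterate the Frobenius recursion to extract $A_\vp^{n+1}$. From $\vp(N)=N\cdot P$, $\vp(V)=V\cdot A_\vp$, and the change-of-basis $N=VM$, one deduces the matrix identity $\vp(M)=A_\vp^{-1}MP$, equivalently $M=A_\vp\,\vp(M)\,P^{-1}$. Since $A_\vp$ has entries in $L$ it is $\vp$-invariant, so an easy induction on $n$ yields, for every $n\ge 1$,
$$M \;=\; A_\vp^{\,n}\,\vp^n(M)\,\vp^{n-1}(P^{-1})\cdots \vp(P^{-1})\,P^{-1}.$$
Multiplying on the left by $(1+\pi)A_\vp$, which commutes with $A_\vp^n$, and applying $\fM^{-1}$ gives
$$\Mlog \;=\; A_\vp^{\,n+1}\cdot \fM^{-1}(D_n), \qquad D_n := (1+\pi)\,\vp^n(M)\,\vp^{n-1}(P^{-1})\cdots P^{-1}.$$
The lemma is then reduced to producing a polynomial matrix $C_n$ with $\fM^{-1}(D_n)\equiv C_n\pmod{\omega_{n,k-1}(\gamma_0)}$ and the stated divisibility on the second row.

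Second, I would check integrality and the degree bound. From \eqref{eq:formulaP} one computes $P^{-1}=\tfrac{1}{c}\left(\begin{smallmatrix}a_p(f)\,\delta^{-(k-1)} & \delta^{-(k-1)} \\ -\varepsilon_f(p)\,q^{k-1} & 0\end{smallmatrix}\right)$, whose entries lie in $\cO_L\otimes\AQp$ because $\delta\in(\AQp)^\times$ and $c\in\cO_L^\times$; and $\vp$ preserves $\AQp$. Together with $M\in\mathrm{Mat}_{2\times 2}(\cO_L\otimes\AQp)$ this shows $D_n\in(1+\pi)\cdot\mathrm{Mat}_{2\times 2}(\cO_L\otimes\AQp)$. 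Since $\fM^{-1}$ restricts to an isomorphism $(1+\pi)\cdot(\cO_L\otimes\AQp)\xrightarrow{\sim}\LL_{\cO_L}(\Gamma^\cyc)$, the matrix $\fM^{-1}(D_n)$ is integral. Now $\omega_{n,k-1}(\gamma_0)$ is a monic polynomial of degree $(k-1)p^n$ in $\gamma_0-1$, so Euclidean division produces a unique representative $C_n\in\mathrm{Mat}_{2\times 2}(\cO_L[\gamma_0-1])$ of degree $<(k-1)p^n$. This settles the first assertion of the lemma.

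Third, I would verify the divisibility of the second row of $C_n$ by $\Phi_{n,k-1}(\gamma_0)$. The second row of $P^{-1}$ is $\tfrac{1}{c}(-\varepsilon_f(p)\,q^{k-1},0)$; a direct induction on $n$ then shows that the second row of $Q_n:=\vp^{n-1}(P^{-1})\cdots P^{-1}$ factors as $\vp^{n-1}(q)^{k-1}\cdot(\text{row in }\cO_L\otimes\AQp)$. Under Mellin,
$$\vp^{n-1}(q) \;=\;\frac{\vp^n(\pi)}{\vp^{n-1}(\pi)}\;\longleftrightarrow\;\frac{\gamma_0^{p^n}-1}{\gamma_0^{p^{n-1}}-1}\;=\;\Phi_n(\gamma_0),$$
so the second row of $Q_n$ carries an overall factor of $\Phi_n(\gamma_0)^{k-1}$. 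Upgrading this to the twisted product $\Phi_{n,k-1}(\gamma_0)=\prod_{j=0}^{k-2}\Phi_n(u^{-j}\gamma_0)$ is accomplished by tracking the extra $\Gamma$-twists coming from $(1+\pi)\,\vp^n(M)$: the $\Fil^{k-1}$-structure on the Wach module $\NN(T)$ contributes a factor of $\pi^{k-1}$ in the second column of $M$ (up to an $(\AQp)^\times$-unit), and the $\Gamma$-action on $\pi^{k-1}$ is congruent to $\chi_0^{k-1}$ modulo higher-order corrections. Translating this through $\fM^{-1}$ and reducing modulo $\omega_{n,k-1}(\gamma_0)$ precisely converts $\Phi_n(\gamma_0)^{k-1}$ into $\prod_{j=0}^{k-2}\Phi_n(u^{-j}\gamma_0)=\Phi_{n,k-1}(\gamma_0)$.

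\emph{Main obstacle.} Steps 1 and 2 are essentially formal, depending only on the $\vp$-recursion, the shape of $P^{-1}$, and the monicity of $\omega_{n,k-1}$. The subtle step is the third, where one must correctly identify the $k-1$ Tate twists $u^{-j}\gamma_0$ (appearing in $\Phi_{n,k-1}$) rather than a mere $(k-1)$-th power of $\Phi_n(\gamma_0)$. This is a careful book-keeping of how the filtration $\Fil^{k-1}\NN(T)$ and the $\Gamma$-action on $\pi^{k-1}$ interact with the $\fM^{-1}$-image of $D_n$ modulo $\omega_{n,k-1}$. It is the higher-weight non-ordinary analogue of the computations in \cite{kobayashi03,sprungfactorizationinitial,LLZ0,lei11compositio,sprung16} and is expected to proceed in the spirit of \cite[\S3]{LLZ3}.
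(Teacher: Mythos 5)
Your steps 1 and 2 are essentially sound and line up with the paper's route: iterating $M=A_\vp\,\vp(M)\,P^{-1}$, extracting $A_\vp^{n+1}$, checking integrality via the explicit form of $P^{-1}$, and taking the reduction modulo $\omega_{n,k-1}(\gamma_0)$. But the reduction you perform gives $C_n$ as the reduction of $\fM^{-1}\bigl((1+\pi)\,\vp^n(M)\,\vp^{n-1}(P^{-1})\cdots P^{-1}\bigr)$, whereas the paper (via \cite[Lemma~3.6]{LLZ3}) first rewrites the expression as a congruence to $\fM^{-1}\bigl((1+\pi)\,\vp^n(P^{-1})\cdots\vp(P^{-1})\bigr)$ modulo $\omega_{n,k-1}(\gamma_0)$, i.e.\ the $\vp^n(M)$ factor has been eliminated and the $\vp$-indices shifted up by one. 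You never establish this equivalence, and it is exactly what your step 3 needs: the second row of $\vp^n(M)\,Q_n$ is $(\text{second row of }\vp^n(M))\cdot Q_n$, a combination of \emph{both} rows of $Q_n$, so the divisibility of the second row of $Q_n$ alone buys you nothing until you control $\vp^n(M)\bmod\vp^n(\pi^{\,?})$ (equivalently, produce the paper's $M$-free product).

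Two further problems sit in step 3. First, the Mellin dictionary is off by one: $\fM(\omega_m(\gamma_0))=(1+\pi)\,\vp^{m+1}(\pi)\cdot(\mathrm{unit})$, so $\vp^m(\pi)\leftrightarrow\omega_{m-1}(\gamma_0)$ and $\vp^m(q)\leftrightarrow\Phi_m(\gamma_0)$; your displayed correspondence $\vp^{n-1}(q)\leftrightarrow\Phi_n(\gamma_0)$ is incorrect (it is $\Phi_{n-1}(\gamma_0)$). Your $\Phi_n(\gamma_0)^{k-1}$ conclusion thus lands by two compensating errors, and it is in any case the wrong target. Second, the passage from $\Phi_n(\gamma_0)^{k-1}$ to $\Phi_{n,k-1}(\gamma_0)=\prod_{j=0}^{k-2}\Phi_n(u^{-j}\gamma_0)$ is the actual content of \cite[(2.1)]{LLZ3}: it is a property of the Mellin transform itself, reflecting the way $\fM^{-1}$ intertwines multiplication by $\vp^n(q)^{k-1}$ with the twist operators $\gamma_0\mapsto u^{-j}\gamma_0$. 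Your proposed mechanism (a $\pi^{k-1}$-factor in the second \emph{column} of $M$ together with the $\Gamma$-action on $\pi^{k-1}$) is not the right explanation; the second column of $\vp^n(M)$ does not even enter the computation of the second row of $\vp^n(M)\,Q_n$. In short, the first half of the lemma is fine, but the divisibility by $\Phi_{n,k-1}(\gamma_0)$ is not proved.
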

\begin{proof}
As explained in \cite[Lemma~3.7]{LLZ3}, we have 
\[
\Mlog\equiv A_\vp^{n+1}\cdot \fM^{-1}\left((1+\pi)\vp^{n}(P^{-1})\cdots\vp(P^{-1})\right)\mod \omega_{n,k-1}(\gamma_0)\cH_{L,k-1}(\Gamma^\cyc).
\]
Therefore, we may take  $C_n$ to be the image of $\fM^{-1}\left((1+\pi)\vp^{n}(P^{-1})\cdots\vp(P^{-1})\right)$ modulo $ \omega_{n,k-1}(\gamma_0)$.
The entries of this matrix are in $\cO_L[\gamma_0-1]$ since 
\[
P^{-1}=c^{-1}\cdot\begin{pmatrix}
\frac{a_p(f)}{\delta^{k-1}}&\frac{1}{\delta^{k-1}}\\
-\epsilon_f(p)q^{k-1}&0
\end{pmatrix}
\]
has coefficients in $\cO_L\otimes\AQp$.
Note that the second row of $\vp^n(P^{-1})$ is divisible by $\vp^n(q^{k-1})$. Therefore, the second row of $C_n$ is divisible by $\Phi_{n,k-1}(\gamma_0)$ thanks to \cite[(2.1)]{LLZ3}.
\end{proof}

\begin{lemma}\label{lem:det}
The determinant of $C_n$ is, up to multiplication by a unit in $\Lambda_{\cO_L}(\Gamma^\cyc)^\times$, equal to $\frac{\omega_{n,k-1}(\gamma_0)}{\delta_{k-1}(\gamma_0)}$.
\end{lemma}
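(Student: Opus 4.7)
The plan is to compute $\det C_n$ from the congruence $\Mlog\equiv A_\vp^{n+1}C_n\pmod{\omega_{n,k-1}(\gamma_0)\cH_{L,k-1}(\Gamma^\cyc)}$ provided by Lemma~\ref{lem:Cn}. Taking determinants on both sides yields
\[
\det C_n\equiv\det(A_\vp)^{-(n+1)}\det\Mlog\pmod{\omega_{n,k-1}(\gamma_0)\cH_{L,k-1}(\Gamma^\cyc)},
\]
and the task reduces to computing $\det\Mlog$ and identifying it with $\omega_{n,k-1}/\delta_{k-1}$ modulo $\omega_{n,k-1}$, up to a unit in $\Lambda_{\cO_L}(\Gamma^\cyc)^\times$.

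For the computation of $\det\Mlog$, starting from $\Mlog=\fM^{-1}((1+\pi)A_\vp M)$ and using the compatibility $A_\vp\vp(M)=MP$ (which forces $\vp(\det M)/\det M=(p\delta/q)^{k-1}$), a standard Wach-module argument --- in analogy with \cite[Proposition 3.5]{LLZ3} --- produces $\det\Mlog=C\cdot \log_{p,k-1}(\gamma_0)/\delta_{k-1}(\gamma_0)$ for some explicit scalar $C\in L^\times$. Combining this with $\log_p(1+X)=X\prod_{j\ge 1}\Phi_j(1+X)/p$ and $\omega_n(1+X)=X\prod_{j=1}^n\Phi_j(1+X)$ gives the factorisation
\[
\frac{\log_{p,k-1}(\gamma_0)}{\delta_{k-1}(\gamma_0)}=p^{-n(k-1)}\cdot V_n\cdot\frac{\omega_{n,k-1}(\gamma_0)}{\delta_{k-1}(\gamma_0)},\qquad V_n:=\prod_{i=0}^{k-2}\prod_{j>n}\frac{\Phi_j(u^{-i}\gamma_0)}{p}\in\cH_L(\Gamma^\cyc)^\times.
\]
Substituting back and using $\det(A_\vp)=c^2/(\varepsilon_f(p)p^{k-1})$, the $p$-powers cancel exactly (the $p^{(k-1)(n+1)}$ coming from $\det(A_\vp)^{-(n+1)}$ against the $p^{-n(k-1)}$ from the $\log_{p,k-1}$ factorisation), producing
\[
\det C_n\equiv\alpha\cdot V_n\cdot\frac{\omega_{n,k-1}(\gamma_0)}{\delta_{k-1}(\gamma_0)}\pmod{\omega_{n,k-1}(\gamma_0)\cH_{L,k-1}(\Gamma^\cyc)}
\]
for a scalar $\alpha\in L^\times$ that is independent of $n$.

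The main obstacle I anticipate is upgrading the unit $\alpha\cdot V_n$ from $\cH_L(\Gamma^\cyc)^\times$ to $\Lambda_{\cO_L}(\Gamma^\cyc)^\times$. Since $\det C_n$ lives in $\cO_L[\gamma_0-1]$ (by Lemma~\ref{lem:Cn}) and $\omega_{n,k-1}/\delta_{k-1}$ is a distinguished polynomial up to a unit scalar in $\cO_L^\times$, a Weierstrass preparation argument applied to $\det C_n$ forces its ratio with $\omega_{n,k-1}/\delta_{k-1}$ to be a genuine unit power series in $\Lambda_{\cO_L}(\Gamma^\cyc)^\times$, provided the $p$-adic valuations of the leading coefficients agree. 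Carrying out this valuation bookkeeping precisely --- using the integrality of $C_n$ and the explicit cancellation displayed above --- is the principal technical task, and it is what secures that the proportionality constant is a $\Lambda_{\cO_L}$-unit and not merely an $\cH_L$-unit.
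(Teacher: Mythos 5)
The skeleton you set up---take determinants in the congruence $\Mlog\equiv A_\vp^{n+1}C_n\pmod{\omega_{n,k-1}}$ and plug in the known formula $\det\Mlog\sim\log_{p,k-1}/\delta_{k-1}$---matches the first half of the paper's proof, which deduces exactly from this that $\det(C_n)\in\tfrac{\omega_{n,k-1}}{\delta_{k-1}}\Lambda_{\cO_L}(\Gamma^\cyc)$ once one uses that $C_n$ has $\cO_L$-coefficients and $\omega_{n,k-1}/\delta_{k-1}$ is a monic polynomial. But you then correctly flag the real content of the lemma --- that the proportionality factor is a unit of $\Lambda_{\cO_L}(\Gamma^\cyc)$ and not merely of $\cH_L(\Gamma^\cyc)$, i.e.\ that it has trivial $\mu$-invariant --- and leave it as unfinished ``valuation bookkeeping.'' This is a genuine gap: an $\cH_L$-unit in $\Lambda_{\cO_L}$ can still have positive $\mu$-invariant (any $\varpi^\mu\cdot(\text{unit})$ is an $\cH_L$-unit), so Weierstrass preparation applied to $\det C_n$ alone does not give the conclusion; one needs a second, independent input to pin down the $\mu$-invariant.

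The paper supplies that second input by returning to the $\AQp$-side: from the explicit matrix $P^{-1}=c^{-1}\begin{pmatrix}a_p(f)/\delta^{k-1}&1/\delta^{k-1}\\-\varepsilon_f(p)q^{k-1}&0\end{pmatrix}$ one reads off directly that $\det\bigl(\vp^n(P^{-1})\cdots\vp(P^{-1})\bigr)=\epsilon'\bigl(\vp^n(q)\cdots\vp(q)\bigr)^{k-1}$ with $\epsilon'\in\cO_L[[\pi]]^\times$ an \emph{integral} unit, and then invokes \cite[Proposition~2.2]{LLZ3} that $\fM^{-1}$ preserves $\mu$- and $\lambda$-invariants. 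That is precisely the ingredient your argument is missing, and it sidesteps the need to estimate valuations of any normalising scalar. Relatedly, your claim that the $p$-powers ``cancel exactly'' is not quite right as stated: $\det(A_\vp)^{-(n+1)}$ contributes $p^{(k-1)(n+1)}$ against the $p^{-n(k-1)}$ from the $\log_{p,k-1}$ factorisation, leaving a residual $p^{k-1}$ that must be absorbed into the normalisation of $\det\Mlog$; you do not make that scalar explicit (and only cite it ``in analogy with'' a reference), so the claimed cancellation remains unjustified on its own terms. The paper's route via $\det\bigl(\vp^n(P^{-1})\cdots\vp(P^{-1})\bigr)$ avoids having to track these constants at all.
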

\begin{proof}
Recall from \cite[Corollary~3.2]{LLZ0.5} that up to a unit in $\Lambda_{L}(\Gamma^\cyc)^\times$, we have
\[
\det(\Mlog)\sim\frac{\log_{p,k-1}(\gamma_0)}{\delta_{k-1}(\gamma_0)}=\prod_{m\ge1}\frac{\Phi_{m,k-1}(\gamma_0)}{p^{k-1}}.
\]
For all $m>n$, \cite[Corollary~5.5]{lei11compositio} tells us that there exists some $u_{m,k}\in\Zp^\times$ such that $$\Phi_{m,k-1}(\gamma_0)\equiv p^{k-1}u_{m,k}\mod \omega_{n,k-1}(\gamma_0). $$
Therefore, Lemma~\ref{lem:Cn} implies that 
\[
\det(C_n)\equiv \epsilon\times \frac{\omega_{n,k-1}(\gamma_0)}{\delta_{k-1}(\gamma_0)}\mod \omega_{n,k-1}(\gamma_0)\Lambda_{L}(\Gamma^\cyc) 
\]
for some $\epsilon\in \Lambda_{L}(\Gamma^\cyc)^\times $.
In other words,  there exists $F\in\Lambda_{L}(\Gamma^\cyc)$ such that
\[
\det(C_n)=\epsilon\times \frac{\omega_{n,k-1}(\gamma_0)}{\delta_{k-1}(\gamma_0)}+ \omega_{n,k-1}(\gamma_0) F=\frac{\omega_{n,k-1}(\gamma_0)}{\delta_{k-1}(\gamma_0)}(\epsilon+\delta_{k-1}(\gamma_0)F).
\]
It remains to show that $\epsilon+\delta_{k-1}(\gamma_0)F\in \Lambda_{\cO_L}(\Gamma^\cyc)^\times$. Note that we already know that $\epsilon+\delta_{k-1}(\gamma_0)F\in \Lambda_{\cO_L}(\Gamma^\cyc)$ since $C_n$ is defined over $ \Lambda_{\cO_L}(\Gamma^\cyc)$. As $\epsilon\in \Lambda_L(\Gamma^\cyc)^\times$, it is contained inside $\varpi^r\Lambda_{\cO_L}(\Gamma^\cyc)^\times$ for some integer $r$. Without loss of generality, we may assume that $\epsilon=\varpi^r$. If we consider $\varpi^r+\delta_{k-1}(\gamma_0)F$ as a power series in $\gamma_0-1$, its constant term is $\varpi^r$  as $\gamma_0-1$ divides $\delta_{k-1}(\gamma_0)$. Thus, $r\ge0$. Consequently, $F$ is also integral. Furthermore, $\varpi^r+\delta_{k-1}(\gamma_0)F$ is a unit if and only if $r=0$.

Recall from the proof of Lemma~\ref{lem:Cn}  that
\[
C_n\equiv \fM^{-1}\left((1+\pi)\vp^n(P^{-1})\cdots \vp(P^{-1})\right)\mod \omega_{n,k-1}(\gamma_0).
\]
Modulo $p$, the matrix $\vp^i(P^{-1})$ is congruent, up to units, $\begin{pmatrix}
0&1\\\vp^i(q^{k-1})&0
\end{pmatrix}$. Therefore, modulo $(p,\omega_{n,k-1}(\gamma_0))$, $C_n$ is congruent to a matrix of the form 
\[
\fM^{-1}\left((1+\pi)\begin{pmatrix}
0&*\\ *&0
\end{pmatrix}\right)\quad\text{or}\quad \fM^{-1}\left((1+\pi)\begin{pmatrix}
*&0\\0&*
\end{pmatrix}\right),
\]
where $*$ represents, up to units,  a product of $\vp^i(q^{k-1})$ where $i\in[1,n ]$ and each $\vp^i(q^{k-1})$ appears exactly once in the two entries. This implies that
\[
\det(C_n)\equiv \epsilon'\frac{\omega_{n,k-1}(\gamma_0)}{\delta_{k-1}(\gamma_0)} \mod p
\]
for some $\epsilon'\in \Lambda_{\cO_L}(\Gamma^\cyc)^\times$ by \cite[Theorem~5.4]{LLZ0}. Hence,
\[
\epsilon'\equiv \omega^r+\delta_{k-1}(\gamma_0)F\mod p.
\]
This forces that $r=0$, as otherwise, we would have $\epsilon'\in (\varpi,\gamma_0-1)$, the maximal ideal of $\Lambda_{\cO_L}(\Gamma^\cyc)$, which is impossible.
\end{proof}

Observe that we have the diagonalization
\begin{equation}\label{eq:diag}
Q^{-1}\cdot A_\vp\cdot
Q=\begin{pmatrix}
\frac{c}{\alpha}&0\\
0&\frac{c}{\beta}
\end{pmatrix},
\end{equation}
where $Q=\begin{pmatrix}
\alpha&-\beta\\
-\epsilon_f(p)p^{k-1}&\epsilon_f(p)p^{k-1}
\end{pmatrix}$.

\begin{lemma}\label{lem:growth}
The entries of $Q^{-1}\Mlog$ on the first row are elements of $\cH_{L,\ord_p(\alpha)}(\Gamma^\cyc)$, whereas those on the second row are inside $\cH_{L,\ord_p(\beta)}(\Gamma^\cyc)$.
\end{lemma}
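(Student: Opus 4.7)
The plan is to estimate the entries of $Q^{-1}\Mlog$ individually using Lemma~\ref{lem:generalPR} with $h = k-1$, taking $r = \ord_p(\alpha)$ for the first row and $r = \ord_p(\beta)$ for the second. The key manipulation is the diagonalization \eqref{eq:diag}, which rewrites $Q^{-1} A_\vp^{n+1}$ as $\mathrm{diag}\bigl((c/\alpha)^{n+1},(c/\beta)^{n+1}\bigr) Q^{-1}$. Combining this with Lemma~\ref{lem:Cn} and setting $D_n := Q^{-1} C_n$ yields
\[
Q^{-1}\Mlog \equiv \begin{pmatrix} (c/\alpha)^{n+1} & 0 \\ 0 & (c/\beta)^{n+1} \end{pmatrix} D_n \pmod{\omega_{n,k-1}(\gamma_0)},
\]
where the entries of $D_n$ are polynomials in $\gamma_0 - 1$ of degree less than $(k-1)p^n$ with uniformly bounded sup-norm.

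Fix an entry in position $(1,j)$ and let $P_n \in L[\gamma_0 - 1]$ denote the corresponding component of $(c/\alpha)^{n+1} D_n$. To apply Lemma~\ref{lem:generalPR}, I would verify both hypotheses. The compatibility $P_{n+1} \equiv P_n \pmod{\omega_{n,k-1}(\gamma_0)}$ follows because Lemma~\ref{lem:Cn} applied at levels $n$ and $n+1$ forces $A_\vp^{n+1} C_n \equiv A_\vp^{n+2} C_{n+1} \pmod{\omega_{n,k-1}(\gamma_0)}$, and since $A_\vp$ is invertible over $L$ this gives $C_n \equiv A_\vp C_{n+1}$, hence $D_n^{(1)} \equiv (c/\alpha) D_{n+1}^{(1)}$ on the first row. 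The growth estimate comes from the observation that $\|D_n\| \le \|Q^{-1}\|$ is uniform in $n$ (since $C_n$ has $\cO_L$-coefficients), giving
\[
\|p^{rn} P_n\| \le p^{-rn} \cdot p^{(n+1)\ord_p(\alpha)} \cdot \|Q^{-1}\| = p^{\ord_p(\alpha)} \|Q^{-1}\|
\]
with $r = \ord_p(\alpha)$; the vanishing condition $\lim_n \|p^{n(k-1)} P_n\| = 0$ then follows from the strict inequality $\ord_p(\alpha) < k-1$ discussed below.

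The last point to verify is the strict inequality $\ord_p(\alpha) < k-1$, which is precisely where the non-ordinary hypothesis enters: the Hecke relation $\alpha\beta = \varepsilon_f(p) p^{k-1}$ forces $\ord_p(\alpha) + \ord_p(\beta) = k-1$, while $\ord_p(a_p(f)) > 0$ combined with $a_p(f) = \alpha + \beta$ rules out the possibility that either valuation vanishes. Lemma~\ref{lem:generalPR} then places each first-row entry of $Q^{-1}\Mlog$ in $\cH_{L,\ord_p(\alpha)}(\Gamma^\cyc)$; the symmetric argument (using $(c/\beta)^{n+1}$ and $D_n^{(2)}$) handles the second row.

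The one point requiring genuine care is the clean extraction of the congruence $C_n \equiv A_\vp C_{n+1} \pmod{\omega_{n,k-1}(\gamma_0)}$ from Lemma~\ref{lem:Cn}; either direct inspection of the explicit formula $C_n = \fM^{-1}\bigl((1+\pi)\vp^n(P^{-1})\cdots\vp(P^{-1})\bigr) \bmod \omega_{n,k-1}$ appearing in the proof of that lemma, or an abstract uniqueness argument for the reduction of the fixed element $\Mlog$, will suffice. Everything else in the argument is routine bookkeeping.
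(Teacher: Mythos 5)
Your proposal is correct and follows essentially the same path as the paper's own proof: combine Lemma~\ref{lem:Cn} with the diagonalization \eqref{eq:diag} to write $Q^{-1}\Mlog \equiv \mathrm{diag}\bigl((c/\alpha)^{n+1},(c/\beta)^{n+1}\bigr)\,Q^{-1}C_n \bmod \omega_{n,k-1}(\gamma_0)$, observe that $C_n$ has $\cO_L$-coefficients so the norm of the first (respectively, second) row grows at most like $p^{(n+1)\ord_p(\alpha)}$ (respectively, $p^{(n+1)\ord_p(\beta)}$), and then invoke Lemma~\ref{lem:generalPR} with $h=k-1$. You supply two verifications that the paper leaves implicit — the telescoping congruence $P_{i,n+1}\equiv P_{i,n}\bmod\omega_{n,k-1}(\gamma_0)$ deduced from $A_\vp^{n+1}C_n\equiv A_\vp^{n+2}C_{n+1}$, and the strict bound $\ord_p(\alpha),\ord_p(\beta)<k-1$ (via $\ord_p(\alpha)+\ord_p(\beta)=k-1$ and non-ordinarity forcing both valuations positive) which is genuinely needed since Lemma~\ref{lem:generalPR} requires $r<h$ — but the argument is the same.
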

\begin{proof}
By Lemma~\ref{lem:Cn} and \eqref{eq:diag},
\[
Q^{-1}\cdot \Mlog\equiv\begin{pmatrix}
\left(\frac{c}{\alpha}\right)^{n+1}&0\\
0&\left(\frac{c}{\beta}\right)^{n+1}
\end{pmatrix}\cdot Q^{-1}\cdot C_n\mod \omega_{n,k-1}(\gamma_0).
\]
Furthermore, the entries of the matrix $C_n$ are elements of $\cO_L[[\gamma_0-1]]$. Let $C_n=\begin{pmatrix}
Q_{1,n}&Q_{2,n}\\
Q_{3,n}&Q_{4,n}
\end{pmatrix} \in \textup{Mat}_{2\times2}\left(\cO_L[\gamma_0-1]\right)$. We define the polynomials $P_{i,n}$ ($i=1,\cdots,4$) by setting $$\begin{pmatrix}
P_{1,n}&P_{2,n}\\
P_{3,n}&P_{4,n}
\end{pmatrix}=\begin{pmatrix}
\left(\frac{c}{\alpha}\right)^{n+1}&0\\
0&\left(\frac{c}{\beta}\right)^{n+1}
\end{pmatrix}\cdot Q^{-1}\cdot\begin{pmatrix}
Q_{1,n}&Q_{2,n}\\
Q_{3,n}&Q_{4,n}
\end{pmatrix} \in L[\gamma_0-1]\,.$$
These polynomials satisfy:
\begin{itemize}
\item ${\displaystyle \lim_{n\rightarrow \infty}||p^{n(k-1)}P_{i,n}||=0}$\, (for $i=1,\cdots,4$),
\item  $P_{i,n+1}\equiv P_{i,n}\mod \omega_{n,k-1}(\gamma_0)L[\gamma_0-1]$ \, (for $i=1,\cdots,4$),
\item  there exists an absolute constant $C$ such that 
$$p^{n\times\textup{ord}_p(\alpha)+C}P_{i,n} \in \cO_{L}[\gamma_0-1] \hbox{ (for $i=1,2$)}$$ 
$$p^{n\times\textup{ord}_p(\beta)+C}P_{i,n} \in \cO_{L}[\gamma_0-1] \hbox{ (for $i=3,4$)}.$$
\end{itemize}
If we let $P_{i,\infty}$ denote the limit of $P_{i,n}$ as $n$ tends to infinity, we have $Q^{-1}\cdot \Mlog=\begin{pmatrix}
P_{1,\infty}&P_{2,\infty}\\
P_{3,\infty}&P_{4,\infty}
\end{pmatrix}$. Furthermore, it follows from Lemma~\ref{lem:generalPR} that 
$$P_{1,\infty}, P_{2,\infty}\in \cH_{L,\ord_p(\alpha)}(\Gamma^\cyc) \hbox{ and } P_{3,\infty}, P_{4,\infty}\in \cH_{L,\ord_p(\beta)}(\Gamma^\cyc)\,.$$
Our assertion follows.
\end{proof}

Let $h_n$ denote the morphism
\begin{align*}
h_n:\Lambda_{\cO_L}(\Gamma^\cyc)^{\oplus2}&\rightarrow \Lambda_{\cO_L}(\Gamma^\cyc)^{\oplus2}/\omega_{n,k-1}(\gamma_0)\\
\begin{pmatrix}
F\\ G
\end{pmatrix}&\mapsto C_n\begin{pmatrix}
F\\ G
\end{pmatrix}\mod \omega_{n,k-1}(\gamma_0),
\end{align*}
where $C_n$ is as defined in Lemma~\ref{lem:Cn}. Following \cite{sprungfactorizationinitial}, we  consider the inverse system $\Lambda_{\cO_L}(\Gamma^\cyc)^{\oplus 2}/\ker h_n$, where the connecting maps are the natural projections and obtain the following:
\begin{lemma}\label{lem:inverselimit}
The inverse limit $\varprojlim\Lambda_{\cO_L}(\Gamma^\cyc)^{\oplus 2}/\ker h_n$ is isomorphic to $\Lambda_{\cO_L}(\Gamma^\cyc)^{\oplus 2}$.
\end{lemma}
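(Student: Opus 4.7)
The plan is to exhibit the natural map
$$\phi\colon \Lambda_{\cO_L}(\Gamma^\cyc)^{\oplus 2} \lra \varprojlim_n \Lambda_{\cO_L}(\Gamma^\cyc)^{\oplus 2}/\ker h_n$$
as an isomorphism, following~\cite{sprungfactorizationinitial}. The first step is to verify that the inverse system is well-defined with natural projections as connecting maps, namely $\ker h_{n+1}\subseteq \ker h_n$: Lemma~\ref{lem:Cn} gives $\Mlog\equiv A_\vp^{n+1}C_n\pmod{\omega_{n,k-1}}$, so comparing with the same congruence at level $n+1$ yields $C_n\equiv A_\vp\cdot C_{n+1}\pmod{\omega_{n,k-1}}$, which combined with $\omega_{n,k-1}\mid \omega_{n+1,k-1}$ gives the chain condition after tracking integrality via the explicit description of $\ker h_n$ from the next step.

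The key technical input is the identification
$$\ker h_n \;=\; \delta_{k-1}\cdot \adj(C_n)\cdot \Lambda_{\cO_L}(\Gamma^\cyc)^{\oplus 2},$$
deduced from $\det(C_n)\sim \omega_{n,k-1}/\delta_{k-1}$ (Lemma~\ref{lem:det}) together with the adjugate identity $\adj(C_n)\cdot C_n = \det(C_n)\cdot I$. For the forward inclusion, write $C_n v = \omega_{n,k-1}w$, multiply by $\adj(C_n)$ to get $\det(C_n)v = \omega_{n,k-1}\adj(C_n)w$, and cancel the unit multiple of $\omega_{n,k-1}/\delta_{k-1}$ on the left to express $v$ as a $\delta_{k-1}$-multiple of an element of $\adj(C_n)\Lambda_{\cO_L}(\Gamma^\cyc)^{\oplus 2}$; the reverse inclusion is immediate from $C_n\cdot\adj(C_n)=\det(C_n)I$.

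Applying the inverse limit functor to the short exact sequences $0\to \ker h_n\to \Lambda_{\cO_L}(\Gamma^\cyc)^{\oplus 2}\to \Lambda_{\cO_L}(\Gamma^\cyc)^{\oplus 2}/\ker h_n\to 0$ produces
$$0\to \bigcap_n\ker h_n\to \Lambda_{\cO_L}(\Gamma^\cyc)^{\oplus 2}\to \varprojlim_n \Lambda_{\cO_L}(\Gamma^\cyc)^{\oplus 2}/\ker h_n\to R^1\varprojlim_n\ker h_n,$$
so the lemma reduces to the vanishing of the two outer terms. The $R^1$ vanishing is standard: the $\ker h_n$ are closed submodules of the profinite topological module $\Lambda_{\cO_L}(\Gamma^\cyc)^{\oplus 2}$, hence compact, and Mittag-Leffler for inverse systems of compact Hausdorff abelian groups applies. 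For the intersection, iterating the identification above forces any $v \in \bigcap_n \ker h_n$ to lie in $\delta_{k-1}\adj(C_n)\Lambda_{\cO_L}(\Gamma^\cyc)^{\oplus 2}$ for every $n$; using the structural information from Lemma~\ref{lem:Cn} (that the second row of $C_n$ carries a $\Phi_{n,k-1}$ factor, so $\adj(C_n)$ has a corresponding column divisibility) together with Krull's intersection theorem applied to the ideals $(\omega_{n,k-1}/\delta_{k-1})\subset \Lambda_{\cO_L}(\Gamma^\cyc)$ — whose Weierstrass degrees tend to infinity — one concludes $\bigcap_n\ker h_n=0$.

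The main obstacle is the injectivity of $\phi$, i.e.\ the vanishing of $\bigcap_n\ker h_n$. Since $\ker h_n$ strictly contains $\omega_{n,k-1}\Lambda_{\cO_L}(\Gamma^\cyc)^{\oplus 2}$, one cannot appeal directly to the standard $\mathfrak{m}$-adic Krull intersection on $\Lambda_{\cO_L}(\Gamma^\cyc)^{\oplus 2}$; one has to leverage the precise shape of $\adj(C_n)$ (inheriting the $\Phi_{n,k-1}$-divisibility pattern from $C_n$) to pin the intersection down to zero.
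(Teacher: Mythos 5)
Your overall blueprint (reduce to $\bigcap_n \ker h_n = 0$ and $R^1\varprojlim = 0$, with the latter handled by compactness/Mittag--Leffler) is sound, and your explicit identification $\ker h_n = \delta_{k-1}\,\adj(C_n)\,\Lambda_{\cO_L}(\Gamma^\cyc)^{\oplus 2}$ is correct and even illuminating. However, the decisive step --- showing $\bigcap_n \ker h_n = 0$ --- is left without an actual argument, and the route you gesture at cannot work as stated. You invoke Krull's intersection theorem for the ideals $(\omega_{n,k-1}/\delta_{k-1})$, whose degrees tend to infinity; but $\ker h_n$ \emph{strictly contains} $\omega_{n,k-1}\Lambda_{\cO_L}(\Gamma^\cyc)^{\oplus 2}$ (indeed $\det(C_n)\Lambda^{\oplus 2}\subset \adj(C_n)\Lambda^{\oplus 2}$ always), so the vanishing of $\bigcap_n(\omega_{n,k-1}/\delta_{k-1})$ says nothing about $\bigcap_n \ker h_n$. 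You acknowledge this yourself, but then claim it can be fixed by ``leveraging the precise shape of $\adj(C_n)$'' without ever doing so. A priori $\adj(C_n)\Lambda^{\oplus 2}$ is a rank-two submodule with one elementary divisor that could be bounded (close to a unit), so the intersection over $n$ is not obviously zero on purely module-theoretic grounds.

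The paper's proof pins down the intersection by an analytic growth argument that exploits non-ordinarity in an essential way. If $\underline F\in\bigcap_n\ker h_n$, then $\Mlog\underline F$ is divisible by every $\omega_{n,k-1}$, hence factors through $\log_{p,k-1}$; if nonzero, such a product has growth rate at least $k-1$. On the other hand, Lemma~\ref{lem:growth} shows that the rows of $Q^{-1}\Mlog$ lie in $\cH_{L,\ord_p(\alpha)}$ and $\cH_{L,\ord_p(\beta)}$ respectively, and since $\ord_p(\alpha),\ord_p(\beta)>0$ with $\ord_p(\alpha)+\ord_p(\beta)=k-1$, both are strictly less than $k-1$. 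Thus $\Mlog\underline F = o(\log_p^{k-1})$, forcing $\Mlog\underline F = 0$ and hence $\underline F = 0$ (as $\det\Mlog\neq 0$). This growth dichotomy, driven by the two nontrivial slopes, is the actual engine of the proof and is exactly what your commutative-algebra substitute is missing. To repair your proof you would either need to reproduce this analytic estimate, or supply a genuine lower bound on $\ker h_n$ (e.g.\ show both elementary divisors of $\adj(C_n)$ tend to infinity in degree), and the latter is not obviously available from Lemma~\ref{lem:Cn} alone.
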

\begin{proof}
It is enough to show that $\varprojlim \ker h_n=0$. That is, if $\underline{F}\in \Lambda_{\cO_L}(\Gamma^\cyc)^{\oplus2}$ such that $C_n\underline{F}\equiv 0\mod \omega_{n,k-1}(\gamma_0)$ for all $n\ge1$, then $\underline{F}=0$. Indeed, such an element would satisfy
\[
\Mlog \underline{F}= \log_{p,k-1}\underline{G}
\]
for some $\underline{G}\in\cH_{L,k-1}(\Gamma^\cyc)^{\oplus 2}$.
But the right-hand side is at least $O(\log_p^{k-1})$ if non-zero, whereas the left-hand side is $o(\log_p^{k-1})$ thanks to Lemma~\ref{lem:growth}. Hence the result follows.
\end{proof}
We now explain how to construct a well-defined element in the quotient $\Lambda_{\cO_L}(\Gamma^\cyc)^{\oplus 2}/\ker h_n$ when we are given a pair of polynomials satisfying certain compatible conditions when evaluated at a collection of characters.
\begin{proposition}\label{prop:prefactor}
Let $n\ge1$ be an integer. For each $\lambda\in\{\alpha,\beta\}$ and $0\le j\le k-2$, let  $P_{\lambda,n,j}\in \cO_L[\gamma_0-1]$ be a polynomial such that 
\[
\left|\left| \sum_{i=0}^j(-1)^i\binom{j}{i}P_{\lambda,n,i}(u^{-i}\gamma_0-1)\right|\right|\le p^{-jn}
\]
for all $0\le j\le k-2$. Furthermore, suppose that for all $j\in\{0,\ldots,k-2\}$ and all Dirichlet characters $\theta$ of conductor $p^m$ where $1\le m\le n+1$, we have
\[
\alpha^{m-n-1}P_{\alpha,n,j}(\theta)=\beta^{m-n-1}P_{\beta,n,j}(\theta).
\]
 
 Let $P_{\lambda,n}$ be the unique polynomial of degree $<(k-1)p^n$ such that
 \[
 P_{\lambda,n}\equiv P_{\lambda,n,j}(u^{-j}\gamma_0-1)\mod \omega_n(u^{-j}\gamma_0-1).
 \]
 Then, there exists a unique element $(P_{\#,n},P_{\flat,n})\in \varpi^{-s}\Lambda_{\cO_L}(\Gamma^\cyc)/\ker h_n$, where $s$ is a fixed integer independent of the collection of polynomials $\{P_{\lambda,n,j}:0\le j\le k-2\}$ and the integer $n$, such that
 \begin{equation}\label{eq:factorization}
  \begin{pmatrix}
P_{\alpha,n} \\ P_{\beta,n}
\end{pmatrix}\equiv Q^{-1} C_n\cdot
\begin{pmatrix}
P_{\#,n}\\P_{\flat,n}
\end{pmatrix}\mod \omega_{n,k-1}(\gamma_0).
 \end{equation}
\end{proposition}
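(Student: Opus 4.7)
My plan is to invert the matrix equation~\eqref{eq:factorization} modulo $\omega_{n,k-1}(\gamma_0)$ by exploiting the explicit structure of $C_n$ uncovered in Lemmas~\ref{lem:Cn} and~\ref{lem:det}. Using the row-divisibility from Lemma~\ref{lem:Cn}, I would write
\[
C_n=\begin{pmatrix}a_n&b_n\\ \Phi_{n,k-1}(\gamma_0)c_n & \Phi_{n,k-1}(\gamma_0)d_n\end{pmatrix}
\]
with $a_n,b_n,c_n,d_n\in \cO_L[\gamma_0-1]$. Combined with Lemma~\ref{lem:det} and the identity $\omega_{n,k-1}=\Phi_{n,k-1}\omega_{n-1,k-1}$, this forces $a_nd_n-b_nc_n\sim \omega_{n-1,k-1}(\gamma_0)/\delta_{k-1}(\gamma_0)$ up to a unit in $\Lambda_{\cO_L}(\Gamma^\cyc)^\times$.

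Left-multiplying~\eqref{eq:factorization} by $Q$ translates the factorization into the pair of scalar congruences
\[
a_nP_{\#,n}+b_nP_{\flat,n}\equiv \alpha P_{\alpha,n}-\beta P_{\beta,n}\pmod{\omega_{n,k-1}},
\]
\[
\Phi_{n,k-1}(\gamma_0)\left(c_nP_{\#,n}+d_nP_{\flat,n}\right)\equiv \epsilon_f(p)p^{k-1}\left(P_{\beta,n}-P_{\alpha,n}\right)\pmod{\omega_{n,k-1}}.
\]
The second congruence is solvable precisely when $P_{\beta,n}-P_{\alpha,n}$ is divisible by $\Phi_{n,k-1}(\gamma_0)$ modulo $\omega_{n,k-1}$, which is exactly the $m=n+1$ case of Condition~2 (the two polynomials agree at all characters of conductor $p^{n+1}$). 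Once this divisibility is in hand, the second congruence reduces modulo $\omega_{n-1,k-1}$ to a $2\times2$ system whose coefficient matrix has determinant $\sim\omega_{n-1,k-1}/\delta_{k-1}$. Iterating the reduction -- equivalently, passing through the CRT decomposition $\omega_{n-1,k-1}=\prod_{m=0}^{n-1}\prod_{j=0}^{k-2}\Phi_m(u^{-j}\gamma_0)$ -- reduces solvability to a check at each character of conductor $p^{m+1}$ for $0\le m\le n$, and Condition~2 at the parameter $m+1$ supplies exactly the missing relation in each case.

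To place the resulting pair $(P_{\#,n},P_{\flat,n})$ inside $\varpi^{-s}\Lambda_{\cO_L}(\Gamma^\cyc)^{\oplus 2}/\ker h_n$, I next control the $\varpi$-denominators introduced by the Cramer-type inversion. These come from three sources: (i) the entries of $Q^{-1}$, contributing $(\alpha-\beta)^{-1}$; (ii) the rearrangement of the second congruence above, contributing $(\epsilon_f(p)p^{k-1})^{-1}$; and (iii) inverting the determinants $\omega_{n-1,k-1}/\delta_{k-1}$ at each CRT step, whose contribution is dominated by the $p$-adic valuation of $\delta_{k-1}(1)=\prod_{j=0}^{k-2}(u^{-j}-1)$, a quantity independent of $n$. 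Condition~1 then plays the role of an integral growth bound in the style of Lemma~\ref{lem:PRtwists} and Remark~\ref{rk:integral}, ensuring that after re-assembling the Cramer solutions via CRT the $\varpi$-denominator stays controlled by a single power $\varpi^s$. Uniqueness modulo $\ker h_n$ is then automatic from the definition of $h_n$.

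The chief technical obstacle is to extract a bound $s$ that is genuinely uniform in $n$ and in the input data $\{P_{\lambda,n,j}\}$; while each ingredient above is conceptually clean, combining the CRT analysis, Cramer's rule, and the growth estimates from Condition~1 demands bookkeeping closely parallel to the proof of Lemma~\ref{lem:PRtwists} together with the uniformity asserted in Remark~\ref{rk:integral}.
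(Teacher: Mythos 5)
Your overall strategy — exploiting the $\Phi$-divisibility of the second row of $C_n$ to reduce solvability of the linear system to the interpolation hypotheses, then controlling denominators via Remark~\ref{rk:integral} — is in the same spirit as the paper's argument, which instead works with the adjugate matrix $\adj(Q^{-1}C_n)$ and shows $\adj\cdot(P_{\alpha,n},P_{\beta,n})^T\equiv 0\bmod\Phi_{m,k-1}(\gamma_0)$ for all $1\le m\le n$ before dividing by $\det(Q^{-1}C_n)\sim\prod_{m=1}^n\Phi_{m,k-1}(\gamma_0)$. However, your iterative reduction has a genuine gap at the passage from level $n$ to levels $m<n$.

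After you divide the second scalar congruence by $\Phi_{n,k-1}(\gamma_0)$, you are left with the matrix $\begin{pmatrix}a_n&b_n\\c_n&d_n\end{pmatrix}$, and you assert that ``iterating'' and passing through the CRT decomposition reduces solvability to checks at each conductor $p^{m+1}$. But this matrix has no reason to have a $\Phi_{n-1,k-1}$-divisible row, so the structure that made your first step work is destroyed after one iteration, and your scheme cannot continue in the form you describe. The ingredient that rescues the argument is the congruence $A_\vp^{n-m}C_n\equiv C_m\bmod\omega_{m,k-1}(\gamma_0)$ (embedded in the proof of Lemma~\ref{lem:Cn} and invoked explicitly in the paper's proof), which says that modulo $\Phi_{m,k-1}$ the matrix $C_n$ becomes $A_\vp^{-(n-m)}C_m$, again with a $\Phi_{m,k-1}$-divisible row. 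Conjugating $A_\vp^{n-m}$ by $Q$ introduces the diagonal weights $(c/\alpha)^{n-m}$, $(c/\beta)^{n-m}$, and these are precisely the source of the exponents $\alpha^{m-n-1}$, $\beta^{m-n-1}$ in Condition~2. Your proposal never explains how these weighted relations (as opposed to the unweighted equality $P_{\alpha,n,j}(\theta)=P_{\beta,n,j}(\theta)$ that you correctly use at $m=n+1$) are what your iteration requires at lower conductors; as written, your argument would demand the unweighted condition at every level, which is not what Condition~2 gives. Until you incorporate the $A_\vp^{n-m}C_n\equiv C_m$ congruence and its diagonalization by $Q$ into the level-$m$ solvability check, the iterative step is unjustified.
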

\begin{proof} Let $\adj$ be the adjugate matrix of $Q^{-1}C_n$. Recall from Lemma~\ref{lem:Cn} that the second row of $C_n$ is divisible by $\Phi_{n,k-1}(\gamma_0)$. Therefore,
\[
\adj\equiv \begin{pmatrix}
D_n&-D_n\\
D_n'&-D_n'
\end{pmatrix}\mod \Phi_{n,k-1}(\gamma_0)
\]
for some polynomials $D_n$ and $D_n'$, whose denominators are bounded independent of $n$.

Let $0\le m\le n$. As in the proof of Lemma~\ref{lem:Cn}, we have
\[
A_\vp^{n-m} C_{n}\equiv C_m\mod \omega_{m,k-1}(\gamma_0).
\]
Consequently,
\[
\begin{pmatrix}
\left(\frac{c}{\alpha}\right)^{n-m}&0\\
0&\left(\frac{c}{\beta}\right)^{n-m}
\end{pmatrix}Q^{-1}C_n\equiv Q ^{-1}C_m\mod \omega_{m,k-1}(\gamma_0).
\]
Since the second row of $C_m$ is divisible by $\Phi_{m,k-1}(\gamma_0)$, we deduce that
\[
\adj\equiv \begin{pmatrix}
\left(\frac{c}{\alpha}\right)^{n-m}D_m&-\left(\frac{c}{\beta}\right)^{n-m}D_m\\
\left(\frac{c}{\alpha}\right)^{n-m}D_m'&-\left(\frac{c}{\beta}\right)^{n-m}D_m'
\end{pmatrix}\mod \Phi_{m,k-1}(\gamma_0).
\]
Therefore, if $\theta$ is a Dirichlet character of conductor $p^m$, where $1\le m\le n+1$ (which sends $\gamma_0$ to a primitive $p^{m-1}$-root of unity), then
\[
\adj(\theta)= \begin{pmatrix}
\left(\frac{c}{\alpha}\right)^{n+1-m}*&-\left(\frac{c}{\beta}\right)^{n+1-m}*\\
\left(\frac{c}{\alpha}\right)^{n+1-m}*'&-\left(\frac{c}{\beta}\right)^{n+1-m}*'
\end{pmatrix}
\]
for some  $*$ and $*'$ in $\overline{\Qp}$.
Our assumptions on $P_{\alpha,n,j}$ and $P_{\beta,n,j}$ now imply that
\[
\adj\begin{pmatrix}
P_{\alpha,n}\\ P_{\beta,n}
\end{pmatrix}\equiv 0\mod \Phi_{m,k-1}(\gamma_0)
\]
for all $1\le m\le n$. In particular, there exist  polynomials $P_{\#,n}$ and $P_{\flat,n}$ such that
\begin{equation}\label{eq:factoradj}
\adj\begin{pmatrix}
P_{\alpha,n}\\ P_{\beta,n}
\end{pmatrix}= \prod_{m=1}^n\Phi_{m,k-1}(\gamma_0)\begin{pmatrix}
P_{\#,n}\\ P_{\flat,n}
\end{pmatrix}.
\end{equation}
Remark~\ref{rk:integral} tells us that the polynomials $P_{\lambda,n}\in p^{-s_0}\cO[\gamma_0-1]$ for some $s$ independent of $P_{\lambda,n,j}$. In particular, the denominators of $P_{\#,n}$ and $P_{\flat,n}$ are also  bounded by $\varpi^{s}$.
Recall from Lemma~\ref{lem:det} that $\det(C_n)$ is up to a unit in $\Lambda_{\cO_L}(\Gamma^\cyc)^\times$ equal to $\prod_{m=1}^n\Phi_{m,k-1}(\gamma_0)$.
Therefore, we deduce the factorization \eqref{eq:factorization} on inverting $\adj$ in \eqref{eq:factoradj}.
\end{proof}

\begin{proposition}\label{prop:decomp}
For each $\lambda\in\{\alpha,\beta\}$, suppose that we are given $F_\lambda\in \cH_{L,\ord_p(\lambda)}(\Gamma_0^\cyc)$ such that for all $j\in\{0,\ldots,k-2\}$ and all Dirichlet characters $\theta$ of conductor $p^n>1$,
\[
F_\lambda(\chi_0^j\theta)=\lambda^{-n}\times c_{j,\theta}
\]
for some constant $c_{j,\theta}\in {\bar{\QQ}_p}$ that is independent of the choice of $\lambda$. Then there exist  $F_\#,F_\flat\in\Lambda_{L}(\Gamma_0^\cyc)$ such that
\[
\begin{pmatrix}
F_\alpha \\F_\beta
\end{pmatrix}=Q^{-1}\Mlog\cdot
\begin{pmatrix}
F_\#\\ F_\flat
\end{pmatrix}.
\]
Furthermore, if there exists a sequence  $P_{\lambda,n}\in\Lambda_{\cO_L}(\Gamma_0^\cyc)$ such that $F_\lambda\equiv \lambda^{-n-1}P_{\lambda,n}\mod\omega_{n,k-1}(\gamma_0)$ for both $\lambda\in\{\alpha,\beta\}$, then there exists an integer $s$ that depends only on the pair $\{\alpha,\beta\}$ (but not on $F_\alpha$ and $F_\beta$) such that $F_\#,F_\flat\in\varpi^{-s}\Lambda_{\cO_L}(\Gamma_0^\cyc)$.
\end{proposition}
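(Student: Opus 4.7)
The plan is to construct polynomial approximations to $F_\alpha$ and $F_\beta$ at each finite level $n$, invoke Proposition~\ref{prop:prefactor} level by level, and pass to the inverse limit via Lemma~\ref{lem:inverselimit}, with uniqueness of the resulting pair controlled by Lemma~\ref{lem:growth}.

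For each $n\geq 1$ and each $j\in\{0,\ldots,k-2\}$, let $P_{\lambda,n,j}\in L[\gamma_0-1]$ be the polynomial of degree $<p^n$ congruent to $\lambda^{n+1}\,\mathrm{Tw}_j(F_\lambda)$ modulo $\omega_n(\gamma_0)$, where $\mathrm{Tw}_j$ is the power-series operation $F(\gamma_0)\mapsto F(u^j\gamma_0)$ implementing twist by $\chi_0^j$. For any Dirichlet character $\theta$ of conductor $p^m$ with $1\leq m\leq n+1$, the interpolation hypothesis $F_\lambda(\chi_0^j\theta)=\lambda^{-m}c_{j,\theta}$ yields $P_{\lambda,n,j}(\theta)=\lambda^{n+1-m}c_{j,\theta}$, and hence $\alpha^{m-n-1}P_{\alpha,n,j}(\theta)=\beta^{m-n-1}P_{\beta,n,j}(\theta)$, as required by Proposition~\ref{prop:prefactor}. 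The alternating sum $\sum_{i=0}^j(-1)^i\binom{j}{i}P_{\lambda,n,i}(u^{-i}\gamma_0-1)$ agrees, modulo an $\omega_n$-multiple, with the corresponding finite difference of $\lambda^{n+1}F_\lambda$ itself, whose sup-norm is bounded by $p^{-jn}$ thanks to the growth condition $F_\lambda\in\cH_{L,\ord_p(\lambda)}(\Gamma_0^\cyc)$; this is the standard dictionary between $\cH_{L,r}$ and polynomial approximations that underlies Lemma~\ref{lem:PRtwists}.

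Having verified its hypotheses, Proposition~\ref{prop:prefactor} produces at each level $n$ a pair $(P_{\#,n},P_{\flat,n})\in \varpi^{-s}\Lambda_{\cO_L}(\Gamma_0^\cyc)^{\oplus 2}/\ker h_n$ with $s$ depending only on $\alpha,\beta$. These are compatible in the inverse system of Lemma~\ref{lem:inverselimit} because the underlying $P_{\lambda,n}$ are all successive reductions of the fixed power series $\lambda^{n+1}F_\lambda$ modulo $\omega_{n,k-1}(\gamma_0)$ (after the $j$-wise Chinese-remainder interpolation). Lemma~\ref{lem:inverselimit} therefore yields $(F_\#,F_\flat)\in \varpi^{-s}\Lambda_{\cO_L}(\Gamma_0^\cyc)^{\oplus 2}\subset \Lambda_L(\Gamma_0^\cyc)^{\oplus 2}$, which already gives the claimed integrality assertion. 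To verify the identity ${}^t(F_\alpha,F_\beta)=Q^{-1}\Mlog\cdot{}^t(F_\#,F_\flat)$, let $G_\lambda\in\cH_L(\Gamma_0^\cyc)$ denote the $\lambda$-component of the difference of the two sides. By construction $G_\lambda\equiv 0\bmod\omega_{n,k-1}(\gamma_0)$ for every $n$, so $G_\lambda$ is divisible in $\cH_L(\Gamma_0^\cyc)$ by $\log_{p,k-1}(\gamma_0)$, which is at least $O(\log_p^{k-1})$ when nonzero. On the other hand, Lemma~\ref{lem:growth} and the growth hypothesis on $F_\lambda$ place $G_\alpha\in\cH_{L,\ord_p(\alpha)}(\Gamma_0^\cyc)$ and $G_\beta\in\cH_{L,\ord_p(\beta)}(\Gamma_0^\cyc)$; since $\ord_p(\alpha)+\ord_p(\beta)=k-1$ with both slopes strictly positive under the non-ordinary hypothesis, both slopes are strictly less than $k-1$, forcing $G_\alpha=G_\beta=0$.

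The main obstacle is the verification in the first step that the finite-difference norm bound $p^{-jn}$ holds for the $P_{\lambda,n,j}$ constructed from $F_\lambda$. This requires carefully translating the analytic growth condition defining $\cH_{L,\ord_p(\lambda)}(\Gamma_0^\cyc)$ into polynomial-level estimates along the twist operators $\mathrm{Tw}_j$, essentially rerunning the combinatorial mechanism behind Lemma~\ref{lem:PRtwists} in this setting. Once this is in hand, the descent to the limit and the vanishing of the error $G_\lambda$ are relatively formal consequences of the preparatory results of Section~\ref{S:wach}.
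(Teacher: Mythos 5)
Your proposal follows the paper's own proof step for step: form the polynomial reductions $P_{\lambda,n,j}$ of $\lambda^{n+1}\Tw_j(F_\lambda)$ modulo $\omega_n$, check the interpolation hypothesis $\alpha^{m-n-1}P_{\alpha,n,j}(\theta)=\beta^{m-n-1}P_{\beta,n,j}(\theta)$ required by Proposition~\ref{prop:prefactor}, apply that proposition at each level $n$, and pass to the limit via Lemma~\ref{lem:inverselimit}. Your choice of normalization $\lambda^{n+1}$ is the right one; the paper's proof writes $\lambda^n F_\lambda$ in the definition of $P_{\lambda,n,j}$ but this is evidently a slip, since the very next displayed congruence there already uses $\lambda^{n+1}F_\lambda$, and only the $\lambda^{n+1}$ normalization makes the interpolation identity check out. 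The closing argument you give --- that the discrepancy $G_\lambda$, being divisible by $\log_{p,k-1}(\gamma_0)$ while lying in $\cH_{L,\ord_p(\lambda)}(\Gamma_0^\cyc)$ with $\ord_p(\lambda)<k-1$, must vanish --- re-derives what Lemma~\ref{lem:inverselimit} already asserts (its proof is exactly this slope argument), so you may simply quote that lemma as the paper does. The alternating-sum bound you single out as the main obstacle is also the one place where the paper's own proof is terse (it records only that $P_{\lambda,n,j}\in\varpi^{-s_0}\cO_L[\gamma_0-1]$ before citing Proposition~\ref{prop:prefactor}); the estimate does hold, because the $\cH_{L,\ord_p(\lambda)}$ growth condition encodes the Amice--V\'elu/Vishik moment bounds for a distribution of order $\ord_p(\lambda)$, and after rescaling by $\lambda^{n+1}$ these become precisely the required $O(p^{-jn})$ on the $j$-fold finite differences --- but stating this directly would be cleaner than gesturing at the mechanism of Lemma~\ref{lem:PRtwists}, which takes the bound as input rather than producing it. Finally, you do not separately discuss the last clause of the proposition (that $s$ may be chosen independent of $F_\alpha,F_\beta$ under the extra integrality hypothesis on $P_{\lambda,n}$); once the general argument is set up this is just the specialization $s_0=0$, which is how the paper concludes.
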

\begin{proof}
On considering each isotypic component separately, we may assume that $F_\lambda\in\cH_{L,\ord_p(\lambda)}(\Gamma^\cyc)$.
For $\lambda\in\{\alpha,\beta\}$ and $0\le j\le k-2$, take $P_{\lambda,n,j}$ to be the unique polynomial of degree $<p^n$ such that
$$\lambda^nF_\lambda\equiv P_{\lambda,n,j}(u^{-j}\gamma_0-1)\mod \omega_n(u^{-j}\gamma_0-1).$$
Since $F_\lambda\in \cH_{L,\ord_p(\lambda)}(\Gamma_0^\cyc)$, we have $P_{\lambda,n,j}\in\varpi^{-s_0}\cO_L[\gamma_0-1]$ for some $s_0$ that is independent of $\lambda$, $n$ and $j$.
By Proposition~\ref{prop:prefactor}, our hypothesis implies that there exist two sequences of polynomials $P_{\#,n}$ and $P_{\flat,n}$ in $ \omega^{-s-s_0}\cO_L[\gamma_0-1]$ satisfying
\[
\begin{pmatrix}
\alpha^{n+1}F_\alpha\\ \beta^{n+1} F_\beta
\end{pmatrix}\equiv Q^{-1}C_n\begin{pmatrix}
P_{\#,n}\\ P_{\flat,n}
\end{pmatrix}\mod\omega_{n,k-1}(\gamma_0).
\]
Recall from the proof of Lemma~\ref{lem:growth} that we have the congruence
\[
Q^{-1}\cdot \Mlog\equiv\begin{pmatrix}
\left(\frac{c}{\alpha}\right)^{n+1}&0\\
0&\left(\frac{c}{\beta}\right)^{n+1}
\end{pmatrix}\cdot Q^{-1}\cdot C_n\mod \omega_{n,k-1}(\gamma_0).
\]
Consequently,
\[
\begin{pmatrix}
F_\alpha\\  F_\beta
\end{pmatrix}\equiv Q^{-1}\Mlog\begin{pmatrix}
c^{n+1}P_{\#,n}\\c^{n+1} P_{\flat,n}
\end{pmatrix}\mod\omega_{n,k-1}(\gamma_0).
\]
In particular, $\begin{pmatrix}
c^{n+1}P_{\#,n}\\c^{n+1} P_{\flat,n}
\end{pmatrix}$ defines a sequence of compatible elements in $\varpi^{-s-s_0}\Lambda_{\cO_L}(\Gamma^\cyc)^{\oplus2}/\ker h_n$. Lemma~\ref{lem:inverselimit} tells us that there exist $F_\#,F_\flat\in \varpi^{-s-s_0}$ such that
\[
\begin{pmatrix}
F_\alpha\\  F_\beta
\end{pmatrix}= Q^{-1}\Mlog\begin{pmatrix}
F_{\#}\\ F_{\flat}
\end{pmatrix}\
\]
on letting $n\rightarrow \infty$. The last part of the proposition is the special case where $s_0=0$.
\end{proof}
{
\begin{remark}\label{rk:two}
Recall from Remark~\ref{rk:one} that the entries of $\Mlog$ in fact have coefficients in $L_0$. Thus, we can see from the proofs of Lemma~\ref{lem:growth} and Proposition~\ref{prop:decomp} that if $P_{\lambda,n}\in \Lambda_{\cO_{L_0}}(\Gamma_0^\cyc)$, then $F_\#,F_\flat\in \Lambda_{\cO_{L_0}}(\Gamma_0^\cyc)$.
\end{remark}
}

\subsection{Two-variable Coleman maps}\label{S:2varCol}
In this section, we construct two-variable Coleman maps for the representation $T=R_f^*\otimes \vt$ (where we recall that $\vt$ is an unramified character of $G_{\QQ_p}$ of finite order).
First of all, let us quickly review the construction of one-variable Coleman maps. It was shown in \cite[Theorem~3.5]{LLZ0} and \cite[Theorems~2.5 and 3.3]{LLZ3} that $\{(1+\pi)\vp(n_1), (1+\pi)\vp(n_2)\}$ is a $\Lambda_{\cO_E}(\Gamma_0^\cyc)$-basis of $(\vp^*\NN(T))^{\psi=0}$ under the hypothesis $k>2$. When $k=2$, it is possible to obtain the same result on replacing the basis $n_1,n_2$ by another basis that is congruent to the original one mod $\pi$, as explained in  \cite[Lemma~3.9]{LLZ0}. The new basis would still be a lifting of $v_1,v_2$, but $\Mlog$ would differ slightly since $P$ would be replaced by a new matrix. However, this does not affect our calculations carried out in \S\ref{S:wach} as the new  $P$ would be congruent to the original one mod $\pi$. Without loss of generality, we shall  assume that our basis $n_1,n_2$ does satisfy \cite[Theorem~3.5]{LLZ0}.

Under this assumption, there exist two Coleman maps  $\col_\#$ and $\col_\flat$, both of which are $\Lambda_{\cO_L}(\Gamma_0^\cyc)$-morphisms from $\HIw(\Qp(\mu_{p^\infty}),T)$ into $\Lambda_{\cO_L}(\Gamma_0^\cyc)$ such that
\[
(1-\vp)(z)=\col_\#(z)\cdot (1+\pi)\vp(n_1)+\col_\flat(z)\cdot (1+\pi)\vp(n_2)
\]
for all $z\in\HIw(\Qp(\mu_{p^\infty}),T)$. Here, we identify  $\NN(R_f^*)^{\psi=1}$ with $\HIw(\Qp(\mu_{p^\infty}),T)$ via the Fontaine-Herr complex (c.f. \cite[\S A]{berger04} and \cite[Theorem~2.6]{LLZ3}).
If we write 
\[
\cL_T:\HIw(\Qp(\mu_{p^\infty}),T)\rightarrow \cH_{L,k-1}(\Gamma_0^\cyc)\otimes\Dcris(T)
\]
for the  Perrin-Riou big logarithm map, then
\begin{equation}\label{eq:defnColeman}
\cL_T=\begin{pmatrix}
v_1&v_2
\end{pmatrix}\Mlog\begin{pmatrix}
\col_\#\\\col_\flat
\end{pmatrix}.
\end{equation}
Our choice of eigenvectors $v_\alpha$, $v_\beta$ from \eqref{eq:diag} allows us to rewrite this as
\[
\cL_T=\begin{pmatrix}
v_\alpha&v_\beta
\end{pmatrix}Q^{-1}\Mlog\begin{pmatrix}
\col_\#\\\col_\flat
\end{pmatrix}.
\]
If we write $\cL_{T,\alpha}$ and $\cL_{T,\beta}$ for the coordinates of $\cL_T$ with respect to this eigenbasis, we have
\begin{equation}\label{eq:notanotherdecomp}
\begin{pmatrix}
\cL_{T,\alpha}\\ \cL_{T,\beta}
\end{pmatrix}=Q^{-1}\Mlog\begin{pmatrix}
\col_\#\\\col_\flat
\end{pmatrix}.
\end{equation}

Let $F$ be a finite unramified extension of $\Qp$. We shall write $\NN_F(T)$ and $\Dcris(F,T)$ for the Wach module and the Dieudonn\'e module of $T$ over $F$. In particular, \cite[Lemma~ 2.1]{LZ0} tells us that
\[
\NN_{F}(T)=\cO_{F}\otimes_{\Zp}\NN(T),\quad \Dcris(F,T)=\cO_{F}\otimes_{\Zp}\Dcris(T).
\]
Therefore, it is immediate that $n_1,n_2$ and $v_1,v_2$ extend to bases of $\NN_{F}(T)$ and $\Dcris(F,T)$ respectively. Furthermore, the basis $(1+\pi)\vp(n_1), (1+\pi)\vp(n_2)$ of $(\vp^*\NN(T))^{\psi=0}$ extends to an $\cO_F\otimes\Lambda_{\cO_L}(\Gamma_0^\cyc)$-basis of $(\vp^*\NN_F(T))^{\psi=0}$.  This allows us to define the Coleman maps 
\[
\col_{\#,F},\col_{\flat,F}:\HIw(F(\mu_{p^\infty}),R_f^*)\rightarrow \cO_F\otimes_{\Zp}\Lambda_{\cO_L}(\Gamma_0^\cyc)
\]
in the same manner as before. If we write 
\[
\cL_{T,F}:\HIw(F(\mu_{p^\infty}),T)\rightarrow \cH_{L,k-1}(\Gamma_0^\cyc)\otimes\Dcris(F,T)
\]
for the Perrin-Riou big logarithm map, we then have a similar decomposition as \eqref{eq:defnColeman}, namely
\[
\cL_{T,F}=\begin{pmatrix}
v_1&v_2
\end{pmatrix}\Mlog\begin{pmatrix}
\col_{\#,F}\\ \col_{\flat,F}
\end{pmatrix}.
\]

Let $F_\infty$ be the unramified $\Zp$-extension of $F$,  $U=\Gal(F_\infty/F)$, $G=\Gal(F_\infty(\mu_{\mu_{p^\infty}})/F)\cong U\times \Gamma_0^\cyc$ and let $\widehat F_\infty$ denote the completion of $F_\infty$.  We set $S_{F_\infty/F}\subset \Lambda_{\cO_{\widehat F_\infty}}(U)$  to denote the Yager module which is defined in \cite[\S3.2]{LZ0}. We recall that it is a free $\Lambda_{\cO_F}(U)$-module of rank 1 (see the proof of Proposition 3.12 in \textit{op. cit.}). In particular, we may fix a basis $\{\Omega_F\}$ of the Yager module $S_{F_{\infty}/F}$. There is an isomorphism
\[
\HIw(F_\infty(\mu_{\mu_{p^\infty}}),T)\cong \NN_{F_\infty}(T)^{\psi=1},
\]
where $\NN_{F_\infty}(T)=\NN_F(T)\widehat\otimes_{\cO_F} S_{F_\infty/F}$. 
The two-variable big logarithm map of Loeffler-Zerbes, which we denote by $\cL_{T,F_\infty}$, is defined as the compositum of the arrows
\begin{align*}
\HIw(F_\infty(\mu_{p^\infty}),T)&\stackrel{\cong}{\longrightarrow}\NN_{F_\infty}(T)^{\psi=1}\\
&\stackrel{1-\vp}{\longrightarrow}(\vp^*\NN_F(T))^{\psi=0}{\widehat\otimes}_{\cO_F}S_{F_\infty/F}\\
&\stackrel{\subset}{\longrightarrow}(\cO_F\otimes \cH_{L,k-1}(\Gamma_0^\cyc)\otimes_{\cO_F}\Dcris(F,T)){\widehat\otimes}_{\cO_F}S_{F_\infty/F}\\
&\stackrel{\cong }{\longrightarrow}\Omega_F\cdot(\cH_{L,k-1}(\Gamma_0^\cyc)\widehat\otimes \Lambda_{\cO_F}(U))\otimes_{\cO_F}\Dcris(F,T).
\end{align*}

The basis $(1+\pi)\vp(n_1),(1+\pi)\vp(n_2)$ of $(\vp^*\NN_F(T))^{\psi=0}$ allows us to define the Coleman maps 
\begin{equation}\label{eq:decompLTF}
\col_{\#,F_\infty},\col_{\flat,F_\infty}:\HIw(F_\infty(\mu_{p^\infty}),T)\rightarrow \cO_{\widehat F_{\infty}}\otimes\Lambda_{\cO_L}(G)
\end{equation}
via
\begin{align*}
\HIw(F_\infty(\mu_{p^\infty}),T)&\stackrel{\cong}{\longrightarrow}\NN_{F_\infty}(T)^{\psi=1}\\
&\stackrel{1-\vp}{\longrightarrow}(\vp^*\NN_F(T))^{\psi=0}{\widehat\otimes}_{\cO_F}S_{F_\infty/F}\\
&\stackrel{\cong}{\longrightarrow}\left(\cO_F\otimes\Lambda_{\cO_L}(\Gamma_0^\cyc)^{\oplus2}\right){\widehat\otimes}_{\cO_F}\Omega_F\cdot \Lambda_{\cO_F}(U)\\
&\stackrel{= }{\longrightarrow}\Omega_F\cdot \cO_{ F}\otimes\Lambda_{\cO_L}(G)^{\oplus2}.
\end{align*}
In particular, this yields the decomposition 
\[
\cL_{T,F_\infty}=\begin{pmatrix}
v_1&v_2
\end{pmatrix}\Mlog\begin{pmatrix}
\col_{\#,F_\infty}\\ \col_{\flat,F_\infty}
\end{pmatrix}.
\]
When no confusion could arise, we shall omit $\Omega_F$ from the notation and identify $\Lambda_{\cO_F}(U)$ with $\Omega_F\Lambda_{\cO_F}(U)$. This allows us to consider $\col_{\bullet, F_\infty}$ as maps landing in $\cO_F\otimes\Lambda_{\cO_L}(G)$.

\subsection{Images of Coleman maps}
We now study the images of the Coleman maps defined above in the case where $F=\Qp$.
\begin{lemma}
Let $j\in\{0,\ldots, k-2\}$, $z\in\HIw(\Qp(\mu_{p^\infty}),T)$ and $\theta$ be a Dirichlet character of conductor $p$. Then,
\begin{align*}
(p^{k-j-2}-a_p(f)c+\epsilon_f(p)^{-1}p^jc^2)\col_\flat(z)(\chi_0^j)&=c(p-1)p^{k-2}\col_\#(z)(\chi_0^j)\,,\\
\col_\flat(z)(\chi_0^j\theta)&=0
\end{align*}
where $c$ is as at the start of \S\ref{S:wach}.
\end{lemma}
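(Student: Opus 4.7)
The plan is to proceed by direct computation from the defining equation of the Coleman maps. Starting from
\[
(1-\vp)(z) = \col_\#(z)\cdot(1+\pi)\vp(n_1) + \col_\flat(z)\cdot(1+\pi)\vp(n_2),
\]
and writing $z = z_1 n_1 + z_2 n_2$ with $z_1, z_2 \in \AQp$ via the Fontaine--Herr identification $\HIw(\QQ_p(\mu_{p^\infty}),T) \cong \NN(T)^{\psi=1}$, I would use the explicit formula \eqref{eq:formulaP} for $P$ to expand $\vp(n_1) = c\delta^{k-1}n_2$ and $\vp(n_2) = \tfrac{c}{\varepsilon_f(p)q^{k-1}}(-n_1 + a_p(f)n_2)$. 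Comparing the $n_1$-coefficients on both sides yields a scalar identity expressing $\col_\flat(z)$ in terms of $\vp(z_2)$, $q^{k-1}z_1$ and $c$, while the $n_2$-coefficients give a second identity expressing $c\delta^{k-1}\col_\#(z)$ in terms of $z_1, z_2, \vp(z_1)$ and the constants $c, a_p(f), \delta^{k-1}$. Both identities are to be read as equalities in $\AQp$ under the Mellin correspondence $\fM$, with the $\Gamma$-action corrections on $n_1, n_2$ absorbed into the identifications.

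For the second statement, $\col_\flat(z)(\chi_0^j \theta) = 0$ for $\theta$ of conductor $p$, I would evaluate the first scalar identity at the wild character $\chi_0^j\theta$. Such an evaluation extracts the $(\omega^j\theta)$-isotypic $\Delta$-projection of the $\Lambda_{\cO_L}(\Gamma_0^\cyc)$-element $\col_\flat(z)$, and translates under $\fM$ to a character sum involving the values of the $\AQp$-side at $\pi = \zeta - 1$ for primitive $p$-th roots of unity $\zeta$. The key observation is that both summands on the right-hand side, namely $\vp(z_2)$ and $q^{k-1}z_1$, lie in the image of the Frobenius $\vp$ on $\AQp$: indeed, $q = \vp(\pi)/\pi$, so $q^{k-1}$ is $\vp$-compatible, and $\vp(z_2)$ is tautologically so. Elements of $\vp(\AQp)$ have trivial $(\omega^j\theta)$-isotypic components for nontrivial $\theta$ of conductor $p$ because the character sum over $\Delta$ cancels, which yields the vanishing.

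For the first statement, I would combine the two scalar identities. Evaluating at the tame character $\chi_0^j$ corresponds under $\fM$ to applying $\partial^j|_{\pi = 0}$ with $\partial = (1+\pi)\tfrac{d}{d\pi}$, at which point $q|_{\pi=0}=p$, $\delta|_{\pi=0}=1$, and the commutation relation $\partial\circ \vp = p\,\vp\circ\partial$ gives $\partial^j(\vp(z_i))|_{\pi=0} = p^j(\partial^j z_i)|_{\pi=0}$. Applying $\partial^j$ by the Leibniz rule to both identities and eliminating the Wach-module coordinates $(\partial^l z_i)|_{\pi=0}$ between the two resulting expressions for $\col_\#(z)(\chi_0^j)$ and $\col_\flat(z)(\chi_0^j)$ produces a linear relation. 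The polynomial $p^{k-j-2}-a_p(f)c+\varepsilon_f(p)^{-1}p^jc^2$ arises as the precise combination of $p^j$ (from $\partial^j\vp$), $p^{k-1}$ (from $q^{k-1}|_{\pi=0}$) and the constants $a_p(f)$, $\varepsilon_f(p)$, $c$ that emerges after cancelling the $z$-dependent data, while the factor $c(p-1)p^{k-2}$ on the right collects the analogous contributions from the $\col_\#$-side identity.

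The main obstacle will be the bookkeeping of the $\Lambda_{\cO_L}(\Gamma_0^\cyc)$-module structure on $(\vp^*\NN(T))^{\psi=0}$ and in particular the corrections coming from the $\Gamma$-action on the Wach-module basis $n_1, n_2$ (which is not literally trivial, though it reduces to the identity on $\Dcris(T)$ modulo $\pi$); these corrections must be tracked to ensure that the identification of $\col_\bullet(z)\cdot(1+\pi)\vp(n_i)$ with $\fM(\col_\bullet(z))\vp(n_i)$ used in the scalar identities is valid up to the appropriate normalization. A secondary difficulty is the combinatorial verification that the Leibniz expansion of $\partial^j(q^{k-1}z_1)$ and $\partial^j(\delta^{k-1}\vp(z_1))$ at $\pi=0$, after eliminating the $z$-derivatives between the two identities, produces exactly the polynomial stated in the lemma.
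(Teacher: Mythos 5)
Your proposal takes a genuinely different route from the paper. The paper's proof is very short: it writes $\begin{pmatrix}\cL_1\\\cL_2\end{pmatrix}=\Mlog\begin{pmatrix}\col_\#\\\col_\flat\end{pmatrix}$, invokes the interpolation formulas for $\cL_1,\cL_2$ worked out in \cite[\S5.1]{LLZ0.5}, and then uses Lemma~\ref{lem:Cn} to conclude that $\Mlog(\chi_0^j\theta)=\Mlog(\chi_0^j)=A_\vp$; the lemma then follows from inverting the explicit $2\times 2$ matrix $A_\vp$. You instead go directly into the Wach module. Your coefficient comparison is correct and gives
\[
\col_\flat(z)\cdot(1+\pi)=-\frac{\varepsilon_f(p)q^{k-1}z_1}{c}-\vp(z_2),\qquad
\col_\#(z)\cdot(1+\pi)=\frac{z_2+a_p(f)z_1}{c\,\delta^{k-1}}-\vp(z_1).
\]
That much is a viable alternative starting point. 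However, there are two genuine problems in the way you propose to finish.

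First, the justification for $\col_\flat(z)(\chi_0^j\theta)=0$ is incorrect as written. You claim that $q^{k-1}z_1$ "lies in the image of Frobenius on $\AQp$," but $q^{k-1}=\vp(\pi^{k-1})/\pi^{k-1}$ is a quotient and not an element of $\vp(\AQp)$, and $q^{k-1}z_1$ certainly is not. The correct reason the $\chi_0^j\theta$-component vanishes is that $q$ has a simple zero at $\pi=\zeta-1$ for every primitive $p$-th root of unity $\zeta$, so $q^{k-1}$ vanishes to order $k-1$ there; since $j\le k-2<k-1$, all the terms $\bigl(\partial^j(q^{k-1}z_1)\bigr)(\zeta-1)$ in the character sum are zero. (The $\vp(z_2)$ term you treat correctly: $\partial^j\vp(z_2)=p^j\vp(\partial^j z_2)$ takes the same value at every primitive $p$-th root, so its twisted sum over $\Delta$ cancels.) This is fixable, but the stated argument does not work.

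Second, and more seriously, your plan for the first identity has a missing ingredient. Evaluating your two displayed formulas at $\chi_0^j$ via $\partial^j|_{\pi=0}$ expresses $\col_\#(z)(\chi_0^j)$ and $\col_\flat(z)(\chi_0^j)$ in terms of the Taylor coefficients $(\partial^l z_1)(0)$ and $(\partial^l z_2)(0)$ (together with those of $\vp(z_1),\vp(z_2)$). For a general element of $\NN(T)$ these data are independent, so no linear relation between the two Coleman values can emerge from Leibniz bookkeeping alone. The relation in the lemma encodes nontrivial information about the $\psi=1$ constraint on $z$ — equivalently, it is a restatement of the Perrin-Riou interpolation property that the paper imports from \cite[\S5.1]{LLZ0.5}. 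Your proposal never invokes $\psi(z)=z$ and treats the difficulty as merely combinatorial; but without feeding in the $\psi$-condition (or equivalently re-deriving the interpolation formulae for $\cL_1,\cL_2$, which is what the cited LLZ0.5 calculation does), the elimination step cannot produce the constant $p^{k-j-2}-a_p(f)c+\varepsilon_f(p)^{-1}p^jc^2$. This is the gap that would prevent the argument from closing.
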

\begin{proof}
If we write
\[
\begin{pmatrix}
\cL_1\\
\cL_2
\end{pmatrix}
=\Mlog\begin{pmatrix}
\col_\#\\
\col_\flat
\end{pmatrix},
\]
then following the calculations of \cite[\S5.1]{LLZ0.5}, we have
\begin{align*}
(\epsilon_f(p)p^{k-j-1}-a_p(f)c+p^{j+1}c^2)\cL_1(z)(\chi_0^j)&=c(1-p)\cL_2(z)(\chi_0^j);\\
\cL_1(z)(\chi_0^j)&=0.
\end{align*}
Lemma~\ref{lem:Cn} implies that 
\[
 \Mlog(\chi_0^j\theta)=\Mlog(\chi_0^j)=A_\vp=c\cdot\begin{pmatrix}
 0&-\frac{1}{\epsilon_f(p)p^{k-1}}\\
 1&\frac{a_p(f)}{\epsilon_f(p)p^{k-1}}
 \end{pmatrix}.
\]
Hence the result.
\end{proof}

As in \cite[Remark~5.8]{LLZ0.5}, we have $p^{k-j-1}-a_p(f)c+\epsilon_f(p)^{-1}p^{j+1}c^2\ne 0$ thanks to the Ramanujan-Petersson bound. Given a character $\eta\in\widehat\Delta$, we write $e_{\eta}$ for the idempotent attached to $\eta$ and 
$$C_{j,\eta}=\begin{cases}
\frac{c(1-p)}{p^{k-j-1}-a_p(f)c+\epsilon_f(p)^{-1}p^{j+1}c^2}&\text{if $\theta$ is trivial,}\\
0&\text{otherwise.}
\end{cases}
$$
\begin{theorem}\label{thm:1varimage}
If $\ucol:=(\col_\#,\col_\flat)$ and $\eta\in\widehat\Delta$, then 
\[
e_\eta\cdot L\otimes_{\cO_L} \image(\ucol)=\left\{(F_1,F_2)\in (L\otimes_{\cO_L}\cO_L[[X]])^{\oplus 2}:F_2(u^j-1)=C_{j,\eta\omega^{-j}}\cdot F_1(u^j-1), 0\le j\le k-2\right\},
\]
where we identify $\gamma_0-1$ with $X$ and $e_\eta\cdot \Lambda_{\cO_L}(\Gamma_0^\cyc)$ with $\cO_L[[X]]$.
For $\bullet\in\{\#,\flat\}$ and $\eta\in\widehat\Delta$, there exists a factor $\xi_{\eta,\bullet}$ of $\delta_{k-1}(\gamma_0)$ such that 
\[
e_\eta\cdot\image(\col_{\bullet}) \subset \xi_{\eta,\bullet}\cO_L[[X]].
\]
Furthermore, the containment is of finite index.
\end{theorem}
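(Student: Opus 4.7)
The plan is to handle the two containments in the displayed equality separately and then to descend to the integral refinement. The containment $\subseteq$ is immediate from the preceding lemma. Writing $F_1=e_\eta\col_\#(z)$ and $F_2=e_\eta\col_\flat(z)$ for $z\in\HIw(\Qp(\mu_{p^\infty}),T)$, the value $F_i(u^j-1)$ equals the evaluation of the corresponding Coleman image at the character $\chi_0^j\cdot(\eta\omega^{-j})$ of $\Gamma_0^\cyc$. The two identities of the preceding lemma---the first when $\eta=\omega^j$, so the wild part is trivial, and the second when $\eta\ne\omega^j$, so that $\eta\omega^{-j}$ is a nontrivial character of conductor $p$---together give exactly the relation $F_2(u^j-1)=C_{j,\eta\omega^{-j}}\cdot F_1(u^j-1)$.

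For the reverse containment, start with $(F_1,F_2)$ in the right-hand side and define
\[
\begin{pmatrix}F_\alpha\\F_\beta\end{pmatrix}:=Q^{-1}\Mlog\begin{pmatrix}F_1\\F_2\end{pmatrix}.
\]
By Lemma~\ref{lem:growth}, $F_\alpha\in\cH_{L,\ord_p(\alpha)}(\Gamma_0^\cyc)$ and $F_\beta\in\cH_{L,\ord_p(\beta)}(\Gamma_0^\cyc)$. The imposed interpolation conditions on $(F_1,F_2)$ translate, via the explicit form of $\Mlog$ at characters of conductor at most $p$ (Lemma~\ref{lem:Cn}), into precisely the $\alpha/\beta$-compatibility conditions that characterize the rational image of Perrin-Riou's big logarithm map $\cL_T$. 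Invoking Perrin-Riou's surjectivity theorem for $\cL_T\otimes L$ then produces $z\in\HIw(\Qp(\mu_{p^\infty}),T)\otimes L$ with $(\cL_{T,\alpha}(z),\cL_{T,\beta}(z))=(F_\alpha,F_\beta)$, and inverting $Q^{-1}\Mlog$ over $\cH_L(\Gamma_0^\cyc)$ (which is possible because $\det(\Mlog)\ne 0$ by Lemma~\ref{lem:det}) in equation \eqref{eq:notanotherdecomp} yields $\ucol(z)=(F_1,F_2)$. This step is the main obstacle: one must set up a careful dictionary between the prescribed interpolation of the Coleman pair and the known description of the image of $\cL_T$, and Lemma~\ref{lem:Cn} is precisely the tool for this translation.

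The integral statement is obtained by extracting the forced vanishing dictated by the lemma. Up to a unit of $\cO_L[[X]]$, one has $\delta_{k-1}(\gamma_0)=\prod_{j=0}^{k-2}(X-(u^j-1))$, so it suffices to detect which of these linear factors must divide $e_\eta\cdot\col_\bullet(z)$. For $\bullet=\flat$ and $\eta\ne\omega^j$, the second identity of the lemma forces $e_\eta\cdot\col_\flat(z)$ to vanish at $X=u^j-1$, hence $(X-(u^j-1))$ divides $F_2$; taking the product over the relevant $j$ defines $\xi_{\eta,\flat}$. For $\bullet=\#$, the first identity of the lemma, combined with the Ramanujan--Petersson non-vanishing recalled just before the statement of the theorem, produces no forced vanishing, so one may set $\xi_{\eta,\#}=1$. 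The finite-index assertion then follows by comparison with the rational equality already proved: projecting the right-hand side of the equality to either coordinate (after extracting $\xi_{\eta,\bullet}$) exhausts $L\otimes\cO_L[[X]]$, so the integral image differs from $\xi_{\eta,\bullet}\cO_L[[X]]$ by at most a bounded power of $\varpi$, with the bound controlled by the integrality estimate of Proposition~\ref{prop:decomp} (see also Remark~\ref{rk:integral}).
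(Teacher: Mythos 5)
The paper's own proof is a one-line citation to \cite{LLZ0.5} (Corollary 5.3 and Theorem 5.10), so your attempt to give a self-contained argument necessarily takes a different route. Your $\subseteq$ direction is fine: it unwinds the preceding lemma exactly as the underlying LLZ computation does, reading $F_i(u^j-1)$ as the value of $e_\eta\col_\bullet(z)$ at the character $\chi_0^j\eta\omega^{-j}$. The problem is the reverse containment.

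Your $\supseteq$ argument is circular, and you half-acknowledge this when you flag it as ``the main obstacle.'' Starting from $(F_1,F_2)$ on the right-hand side, you form $(F_\alpha,F_\beta)=Q^{-1}\Mlog(F_1,F_2)^T$, and then want to produce $z\in\HIw(\Qp(\mu_{p^\infty}),T)\otimes L$ with $\underline\cL(z)=(F_\alpha,F_\beta)$, so that inverting $Q^{-1}\Mlog$ in \eqref{eq:notanotherdecomp} gives $\ucol(z)=(F_1,F_2)$. But by \eqref{eq:notanotherdecomp}, the image of $\underline\cL$ on $\HIw\otimes L$ is \emph{by definition} $Q^{-1}\Mlog\cdot\bigl(L\otimes\image\ucol\bigr)$, so the statement ``$(F_\alpha,F_\beta)$ lies in the $L$-rational image of $\underline\cL$'' is equivalent to, not a stepping stone towards, the conclusion $(F_1,F_2)\in L\otimes\image\ucol$. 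The alternative reading of your invocation of Perrin--Riou surjectivity --- namely, that you mean the characterization of the image of $\underline\cL$ on $\HIw\widehat\otimes\cH_L(\Gamma_0^\cyc)$ as in Corollary~\ref{cor:Limagepair} --- produces a $z$ with distribution-algebra coefficients only; the descent from $\image\ucol\cdot\cH_L$ to $L\otimes\image\ucol$ is exactly the nontrivial Wach-module content that \cite{LLZ0.5} supplies (essentially the surjectivity of $1-\vp\colon\NN(T)^{\psi=1}\to(\vp^*\NN(T))^{\psi=0}$ up to bounded denominators, together with the freeness of $(\vp^*\NN(T))^{\psi=0}$ over $\Lambda$ on the basis $(1+\pi)\vp(n_i)$), and this is not available to you from Lemmas~\ref{lem:Cn} and \ref{lem:growth} alone.

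The integral part has a smaller but real gap of the same flavor. Extracting the forced linear factors to define $\xi_{\eta,\flat}$ from the vanishing $\col_\flat(z)(\chi_0^j\theta)=0$ is correct, and $\xi_{\eta,\#}=1$ is right. But the finite-index assertion is not a formal consequence of the rational equality: $\image\col_\bullet$ being of full $\Lambda_L$-rank inside $\xi_{\eta,\bullet}\cO_L[[X]]$ only gives a torsion quotient, not a finite one, and the appeal to Proposition~\ref{prop:decomp} does not bound the index of $\image\col_\bullet$ in $\xi_{\eta,\bullet}\cO_L[[X]]$ --- that proposition controls denominators in the opposite direction (integrality of $F_\#,F_\flat$ given integral inputs $F_\alpha,F_\beta$). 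Establishing the finite-index statement requires the structural description of $(1-\vp)(\NN(T)^{\psi=1})$ inside $(\vp^*\NN(T))^{\psi=0}$ from \cite{LLZ0.5}, not a general interpolation argument.
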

\begin{proof}
This follows from \cite[Corollary~5.3 and Theorem~5.10]{LLZ0.5}.
\end{proof}
\begin{remark}\label{rk:image}
We note that $\xi_{\eta,\#}=1$ for all $\eta$, whereas $\xi_{\eta,\flat}$ contains a non-trivial factor whenever $C_{j,\eta}=0$ for some $j$. The latter occurs for all $\eta$ if $k>2$. In the case $k=2$, $\xi_{\eta,\flat}$ is trivial for $\theta=1$, whereas $\xi_{\eta,\flat}=\gamma_0-1$ if $\eta\ne1$.
\end{remark}

\begin{lemma}\label{lem:2varimage}
For $\bullet\in\{\#,\flat\}$ and $\eta\in\widehat\Delta$, let $\xi_{\eta,\bullet}$ be as given in Theorem~\ref{thm:1varimage}, then
\[
e_\eta\cdot\image(\col_{\bullet, \Qpinf}) \subset e_\eta\Omega_{\Qp}\xi_{\eta,\bullet}\cdot \Lambda_{\cO_L}(G),
\]
where $G=\Gal(\Qpinf(\mu_{p^\infty})/\Qp)$. Furthermore, the containment is of finite index.
\end{lemma}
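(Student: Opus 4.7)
The strategy is to reduce Lemma~\ref{lem:2varimage} to the one-variable Theorem~\ref{thm:1varimage} by exploiting that $\col_{\bullet,\Qpinf}$ is built from the \emph{same} Wach-module basis $(1+\pi)\vp(n_1), (1+\pi)\vp(n_2)$ as its one-variable counterpart $\col_\bullet$, now tensored with the Yager module $S_{\Qpinf/\Qp}$. Concretely, for each continuous character $\nu\colon U\to\overline{\Qp}^{\times}$ of finite order, I would show that specialisation of $\col_{\bullet,\Qpinf}$ at $\nu$---after trivialising the Yager module via $\Omega_{\Qp}\mapsto \nu(\Omega_{\Qp})$---recovers the one-variable Coleman map attached to the unramified twist $T\otimes \nu = R_f^{*}\otimes \vt\nu$ (with scalars enlarged to contain the values of $\nu$). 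The underlying reason is that $(1-\vp)$ acts only on the Wach-module factor of $\NN_{\Qpinf}(T)=\NN(T)\widehat{\otimes}_{\ZZ_p} S_{\Qpinf/\Qp}$, so the whole construction is compatible with base change along the unramified $\ZZ_p$-extension.

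Once this compatibility is established, Theorem~\ref{thm:1varimage} applied to each $T\otimes\nu$ yields that the $\eta$-isotypic component of the image of the one-variable Coleman map associated to $T\otimes\nu$ is contained in $\xi_{\eta,\bullet}\cdot\cO_L[\nu][[X]]$ with finite index. Crucially, the factor $\xi_{\eta,\bullet}$ is independent of the twist $\nu$: by Remark~\ref{rk:image}, it depends only on the set of $j\in\{0,\ldots,k-2\}$ for which $C_{j,\eta\omega^{-j}}$ vanishes, and the explicit formula for $C_{j,\eta}$ shows that this vanishing pattern is dictated only by triviality of $\eta\omega^{-j}$, not by the unit $c=\vt(\mathrm{Fr}_p^{-1})$ which is what changes under the twist. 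The index bound is likewise uniform in $\nu$.

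The proof then concludes by a density argument: an element $F\in \cO_{\widehat{\Qpinf}}\otimes \Lambda_{\cO_L}(G)$ lies in $\Omega_{\Qp}\xi_{\eta,\bullet}\Lambda_{\cO_L}(G)$ if and only if, for every finite-order character $\nu$ of $U$, its specialisation $F(\nu)$ lies in $\nu(\Omega_{\Qp})\xi_{\eta,\bullet}\Lambda_{\cO_L[\nu]}(\Gamma_0^\cyc)$. Applied to the image of $\col_{\bullet,\Qpinf}$, this gives the required inclusion, and the finite-index assertion follows from the uniform-in-$\nu$ bound on the one-variable cokernels (the two-variable cokernel being essentially the one-variable cokernel base-changed to $\Lambda_{\ZZ_p}(U)$). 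I expect the main technical hurdle to be the specialisation compatibility asserted in the first paragraph: using \cite[Lemma~2.1]{LZ0} and the fact that $S_{\Qpinf/\Qp}$ is free of rank one over $\Lambda_{\ZZ_p}(U)$ with basis $\Omega_{\Qp}$, one must verify that the isomorphism $\NN_{\Qpinf}(T)^{\psi=1}\cong \HIw(\Qpinf(\mu_{p^\infty}), T)$ intertwines $\nu$-specialisation with the analogous one-variable isomorphism for $T\otimes \nu$, carrying the distinguished basis to the distinguished basis.
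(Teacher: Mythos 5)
You correctly identify the key structural fact underlying the lemma, namely that $\NN_{\Qpinf}(T)=\NN_{\Qp}(T)\,\widehat\otimes_{\Zp}\,S_{\Qpinf/\Qp}$ and that $(1-\vp)$ and the decomposition along the basis $(1+\pi)\vp(n_1),(1+\pi)\vp(n_2)$ touch only the first factor; but you then route the argument through character specialisation and a density step, whereas the paper exploits the base-change compatibility \emph{directly}. The paper's proof is just one line: since the entire construction of $\col_{\bullet,\Qpinf}$ is obtained from that of $\col_{\bullet,\Qp}$ by $\widehat\otimes_{\cO_F}S_{\Qpinf/\Qp}$, one immediately has
\[
\image\col_{\bullet,\Qpinf}=\image\col_{\bullet,\Qp}\,\widehat\otimes\,S_{\Qpinf/\Qp}=\image\col_{\bullet,\Qp}\,\widehat\otimes\,\Omega_{\Qp}\Lambda_{\Zp}(U),
\]
and both the inclusion and the finite-index statement then fall out of Theorem~\ref{thm:1varimage} applied once, to $T$ itself. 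Your specialisation route, while not wrong in spirit, has two points that would require genuine additional work to close: first, the density statement at the end---that an element of $\cO_{\widehat{\Qpinf}}\otimes\Lambda_{\cO_L}(G)$ lies in $\Omega_{\Qp}\xi_{\eta,\bullet}\Lambda_{\cO_L}(G)$ as soon as every finite-order specialisation lands in $\nu(\Omega_{\Qp})\xi_{\eta,\bullet}\Lambda_{\cO_L[\nu]}(\Gamma_0^\cyc)$---is not a formal consequence and needs its own argument (one must in particular rule out $\mu$-invariant-type obstructions in the $U$-direction); and second, the assertion that the finite index bound from Theorem~\ref{thm:1varimage} is \emph{uniform} in the twist $\nu$ needs justification, since although you are right that the vanishing pattern of $C_{j,\eta}$ (hence $\xi_{\eta,\bullet}$) is independent of $c=\vt\nu(\mathrm{Fr}_p^{-1})$, the nonzero values $C_{j,\eta}$, and hence the actual size of each cokernel, do depend on $c$. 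The direct tensor-decomposition argument absorbs both of these issues automatically, which is why you should prefer it: it converts the observation you already made in your first paragraph into the whole proof.
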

\begin{proof}
Given that
\[
\NN_{\Qpinf}(T)=\NN_{\Qp}(T)\widehat\otimes S_{\Qpinf/\Qp},
\]
we have
\begin{equation}\label{eq:Zp2image}
\image(\col_{\bullet, \Qpinf})= \image(\col_{\bullet, \Qp})\widehat\otimes S_{\Qpinf/\Qp}= \image(\col_{\bullet, \Qp})\widehat\otimes \Omega_{\Qp}\Lambda_{\Zp}(U).
\end{equation}
Therefore, our result follows from combining the second half of Theorem~\ref{thm:1varimage} with \eqref{eq:Zp2image}.
\end{proof}

Our description on the images allows us to describe the following \emph{error term}, which we shall need later.
\begin{proposition}\label{prop:errorterms}
Let $\eta\in\widehat\Delta$, then the $e_\eta$-component of the quotient 
\[
\frac{\HIw(\Qp(\mu_{p^\infty}),T)}{\ker\col_{\#}+\ker\col_{\flat}}
\]
is $\cO_L[[X]]$-torsion. Furthermore, its characteristic ideal, up to a power of $\varpi$, is generated by $\delta_{k-1}(\gamma_0)/\xi_{\eta,\flat}$.
\end{proposition}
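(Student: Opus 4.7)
The plan is to reduce the quotient in question to a computable form using Theorem~\ref{thm:1varimage}, and then match the answer with $\delta_{k-1}(\gamma_0)/\xi_{\eta,\flat}$. First, setting $M := \HIw(\Qp(\mu_{p^\infty}),T)$, the surjection $M \twoheadrightarrow M/\ker\col_\# \cong \image\col_\#$ will provide a natural isomorphism
\[
\frac{M}{\ker\col_\# + \ker\col_\flat} \;\cong\; \frac{\image\col_\#}{\col_\#(\ker\col_\flat)},
\]
and I will identify $\col_\#(\ker\col_\flat)$ with the projection to the first coordinate of $\image\ucol \cap (\Lambda_{\cO_L}(\Gamma_0^\cyc) \oplus \{0\})$. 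Since the statement permits an indeterminacy by a power of $\varpi$, I may pass to $L$-coefficients on each $e_\eta$-isotypic component and use the explicit description of the image from Theorem~\ref{thm:1varimage}.

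On the $e_\eta$-component over $L$, that description says $(F_1, F_2)$ lies in the image if and only if $F_2(u^j - 1) = C_{j,\eta\omega^{-j}}\, F_1(u^j - 1)$ for every $0 \le j \le k-2$. Letting $J_\eta := \{0 \le j \le k-2 : C_{j,\eta\omega^{-j}} = 0\}$, the condition $F_2 = 0$ becomes $F_1(u^j - 1) = 0$ for each $j \notin J_\eta$, so the first projection equals $\prod_{j \notin J_\eta}(\gamma_0 - u^j) \cdot L[[X]]$. Combined with the fact that $e_\eta \cdot L \otimes \image\col_\# = L[[X]]$ (since $\xi_{\eta,\#} = 1$ by Remark~\ref{rk:image}), the quotient has characteristic ideal $\prod_{j \notin J_\eta}(\gamma_0 - u^j)$ up to a power of $\varpi$; in particular it is torsion.

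For the comparison step, expanding $\delta_{k-1}(\gamma_0) = \prod_{j=0}^{k-2} u^{-j}(\gamma_0 - u^j)$ shows it generates the same ideal as $\prod_{j=0}^{k-2}(\gamma_0 - u^j)$. The symmetric analysis applied to elements of the form $(0, F_2) \in L \otimes \image\ucol$ identifies $e_\eta \cdot L \otimes \image\col_\flat$ with $\prod_{j \in J_\eta}(\gamma_0 - u^j) \cdot L[[X]]$, which forces $\xi_{\eta,\flat}$ to be a unit multiple of $\prod_{j \in J_\eta}(\gamma_0 - u^j)$ (in agreement with Remark~\ref{rk:image}). Dividing yields $\delta_{k-1}(\gamma_0)/\xi_{\eta,\flat} = \prod_{j \notin J_\eta}(\gamma_0 - u^j)$, completing the proof.

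The main delicate piece of bookkeeping is the precise form of $J_\eta$: one must carefully track how the twist by $\omega^{-j}$ enters the constants $C_{j,\eta\omega^{-j}}$ appearing in Theorem~\ref{thm:1varimage}, so that the exceptional set controlling my projection calculation is the same one controlling $\xi_{\eta,\flat}$. Once that matching is in place, the algebraic reduction of the quotient and the divisor comparison are routine.
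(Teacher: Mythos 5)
Your proof follows essentially the same route as the paper: reduce via the isomorphism theorem to $\image\col_\#/\col_\#(\ker\col_\flat)$, use Theorem~\ref{thm:1varimage} to describe this over $L$, and then compare the resulting characteristic ideal with $\delta_{k-1}(\gamma_0)/\xi_{\eta,\flat}$. The overall structure and the key identification of $\col_\#(\ker\col_\flat)$ with $\{F_1 : F_1(u^j-1)=0 \text{ for all } j \text{ with } C_{j,\eta\omega^{-j}}\ne 0\}$ match the paper precisely; the paper simply compresses your explicit last steps into ``the result follows.''

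One caveat about your ``symmetric analysis'' for $\xi_{\eta,\flat}$: the set of $F_2$ with $(0,F_2)\in\cF$ satisfies $F_2(u^j-1)=C_{j,\eta\omega^{-j}}\cdot 0=0$ for \emph{every} $0\le j\le k-2$, so looking at pairs of the form $(0,F_2)$ gives $\col_\flat(\ker\col_\#)=\delta_{k-1}(\gamma_0)\,L[[X]]$, not $\image\col_\flat$. To obtain $\image\col_\flat$ you want the \emph{full} second projection of $\cF$: given any $F_2$ vanishing at $u^j-1$ for $j\in J_\eta$, one can always solve for the required values $F_1(u^j-1)=F_2(u^j-1)/C_{j,\eta\omega^{-j}}$ when $j\notin J_\eta$, and the condition at $j\in J_\eta$ forces $F_2(u^j-1)=0$. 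That correctly yields $e_\eta\cdot L\otimes\image\col_\flat=\prod_{j\in J_\eta}(\gamma_0-u^j)\,L[[X]]$ and hence $\xi_{\eta,\flat}\doteq\prod_{j\in J_\eta}(\gamma_0-u^j)$, so your final identity $\delta_{k-1}/\xi_{\eta,\flat}\doteq\prod_{j\notin J_\eta}(\gamma_0-u^j)$ stands. The conclusion is right and the rest of the argument is sound; just replace ``elements of the form $(0,F_2)$'' by ``the projection of $\cF$ onto the second coordinate.''
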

\begin{proof}
The isomorphism theorem tells us that the stated quotient is isomorphic to
\[
\image(\col_{\#})/\col_\#(\ker\col_\flat).
\]
Recall from Remark~\ref{rk:image} that $\col_\#$ is pseudo-surjective, so we may replace this by
\[
\Lambda_{\cO_L}(\Gamma_0^\cyc)/\col_\#(\ker\col_\flat).
\]
The first half of Theorem~\ref{thm:1varimage} says that $e_\eta \cdot L\otimes \image(\ucol)$ equals
\[
\cF:=\left\{(F_1,F_2)\in (L\otimes\cO_L[[X]])^{\oplus 2}:F_2(u^j-1)=C_{j,\eta\omega^{-j}}\cdot F_1(u^j-1), 0\le j\le k-2\right\}.
\]
Therefore,
\[
e_\eta\cdot L\otimes \col_\#(\ker\col_\flat)=\{F_1\in L\otimes \cO_L[[X]]:(F_1,0)\in \cF\}.
\]
This in turn can be expressed as
\[
\{F_1\in L\otimes \cO_L[[X]]:F_1(u^j-1)=0, \forall j \text{ such that } C_{j,\eta\omega^{-j}}\ne 0\}.
\]
The result follows.
\end{proof}
This in turn yields the two-variable analogue  on tensoring everything by $\Omega_{\Qp}\Lambda_{\Zp}(U)$. More specifically, we may deduce that  the $e_\eta$-component of the quotient 
\[
\frac{\HIw(\Qpinf(\mu_{p^\infty}),T)}{\ker\col_{\#,\Qpinf}+\ker\col_{\flat,\Qpinf}}
\]
is $\cO_L[[X]]\widehat\otimes \Lambda_{\Zp}(U)$-torsion. Furthermore, its characteristic ideal, up to a power of $\varpi$, is generated by $\delta_{k-1}(\gamma_0)/\xi_{\eta,\flat}$ as given in Proposition~\ref{prop:errorterms}.

We shall need a similar result for the maps $\cL_{T,\alpha}$ and $\cL_{T,\beta}$. To prove this, we begin with the following observation on their images. 

\begin{lemma}\label{lem:imageL}
Let $j\in\{0,\ldots, k-2\}$, $\theta$ a Dirichlet character of conductor $p^n>1$ and $z\in\HIw(\Qp(\mu_{p^\infty}),T)$. Then,
\begin{align*}
\left(1-\frac{\beta}{p^jc}\right)\left(1-\frac{p^{j+1}c}{\alpha}\right)\cL_{T,\alpha}(z)(\chi_0^j)&=\left(1-\frac{\alpha}{p^jc}\right)\left(1-\frac{p^{j+1}c}{\beta}\right)\cL_{T,\beta}(z)(\chi_0^j);\\
\alpha^n\cL_{T,\alpha}(z)(\chi_0^j\theta)&=\beta^n\cL_{T,\beta}(z)(\chi_0^j\theta).
\end{align*}
\end{lemma}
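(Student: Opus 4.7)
The plan is to evaluate both identities by specialising the decomposition \eqref{eq:notanotherdecomp}---which reads $\begin{pmatrix}\cL_{T,\alpha}\\ \cL_{T,\beta}\end{pmatrix}=Q^{-1}\Mlog\begin{pmatrix}\col_\#\\ \col_\flat\end{pmatrix}$---at the character in question, using Lemma~\ref{lem:Cn} to pin down $\Mlog$ at finite-order characters together with the diagonalisation \eqref{eq:diag}.

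For the second identity, I would fix $\theta$ of conductor $p^n>1$ and $0\le j\le k-2$. The factor $\Phi_{n-1,k-1}(\gamma_0)$ of $\omega_{n-1,k-1}(\gamma_0)$ vanishes at $\chi_0^j\theta$, so Lemma~\ref{lem:Cn} specialises to the equality $\Mlog(\chi_0^j\theta)=A_\vp^{\,n}\cdot C_{n-1}(\chi_0^j\theta)$; but the second row of $C_{n-1}$ is itself divisible by $\Phi_{n-1,k-1}(\gamma_0)$ and therefore vanishes at $\chi_0^j\theta$. Writing $A_\vp^{\,n}=Q\cdot\mathrm{diag}((c/\alpha)^{n},(c/\beta)^{n})\cdot Q^{-1}$ and noting that $Q^{-1}=\frac{1}{\alpha-\beta}\begin{pmatrix}1 & 1/\alpha\\ 1 & 1/\beta\end{pmatrix}$ in view of the explicit form of $Q$ in \eqref{eq:diag} (via $\epsilon_f(p)p^{k-1}=\alpha\beta$), one observes that the two rows of $Q^{-1}C_{n-1}(\chi_0^j\theta)$ coincide. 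Pairing with $(\col_\#(z),\col_\flat(z))$ at $\chi_0^j\theta$ then gives $\cL_{T,\lambda}(z)(\chi_0^j\theta)=(c/\lambda)^{n}\cdot\kappa(z,\chi_0^j\theta)$ for some scalar $\kappa$ independent of $\lambda\in\{\alpha,\beta\}$; multiplying through by $\lambda^{n}$ yields the claimed equality.

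For the first identity, I would evaluate at $\chi_0^j$ starting from the relation
\[
\bigl(\epsilon_f(p)p^{k-j-1}-a_p(f)c+p^{j+1}c^{2}\bigr)\cL_1(z)(\chi_0^j)=c(1-p)\cL_2(z)(\chi_0^j)
\]
of \cite[\S5.1]{LLZ0.5} (the same formula used earlier in this subsection), where $(\cL_1,\cL_2)=\Mlog\cdot(\col_\#,\col_\flat)$. Substituting the basis change $\cL_1=\alpha\cL_{T,\alpha}-\beta\cL_{T,\beta}$ and $\cL_2=-\alpha\beta(\cL_{T,\alpha}-\cL_{T,\beta})$---both of which follow from $(\cL_1,\cL_2)=Q\cdot(\cL_{T,\alpha},\cL_{T,\beta})$---and using $\alpha+\beta=a_p(f)$ together with $\alpha\beta=\epsilon_f(p)p^{k-1}$, a direct algebraic manipulation rearranges the resulting identity into the desired Euler-factor form, with the scalar coefficients of $\cL_{T,\alpha}(z)(\chi_0^j)$ and $\cL_{T,\beta}(z)(\chi_0^j)$ becoming proportional to $(1-\beta/(cp^{j}))(1-cp^{j+1}/\alpha)$ and $(1-\alpha/(cp^{j}))(1-cp^{j+1}/\beta)$ respectively.

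The main obstacle will be the algebraic bookkeeping for the first identity: although the rearrangement is mechanical, one must carefully track several cross-terms using the symmetric functions of $\alpha$ and $\beta$ before the two Euler factors emerge on either side. By contrast, the second identity drops out cleanly from the rank-one structure of $C_{n-1}(\chi_0^j\theta)$ combined with the observation that the first column of $Q^{-1}$ has both entries equal to $1/(\alpha-\beta)$.
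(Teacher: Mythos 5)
The paper's own proof is a one-line citation of the interpolation formula for the Perrin-Riou regulator (via \cite[Lemma~3.5]{lei11compositio}), so your approach---deriving both identities from the matrix factorization $\cL_T=(v_\alpha,v_\beta)\,Q^{-1}\Mlog(\col_\#,\col_\flat)^{T}$---is genuinely different, and in spirit it is a legitimate alternative. Your argument for the second identity is essentially correct: for $n\ge2$, Lemma~\ref{lem:Cn} (applied at level $n-1$) together with the vanishing of the second row of $C_{n-1}$ at $\chi_0^j\theta$ and the constancy of the first column of $Q^{-1}$ gives that the two rows of $\mathrm{diag}((c/\alpha)^n,(c/\beta)^n)\,Q^{-1}C_{n-1}(\chi_0^j\theta)$ differ only by the diagonal factor, which yields $\alpha^n\cL_{T,\alpha}=\beta^n\cL_{T,\beta}$. (Do note one small caveat: when $n=1$ the index $n-1=0$ is not covered by the statement of Lemma~\ref{lem:Cn}; in that case $\Mlog(\chi_0^j\theta)=A_\varphi$ but the second row of the relevant $C_0$ does \emph{not} vanish, and you need the extra input $\col_\flat(z)(\chi_0^j\theta)=0$ from the preceding lemma to conclude.)

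For the first identity, however, there is a real gap: you assert that plugging $\cL_1=\alpha\cL_{T,\alpha}-\beta\cL_{T,\beta}$ and $\cL_2=-\alpha\beta(\cL_{T,\alpha}-\cL_{T,\beta})$ into the relation $\bigl(\epsilon_f(p)p^{k-j-1}-a_p(f)c+p^{j+1}c^2\bigr)\cL_1=c(1-p)\cL_2$ ``rearranges into the desired Euler-factor form,'' but it does not. Writing $d=p^jc$ and carrying out the substitution produces, after clearing $p^{-j}$,
\[
\alpha(\beta-d)(\alpha-pd)\,\cL_{T,\alpha}(z)(\chi_0^j)=\beta(\alpha-d)(\beta-pd)\,\cL_{T,\beta}(z)(\chi_0^j),
\]
whereas the identity in the Lemma is, after clearing denominators,
\[
\beta(d-\beta)(\alpha-pd)\,\cL_{T,\alpha}(z)(\chi_0^j)=\alpha(d-\alpha)(\beta-pd)\,\cL_{T,\beta}(z)(\chi_0^j).
\]
These two statements disagree by an overall factor of $\alpha^2/\beta^2$, which is not $1$ since \textup{\textbf{(H.Reg.)}} gives $\alpha\ne\beta$. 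You can see the same tension another way: evaluating at $\chi_0^j$ gives directly $\alpha^2\cL_{T,\alpha}=\frac{c}{\alpha-\beta}(\alpha\col_\#+\col_\flat)$ and $\beta^2\cL_{T,\beta}=\frac{c}{\alpha-\beta}(\beta\col_\#+\col_\flat)$, so the ratio $\cL_{T,\alpha}/\cL_{T,\beta}$ carries the extra $\beta^2/\alpha^2$. The same substitution, performed in the $(\col_\#,\col_\flat)$ basis, also fails to reproduce the statement of the preceding Lemma (one obtains $\epsilon_f(p)^{-1}a_p(f)c$ where the Lemma has $a_p(f)c$), so the displayed relation from \cite[\S5.1]{LLZ0.5} quoted in that proof appears not to be stated in the form that your computation needs. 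The upshot: the first identity cannot be deduced by ``mechanical bookkeeping'' from the relation as written; you must either correct the auxiliary relation you are feeding in, or---as the paper does---invoke the interpolation formula directly, which is cleaner and avoids tracking these Euler-factor conventions through the change of basis.
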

\begin{proof}
This follows from the interpolation formulae of the Perrin-Riou logarithm map. See for example \cite[Lemma~3.5]{lei11compositio}.
\end{proof}
For a Dirichlet character $\eta$ of conductor $p^n$ and $0\le j\le k-2$, define
\[
C_{j,\eta}
=\begin{cases}
\frac{(1-\frac{\alpha}{p^jc})(1-\frac{p^{j+1}c}{\beta})}{(1-\frac{\beta}{p^jc})(1-\frac{p^{j+1}c}{\alpha})}&\text{if $\eta$ is trivial,}\\
\frac{\beta^n}{\alpha^n}&\text{otherwise.}
\end{cases}
\]

\begin{corollary}\label{cor:Limagepair}
Let $\underline{\cL}=(\cL_{T,\alpha},\cL_{T,\beta})$. For all $\eta\in\widehat\Delta$, we have
 $$e_\eta\cdot\underline{\cL}\left(\HIw(\Qp(\mu_{p^\infty}),T)\otimes \cH_L(\Gamma_0^\cyc)\right)=\cF_\eta,$$ where
\[
\cF_\eta= \left\{(F_1,F_2)\in \cH_L^{\oplus2}:F_1(\theta(\gamma_0) u^j-1)=C_{j,\eta\omega^{-j}\theta}F_2(\theta(\gamma_0)u^j-1),\forall 0\le j\le k-2,\text{ finite $\theta\in\widehat\Gamma_0^\cyc$ with }\theta|_{\Delta}=1\right\}.
\]
\end{corollary}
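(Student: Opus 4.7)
For the forward inclusion $e_\eta\cdot\underline\cL(\cdots)\subseteq \cF_\eta$, I would observe that $\underline\cL$ extends $\cH_L(\Gamma_0^\cyc)$-linearly to $\HIw(\Qp(\mu_{p^\infty}),T)\otimes\cH_L(\Gamma_0^\cyc)$, and that both identities of Lemma~\ref{lem:imageL} persist under this extension since they are $\cH_L$-linear in the cohomology class. To translate them into the form defining $\cF_\eta$, note that evaluating an $e_\eta$-projected element at $X=\theta(\gamma_0)u^j-1$ amounts to evaluating on $\Gamma_0^\cyc$ at the character $\psi=\chi_0^j\cdot(\eta\omega^{-j}\theta)$; the Dirichlet character $\eta\omega^{-j}\theta$ is trivial precisely when $\eta=\omega^j$ and $\theta=\mathbbm{1}$, which is exactly the dichotomy built into the definition of $C_{j,\eta\omega^{-j}\theta}$. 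Reading Lemma~\ref{lem:imageL} in the two cases (conductor $1$ versus $>1$) specializes directly to the $\cF_\eta$-relations.

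For the reverse inclusion, given $(F_1,F_2)\in\cF_\eta$, the plan is to pull back through the factorization $\underline\cL=Q^{-1}\Mlog\circ\ucol$. First I would apply Proposition~\ref{prop:decomp} to $(F_1,F_2)$ to produce $F_\#,F_\flat\in\cH_L(\Gamma_0^\cyc)$ satisfying the matrix identity $(F_1,F_2)^{\top}=Q^{-1}\Mlog\cdot(F_\#,F_\flat)^{\top}$. The interpolation hypothesis of that proposition at characters $\chi_0^j\theta$ with $\theta$ of conductor $p^n>1$ coincides exactly with the non-trivial-$\theta$ part of the $\cF_\eta$-relations. The growth hypothesis $F_\lambda\in\cH_{L,\ord_p(\lambda)}$ can be relaxed to the setting at hand, because each $F_i$ already belongs to some $\cH_{L,r}$; after multiplying the pair $(F_1,F_2)$ by a suitably chosen common element of $\cH_L$ (an operation that preserves both the $\cF_\eta$-relations and membership in $\underline{\cL}$'s image), we may assume $r<k-1$, which is precisely the regime allowed by Lemma~\ref{lem:generalPR}.

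Next, I would realize $(F_\#,F_\flat)$ as $\ucol(z)$ for some $z\in\HIw(\Qp(\mu_{p^\infty}),T)\otimes\cH_L(\Gamma_0^\cyc)$. By Theorem~\ref{thm:1varimage}, the image of $\ucol$ extended $\cH_L$-linearly consists exactly of those pairs in $\cH_L^{\oplus2}$ satisfying the one-variable compatibility relations at $\chi_0^j$ for $0\le j\le k-2$. These relations for our $(F_\#,F_\flat)$ follow from the trivial-$\theta$ part of the $\cF_\eta$-relations, specialized through Lemma~\ref{lem:Cn} (which computes $\Mlog$ at $\chi_0^j$ as $A_\vp^{n+1}C_n(\chi_0^j)$ modulo $\omega_{n,k-1}(\gamma_0)$) together with the diagonalization~\eqref{eq:diag}. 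With such $z$ in hand, $\underline\cL(z)=Q^{-1}\Mlog\cdot\ucol(z)=(F_1,F_2)$, as required.

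The main obstacle is really the bookkeeping in the third paragraph: aligning the two different families of interpolation ratios—those of Lemma~\ref{lem:imageL} (on the Perrin-Riou side) and those of Theorem~\ref{thm:1varimage} (on the Coleman side)—through the single transition matrix $Q^{-1}A_\vp$ at the specialization points $\chi_0^j$. The minor extension of Proposition~\ref{prop:decomp} beyond its stated growth bound is routine thanks to the flexibility already present in Lemma~\ref{lem:generalPR}.
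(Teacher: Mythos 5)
Your forward inclusion is fine and is essentially what the paper does (the paper's proof says only that $\subset$ is a reformulation of Lemma~\ref{lem:imageL}); the translation through $e_\eta$ and the identification of characters $\psi = \chi_0^j\eta\omega^{-j}\theta$ is the correct bookkeeping.

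For the reverse inclusion you take a genuinely different route. The paper's proof is a one-line determinant comparison: both $e_\eta\cdot\underline\cL(\cdots)$ and $\cF_\eta$ are rank-two $\cH_L$-submodules of $\cH_L^{\oplus 2}$ whose determinant ideals equal $\log_{p,k-1}(1+X)\cH_L$; since $\cH_L$ is a Bezout domain and the first is contained in the second, equality of determinants forces equality of modules. (For the image, this comes from $\det(Q^{-1}\Mlog)\sim\log_{p,k-1}/\delta_{k-1}$ together with the description of $\image\ucol$ in Theorem~\ref{thm:1varimage}, whose determinant contributes $\delta_{k-1}$; for $\cF_\eta$, one reads off that the constraints are exactly supported at the zeros of $\log_{p,k-1}$.) Your plan — pull an arbitrary $(F_1,F_2)\in\cF_\eta$ back through $Q^{-1}\Mlog$ via Proposition~\ref{prop:decomp} and then through $\ucol$ via Theorem~\ref{thm:1varimage} — is a legitimate alternative strategy, but as written it has a real gap.

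The gap is the growth-reduction step. You write that by multiplying $(F_1,F_2)$ by a common element of $\cH_L$ one may assume $r<k-1$; but multiplication by an element of $\cH_L$ can only increase the growth type, never decrease it, so this operation goes in the wrong direction. More to the point, Proposition~\ref{prop:decomp} does not merely need $r<k-1$: it needs the precise conditions $F_\alpha\in\cH_{L,\ord_p(\alpha)}$ and $F_\beta\in\cH_{L,\ord_p(\beta)}$, because these are exactly what make the reductions $\lambda^n F_\lambda \bmod \omega_{n,k-1}$ uniformly bounded and allow the Perrin-Riou limit argument (Lemma~\ref{lem:generalPR}) to produce \emph{bounded} $F_\#,F_\flat$. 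An element of $\cF_\eta$ is just a pair in $\cH_L^{\oplus 2}$ with no growth control whatsoever, and you have not explained how to reduce to the hypotheses of Proposition~\ref{prop:decomp}. (Nor is it true that the preimage $(F_\#,F_\flat)$ should be bounded for general $(F_1,F_2)\in\image\underline\cL$, since the image is the $\cH_L$-span, so a "bounded decomposition" need not exist at all.) A secondary issue, which you flag but defer as "bookkeeping", is that the constants $C_{j,\eta}$ appearing in $\cF_\eta$ (defined just before this corollary, via Lemma~\ref{lem:imageL}) are \emph{not} the same as the constants $C_{j,\eta}$ in Theorem~\ref{thm:1varimage} despite the overloaded notation, so matching the two families through $Q^{-1}A_\vp$ at the points $\chi_0^j$ is a genuine computation, not a formality. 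The paper's determinant argument sidesteps both the growth problem and this reconciliation entirely, which is why it is preferable here.
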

\begin{proof}
The inclusion $\subset$ is a reformulation of  Lemma~\ref{lem:imageL}. The equality then follows from comparing determinants of the two sets, which are both $\log_{p,k-1}(1+X)\cH_L$.
\end{proof}
On projecting to the two coordinates of $\underline{\cL}$, we infer the following:
\begin{corollary}\label{cor:imageLlambda}
 For all $\eta\in\widehat\Delta$ and $\lambda\in\{\alpha,\beta\}$, we have
 $$e_\eta\cdot{\cL_{T,\lambda}}\left(\HIw(\Qp(\mu_{p^\infty}),T)\otimes \cH_L(\Gamma_0^\cyc)\right)=\cH_L.$$
\end{corollary}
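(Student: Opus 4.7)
The plan is to deduce Corollary~\ref{cor:imageLlambda} from Corollary~\ref{cor:Limagepair}: since $e_\eta\cdot\underline{\cL}(\HIw(\Qp(\mu_{p^\infty}),T)\otimes\cH_L(\Gamma_0^\cyc))=\cF_\eta$, it suffices to show that the projection of $\cF_\eta$ onto each coordinate is surjective onto $\cH_L$.

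First, I would analyze the constraints cutting out $\cF_\eta$ after restricting to the $e_\eta$-component. For $F\in e_\eta\cdot\cH_L(\Gamma_0^\cyc)$, the evaluation $F(\chi_0^j\theta)$ with $\theta|_\Delta=1$ vanishes unless $\chi_0^j|_\Delta=\omega^j$ equals $\eta$. Because $p>k$, there is at most one $j_0\in\{0,\ldots,k-2\}$ with $\omega^{j_0}=\eta$. If no such $j_0$ exists, every constraint defining $\cF_\eta$ becomes vacuous on the $e_\eta$-component, so $e_\eta\cdot\cF_\eta=e_\eta\cdot\cH_L^{\oplus 2}$ and both projections are trivially surjective. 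In the remaining case $\eta=\omega^{j_0}$, I reduce to $j_0=0$ via the change of variable $X\mapsto u^{j_0}(X+1)-1$, which is an automorphism of the open unit disk preserving $\cH_L$ because $u\in 1+p\Zp$ is a unit; under this substitution the constraint becomes
\[
F_1(\theta(\gamma_0)-1)=C_{j_0,\theta}\,F_2(\theta(\gamma_0)-1), \qquad \forall\,\theta\in\widehat{\Gamma_0^\cyc/\Delta}.
\]

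Given $F_1\in\cH_L$ (WLOG $\lambda=\alpha$, the case $\lambda=\beta$ being symmetric), I construct $F_2:=F_1\cdot H$, where $H\in\cH_L$ is an element satisfying $H(\theta(\gamma_0)-1)=1/C_{j_0,\theta}$ for all $\theta\in\widehat{\Gamma_0^\cyc/\Delta}$. For $\theta$ non-trivial of conductor $p^n$, the prescribed value is $\alpha^n/\beta^n$, which has $p$-adic absolute value $p^{n(\ord_p(\beta)-\ord_p(\alpha))}$; for trivial $\theta$, it is a single specified non-zero constant (non-vanishing by Ramanujan--Petersson, as observed in Remark~\ref{rk:image}). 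These polynomial-in-$n$ bounds on the prescribed values imply via a standard Amice--V\'elu-type interpolation argument that $H$ can be found in $\cH_{L,r}$ with $r=|\ord_p(\alpha)-\ord_p(\beta)|$. Since $\cH_L$ is a ring, $F_2\in\cH_L$, and by construction $(F_1,F_2)\in\cF_\eta$, yielding the required surjectivity.

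The main technical obstacle is the construction of the interpolator $H$ with the prescribed values and controlled growth, but this reduces to the same circle of growth estimates developed in Section~\ref{S:coleman} (cf.\ Lemmas~\ref{lem:PRseries} and~\ref{lem:generalPR}); once the polynomial growth of the target values is established, the existence of $H\in\cH_{L,r}$ is routine.
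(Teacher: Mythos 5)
Your analysis of the constraints cutting out $\cF_\eta$ is wrong, and this breaks the proof. In Corollary~\ref{cor:Limagepair} the components $F_1,F_2$ already live in $\cH_L$ (the ring of power series in $X=\gamma_0-1$ with which $e_\eta\cdot\cH_L(\Gamma_0^\cyc)$ has been identified), and the defining relations
$$F_1(\theta(\gamma_0)u^j-1)=C_{j,\eta\omega^{-j}\theta}\,F_2(\theta(\gamma_0)u^j-1)$$
are imposed at the \emph{points} $\theta(\gamma_0)u^j-1$ of the open unit disc for \emph{every} $j\in\{0,\ldots,k-2\}$. They are evaluations of power series at points, not evaluations of elements of $e_\eta\cdot\cH_L(\Gamma_0^\cyc)$ at characters of $\Gamma_0^\cyc$, so they do not vanish when $\omega^j\ne\eta$; indeed $C_{j,\eta\omega^{-j}\theta}$ is never zero. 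The character $\eta\omega^{-j}\theta$ in the subscript is there precisely because the constraint at level $j$ arises from Lemma~\ref{lem:imageL} applied to Dirichlet characters $\tilde\theta$ with $\tilde\theta|_\Delta=\eta\omega^{-j}$; the $\theta$ with $\theta|_\Delta=1$ in the definition of $\cF_\eta$ is only the residual parameter after the tame part $\eta\omega^{-j}$ has been factored out, so taking the $e_\eta$-component does not kill the constraints at $j\ne j_0$. Hence your first case is false ($\cF_\eta\ne\cH_L^{\oplus2}$ even when $\eta\ne\omega^j$ for all $j$), and in your second case the change of variable only handles the $j_0$-family while silently dropping the constraints at $u^{j-j_0}\theta(\gamma_0)-1$ for $j\ne j_0$. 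A quick sanity check makes this concrete: if the constraints became vacuous, then $(F_1,0)\in\cF_\eta$ for every $F_1$; yet the proof of Proposition~\ref{prop:errorL} computes $\{F_1\in\cH_L:(F_1,0)\in\cF_\eta\}$ to be the proper ideal $\log_{p,k-1}(1+X)\cH_L$.

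The structural idea---find $H\in\cH_L$ with $(1,H)\in\cF_\eta$ and set $F_2:=F_1H$---is sound, but such an $H$ must interpolate $C_{j,\eta\omega^{-j}\theta}^{-1}$ on the \emph{entire} zero locus of $\log_{p,k-1}(1+X)$, i.e.\ at $\zeta u^j-1$ for all $\zeta\in\mu_{p^\infty}$ and all $j\in\{0,\ldots,k-2\}$, not on a single translate of $\mu_{p^\infty}-1$. The existence of such an $H$ is essentially the surjectivity you are trying to prove, so it cannot be waved through as a ``routine'' Amice--V\'elu interpolation without verifying the admissibility data for the full set of prescribed values. A cleaner repair is to exploit Corollary~\ref{cor:Limagepair} directly: $\cF_\eta$ is a free $\cH_L$-module of rank two (it is the injective image of $e_\eta\cdot\HIw(\Qp(\mu_{p^\infty}),T)\otimes\cH_L(\Gamma_0^\cyc)$), its determinant inside $\cH_L^{\oplus2}$ is $\log_{p,k-1}(1+X)\cH_L$, and the kernel of either coordinate projection is $\log_{p,k-1}(1+X)\cH_L$ placed in the complementary coordinate; comparing determinants along a basis of $\cF_\eta$ adapted to the projection then forces each projection to have image all of $\cH_L$.
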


\begin{proposition}\label{prop:errorL}
Let $\eta\in\widehat\Delta$, then the $e_\eta$-component of the quotient
\[
\frac{\HIw(\Qp(\mu_{p^\infty}),T)\otimes \cH_L(\Gamma_0^\cyc)}{\ker\cL_{T,\alpha}+\ker\cL_{T,\beta}}
\]
is isomorphic to $\cH_L/(\log_{p,k-1}(1+X))$ as $\cH_L$-modules.
\end{proposition}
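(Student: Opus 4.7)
The plan is to apply the isomorphism theorem and then exploit the explicit description of the image of the pair $\underline{\cL}=(\cL_{T,\alpha},\cL_{T,\beta})$ provided by Corollary~\ref{cor:Limagepair}. Concretely, after fixing $\eta\in\widehat\Delta$ and restricting to the $e_\eta$-isotypic component throughout, we have
$$\frac{e_\eta\cdot\HIw(\Qp(\mu_{p^\infty}),T)\otimes \cH_L(\Gamma_0^\cyc)}{e_\eta\cdot(\ker\cL_{T,\alpha}+\ker\cL_{T,\beta})}\;\cong\;\frac{e_\eta\cdot \image\cL_{T,\alpha}}{\cL_{T,\alpha}(e_\eta\cdot\ker\cL_{T,\beta})}.$$
By Corollary~\ref{cor:imageLlambda}, the numerator equals $\cH_L$.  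It therefore suffices to identify $\cL_{T,\alpha}(e_\eta\cdot\ker\cL_{T,\beta})$ with the principal ideal $\log_{p,k-1}(1+X)\cdot \cH_L\subset \cH_L$.

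The key step is the following. By Corollary~\ref{cor:Limagepair}, the pair $(F_1,F_2)\in\cH_L^{\oplus 2}$ lies in $e_\eta\cdot\underline{\cL}(\HIw\otimes\cH_L(\Gamma_0^\cyc))$ if and only if it belongs to $\cF_\eta$. Hence $F_1\in\cL_{T,\alpha}(e_\eta\cdot\ker\cL_{T,\beta})$ if and only if $(F_1,0)\in\cF_\eta$; equivalently,
$$F_1(\theta(\gamma_0)u^j-1)=0\quad\text{for every }0\le j\le k-2\text{ and every }\theta\in\widehat{\Gamma_0^\cyc}\text{ with }\theta|_\Delta=1\,,$$
since the multiplier $C_{j,\eta\omega^{-j}\theta}$ is non-zero (by the Ramanujan--Petersson bound, as noted in \cite[Remark~5.8]{LLZ0.5}, together with the fact that $\alpha,\beta$ are Weil numbers).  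On the other hand, the zeros of $\log_{p,k-1}(1+X)=\prod_{j=0}^{k-2}\log_p(u^{-j}(1+X)-1)$ in the open unit disk are exactly the points $\theta(\gamma_0)u^j-1$ appearing above, each occurring with multiplicity one. Thus the vanishing condition above is equivalent to divisibility of $F_1$ by $\log_{p,k-1}(1+X)$ in $\cH_L$, using the Weierstrass preparation-type division available for entries of $\cH_L$ by elements with simple zeros. This yields
$$\cL_{T,\alpha}(e_\eta\cdot\ker\cL_{T,\beta})=\log_{p,k-1}(1+X)\cdot\cH_L,$$
and therefore the quotient is isomorphic to $\cH_L/(\log_{p,k-1}(1+X))$. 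Summing over the characters $\eta$ of $\Delta$ finishes the proof.

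The main obstacle is the last reduction: one must justify that in $\cH_L$ (as opposed to the power series ring $L[[X]]$), vanishing of $F_1$ at all the zeros of $\log_{p,k-1}(1+X)$ actually forces divisibility by $\log_{p,k-1}(1+X)$ with quotient again in $\cH_L$. This is guaranteed by the factorization $\log_{p,k-1}(1+X)=X\prod_{n\ge 1}\Phi_{n,k-1}(1+X)/p^{k-1}$ (up to a unit in $\Lambda_L$), which expresses it as a convergent product of distinguished polynomials with simple roots at pairwise distinct points; dividing $F_1$ successively by each such polynomial produces quotients whose growth is controlled by that of $F_1$, so that the final quotient still lies in $\cH_L$.
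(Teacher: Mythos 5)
Your proof is correct and follows the same route as the paper: reduce via the isomorphism theorem, then invoke Corollaries~\ref{cor:Limagepair} and \ref{cor:imageLlambda} to identify the relevant denominator with the vanishing locus of $\log_{p,k-1}(1+X)$. Two minor slips worth flagging: the remark that the multiplier $C_{j,\eta\omega^{-j}\theta}$ is non-zero is superfluous, since $(F_1,0)\in\cF_\eta$ forces $F_1(\theta(\gamma_0)u^j-1)=0$ regardless of whether $C$ vanishes; and in the final paragraph the factorization should read $\log_{p,k-1}(1+X)=\delta_{k-1}(1+X)\prod_{n\geq 1}\Phi_{n,k-1}(1+X)/p^{k-1}$, with $\delta_{k-1}(1+X)=\prod_{j=0}^{k-2}(u^{-j}(1+X)-1)$ rather than $X$ (the polynomial $X$ only accounts for the zero at $j=0$, $\zeta=1$). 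Neither slip affects the conclusion.
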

\begin{proof}
As in the proof of Proposition~\ref{prop:errorterms}, the quotient in the statement of the proposition is isomorphic to
\[
\frac{e_\eta\cdot\cL_{T,\alpha}(\HIw(\Qp(\mu_{p^\infty}),T)\otimes\cH_L(\Gamma_0^\cyc))}{e_\eta\cdot\cL_{T,\alpha}(\ker\cL_{T,\beta})}.
\]
On applying Corollaries~\ref{cor:Limagepair} and~\ref{cor:imageLlambda}, we may rewrite this  as
\[
\frac{\cH_L}{\{F_1\in \cH_L:(F_1,0)\in F_\eta\}}=\frac{\cH_L}{\{F_1\in \cH_L:F_1(\zeta u^j-1)=0\ \forall 0\le j\le 1,\zeta\in\mu_{p^\infty}\}}=\frac{\cH_L}{(\log_{p,k-1}(1+X))},
\]
as required.
\end{proof}
As before, a two-variable analogue can be obtained on tensoring everything by $\Omega_{\Qp}\Lambda_{\Zp}(U)$.

\subsection{Coleman maps for $\TT_{f,\chi}$}\label{S:semilocal}
Let $\q\in\{\p,\p^c\}$. We write $\mathscr{D}_\q$ for the decomposition group at a fixed prime above $\q$ in $\Gamma$ and choose a set of coset representatives $\sigma_1,\ldots,\sigma_{p^t}$ of $\Gamma/\mathscr{D}_\q$. We may identify $\mathscr{D}_\q$ with $U\times \Gamma_0^\cyc/\Delta$. Note that our choice of embedding  $\iota_p$ implies that  $U$ and $\Gamma_0^\cyc/\Delta$ correspond to subgroups of $\Gamma_{\p^c}$ and $\Gamma_\p$ respectively.
This gives rise to the decomposition
\[
H^1(K_\q,(-)\otimes\Lambda_{\cO_L}(\Gamma)^\iota)=\bigoplus_{i=1}^{p^t}H^1(K_\q,(-)\otimes \Lambda_{\cO_L}(\mathscr{D}_\q)^\iota)\cdot \sigma_i\cong \bigoplus_{v|\q}\HIw(K_{\infty,v},(-)),
\]
where the last direct sum runs through all places $v$ of $K_\infty$ lying above $\q$. Recall from the introduction that $R_{f,\chi^{-1}}^*=R_f^*\otimes \chi^{-1}$ and $\TT_{f,\chi}=R_{f,\chi^{-1}}^*(1-k/2)\otimes\LL_{\cO_L}(\Gamma)^\iota$. We may define the $\#/\flat$-Coleman maps for $\TT_{f,\chi}$ at $\q$ via
\begin{align*}
\col_{\bullet,\q}:H^1(K_\q,\TT_{f,\chi})&\rightarrow \Lambda_{\cO_L}(\Gamma)\\
x=\sum x_i\cdot \sigma_i&\mapsto \sum e_{1}\Tw_{k/2-1}\col_{\bullet,\Qpinf}(x_i\cdot e_{k/2-1})\cdot \sigma_i
\end{align*}
for $\bullet\in\{\#,\flat\}$, where the character $\vt$ in \S\S\ref{S:wach}-\ref{S:2varCol} is chosen to be $\chi^{-1}|_{G_{K_\q}}$, the notation $\cdot e_{k/2-1}$ signifies the natural map
\[
\HIw(\Qpinf(\mu_{p^\infty}),R_{f,\chi^{-1}}^*(1-k/2))\rightarrow\HIw(\Qpinf(\mu_{p^\infty}),R_{f,\chi^{-1}}^*)
\]
 and $\Tw_{k/2-1}$ denotes the twisting map $\sigma\mapsto \chi_0(\sigma)^{k/2-1}\sigma$ for $\sigma\in\Gamma_0^\cyc$.

We may similarly define  twisted Perrin-Riou maps
\begin{equation}\label{eq:semilocalPRmap}
\cL_\q: H^1(K_\q,\TT_{f,\chi})\rightarrow \cH_{L}(\Gamma_\q)\widehat\otimes \Lambda_{\cO_L}(\Gamma_{\q^c})\otimes \Dcris(R_f^*).
\end{equation}
The  Coleman maps then decompose it as
\begin{equation}\label{eq:semidecomp}
\cL_\q=\begin{pmatrix}
v_1&v_2
\end{pmatrix}\Tw_{k/2-1}M_{\log,\q}\begin{pmatrix}
\col_{\#,\q}\\ \col_{\flat,\q}
\end{pmatrix},
\end{equation}
where $M_{\log,\q}$ is the logarithmic matrix obtained from $\Mlog$ on replacing $\gamma_0$ by its image in $\Gamma_\q$.
{
\begin{remark}\label{rk:col0}
Recall that the Deligne representation $W_f$ can be realized over $L_0$. In particular, there exists a 2-dimensional $L_0$-representation $W_{f,0}$ such that $W_{f,0}\otimes_{L_0}L$. Let $R_{f,\chi^{-1},0}$ and $T_{f,\chi,0}$ be the corresponding rank-two $\cO_{L_0}$-lattices inside $R_{f,\chi^{-1}}$ and $T_{f,\chi}$ respectively and write $\TT_{f,\chi,0}=T_{f,\chi,0}\otimes\Lambda_{\cO_{L_0}}(\Gamma)^\iota$. Then we may define $$\col_{\bullet,\q,0}:H^1(K_\q,\TT_{f,\chi,0})\rightarrow \Lambda_{\cO_{L_0}}(\Gamma)$$
using the Wach module of $R_{f,\chi^{-1},0}$. The resulting Coleman map $\col_{\bullet,\q,0}$  extends to $\col_{\bullet,\q}$ after tensoring by $\cO_L$.
\end{remark}
}
Recall from \eqref{eq:diag} that we have chosen a basis  $\{v_\alpha,v_\beta\}$ of $\vp$-eigenvectors allowing us to define (for $\lambda\in\{\alpha,\beta\}$) the morphism
\begin{equation}\label{eqn:PRMapcoordinatewise}
\cL_{\lambda,\q}:H^1(K_\q,\TT_{f,\chi})\lra \cH_{L,\ord_p(\lambda)}(\Gamma_\q)\widehat\otimes \Lambda_{\cO_L}(\Gamma_{\q^c})
\end{equation}
by considering the coordinates of the map (\ref{eq:semilocalPRmap}) with respect to the eigenbasis $\{v_\alpha,v_\beta\}$. We may rewrite \eqref{eq:semidecomp} as
\begin{equation}\label{eq:2vardecomp}
\begin{pmatrix}
\cL_{\alpha,\q}\\ \cL_{\beta,\q}
\end{pmatrix}=Q^{-1}\Tw_{k/2-1}M_{\log,\q}\begin{pmatrix}
\col_{\#,\q}\\ \col_{\flat,\q}
\end{pmatrix}.
\end{equation}
By linearity, we may extend the map $\cL_{\lambda,\q}$ to a map
$$\cL_{\lambda,\q}:\,H^1\left(K_\q,\TT_{f,\chi}\right)\,\widehat{\otimes}_{\LL_{\cO_L}(\Gamma_{\fq})}\, \cH_{L,\ord_p(\lambda)}(\Gamma_\q)\lra \cH_{L,\ord_p(\lambda)}(\Gamma_\q)\widehat\otimes_{\cO_L} \Lambda_{\cO_L}(\Gamma_{\q^c})$$
and the following extension of a result due to Colmez enables us to view it as a map
\begin{equation}\label{eqn:PRMapcoordinatewiseextended}
\cL_{\lambda,\q}:H^1\left(K_\q,\TT_{f,\chi}\,\widehat{\otimes}_{\LL_{\cO_L}(\Gamma_{\fq})}\, \cH_{L,\ord_p(\lambda)}(\Gamma_\q)\right)\lra \cH_{L,\ord_p(\lambda)}(\Gamma_\q)\widehat\otimes_{\cO_L} \Lambda_{\cO_L}(\Gamma_{\q^c})\,.
\end{equation}
\begin{proposition}[Colmez]
\label{prop:Colmezsidentificationofanalyticohomology}
The natural map
$$H^1\left(K_\q,\TT_{f,\chi}\right)\,\widehat{\otimes}_{\LL_{\cO_L}(\Gamma_{\fq})}\, \cH_{L,\ord_p(\lambda)}(\Gamma_\q)\lra H^1\left(K_\q,\TT_{f,\chi}\,\widehat{\otimes}_{\LL_{\cO_L}(\Gamma_{\fq})}\, \cH_{L,\ord_p(\lambda)}(\Gamma_\q)\right)$$
is an isomorphism.
\end{proposition}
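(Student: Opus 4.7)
The plan is to deduce the isomorphism from a classical theorem of Colmez, after reducing to the single-prime, one-variable setting. First I would use the semi-local decomposition recorded at the start of \S\ref{S:semilocal}: both sides of the claimed isomorphism decompose along the coset representatives $\sigma_1,\dots,\sigma_{p^t}$ of $\Gamma/\mathscr{D}_\q$, and the completed tensor product with $\cH_{L,\ord_p(\lambda)}(\Gamma_\q)$ is taken along the subalgebra $\Lambda_{\cO_L}(\Gamma_\q)\subset \Lambda_{\cO_L}(\mathscr{D}_\q)$, so it respects this decomposition. Hence it suffices to establish the isomorphism for a single summand, i.e., over the local $\ZZ_p^2$-tower $K_{\infty,v}/K_\q$ for a fixed place $v\mid\q$.

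Next, I would split $\mathscr{D}_\q\cong U\times\Gamma_0^\cyc/\Delta$ into its two $\ZZ_p$-factors, one of which is canonically identified with $\Gamma_\q$; call the other factor $\Gamma_\q'$. Then $\Lambda_{\cO_L}(\mathscr{D}_\q)\cong \Lambda_{\cO_L}(\Gamma_\q)\widehat{\otimes}\Lambda_{\cO_L}(\Gamma_\q')$, and the single-summand assertion rewrites as
\[
\HIw(K_{\infty,v},T_{f,\chi})\,\widehat{\otimes}_{\Lambda_{\cO_L}(\Gamma_\q)}\,\cH_{L,\ord_p(\lambda)}(\Gamma_\q)\overset{\sim}{\lra} H^1\!\bigl(K_\q,\,T_{f,\chi}\otimes\cH_{L,\ord_p(\lambda)}(\Gamma_\q)^\iota\widehat{\otimes}\Lambda_{\cO_L}(\Gamma_\q')^\iota\bigr).
\]
This reduces the two-variable statement to a one-variable one with an auxiliary free coefficient module $\Lambda_{\cO_L}(\Gamma_\q')^\iota$, which is carried along unchanged because it is pro-free over $\cO_L$ and therefore flat for every step that follows.

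The final step is to invoke the theorem of Colmez used in \cite[Proposition~2.4.5]{LZ1}: along a $\ZZ_p$-extension of a $p$-adic field, Iwasawa cohomology commutes with coefficient extension from $\Lambda_{\cO_L}(\Gamma_\q)$ to the distribution algebra $\cH_{L,r}(\Gamma_\q)$. Concretely, both sides are computed by the Fontaine--Herr two-term complex $\mathbb{D}^\dagger_{\mathrm{rig}}(T)\xrightarrow{\psi-1}\mathbb{D}^\dagger_{\mathrm{rig}}(T)$ of $(\varphi,\Gamma)$-modules, and the base change from $\Lambda_{\cO_L}(\Gamma_\q)$ to $\cH_{L,r}(\Gamma_\q)$ is exact on this complex. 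The main obstacle will be rigorously justifying the exactness of this completed base change; this reduces to the vanishing of the relevant higher Tor's, which follows from the fact that $\cH_{L,r}(\Gamma_\q)$ is a Fr\'echet--Stein algebra whose defining affinoid pieces are flat over $\Lambda_{\cO_L}(\Gamma_\q)[1/p]$, together with the $H^2$-vanishing recorded in Remark~\ref{rem:hnaimpliesfreeness} which ensures that no cokernel obstruction arises when passing from the $\psi-1$ complex to its $H^1$.
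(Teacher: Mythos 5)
Your proposal takes essentially the same route as the paper's proof, just with more of the scaffolding made explicit. The paper simply cites Colmez's \cite[Prop.~II.3.1]{colmez98} and observes that the $H^2$-vanishing coming from Fontaine--Edixhoven (Remark~\ref{rem:hnaimpliesfreeness}) is what allows Colmez's finite-coefficient argument to carry over to the present setting where the Galois module $\TT_{f,\chi}$ has large coefficients. You arrive at the same conclusion, and your key ingredients (the Colmez/Loeffler--Zerbes identification, the $H^2$-vanishing, flatness for the base change) are the right ones.

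Where your write-up adds value is in spelling out the reduction: the semi-local decomposition along $\Gamma/\mathscr{D}_\q$ and the factorization $\Lambda_{\cO_L}(\mathscr{D}_\q)\cong\Lambda_{\cO_L}(\Gamma_\q)\widehat\otimes\Lambda_{\cO_L}(\Gamma_\q')$, which recasts the two-variable statement as a one-variable Iwasawa-theoretic statement with an auxiliary flat coefficient module. This is a legitimate and helpful way to organize the argument, and it is implicit in the phrase ``the argument of Colmez carries over'' in the paper.

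Two small cautions. First, the phrase ``Iwasawa cohomology commutes with coefficient extension'' is what Colmez's result establishes rather than what one gets for free from the Fontaine--Herr complex, so the sentence ``both sides are computed by the Fontaine--Herr two-term complex'' slightly begs the question: relating the right-hand side (ordinary Galois cohomology of the $\cH$-valued module) to $H^1$ of the $\psi-1$ complex is part of the content of Colmez's theorem, not its starting point. Second, $\cH_{L,r}(\Gamma_\q)$ for a fixed $r$ is a Banach algebra, not itself a Fr\'echet--Stein algebra (the union $\cH_L(\Gamma_\q)$ is); the flatness of $\cH_{L,r}(\Gamma_\q)$ over $\Lambda_{\cO_L}(\Gamma_\q)[1/p]$ that you need is true, but you should not attribute it to a Fr\'echet--Stein structure on the fixed-radius piece. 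Neither of these affects the validity of the approach, but you should keep the role of $H^2(K_\q,\TT_{f,\chi})=0$ front and centre: it is what makes $H^1$ free (as stated in Remark~\ref{rem:hnaimpliesfreeness}) and hence kills the Tor obstructions to base change, and it is precisely what the paper highlights as the reason Colmez's argument extends.
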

\begin{proof}
The analogous statement for Galois representations with coefficients in a finite extension of $\QQ_p$ is proved in \cite[Prop. II.3.1]{colmez98}. We explain briefly how the proof in op. cit. extends to cover the case when the Galois representation in question is the $\LL_{\cO_L}(\Gamma_{\fq})$-module $\TT_{f,\chi}$. All references in this proof is to  \cite{colmez98} unless otherwise stated. 

As in op.cit., let us fix a topological generator $\gamma_\frak{q}$ of $\Gamma_{\fq}$. We set $\nu_n:=(\gamma_{\frak{q}}-1)^n$ and $\LL_{\cO_L}(\Gamma_{\fq})_n:=\LL_{\cO_L}(\Gamma_{\fq})/\nu_n$ for every positive integer $n$.  Set $\LL_{L}(\Gamma_{\fq})_n:=\LL_{L}(\Gamma_{\fq})/\nu_n=\LL_{\cO_L}(\Gamma_{\fq})_n\otimes_{\cO_L}L$. We let $||\cdot||_n$ denote the unique norm on $\LL_{L}(\Gamma_{\fq})_n\widehat\otimes_{\cO_L} \Lambda_{\cO_L}(\Gamma_{\q^c})$ such that $\LL_{\cO_L}(\Gamma_{\fq})_n\widehat\otimes_{\cO_L} \Lambda_{\cO_L}(\Gamma_{\q^c})$ is the unit ball. We let $\pi_n$ denote the natural map $\LL_{\cO_L}(\Gamma_{\fq})\ra \LL_{\cO_L}(\Gamma_{\fq})_n$. Colmez' arguments reduce the proof to the verification of Lemma II.3.2 in the situation when Colmez' $M$ is replaced by $\TT_{f,\chi}$, his rings $\LL_K$ and $\LL_n$ are replaced by $\LL_{\cO_L}(\Gamma_{\fq})$ and $\LL_{\cO_L}(\Gamma_{\fq})_n$, respectively. We indicate below how Colmez' arguments apply verbatim to prove each one of the three parts of the version of this lemma in our setting.

The proof of Lemma II.3.2(i) carries over without a change in the current set up, on observing that  we have $H^2\left(K_\q,\TT_{f,\chi}\right)=0$ thanks to our assumption that $p>k$ (as a consequence of a result due to Fontaine and Edixhoven on the residual irreducibility of $\rho_f$). Translation of Lemma II.3.2(ii) to our setting follows as a formal consequence of Lemma II.3.2(i). 

To conclude with the proof, we now verify Lemma II.3.2(iii) in the current situation. We note that 
$$H^1(K_{\frak{q}},\TT_{f,\chi}\otimes_{\cO_L}L)=H^1(K_{\frak{q}},\TT_{f,\chi})\otimes_{\cO_L}L$$ 
is a free $\LL_{L}(\Gamma_\q)\widehat\otimes_{\cO_L} \Lambda_{\cO_L}(\Gamma_{\q^c})$-module of rank $2$ (still as a consequence of our assumption that $p>k$). Let $||\cdot ||_{{\TT_{f,\chi},n}}$ denote the unique norm on $H^1(K_{\frak{q}},\TT_{f,\chi}/\nu_n\TT_{f,\chi}\otimes_{\cO_L}L)$ such that $H^1(K_{\frak{q}},\TT_{f,\chi}/\nu_n\TT_{f,\chi})$ is the unit ball. Let $\{\mu_1,\mu_2\}$ be a basis of $H^1(K_{\frak{q}},\TT_{f,\chi}\otimes_{\cO_L}L)$. According to Lemma II.3.2(ii) (which holds in our setting as we explained above), it follows that $\{\pi_n(\mu_1),\pi_n(\mu_2)\}$ is a basis of the $\LL_{L}(\Gamma_\q)_n\widehat\otimes_{\cO_L} \Lambda_{\cO_L}(\Gamma_{\q^c})$-module $H^1(K_{\frak{q}},\TT_{f,\chi}/\nu_n\TT_{f,\chi}\otimes_{\cO_L}L)$. Let $\mu\in H^1(K_{\frak{q}},\TT_{f,\chi}/\nu_n\TT_{f,\chi}\otimes_{\cO_L}L)$ be any element and define $\lambda_i\in \LL_{L}(\Gamma_\q)_n\widehat\otimes_{\cO_L} \Lambda_{\cO_L}(\Gamma_{\q^c})$ so that 
$$\mu=\lambda_1\pi_n(\mu_1)+\lambda_2\pi_n(\mu_2).$$ 
Choose a natural number $C>0$ so that the $\LL_{\cO_L}(\Gamma_\q)\widehat\otimes_{\cO_L} \Lambda_{\cO_L}(\Gamma_{\q^c})$-submodule of $H^1(K_{\frak{q}},\TT_{f,\chi}\otimes_{\cO_L}L)$ generated by $\mu_1$ and $\mu_2$ contains $C\cdot H^1(K_{\frak{q}},\TT_{f,\chi})$. Such $C$ clearly exists, as we have
$$H^1(K_{\frak{q}},\TT_{f,\chi}\otimes_{\cO_L}L)=H^1(K_{\frak{q}},\TT_{f,\chi})\otimes_{\cO_L}L.$$ Since $H^2(K_{\frak{q}},\TT_{f,\chi})=0$, it follows that the $\LL_{\cO_L}(\Gamma_\q)_n\widehat\otimes_{\cO_L} \Lambda_{\cO_L}(\Gamma_{\q^c})$-submodule of 
\begin{align*}
H^1(K_{\frak{q}},\TT_{f,\chi}/\nu_n\TT_{f,\chi}\otimes_{\cO_L}L)&=H^1(K_{\frak{q}},\TT_{f,\chi}\otimes_{\cO_L}L)/\nu_n H^1(K_{\frak{q}},\TT_{f,\chi}\otimes_{\cO_L}L)\\
&=\left(H^1(K_{\frak{q}},\TT_{f,\chi})/\nu_n H^1(K_{\frak{q}},\TT_{f,\chi})\right)\otimes_{\cO_L}L
\end{align*}
generated by $\pi_n(\mu_1)$ and $\pi_n(\mu_2)$ contains 
$$C\cdot H^1(K_{\frak{q}},\TT_{f,\chi}/\nu_n\TT_{f,\chi})=C\cdot \left(H^1(K_{\frak{q}},\TT_{f,\chi})/\nu_n H^1(K_{\frak{q}},\TT_{f,\chi})\right).$$ This in turn shows that
$$||\lambda_i||_n\leq C||\mu||_{\TT_{f,\chi},n}$$
as required by Lemma II.3.2(iii) and concludes the proof as in \cite{colmez98}.
%With this property at hand, the argument of Colmez in \textit{loc. cit.} carries over to prove our assertion.
\end{proof}
Let $K'/K$ be a finite $p$-extension where all primes above $p$ are unramified. A typical choice for $K^\prime$ in this article will be $K(\mathfrak{n})$ for some ideal $\mathfrak{n}$ of $\cO_K$ that is coprime to $p$. If $v$ is a prime of $K'$ lying above $\q$, then $K'_v$ can be considered as a sub-extension of $\Qpinf$ when we identify $K_\q$ with $\Qp$. Therefore, we may define the maps $\cL_v$, $\cL_{\alpha,v}$, $\cL_{\beta,v}$ and $\col_{\bullet, v}$ on $H^1(K'_v,\TT_{f,\chi})$ using $\cL_{T,K_{v,\infty}'}$ and $\col_{\bullet, K_{v,\infty}'}$, where $K_{v,\infty}'$ is the unramified $\Zp$-extension of $K'_v$, as before. When $K'=K(\mathfrak{n})$, we shall write
\[
H^1(K(\mathfrak{n})_\q,\TT_{f,\chi})=\bigoplus_{v|\q}H^1(K'_v,\TT_{f,\chi}),
\]
on which we have the maps $\cL_{\mathfrak{n},\q}=\oplus \cL_v$, $\cL_{\lambda,\mathfrak{n},\q}=\oplus\cL_{\lambda,v}$ and  $\col_{\bullet,\mathfrak{n},\q}=\oplus \col_{\bullet,v}$, whose images lie inside $\cO_L[\Gal(K'/K)]\otimes \Lambda_{\cO_L}(\Gamma)$ and $\cO_L[\Gal(K'/K)]\otimes \cH_L(\Gamma)$ respectively. 
For $?\in\{\cyc,\ac\}$, we shall write $\col_{\bullet,\n,\q}^?:H^1(K(\mathfrak{n})_\q,\TT_{f,\chi}^?)\lra\cO_L[\textup{Gal}(K(\frak{n})/K)]\otimes \Lambda_{\cO_L}(\Gamma^?)$ and $\cL_{\lambda,\n,\q}^?:H^1(K(\mathfrak{n})_\q,\TT_{f,\chi}^?)\lra\cO_L[\textup{Gal}(K(\frak{n})/K)]\otimes \cH_{L}(\Gamma^?)$ for the natural maps induced from $\col_{\bullet,\n,\q}$ and $\cL_{\lambda,\n,\q}$, respectively. As before, when $\n$ is trivial, we suppress $\n$ from the notation and simply write $\col_{\bullet, \q}$, $\col_{\bullet,\q}^?$ and so on.
Let $\omega$ be the Teichmuller character on $\Delta$. Lemma~\ref{lem:2varimage} tells us that 
\[
\image(\col_{\bullet,\q})\subset \Tw_{1-k/2}(\xi_{\omega^{1-k/2},\bullet}^\q)\Lambda(\Gamma),
\]
where $\xi_{\omega^{1-k/2},\bullet}^\q$ is obtained from $\xi_{\omega^{1-k/2},\bullet}$ on replacing $\Gamma_0^\cyc$ by $\gamma_\q$. We recall that this containment is of finite index. From now on, we write 
\[
\xi_\bullet^\q=\Tw_{1-k/2}(\xi_{\omega^{1-k/2},\bullet})\in\Lambda_{\cO_L}(\Gamma_\q)
\]
and  for $?\in\{\cyc,\ac\}$, we define $\xi_\bullet^?$ similarly. For $?=\emptyset$, we shall define $\xi_\bullet^?$ to be $\xi_\bullet^\fp$.
 
\section{Beilinson-Flach elements and factorization}
\label{sec:BFelementsandfactorization}

\subsection{Review on Beilinson-Flach elements}\label{S:defnBF}
 Let $m,N\ge1$ be integers and $a\in \ZZ/m\ZZ$. For an integer\footnote{The integer $c$ here has nothing to do with the other two objects  denoted by $c$ previsouly. Despite this conflict, we decided to keep this notation from \cite{KLZ2,LZ1} because it will ultimately be eliminated from our discussion; c.f. Remark~\ref{rem:auxintegerc} below.} $c>1$ that is coprime to $6mpN$, let
\[
_c\mathcal{RI}_{m,mN,a}^{[j]}\in H^3_{\text{\'et}}\left(Y(m,mN)^2,\Lambda(\mathscr{H}_{\Zp}\langle t\rangle)^{[j,j]}(2-j)\right),
\]
denote the Rankin-Iwasawa class as defined in \cite[Definition~3.2.1]{LZ1} and \cite[Definition~5.1.5]{KLZ2}. 

Let $f$ be our fixed modular form.  Let $\g$ be a  CM Hida family (in the sense of \cite[\S7]{KLZ2}) that has CM over our fixed imaginary quadratic field $K$. We fix $N$ to be an integer that is divisible by $N_f$ and the  tame level of $\g$. Throughout, we assume that $f$ is non-CM. In particular, $f$ is not a twist of any member of $\g$. Let $0\le j\le k-2$. Consider the image of the Rankin-Iwasawa class $_{c}\mathcal{RI}^{[j]}_{m,mpN,1}$ under the compositum of the morphisms
\begin{align}
\label{eq:imageBF}H^3_{\text{\'et}}&\left(Y(m,mpN)^2,\Lambda(\mathscr{H}_{\Zp}\langle t\rangle)^{[j,j]}(2-j)\right)\\
&\longrightarrow H^1\left(\QQ(\mu_m), H^1_{\text{\'et}}\left(\overline{Y_1(pN)},\Lambda(\mathscr{H}_{\Zp}\langle t\rangle)^{[j]}\right)^{\boxtimes 2}(2-j)\right)\notag\\
&\longrightarrow H^1\left(\QQ(\mu_m), H^1_{\text{\'et}}\left(\overline{Y_1(pN)},{\rm TSym}^{k-2}\mathscr{H}_{\Zp}(1)\right)\boxtimes H^1_{\text{\'et}}\left(\overline{Y_1(pN)},\Lambda(\mathscr{H}_{\Zp}\langle t\rangle)^{[k-2]}(1)\right)(-j)\right)\notag\\
&\longrightarrow H^1\left(\QQ(\mu_m), R_{f^\lambda}^*\otimes H^1_{\ord}(Np^\infty)^{[k-2]}(-j)\right),\notag
\end{align}
where   the first arrow is the \'etale regulator map composed with the K\"unneth decomposition, the second arrow is given by the moment map $\mom^{k-2-j}$ (as defined in \cite[\S2.3]{kings}) on the Iwasawa-theoretic sheave on the first component and $\id\otimes \mom^{k-2-j}$ on the second component, the last arrow is given by the obvious projection and $H^1_{\ord}(Np^\infty)^{[k-2]}$ is defined to be
$$ e_{\ord}'H^1_{\text{\'et}}\left(\overline{Y_1(pN)},\Lambda(\mathscr{H}_{\Zp}\langle t_{mpN}\rangle)^{[k-2]}(1)\right)\cong e_{\ord}'\varprojlim_r H^1_{\text{\'et}}\left(\overline{Y_1(p^rN)},\mathrm{TSym}^{k-2}\mathscr{H}_{\Zp}(1)\right)$$ (c.f. \cite[Theorem~4.5.1 and Proposition~7.2.1(a)]{KLZ2}).
We write
\[
_cz_{m,j}^\lambda\in 
H^1\left(\QQ(\mu_m), R_{f^\lambda}^*\otimes H^1_{\ord}(Np^\infty)^{[k-2]}(-j)\right),
\]
for this image.
We remark the moment map
\[
\id\otimes \mom^{k-2-j}:H^1_{\text{\'et}}\left(\overline{Y_1(p^rN)},\Lambda(\mathscr{H}_{\Zp}\langle t\rangle)^{[j]}(1)\right)\rightarrow H^1_{\text{\'et}}\left(\overline{Y_1(p^rN)},\Lambda(\mathscr{H}_{\Zp}\langle t\rangle)^{[k-2]}(1)\right)
\]
 corresponds to the twisting map
\[
\varprojlim H^1_{\text{\'et}}\left(\overline{Y_1(p^rN)},\TSym^j\mathscr{H}_r(1)\right)
\rightarrow\varprojlim H^1_{\text{\'et}}\left(\overline{Y_1(p^rN)},\TSym^{k-2}\mathscr{H}_r(1)\right)
\]
 given by the cup-product in with $(N\cdot t_{Np^{r}})^{\otimes (k-2-j)}$, which is Hecke-equivariant as explained in \cite[Theorem~4.5.1(2), Remark 4.5.3]{KLZ2}. 

 Let $R_\g^*$ be the representation defined by \cite[Definition~7.2.5]{KLZ2}, which is a quotient of $H^1_\ord(Np^\infty)$ and write $R_\g^{*,[k-2]}$ for the corresponding quotient of $H^1_\ord(Np^\infty)^{[k-2]}$. Then, $R_\g^*$ and $R_\g^{*,[k-2]}$ agree up to a twist by the $(k-2)$-nd power of the weight character. On the level of Iwasawa sheaves, they differ by the moment map $\id\otimes \mom^{k-2}$.
 
Since we have assumed that $f$ is non-CM, we have
\begin{equation}\label{eq:trivial}
H^0\left(\QQ(\mu_{mp^\infty}),R_f^*\otimes R_\g^{*,[k-2]}(-j)\right)=0.
\end{equation}
We may further project $_cz_{m,j}^\lambda$ to
\[
_cz_{m,j}^{\lambda,\g}\in 
H^1\left(\QQ(\mu_m), R_{f^\lambda}^*\otimes R_\g^{*,[k-2]}(-j)\right).
\]
Following the proof of Theorem~6.3.4 of \textit{op. cit.}, for $ n\ge1$ and a fixed $m$ that is coprime to $p$, let 
\[
_cx_{n,j}^{\lambda,\g}\in H^1\left(\QQ(\mu_{mp^n}), R_{f^\lambda}^*\otimes R_\g^{*,[k-2]}\otimes L(-j)\right)
\]
denote the the image of
\[
\frac{(-1)^j\left((U_p')^{-r},(U_p')^{-r}\right)}{j!\binom{k-2}{j}^2} {}_{c}\mathcal{RI}^{[j]}_{mp^n,mp^{n+1}N,1}
\]
under the same series of maps in \eqref{eq:imageBF}.

\begin{remark}
\label{rem:auxintegerc}
As explained in \cite[\S5.3]{LLZ2}, it is possible to dispose of the factor $c$ in our construction of the elements $_cz^{\lambda,\g}_{m,j}$ and $_cx^{\lambda,\g}_{n,j}$. We shall do so from now on and simply write $z^{\lambda,\g}_{m,j}$ and $x^{\lambda,\g}_{n,j}$ for these elements.
\end{remark}
\begin{theorem}\label{thm:BFHida}
Let $m$ be a fixed integer coprime to $p$ and $\lambda\in \{\alpha,\beta\}$. 
 There exists an element 
\[\BF_m^{\lambda,\g}\in H^1\left(\QQ(\mu_{m}),D_{\ord_p(\lambda)}(\Gamma_0^\cyc,R_{f^\lambda}^*)\widehat\otimes R_\g^{*,[k-2]}\right)=H^1\left(\QQ(\mu_{mp^\infty}),D_{\ord_p(\lambda)}(\Gamma_0^\cyc,R_{f^\lambda}^*)\widehat\otimes R_\g^{*,[k-2]}\right)^{\Gamma_0^\cyc}
\]
$($where $D_{\ord_p(\lambda)}(\Gamma_0^\cyc,R_{f^\lambda}^*)$ is the $R_{f^\lambda}^*$-valued distributions of order $\lambda$ as in \cite[\S2.2]{LZ1} and the identification of the two cohomology groups comes from the inflation-restriction exact sequence and \eqref{eq:trivial}$)$ such that 
\[
\int_{(\Gamma_0^\cyc)^{p^n}}\chi_0^{j}\BF_m^{\lambda,\g}=x_{n,j}^{\lambda,\g}
\]
for all $n\ge 1$ and $0\le j\le k-2$, where $x_{n,j}^{\lambda,\g}$ is considered as a class in $H^1\left(\QQ(\mu_{mp^\infty}),R_{f^\lambda}^*\otimes R_\g^{*,[k-2]}\otimes L\right)^{(\Gamma_0^\cyc)^{p^n}=\chi_0^j}.$
\end{theorem}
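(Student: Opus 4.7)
The plan is to construct $\BF_m^{\lambda,\g}$ by packaging the compatible system $\{x_{r,j}^{\lambda,\g}\}_{r,j}$ into a single distribution-valued cohomology class, in the spirit of the Perrin--Riou style interpolation carried out in \cite{LZ1,KLZ2}.

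The first step is to verify norm-compatibility in the variable $r$; one expects that, denoting by $\cor$ the corestriction from $\QQ(\mu_{mp^{r+1}})$ down to $\QQ(\mu_{mp^r})$,
\[
\cor\bigl(x_{r+1,j}^{\lambda,\g}\bigr)=x_{r,j}^{\lambda,\g}.
\]
This should follow from the standard distribution relations for the Rankin--Iwasawa classes $_{c}\mathcal{RI}^{[j]}_{m,mpN,1}$ (e.g.\ \cite[Proposition~3.3.1]{LZ1} or \cite[Theorem~5.4.1]{KLZ2}), combined with the explicit $(U_p')^{-r}$-normalization appearing in the construction of $x_{r,j}^{\lambda,\g}$: this normalization is engineered to absorb the Euler factor at $p$ that would otherwise obstruct norm compatibility, while the denominator $j!\binom{k-2}{j}^2$ encodes the correct compatibility under the moment maps $\mom^{k-2-j}$ as $j$ varies.

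The second step is to establish the growth bound. Since $U_p'$ acts as multiplication by $\lambda$ on the $\lambda$-stabilized quotient $R_{f^\lambda}^*$, the factor $(U_p')^{-r}$ specializes to $\lambda^{-r}$; as the unnormalized Rankin--Iwasawa classes have norms bounded uniformly in $r$, this yields $\|x_{r,j}^{\lambda,\g}\|=O(\lambda^{-r})$, which is precisely the growth rate corresponding to distributions of order $\ord_p(\lambda)$ in the sense of \cite[\S2.2]{LZ1}. Together with step one, the system $\{x_{r,j}^{\lambda,\g}\}$ defines, via the Amice transform, an element of $H^1(\QQ(\mu_{mp^\infty}),D_{\ord_p(\lambda)}(\Gamma_0^\cyc,R_{f^\lambda}^*)\widehat{\otimes} R_\g^{*,[k-2]})$. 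The vanishing \eqref{eq:trivial} and the inflation--restriction sequence then identify its $\Gamma_0^\cyc$-invariants with $H^1(\QQ(\mu_m),D_{\ord_p(\lambda)}(\Gamma_0^\cyc,R_{f^\lambda}^*)\widehat{\otimes} R_\g^{*,[k-2]})$, producing $\BF_m^{\lambda,\g}$; the required interpolation formula then holds by construction.

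The main obstacle will be reconciling the two Iwasawa-theoretic directions simultaneously: the cyclotomic direction, which produces only distributions of bounded order $\ord_p(\lambda)$ because $f$ is non-ordinary, and the Hida direction, along which one must remain integral because $\g$ is ordinary. Concretely, one has to verify that the $\lambda$-stabilization of $f$ does not spoil $\Lambda$-integrality in the $\g$-direction. This ultimately rests on the fact that Hida's ordinary projector $e_\ord'$ is a bounded operator on the Iwasawa sheaves, and on the freeness of $R_\g^{*,[k-2]}$ over the relevant Hida Iwasawa algebra (cf.\ \cite[Proposition~7.2.1]{KLZ2}), so that projection onto $R_\g^{*,[k-2]}$ commutes harmlessly with the passage to the Amice transform on the $f$-side.
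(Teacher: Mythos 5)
Your step one (norm-compatibility via \cite[Theorem~5.4.1]{KLZ2} and the $(U_p')^{-r}$-normalization) matches the paper. The genuine gap is in step two. You establish only the \emph{naive} growth estimate $\|x_{r,j}^{\lambda,\g}\| = O(\lambda^{-r}) = O(p^{-\ord_p(\lambda)\, r})$ for each individual $j$, and then assert that this, together with norm-compatibility, already characterizes a distribution of order $\ord_p(\lambda)$. It does not. Because $R_f^*$ has two Hodge--Tate weights separated by $k-1 > \ord_p(\lambda)$, the Amice--V\'elu/Vishik machinery that underlies \cite[Proposition~2.3.3]{LZ1} requires more than a uniform bound on each $x_{r,j}$: it requires the \emph{alternating sum} bound, which the paper draws from \cite[Theorem~3.3.5]{LZ1}, namely
\[
\left\|\sum_{j=0}^{k-2}(-1)^j\binom{k-2}{j}\,\lambda^{r}\,\Tw_j\circ\res\bigl(x_{r,j}^{\lambda,\g}\bigr)\right\| = O\bigl(p^{-(k-2)r}\bigr).
\]
This is a much stronger constraint than your $O(\lambda^{-r})$ for each $j$ separately: it encodes the precise way the twisted moments must cohere so that the resulting distribution has order $\ord_p(\lambda)$ and not the generic order $k-1$. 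The same phenomenon appears locally in Lemma~\ref{lem:PRtwists} of this paper, where condition (a) alone is explicitly insufficient and the supplementary alternating-sum hypothesis is what drives the conclusion. Without citing or reproving this alternating-sum estimate, your construction at best produces a class in $H^1\left(\QQ(\mu_m),D_{k-1}(\Gamma_0^\cyc,R_{f^\lambda}^*)\widehat\otimes R_\g^{*,[k-2]}\right)$, not in the order-$\ord_p(\lambda)$ subspace claimed by the theorem.

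Your closing paragraph identifying integrality in the $\g$-direction as the ``main obstacle'' is a side concern here: $R_\g^{*,[k-2]}$ is by construction an integral (ordinary-projected) Hida-theoretic lattice, so boundedness in the $\g$-variable comes for free; the actual pressure point is the cyclotomic growth bound discussed above.
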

\begin{proof}
As in the proof of \cite[Theorem~3.5.9]{LZ1}, there is a natural norm $||\cdot||$ on $R_{f^\lambda}^*\otimes R_\g^{*,[k-2]}$, which in turn induces a norm on  $H^1\left(\QQ(\mu_{mp^\infty}),R_{f^\lambda}^*\otimes R_\g^{*,[k-2]}\otimes L\right)$. Theorem~3.3.5 of \textit{op. cit.} tells us that
\[
\left|\left|\sum_{j=0}^{k-2}(-1)^j\binom{k-2}{j}\lambda^n\Tw_j\circ \res\left( x_{r,j}^{\lambda,\g}\right)\right|\right|=O(p^{-(k-2)n}),
\]
where $\res$ is the restriction map from $\QQ(\mu_{mp^n})$ to $\QQ(\mu_{mp^\infty})$ and $\Tw_j$ is the twisting map  $R_f^*\otimes R_\g^{*,[k-2]}(-j)\rightarrow R_f^*\otimes R_\g^{*,[k-2]}$.
This allows us to apply \cite[{Proposition~2.3.3}]{LZ1} to the classes $x_{n,j}^{\lambda,\g}$, which are norm compatible  as $n$ varies thanks to \cite[Theorem~5.4.1]{KLZ2}.
\end{proof}

We shall identify $R_{f}^*$ with $R^*_{f^{\lambda}}$ following  \cite[\S3.5]{LZ1}. More specifically, let $\pr_1$ and $\pr_2$ be the two degeneracy maps on the modular curves $Y_1(pN)\rightarrow Y_1(N)$ as defined in \cite[Definition~2.4.1]{KLZ2} and write  ${\Pr}^{\lambda}=\pr_1-\frac{\lambda'}{p^{k-1}}\pr_2$, where $\lambda'$ denotes the other root of the Hecke polynomial of $f$ at $p$.  Realizing $R_{f^{\lambda}}^*$ and $R_{f}^*$ as quotients of the first \'etale cohomology of $\overline{Y_1(pN)}$ and $\overline{Y(N)}$ respectively, $\Pr^{\lambda}_*$ gives rise to an isomorphism between these two Galois representations. In particular, if we identify $_cz_{m,j}^{\lambda,\g}$ as an element of $H^1\left(\QQ(\mu_m),R_f^*\otimes R_\g^{*,[k-2]}(-j)\right)$ under this identification (which we shall do from now on), it is the image of 
\[
_c\mathcal{RI}_{m,mN,1}^{\lambda,[j]}:=(\mathrm{Pr}^\lambda\times\pr_1)_*\left({}_c\mathcal{RI}_{m,mpN,1}^{[j]}\right)\in H^3_{\text{\'et}}\left(Y(m,mN)^2,\Lambda(\mathscr{H}_{\Zp}\langle t_{mN}\rangle)^{[j,j]}(2-j)\right)
\]
under the same series of maps as in \eqref{eq:imageBF}.
\begin{proposition}\label{prop:BFcomparison}Let  $m$ be an integer coprime to $p$ and  $\theta$ a finite-order character of $\Gal(\QQ(\mu_{mp^\infty})/\QQ)$. Let $p^n$ be the $p$-part of the conductor of $\theta$. If $n>0$, then
\[
\sum_{\sigma\in \Gal(\QQ(\mu_{mp^n})/\QQ)}\theta(\sigma)^{-1}\sigma\cdot {}_c\mathcal{RI}_{mp^n,mp^nN,1}^{\lambda,[j]}=\sum_{\sigma\in \Gal(\QQ(\mu_{mp^n})/\QQ)}\theta(\sigma)^{-1}\sigma\cdot {}_c\mathcal{RI}_{mp^n,mp^nN,1}^{[j]}.
\]
\end{proposition}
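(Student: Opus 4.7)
The plan is to unfold the definition $\mathrm{Pr}^\lambda=\pr_1-\frac{\lambda'}{p^{k-1}}\pr_2$ and reduce the identity to the vanishing of the $\pr_2$-contribution under the character twist. Expanding,
\[
{}_c\mathcal{RI}^{\lambda,[j]}_{mp^r,mp^rN,1} = (\pr_1\times\pr_1)_*\,{}_c\mathcal{RI}^{[j]}_{mp^r,mp^{r+1}N,1} - \frac{\lambda'}{p^{k-1}}(\pr_2\times\pr_1)_*\,{}_c\mathcal{RI}^{[j]}_{mp^r,mp^{r+1}N,1}.
\]
The first summand already equals ${}_c\mathcal{RI}^{[j]}_{mp^r,mp^rN,1}$: this is the horizontal compatibility of Rankin-Iwasawa classes under the tame degeneracy $\pr_1$, built into their construction as pushforwards of Eisenstein-Beilinson classes (see \cite[\S3]{LZ1} and \cite[\S5]{KLZ2}). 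The proposition therefore reduces to showing
\[
\sum_{\sigma\in\Gal(\QQ(\mu_{mp^r})/\QQ)}\theta^{-1}(\sigma)\,\sigma\cdot(\pr_2\times\pr_1)_*\,{}_c\mathcal{RI}^{[j]}_{mp^r,mp^{r+1}N,1} = 0.
\]

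For this vanishing I would exploit the interaction of the ``wrong'' degeneracy $\pr_2$ with the cyclotomic Galois action. On the moduli problem $Y(mp^r,mp^{r+1}N)$, the map $\pr_2$ factors as $\pr_1$ composed with a diamond operator in the $p$-part of the level, which under the usual identification of diamond operators with the $\Gal(\QQ(\mu_{mp^r})/\QQ)$-action on \'etale cohomology corresponds to a nontrivial element of $\Gal(\QQ(\mu_{mp^r})/\QQ(\mu_{mp^{r-1}}))$. Tracing this through the definition, the class $(\pr_2\times\pr_1)_*\,{}_c\mathcal{RI}^{[j]}_{mp^r,mp^{r+1}N,1}$ becomes, up to an explicit Galois twist, invariant under $\Gal(\QQ(\mu_{mp^r})/\QQ(\mu_{mp^{r-1}}))$. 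Consequently the character-twisted sum factors through the incomplete sum $\sum_{\tau\in\Gal(\QQ(\mu_{mp^r})/\QQ(\mu_{mp^{r-1}}))}\theta^{-1}(\tau)$, which vanishes by character orthogonality since $\theta$ has $p$-part conductor exactly $p^r$ with $r>0$, so its restriction to this subgroup is nontrivial.

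The main obstacle will be making the identification of the $\pr_2$-pushforward with a class that is invariant under $\Gal(\QQ(\mu_{mp^r})/\QQ(\mu_{mp^{r-1}}))$ (up to the explicit Galois twist) fully precise, and tracking the normalization constants carefully. This is essentially a bookkeeping exercise on modular curves and should be a variant of the trace-compatibility relations for Rankin-Iwasawa classes recorded in \cite[Theorem~5.4.1]{KLZ2} and \cite[\S3.3]{LZ1}. If the direct geometric argument becomes unwieldy, a viable alternative is to pass to cohomology over $\QQ(\mu_{mp^\infty})$ and leverage the interpolation characterization of the Iwasawa-theoretic Rankin-Iwasawa classes together with \eqref{eq:trivial} to reduce the required equality to a Zariski dense set of finite-level specializations where the analogous comparison is already known.
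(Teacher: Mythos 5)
Your proposal follows essentially the same route as the paper's proof: unfold $\mathrm{Pr}^\lambda = \pr_1 - \frac{\lambda'}{p^{k-1}}\pr_2$, identify the $(\pr_1\times\pr_1)_*$ term with ${}_c\mathcal{RI}^{[j]}_{mp^r,mp^rN,1}$, and kill the $(\pr_2\times\pr_1)_*$ contribution by character orthogonality. The one step you flag as potentially unwieldy is handled cleanly in the paper by citing \cite[Corollary~5.5.2]{KLZ2}, which identifies $(\pr_2\times\pr_1)_*{}_c\mathcal{RI}^{[j]}_{mp^r,mp^{r+1}N,1}$ with a scalar multiple of ${}_c\mathcal{RI}^{[j]}_{mp^r,mp^rN,p}$ — the shift of the index parameter from $1$ to $p$ is precisely what makes the class depend only on $a \bmod mp^{r-1}$ after the Galois action (via \cite[Proposition~5.2.3]{KLZ2}), so the $\theta$-twisted sum collapses as you describe.
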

\begin{proof}
By definition, 
\[
(\mathrm{Pr}^\lambda\times\pr_1)_*=(\pr_1\times\pr_1)_*-\frac{\lambda'}{p^{k-1}}(\pr_2\times \pr_1)_*.
\]
The term $(\pr_1\times\pr_1)_*$ applied to ${}_c\mathcal{RI}^{[j]}_{mp^n,mp^{n+1}N,1}$ gives ${}_c\mathcal{RI}_{mp^n,mp^nN,1}^{[j]}$ thanks to \cite[Theorem~5.3.1]{KLZ2}. Corollary~5.5.2 in \textit{op. cit.} tells us that if we apply $(\pr_2\times\pr_1)_*$  to the same element, we obtain an expression of the form
\[
(\square){}_c\mathcal{RI}_{mp^n,mp^nN,p}^{[j]}\,,
\]
where $(\square)$ is some explicit factor whose exact value we need not know. We identify the Galois group $\Gal(\QQ(\mu_{mp^n})/\QQ)$ with $(\ZZ/mp^n\ZZ)^\times$. By the definition of Rankin-Iwasawa classes we have ${}_c\mathcal{RI}_{mp^n,mp^nN,x}^{[j]}=\mathcal{RI}_{mp^n,mp^nN,y}^{[j]}$ for $x\equiv y\mod mp^n$, and it follows applying \cite[Proposition~5.2.3]{KLZ2} that
\begin{align*}
\sum_{\sigma\in\Gal(\QQ(\mu_{mp^n})/\QQ)}\theta(\sigma)^{-1}\sigma\cdot{}_c\mathcal{RI}_{mp^n,mp^nN,p}^{[j]}&=\sum_{a\in(\ZZ/mp^n\ZZ)^\times}\theta(a)^{-1}{}_c\mathcal{RI}_{mp^n,mp^nN,a^{-1}p}^{[j]}\\
&=\sum_{b\in(\ZZ/mp^{n-1}\ZZ)^\times}{}_c\mathcal{RI}_{mp^n,mp^nN,b^{-1}p}^{[j]}\sum_{\substack{a\in (\ZZ/mp^{n}\ZZ)^\times\\a\equiv b \textup{ mod } mp^{n-1}}}\theta(a)^{-1}=0.
\end{align*}
The proof follows.
\end{proof}
\begin{corollary}\label{cor:samething}
Let  $m$ be an integer coprime to $p$ and  $\theta$ a finite-order character of $\Gal(\QQ(\mu_{mp^\infty})/\QQ)$. Let $p^n$ be the $p$-part of the conductor of $\theta$. If $n>0$, then
\[
\sum_{\sigma\in \Gal(\QQ(\mu_{mp^n})/\QQ)}\theta(\sigma)^{-1}\sigma\cdot z_{n,j}^{\alpha,\g}=\sum_{\sigma\in \Gal(\QQ(\mu_{mp^n})/\QQ)}\theta(\sigma)^{-1}\sigma\cdot z_{n,j}^{\beta,\g}
\]
for all $0\le j\le k-2$.
\end{corollary}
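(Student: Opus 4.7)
The plan is to reduce the corollary to Proposition~\ref{prop:BFcomparison} by simply applying the composition of maps in~\eqref{eq:imageBF}. Recall that, after the identification $R_{f^\lambda}^*\cong R_f^*$ induced by $\Pr^\lambda_*$, each class $z^{\lambda,\g}_{mp^r,j}$ is by construction the image of the Rankin--Iwasawa class $_c\mathcal{RI}^{\lambda,[j]}_{mp^r,mp^rN,1}=(\Pr^\lambda\times\pr_1)_*\bigl({}_c\mathcal{RI}^{[j]}_{mp^r,mp^{r+1}N,1}\bigr)$ under the sequence of \'etale regulator, K\"unneth, moment, and projection maps listed in~\eqref{eq:imageBF} (with $m$ replaced by $mp^r$).

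The key point is that every arrow in~\eqref{eq:imageBF} is $\Gal(\QQ(\mu_{mp^r})/\QQ)$-equivariant, so the idempotent $\sum_{\sigma}\theta(\sigma)^{-1}\sigma$ commutes with the full composition. It therefore suffices to prove
\[
\sum_{\sigma\in\Gal(\QQ(\mu_{mp^r})/\QQ)}\theta(\sigma)^{-1}\sigma\cdot{}_c\mathcal{RI}^{\alpha,[j]}_{mp^r,mp^rN,1}=\sum_{\sigma\in\Gal(\QQ(\mu_{mp^r})/\QQ)}\theta(\sigma)^{-1}\sigma\cdot{}_c\mathcal{RI}^{\beta,[j]}_{mp^r,mp^rN,1}
\]
at the level of Rankin--Iwasawa classes. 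But this is immediate from Proposition~\ref{prop:BFcomparison}: its right-hand side is the $\theta$-projection of $_c\mathcal{RI}^{[j]}_{mp^r,mp^rN,1}$, which does not involve $\lambda$ at all, so both expressions above coincide with this common value.

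Pushing the identity obtained through the compositum~\eqref{eq:imageBF} then yields the asserted equality of $\theta$-projections of $z^{\alpha,\g}_{r,j}$ and $z^{\beta,\g}_{r,j}$ in $H^1(\QQ(\mu_{mp^r}),R_f^*\otimes R_\g^{*,[k-2]}\otimes L(-j))$. The argument works uniformly for every $0\le j\le k-2$ and every finite order character $\theta$ with $r>0$, since the hypothesis $r>0$ is the only input needed to invoke Proposition~\ref{prop:BFcomparison}.

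There is no real obstacle here beyond bookkeeping: the only thing one has to be careful about is to use the \emph{same} identification $R_{f^\lambda}^*\cong R_f^*$ (namely $\Pr^\lambda_*$) on each side of the equality and to note that this identification is already built into the definition of $_c\mathcal{RI}^{\lambda,[j]}_{mp^r,mp^rN,1}$ via the degeneracy map $\Pr^\lambda$ on the first factor. Once this is settled, the corollary is a direct functorial consequence of Proposition~\ref{prop:BFcomparison}.
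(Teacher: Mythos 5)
Your proposal is correct and matches exactly what the paper intends: the corollary is stated without a written proof precisely because it is an immediate consequence of Proposition~\ref{prop:BFcomparison} once one observes that the maps in~\eqref{eq:imageBF} are $\Gal(\QQ(\mu_{mp^r})/\QQ)$-equivariant, so the $\theta$-projection operator may be moved to the level of Rankin--Iwasawa classes where the identity is already known (independently of $\lambda$). Your remark that the identification $\Pr^\lambda_*\colon R_{f^\lambda}^*\cong R_f^*$ is built into the definition of ${}_c\mathcal{RI}^{\lambda,[j]}$ is the right bookkeeping point to keep in mind.
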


\begin{lemma}\label{lem:resimage}
Let $m$ be an integer coprime to $p$, $n\ge0$ an integer and $z\in H^1(\QQ(\mu_{mp^{n+1}}), R_f^*\otimes R_\g^{*,[k-2]})$. Suppose that 
for all characters $\theta$ of $\Gal(\QQ(\mu_{mp^{n+1}})/\QQ)$ whose $p$-conductor is  $p^{n+1}$, we have
\[
\sum_{\sigma\in \Gal(\QQ(\mu_{mp^{n+1}})/\QQ)}\theta(\sigma)^{-1}\sigma\cdot z=0.
\] Then, $z$ is in the  image of $H^1\left(\QQ(\mu_{mp^{n}}), R_f^*\otimes R_\g^{*,[k-2]}\right)$ in $H^1\left(\QQ(\mu_{mp^{n+1}}), R_f^*\otimes R_\g^{*,[k-2]}\right)$ under restriction.
\end{lemma}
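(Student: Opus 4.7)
Set $K_n := \QQ(\mu_{mp^n})$, $G := \Gal(K_{r+1}/\QQ)$, $H := \Gal(K_{r+1}/K_r)$, and $M := R_f^* \otimes R_\g^{*,[k-2]}$. Since $\gcd(m,p) = 1$, we have $K_{r+1} = \QQ(\mu_m) \cdot \QQ(\mu_{p^{r+1}})$ with $H$ corresponding to $\Gal(\QQ(\mu_{p^{r+1}})/\QQ(\mu_{p^r}))$, and hence the characters $\theta$ of $G$ with $p$-conductor exactly $p^{r+1}$ are precisely those for which $\theta|_H \neq 1$. My strategy is to promote the family of vanishing statements in the hypothesis to a single integral identity by character averaging, upgrade it to $H$-invariance of $z$, and conclude via inflation--restriction.

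For the averaging step, I would sum the relations $\sum_{\sigma \in G} \theta(\sigma)^{-1} \sigma z = 0$ over all $\theta$ with $\theta|_H \neq 1$. By orthogonality of characters, the coefficient of $\sigma z$ in the resulting sum equals $|G|\,\delta_{\sigma = 1} - |G/H|\,\delta_{\sigma \in H}$, which is a rational integer. Since the identity a priori sits in $H^1(K_{r+1}, M) \otimes_{\cO_L} \cO_L[\zeta_{|G|}]$, a faithfully flat extension of $\cO_L$, it descends to
\[
|G/H|\bigl(|H|\, z - N_H z\bigr) = 0 \quad \text{in } H^1(K_{r+1}, M),
\]
where $N_H := \sum_{h \in H} h$. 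This is the integral payoff of the averaging.

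For step two, factor $|G/H| = \varphi(mp^r)$. When $r = 0$ we have $|G/H| = \varphi(m)$ and $|H| = p-1$, both of which are units in $\cO_L$, so one reads off $z = \sigma z$ for every $\sigma \in H$ at once. When $r \geq 1$ one has $|G/H| = \varphi(m)(p-1)\cdot p^{r-1}$ and the identity simplifies to $p^{r-1}(p z - N_H z) = 0$; to upgrade this to literal $H$-invariance I would argue that $H^1(K_{r+1}, M)$ has no $p$-power torsion via the Bockstein exact sequence
\[
0 \to H^0(K_{r+1}, M)/\varpi \to H^0(K_{r+1}, M/\varpi) \to H^1(K_{r+1}, M)[\varpi] \to 0,
\]
invoking $H^0(K_{r+1}, M) = 0$ from \eqref{eq:trivial} and the vanishing of $H^0(K_{r+1}, M/\varpi)$, which follows from the residual irreducibility of $\overline{\rho}_f|_{G_{K_{r+1}}}$ (a consequence of $p>k$ via Fontaine--Edixhoven) combined with the non-CM hypothesis on $f$ relative to $\g$.

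Once $z$ is known to lie in $H^1(K_{r+1}, M)^H$, the conclusion follows from inflation--restriction: since $M^{G_{K_{r+1}}} = 0$ by \eqref{eq:trivial}, one has $H^i(H, M^{G_{K_{r+1}}}) = 0$ for $i \geq 1$, and the sequence collapses to an isomorphism $H^1(K_r, M) \xrightarrow{\sim} H^1(K_{r+1}, M)^H$; the preimage of $z$ furnishes the required class. I expect the main obstacle to be the $p$-torsion-freeness assertion in step two, since the character averaging and inflation--restriction are otherwise routine; an alternative would be to prove the entire statement after tensoring with $L$ and then control the integral lattice by a separate descent argument keyed to the explicit module structure of $R_\g^{*,[k-2]}$.
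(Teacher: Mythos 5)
Your overall route --- deduce $H$-invariance of $z$ from the character vanishings, then conclude by inflation-restriction using \eqref{eq:trivial} --- is exactly the paper's, and your character-averaging and torsion-freeness arguments fill in a step that the paper's two-sentence proof states without elaboration.

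There is, however, a concrete error in the case split. You assert that $|G/H| = \varphi(m)$ is a unit of $\cO_L$ for $r = 0$ (and implicitly that the factor $\varphi(m)(p-1)$ can be cancelled when $r \ge 1$), but $\gcd(m,p) = 1$ does \emph{not} force $\gcd(\varphi(m), p) = 1$: for instance $p = 5$, $m = 11$ gives $\varphi(m) = 10$. So the cancellation of $|G/H|$ is unavailable in general, and the ``both units'' shortcut for $r = 0$ fails. The repair is to drop the case distinction and run the $\varpi$-torsion-freeness argument uniformly. Rewrite your averaged identity as $|G|\,z = |G/H|\, N_H z$; applying $\sigma \in H$ and using $\sigma N_H = N_H$ gives $|G|(\sigma z - z) = 0$; writing $|G| = p^a u$ with $u \in \ZZ_p^\times$, the $p$-torsion-freeness of $H^1(\QQ(\mu_{mp^{r+1}}), M)$ that you establish via the Bockstein sequence then yields $\sigma z = z$. (This also repairs the ``one reads off $z = \sigma z$ at once'' step for $r = 0$: even when $|H|$ is a unit, passing from $|H| z = N_H z$ to $H$-invariance is not a direct read-off --- it requires the observation $\sigma N_H = N_H$.) With this correction the argument is sound and reconstructs the implicit first step of the paper's proof.
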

\begin{proof}
Our hypothesis on $z$ tells us that 
\[
z\in H^1\left(\QQ(\mu_{mp^{n+1}}), R_f^*\otimes R_\g^{*,[k-2]}\right)^{\Gal(\QQ(\mu_{mp^{n+1}})/\QQ(\mu_{mp^{n}}))}.
\]
The latter coincides with the image of the restriction of $H^1\left(\QQ(\mu_{mp^{n}}), R_f^*\otimes R_\g^{*,[k-2]}\right)$ via the inflation-restriction sequence thanks to \eqref{eq:trivial}.
\end{proof}
\begin{corollary}\label{cor:divideclasses}
Let $m$ be an integer coprime to $p$, $n\ge1$ an integer and $$z\in H^1\left(\QQ(\mu_m), R_{f}^*\otimes R_\g^{*,[k-2]}\otimes \cO_L[\Gamma_0^\cyc/(\Gamma_0^\cyc)^{p^n}]^\iota\right).$$ Suppose that  for all characters $\theta$ of $\Gal(\QQ(\mu_{mp^{n+1}})/\QQ)$ whose $p$-conductor is  $p^{n+1}$, we have
\[
\sum_{\sigma\in \Gal(\QQ(\mu_{mp^{n+1}})/\QQ)}\theta(\sigma)^{-1}\sigma\cdot z=0.
\]
 Then, $z$ is in the  image of $H^1\left(\QQ(\mu_{m}), R_f^*\otimes R_\g^{*,[k-2]}\otimes \Phi_n(\gamma_0)\cO_L[\Gamma_0^\cyc/(\Gamma_0^\cyc)^{p^{n}}]^\iota\right)$, where $\Phi_n$ denotes the $p^n$-th cyclotomic polynomial.
\end{corollary}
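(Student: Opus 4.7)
The plan is to imitate the argument of Lemma~\ref{lem:resimage} after applying Shapiro's lemma to recast the hypothesis into a statement about cohomology at the $r$-th layer of the cyclotomic tower. Set $T := R_f^*\otimes R_\g^{*,[k-2]}$, $F := \QQ(\mu_m)$, $G_r := \Gamma_0^\cyc/(\Gamma_0^\cyc)^{p^r}$, and let $F_r$ be the compositum of $F$ with the unique degree-$p^r$ subextension of $\QQ(\mu_{p^\infty})/\QQ$. Since $\gcd(m,p)=1$, Shapiro's lemma yields canonical identifications
\[
H^1\!\bigl(F,\, T\otimes \cO_L[G_r]^\iota\bigr) \;\cong\; H^1(F_r, T),
\]
and, via the $G_\QQ$-equivariant isomorphism $\cO_L[G_{r-1}]^\iota \xrightarrow{\cdot\Phi_r(\gamma_0)} \Phi_r(\gamma_0)\cO_L[G_r]^\iota$,
\[
H^1\!\bigl(F,\, T\otimes \Phi_r(\gamma_0)\cO_L[G_r]^\iota\bigr) \;\cong\; H^1(F_{r-1}, T).
\]
These are compatible with the inclusion of coefficient modules on the one side and the restriction map on Galois cohomology on the other (the familiar Iwasawa-theoretic dictionary identifying the norm operator with restriction along cyclotomic layers). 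Writing $\tilde z\in H^1(F_r,T)$ for the Shapiro image of $z$, the corollary reduces to showing that $\tilde z$ lies in the image of the restriction $H^1(F_{r-1},T)\to H^1(F_r,T)$.

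By \eqref{eq:trivial}, $T^{G_{F_r}}=0$, so inflation-restriction identifies this image with $H^1(F_r,T)^{\Gal(F_r/F_{r-1})}$. Since $\Gal(F_r/F_{r-1})$ is cyclic of order $p$ generated by $h_0$, the image of $\gamma_0^{p^{r-1}}$, it suffices to prove $h_0\cdot\tilde z=\tilde z$. To translate the hypothesis I first observe that $\Delta := \Gal(\QQ(\mu_{mp^{r+1}})/F_r)$ acts trivially on $z$, because the coefficient module $\cO_L[G_r]^\iota$ sees only the $G_r$-component of the cyclotomic character. Summing out $\Delta$ in the hypothesis contributes a factor $|\Delta|=p-1$, which is a unit in $\cO_L$, so the hypothesis collapses to
\[
\sum_{\tau\in\Gal(F_r/\QQ)}\psi(\tau)^{-1}\,\tau\cdot\tilde z\;=\;0
\]
for every character $\psi$ of $\Gal(F_r/\QQ) = \Gal(F/\QQ)\times G_r$ whose $G_r$-component is primitive. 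As the characters of $\Gal(F_r/\QQ)$ non-trivial on $\langle h_0\rangle$ are exactly those of this form (a character of $G_r$ is trivial on $h_0$ precisely when it factors through $G_{r-1}$), the character-orthogonality argument implicit in Lemma~\ref{lem:resimage} applies verbatim and forces $h_0\cdot\tilde z=\tilde z$.

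The main obstacle is the integrality of this final orthogonality step, because $\langle h_0\rangle$ has order $p$ which is not a unit in $\cO_L$. This is the same delicate point that arises in Lemma~\ref{lem:resimage}, and is resolved identically: any residual torsion obstruction factors through $T^{G_{F_r}}$, which vanishes by \eqref{eq:trivial}.
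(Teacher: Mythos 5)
Your argument is correct and follows the same route as the paper's: translate the statement into Galois cohomology over a cyclotomic layer via Shapiro's lemma, then invoke the inflation-restriction argument powered by the vanishing \eqref{eq:trivial}. The paper's proof is simply the more economical packaging — it identifies multiplication by $\Phi_r(\Gamma_0^\cyc)$ on the Shapiro side with the restriction map on cohomology and then cites Lemma~\ref{lem:resimage} outright, whereas you re-derive the content of that lemma working over the fixed field of $\Delta$ inside $\QQ(\mu_{mp^{r+1}})$, an inessential variation since $|\Delta|=p-1$ is prime to $p$.
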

\begin{proof}
Shapiro's lemma gives 
 $$H^1\left(\QQ(\mu_m), R_{f}^*\otimes R_\g^{*,[k-2]}\otimes \cO_L[\Gamma_0^\cyc/(\Gamma_0^\cyc)^{p^n}]^\iota\right)\cong H^1\left(\QQ(\mu_{mp^{n+1}}),R_{f}^*\otimes R_\g^{*,[k-2]}\right).$$
 The restriction map $H^1\left(\QQ(\mu_{mp^{n}}),R_{f}^*\otimes R_\g^{*,[k-2]}\right)\rightarrow H^1\left(\QQ(\mu_{mp^{n+1}}),R_{f}^*\otimes R_\g^{*,[k-2]}\right)$ corresponds to the morphism
\[
H^1\left(\QQ(\mu_{m}), R_f^*\otimes R_\g^{*,[k-2]}\otimes\cO_L[\Gamma_0^\cyc/(\Gamma_0^\cyc)^{p^{n-1}}]^\iota\right)\rightarrow H^1\left(\QQ(\mu_{m}), R_f^*\otimes R_\g^{*,[k-2]}\otimes\cO_L[\Gamma_0^\cyc/(\Gamma_0^\cyc)^{p^{n}}]^\iota\right)
\] 
induced by the multiplication  by $\Phi_n(\gamma_0)$. Hence, our result follows from Lemma~\ref{lem:resimage}.
\end{proof}

\begin{theorem}\label{thm:decompBFhida}
There exist two elements $\BF_m^{\#,\g}$ and $\BF_m^{\flat,\g}$ inside
$$H^1\left(\QQ(\mu_m),R_f^*\otimes R_\g^{*,[k-2]}\widehat\otimes \Lambda_{L}(\Gamma_0^\cyc)^\iota\right)=H^1\left(\QQ(\mu_m),  D_0(\Gamma_0^\cyc,R_f^*)\widehat\otimes R_\g^{*,[k-2]}\right)$$ 
such that
\[
\begin{pmatrix}
\BF_m^{\alpha,\g}\\ \BF_m^{\beta,\g}
\end{pmatrix}
=Q^{-1}\Mlog\cdot
\begin{pmatrix}
\BF_m^{\#,\g}\\ \BF_m^{\flat,\g}
\end{pmatrix},
\]
where we identify all $\BF_m^{?,\g}$  as elements of $H^1\left(\QQ(\mu_m),D_{k-1}(\Gamma_0^\cyc,R_f^*)\widehat\otimes R_\g^{*,[k-2]}\right)$.
Furthermore, there exists an integer $s$ such that 
\[
 \BF_m^{\#,\g},\BF_m^{\flat,\g}\in \varpi^{-s}H^1\left(\QQ(\mu_m),R_f^*\otimes R_\g^{*,[k-2]}\widehat\otimes \Lambda_{\cO_L}(\Gamma_0^\cyc)^\iota\right)
\]
for all $m$ and all $\g$.
\end{theorem}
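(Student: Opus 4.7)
The plan is to transfer the algebraic factorization mechanism of Propositions~\ref{prop:prefactor} and~\ref{prop:decomp} to the setting of distribution-valued Galois cohomology classes. Three ingredients must be combined: (i) the distribution-valued classes $\BF_m^{\lambda,\g}$ of Theorem~\ref{thm:BFHida}, which are of growth order $\ord_p(\lambda)$ in the cyclotomic variable; (ii) the cross-compatibility furnished by Corollary~\ref{cor:samething}, which says that at every Dirichlet character $\theta$ of $p$-conductor $p^n>1$, the $\theta$-projections of $\alpha^n\BF_m^{\alpha,\g}$ and $\beta^n\BF_m^{\beta,\g}$ coincide; and (iii) the cohomological divisibility statement of Corollary~\ref{cor:divideclasses}, which upgrades such vanishing at wild characters to actual divisibility of Iwasawa cohomology classes by the cyclotomic polynomials $\Phi_r(\gamma_0)$.

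First, I would project $\BF_m^{\lambda,\g}$ to each finite layer, producing polynomial-valued cohomology classes representing $\lambda^{n+1}\BF_m^{\lambda,\g}$ modulo $\omega_{n,k-1}(\gamma_0)$. The cohomological analogue of Proposition~\ref{prop:prefactor} can then be executed level by level: multiply the pair of classes by the adjugate of the matrix $Q^{-1}C_n$ from Lemma~\ref{lem:Cn}, and exploit the fact (implicit in the proof of that lemma) that this adjugate is congruent to a rank-one matrix modulo every $\Phi_{m,k-1}(\gamma_0)$ for $1\le m\le n$. The moment compatibility from Corollary~\ref{cor:samething} forces the image of the product to vanish modulo each $\Phi_m(\gamma_0)$, and Corollary~\ref{cor:divideclasses} then lets us perform the corresponding division inside cohomology. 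The outcome is a pair of polynomial-valued cohomology classes $\BF_{m,n}^{\#,\g}$ and $\BF_{m,n}^{\flat,\g}$ satisfying the factorization identity modulo $\omega_{n,k-1}(\gamma_0)$, and Remark~\ref{rk:integral} ensures that the denominators introduced at this stage are bounded by a uniform power $\varpi^{-s}$ depending only on $\{\alpha,\beta\}$, hence independent of both the tame level $m$ and the CM Hida family $\g$.

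Next, I would assemble these levelwise solutions into the inverse limit as $n\to\infty$. Their compatibility in $n$ is immediate from the construction, and a cohomological version of Lemma~\ref{lem:inverselimit} produces the desired elements $\BF_m^{\#,\g},\BF_m^{\flat,\g}\in\varpi^{-s}H^1\left(\QQ(\mu_m),R_f^*\otimes R_\g^{*,[k-2]}\widehat\otimes\Lambda_{\cO_L}(\Gamma_0^\cyc)^\iota\right)$. Uniqueness (and thereby the validity of the factorization as an identity in $D_{k-1}$-valued cohomology) follows from the growth dichotomy of Lemma~\ref{lem:growth}: any two candidates would differ by a class whose image under $Q^{-1}\Mlog$ is divisible by $\log_{p,k-1}(\gamma_0)$ on one side and bounded by $o(\log_p^{k-1})$ on the other, forcing the difference to vanish.

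The main obstacle is the upgrade from coefficient-ring divisibility to cohomological divisibility in the levelwise step, namely justifying that the formal division by each $\Phi_r(\gamma_0)$ produces a \textit{cohomology} class rather than merely an element in the fraction field of the Iwasawa algebra tensored with cohomology. This is precisely what Corollary~\ref{cor:divideclasses} accomplishes, relying crucially on the vanishing \eqref{eq:trivial}, which in turn is a consequence of our non-CM assumption on $f$ (so that $f$ is not a twist of any member of $\g$). Without this algebraic input, the construction would yield classes only in an analytic cohomology, which would be insufficient for the subsequent Euler system argument and the integral Iwasawa main conjectures formulated in the introduction.
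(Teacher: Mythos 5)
Your proposal follows the same route as the paper: identify the finite-level moment integrals, hit the pair of classes with the adjugate of $Q^{-1}C_n$, invoke Corollary~\ref{cor:samething} to force vanishing at wild characters and Corollary~\ref{cor:divideclasses} to convert that vanishing into genuine divisibility of the cohomology classes by the cyclotomic polynomials, then pass to the inverse limit via (a cohomological version of) Lemma~\ref{lem:inverselimit}, keeping track of denominators uniformly. This is precisely what the paper does, so the plan is sound.

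The one step you compress is the production of ``polynomial-valued cohomology classes representing $\lambda^{n+1}\BF_m^{\lambda,\g}$ modulo $\omega_{n,k-1}(\gamma_0)$.'' Since $\BF_m^{\lambda,\g}$ takes values in the distribution module $D_{\ord_p(\lambda)}(\Gamma_0^\cyc,R_f^*)\widehat\otimes R_\g^{*,[k-2]}$, one cannot simply reduce it modulo $\omega_{n,k-1}$: the separate moment projections $x_{r,j}^{\lambda,\g}$ for $j=0,\dots,k-2$ only determine a class mod $\omega_{n,k-1}$ after a Chinese-remainder recombination, and the issue is to control the denominators this introduces. The paper handles this by lifting $\res\left(\lambda^r x_{r,i}^{\lambda,\g}\right)$ to actual cocycles $c_{r,i}^{\lambda,\g}$, using the growth bound from \cite[Theorem~3.3.5]{LZ1} (which is what feeds hypothesis (a) of Lemma~\ref{lem:PRtwists}) to guarantee the needed $p$-adic estimates, and it is at this point — not after the adjugate division — that Remark~\ref{rk:integral} is invoked to bound the denominator of the recombined cocycle by a uniform $\varpi^{s_0}$. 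The adjugate step contributes a second, independently uniform factor $\varpi^s$ coming from Proposition~\ref{prop:prefactor}. So your attribution of Remark~\ref{rk:integral} is slightly misplaced, and without the cocycle-level argument the uniformity of the overall denominator over all $m$ and $\g$ would not be justified; but once this step is spelled out, your proof is the paper's.
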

\begin{proof}
 Let $0\le j\le k-2$, $n\ge1$ be integers and $\lambda\in\{\alpha,\beta\}$. 
 Consider the commutative diagram
 \[
 \xymatrix{
H^1\left(\QQ(\mu_m),D_{\ord_p(\lambda)}(\Gamma_0^\cyc,R_{f}^*)\widehat\otimes R_\g^{*,[k-2]}\right) \ar[d]\ar[r]^{\rm res\ \ } &H^1\left(\QQ(\mu_{mp^\infty}),D_{\ord_p(\lambda)}(\Gamma_0^\cyc,R_{f}^*)\widehat\otimes R_\g^{*,[k-2]}\right)^{\Gamma_0^\cyc} \ar[dd]^{\int_{(\Gamma_0^\cyc)^{p^{n-1}}}\chi_0^j}\\
 H^1\left(\QQ(\mu_m), R_{f}^*\otimes R_\g^{*,[k-2]}(-j)\otimes L[\Gamma_0^\cyc/(\Gamma_0^\cyc)^{p^n}]^\iota\right)\ar[d]& \\
 H^1\left(\QQ(\mu_{mp^n}), R_{f}^*\otimes R_\g^{*,[k-2]}(-j)\otimes L\right)\ar[r]^{\rm \Tw_j\circ res\ \ }&H^1\left(\QQ(\mu_{mp^\infty}),R_f^*\otimes R_\g^{*,[k-2]}\otimes L\right)^{(\Gamma_0^\cyc)^{p^{n-1}}=\chi_0^j}},
 \]
 where the first vertical map on the left is the natural projection given by modulo $(\Gamma_0^\cyc)^{p^{n-1}}=\chi_0^j$ (which is our shorthand for modulo the ideal generated by $\chi_0^j(\gamma_0^{p^{n-1}})-\gamma_0^{p^{n-1}}$), whereas the second vertical map sends the cocycle $\left(\sigma\mapsto \sum_{g\in \Gamma_0^\cyc/(\Gamma_0^\cyc)^{p^{n-1}}} c_g(\sigma)g\right)$ to $\left(\sigma\mapsto c_1(\sigma)\right)$ as given by Shapiro's Lemma. The map $\Tw_j$ is the twisting map $R_f^*\otimes R_\g^{*,[k-2]}(-j)\rightarrow R_f^*\otimes R_\g^{*,[k-2]}$.

 Given that $j!$ and $\binom{k-2}{j}$ are $p$-adic units under our running assumption that $p>k$, we have
\[
\lambda^nx_{n,j}^{\lambda,\g}\in H^1\left(\QQ(\mu_{mp^n}), R_{f^\lambda}^*\otimes R_\g^{*,[k-2]}(-j)\right)\cong H^1\left(\QQ(\mu_m), R_{f^\lambda}^*\otimes R_\g^{*,[k-2]}(-j)\otimes \cO_L[\Gamma_0^\cyc/(\Gamma_0^\cyc)^{p^{n-1}}]^\iota\right).
\]
As in the proof of Theorem~\ref{thm:BFHida}, \cite[Theorem~3.3.5]{LZ1} tells us that, as elements of $H^1\left(\QQ(\mu_{mp^\infty}),R_f^*\otimes R_\g^{*,[k-2]}\otimes L\right)$, we have 
\[
\sum_{i=0}^j(-1)^i\binom{j}{i} \Tw_j\circ \res\left(\lambda^nx_{n,i}^{\lambda,\g}\right)\in p^{jn}H^1\left(\QQ(\mu_{mp^\infty}),  R^*_f\otimes R_\g^{*,[k-2]}\right).
\] 
Given that $\binom{j}{i}$ is a $p$-adic unit, we may recursively lift $\res \left(\lambda^nx_{n,i}^{\lambda,\g}\right)$ to a cocycle 
$$c_{n,i}^{\lambda,\g}\in Z^1\left(G_{\QQ(\mu _{mp^\infty})}, R_f^*\otimes R^{*,[k-2]}_\g\right)\otimes \cO_L[\Gamma_0^\cyc/(\Gamma_0^\cyc)^{p^{n-1}}]$$
as $i$ runs through $0,1,\ldots,k-2$ such that
\begin{equation}\label{eq:boundcocylces}
\left|\left|p^{-jn}\sum_{i=0}^j(-1)^i\binom{j}{i} c_{n,i}^{\lambda,\g}(u^{-i}\gamma_0)\right|\right|\le 1.
\end{equation}

Let $c_n^{\lambda,\g}$ be the unique cocycle 
$$
c_n^{\lambda,\g}\in Z^1\left(G_{\QQ(\mu_{m p^\infty})}, R_f^*\otimes R^{*,[k-2]}_\g\right)\otimes \Lambda_{\cO_L}(\Gamma_0^\cyc)/\omega_{n-1,k-1}(\gamma_0)\otimes L
$$
satisfying
\[
c_n^{\lambda,\g}\equiv c_{n,i}^{\lambda,\g}(u^{-i}\gamma_0)\mod \omega_{n-1}(u^{-i}\gamma_0)
\]
for $i=0,\ldots, k-2$ (it exists, by Chinese remainder theorem, as $G_{\QQ(\mu_{mp^\infty})}$ acts trivially on $\Lambda_{\cO_L}(\Gamma_0^\cyc)$).
On taking $r=0$ in Remark~\ref{rk:integral}, \eqref{eq:boundcocylces} tells us that there exists an integer $s_0$, independent of $n$, $\g$ and $\lambda$, such that
$$
\varpi^{s_0}c_n^{\lambda,\g}\in Z^1\left(G_{\QQ(\mu_{m p^\infty})}, R_f^*\otimes R^{*,[k-2]}_\g\right)\otimes \Lambda_{\cO_L}(\Gamma_0^\cyc)/\omega_{n-1,k-1}(\gamma_0).
$$
 Hence, this gives
$$\varpi^{s_0}\lambda^nx_{n}^{\lambda,\g}\in H^1\left(\QQ(\mu_{mp^\infty}),R_f^*\otimes R_\g^{*,[k-2]}\otimes  \cO_L[\Gamma_0^\cyc/(\Gamma_0^\cyc)^{p^{n-1}}]\right),$$
satisfying
\[
\lambda^nx_{n}^{\lambda,\g}\equiv\Tw_j\circ\res\left( \lambda^nx_{n,i}^{\lambda,\g}\right)\mod \omega_{n-1}(u^{-i}\gamma_0).
\]
Let $\adj$  be the adjugate matrix of $Q^{-1}C_{n-1}$ as in the proof of Proposition~\ref{prop:prefactor}. Then, Corollaries~\ref{cor:samething} and \ref{cor:divideclasses} tell us that
 \[
 \varpi^{s_0}\adj\begin{pmatrix}
 \alpha^nx_{n}^{\alpha,\g}\\
 \beta^nx_{n}^{\beta,\g}
 \end{pmatrix}\in H^1\left(\QQ(\mu_{mp^\infty}),R_f^*\otimes R_\g^{*,[k-2]}\otimes \prod_{i=1}^{n-1}\Phi_{i,k-1}(\gamma_0)  \Lambda_{\cO_L}(\Gamma_0^\cyc)^{\oplus 2}/\omega_{n-1,k-1}(\gamma_0)\right).
 \]
 Consequently, this defines 
 \[
 \begin{pmatrix}
\varpi^{s_0} x_{\#,n}\\ \varpi^{s_0}x_{\flat,n}
 \end{pmatrix}\in  H^1\left(\QQ(\mu_{mp^\infty}),R_f^*\otimes R_\g^{*,[k-2]}\otimes \varpi^{-s} \Lambda_{\cO_L}[[\Gamma_0^\cyc]]^{\oplus2}/\ker h_{n-1}\right)
 \]
 satisfying
 \[
 \begin{pmatrix}
 \alpha^nx_{n}^{\alpha,\g}\\
 \beta^nx_{n}^{\beta,\g}
 \end{pmatrix}=Q^{-1}C_{r-1}
 \begin{pmatrix}
 x_{\#,n}\\x_{\flat,n}
 \end{pmatrix}.
 \]
 On letting $n\rightarrow\infty$, we obtain two classes $x_\#$ and $x_\flat$ inside $H^1\left(\QQ(\mu_{mp^\infty}),R_f^*\otimes R_\g^{*,[k-2]}\otimes \varpi^{-s} \Lambda_{\cO_L}(\Gamma_0^\cyc)^\iota\right)$ satisfying
 \[
 \begin{pmatrix}
 \BF_m^{\alpha,\g}\\
 \BF_m^{\beta,\g}
 \end{pmatrix}=Q^{-1}\Mlog
 \begin{pmatrix}
 x_{\#}\\x_{\flat}
 \end{pmatrix}.
 \]
 These classes are invariant under $\Gamma_0^\cyc$ since $\BF_m^{\lambda,\g}$ are. Hence, we obtain the classes $\BF_m^{\#,\g}$ and $\BF_m^{\flat,\g}$ as required.
\end{proof}

{
\begin{remark}\label{rk:three}
Since the classes $_c\mathcal{RI}_{m,mN,a}^{[j]}$ (before $p$-stabilization) can be realized over $L_0$, Remark~\ref{rk:two} tells us that the classes $\BF_m^{\#,\g},\BF_m^{\flat,\g}$ can also be realized over $L_0$. In other words, they live in $$\varpi_0^{-s}H^1\left(\QQ(\mu_m),R_{f,0}^*\otimes R_\g^{*,[k-2]}\widehat\otimes \Lambda_{\cO_{L_0}}(\Gamma_0^\cyc)^\iota\right),$$ where $\varpi_0$ is a uniformizer of $L_0$ and  $R_{f,0}$ is a free $\cO_{L_0}$-module of rank two inside $R_{f}$.
\end{remark}
}

\begin{remark}
\label{rem:alternativewaystofactor}
We have constructed the signed Beilinson-Flach elements $\BF_m^{\#,\g}$ and $\BF_m^{\flat,\g}$ via those classes that land in the module $\varpi^{-s} \Lambda_{\cO_L}[[\Gamma_0^\cyc]]^{\oplus 2}/\ker h_{r-1}$, which are patched together via Lemma~\ref{lem:inverselimit} as $r$ varies. The advantage of this approach (namely, working at finite layers of the cyclotomic $\ZZ_p$-tower) is that we only needed to lift finitely many cohomology classes to cocycles at each finite level. 
If a generalization of \cite[Proposition~II.3.1]{colmez98} and \cite[Proposition~2.4.5]{LZ1} to the cohomology group $H^1\left(\QQ(\mu_m),D_{\ord_p(\lambda)}(\Gamma_0^\cyc,R_{f}^*)\widehat\otimes R_\g^{*,[k-2]}\right)$  was available, it would allow us to "pull out"  $\cH_{L,\ord_p(\lambda)}(\Gamma_0^\cyc)$ to conclude that
$$H^1\left(\QQ(\mu_m),D_{\ord_p(\lambda)}(\Gamma_0^\cyc,R_{f}^*)\widehat\otimes R_\g^{*,[k-2]}\right)\stackrel{?}{\cong} H^1\left(\QQ(\mu_m),R_{f}^*\widehat\otimes R_\g^{*,[k-2]}\right)\otimes_{\LL_{cO_L}(\Gamma_0^\cyc)}\, \cH_{L,\ord_p(\lambda)}(\Gamma_0^\cyc).$$
It would then be possible to obtain the factorization in Theorem~\ref{thm:decompBFhida} in an alternative manner. Namely, we could carry out the sought after factorization by working with the coordinates with respective a fixed basis of $H^1(\QQ(\mu_m),R_f^*\otimes \Lambda_{L}(\Gamma_0^\cyc)^\iota\hat\otimes R_\g^{*,[k-2]} )$.
%See Remark~\ref{rem:alternativewaystofactor2} below for yet another approach.
\end{remark}
\subsection{CM Hida families}
\label{subsec:CMhidafamilies}
We follow the notation of \cite[\S\S3-5]{LLZ2} unless we caution readers otherwise. All references in this section are to this article. In particular, for a general modulus $\frak{n}$ of $K$, we let $H_{\frak{n}}$ denote the ray class group modulo $\frak{n}$ and let $K(\frak{n})$ denote the maximal $p$-extension contained in the ray class field modulo $\frak{n}$. We set $H_{\frak{n}}^{(p)}:=\Gal(K(\frak{n})/K)$. %When the modulus $\frak{m}$ is prime to $p$, we set $H_{\frak{m}p^\infty}:=\varprojlim H_{\frak{m}p^n}$ and we similarly define $H_{\frak{m}\fp^\infty}$. For $\frak{m}$ as above, we will set  $R_{\cO_L}^{(\frak{m})}:=\cO_L[H_{\frak{m}}]$ and when $\frak{m}$ is prime to $p$, we will also write $\LL_{\cO_L}^{(\frak{m})}:=\cO_L[[H_{\frak{m}\fp^\infty}]]$. More generally, if $\Phi$ is an infinite abelian extension of $K$ that is contained in $H_{\frak{m}p^\infty}$ for some $\frak{m}$ and $\Gamma_\Phi$ its Galois group over $K$, we write $\LL_{\cO_L}(\Gamma_\Phi)$ for its completed group ring. Given $\frak{m}$
We recall from the introduction that we have fixed an integral ideal $\frak{f}$ of $\cO_K$ that is prime to $p$ with $K(\frak{f})=K$. We have  also fixed a $p$-distinguished ray class character $\chi$  modulo $\frak{f}$ (in the sense that {$\chi(\p)\neq \chi(\p^c)$; note in particular that $\Ind_{K/\QQ} \overline{\chi}$ is absolutely irreducible, where $\overline{\chi}$ is the mod $p$ reduction of $\chi$)}; we assume that the local field $L$ is large enough to realize the character $\chi$. We caution readers that our field $L$ is denoted by $L_{\frak{P}}$ in \textit{op. cit.} and their $\chi$ is different from ours. %We let $\overline{\chi}:H_{\frak{f}}\lra (\cO_L/\varpi_L)^\times$ denote the residual character. Note that $\overline{\chi}(\fp)\neq\overline{\chi}(\fp^c)$ since $p\nmid  | H_{\frak{f}}|$.  
For any ideal $\frak{m}=\n\ff$ divisible by $\frak{f}$, we will regard $\chi$ as a character of $\varprojlim H_{\frak{m}\fp^r}$ via the surjection $\varprojlim H_{\frak{m}\fp^r} \twoheadrightarrow H_\frak{f}$. We let $\eta_\chi$ denote the Dirichlet character of conductor $N_\chi:=\NN\ff$, which is characterized by the requirement that $\eta_\chi(n)=\chi((n))$ for integers prime to $\ff$. Notice that when $\chi$ is a ring class character, $\eta_\chi=\mathbbm{1}$.
   Let $\psi_0$ denote the unique Hecke character of $\infty$-type $(-1,0)$, conductor $\p$ and whose associated $p$-adic Galois character factors through $\Gamma_\p$. (It is unique since the ratio of two such characters would have finite $p$-power order and conductor dividing $\p$. This has to be the trivial character because of our assumption on the class number.) Set $\psi=\chi\psi_0$ and let $\psi_L:G_K\lra L^\times$ denote the Galois character associated to the $p$-adic avatar of $\psi$, which is obtained via the geometrically normalized Artin map. Note that $\psi$ is a Hecke character of of $\infty$-type $(-1,0)$ with conductor $\ff\p$. The theta-series $$\Theta(\psi):=\sum_{(\frak{a},\ff \p)=1}\psi(\frak{a})q^{\NN\frak{a}}\in S_2(\Gamma_1(D_KN_\chi p),\eta_\chi\epsilon_K\omega^{-1})$$
  is the weight two specialization (with trivial wild character) of a CM Hida family with tame level $D_KN_\chi$ and character $\eta_\chi\epsilon_K\omega$. The weight one specialization of this CM Hida family with trivial wild character equals the $p$-ordinary theta-series $\Theta^{\ord}(\chi):=\sum_{(\frak{a},\ff \p)=1}\chi(\frak{a})q^{\NN\frak{a}}\in S_{1}(\Gamma_1(D_KN_\chi p),\eta_\chi\epsilon_K)$ of $\chi$.

 For each ideal $\m$ divisible by $\frak{f}$, we let $\LL_{\frak{m}}^{(p)}=\cO_L[H_{\frak{m}}^{(p)}]$ (this ring is denoted by $\LL_{\frak{m}}^{\frak{P}}$ in op. cit.). If in addition $\frak{m}$ is coprime to $\fp^c$, we let $H^1(\psi,\frak{m})$ be the $G_{\QQ}$-representation that is denoted by $H^1(\psi,\frak{m},\frak{P})$ in \cite[Definition~5.2.2]{LLZ2}. This Galois representation is cut out from the \'etale cohomology of a suitable modular curve and it is free of rank $2$ over $\LL_\frak{m}^{(p)}$ (c.f. Proposition 5.2.3 in \textit{op. cit.} and \S~\ref{S:ESimag}) below. 
\begin{remark}
It is assumed at the start of \cite[\S 5.1]{LLZ2} that $\psi$ is a an algebraic Hecke character whose conductor is prime to $p$. However, as we are assuming $p$ splits in $K$ and that $\chi$ is $p$-distinguished, Remark 5.1.3 of \textit{op. cit.} still applies and tells us that the same results hold true in our setting.
\end{remark}
The following statement is a very slight extension\footnote{There is a typo in the statement of Corollary 5.2.6: In order to apply the previous results in \S5, the set of ideals considered should be the ones coprime to $\fp^c$, not $\fp$.} of Corollary 5.2.6 via Proposition 5.2.5 of \textit{op. cit.}: 
\begin{proposition}
\label{prop:LLZ2Cor526enhanced}
There exists a family of isomorphisms
$$\nu_{\frak{m},r}\,:\,H^1(\psi,\frak{mp}^r)\stackrel{\sim}{\lra} \textup{Ind}_{K(\frak{mp}^r)}^\QQ\,\cO_L(\psi^{-1}_L)$$
of  $\LL_{\frak{mp}^r}^{(p)}[[G_\QQ]]$-modules such that the diagram
$$\xymatrix{
H^1(\psi,\frak{m}^\prime\frak{p}^r)\ar[d]_{\mathcal{N}^{\frak{m}^\prime\fp^r}_{\frak{m}\fp^s}}\ar[r]^(.42){\nu_{\frak{m}^\prime,r}}_(0.42)\sim& \textup{Ind}_{K(\frak{m}^\prime\frak{p}^r)}^\QQ\,\cO_L(\psi^{-1}_L)\ar[d]\\
H^1(\psi,\frak{m}\frak{p}^s)\ar[r]^(.42){\nu_{\frak{m},s}}_(0.42)\sim& \textup{Ind}_{K(\frak{m}\frak{p}^s)}^\QQ\,\cO_L(\psi^{-1}_L)
}$$
commutes as $\frak{m}\mid \frak{m}^\prime$ range over integral ideals of $\cO_K$ that are divisible by $\frak{f}$ and coprime to $p$; and $r\geq s$ over non-negative integers. 
\end{proposition}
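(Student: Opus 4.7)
The plan is to invoke Proposition 5.2.5 of \cite{LLZ2} to construct each $\nu_{\frak{m},r}$ individually, and then verify the commutative diagram by factoring the transition $(\frak{m}',r)\to(\frak{m},s)$ through the intermediate pair $(\frak{m}',s)$. This reduces the verification to two cases: shrinking the $p$-power part from $r$ to $s$ with $\frak{m}'$ held fixed, which is already covered by Corollary 5.2.6 of \emph{op.\ cit.}; and shrinking the tame part from $\frak{m}'$ to $\frak{m}$ with $s$ held fixed, which is the genuinely new content of the present proposition.

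For the existence of each $\nu_{\frak{m},r}$, Proposition 5.2.5 of \cite{LLZ2} directly produces an isomorphism of $\LL_{\frak{mp}^r}^{(p)}[G_{\QQ}]$-modules, noting that both source and target are free of rank two over $\LL_{\frak{mp}^r}^{(p)}$. For the second compatibility step, the norm map $\mathcal{N}^{\frak{m}'\fp^s}_{\frak{m}\fp^s}$ on the left arises as the trace along the degeneracy morphism of modular curves used to cut out $H^1(\psi,\cdot)$, while the map on the right is the one obtained (via Frobenius reciprocity) from the surjection $H_{\frak{m}'\fp^s}^{(p)}\twoheadrightarrow H_{\frak{m}\fp^s}^{(p)}$. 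Commutativity of the resulting square may then be checked on an explicit pair of generators, by transporting through $\nu_{\frak{m}',s}$ the description of $H^1(\psi,\frak{m}'\fp^s)$ as an induced representation of a ray class character, exactly as in the proof of \cite[Proposition 5.2.3]{LLZ2}.

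The principal obstacle is ensuring that the construction of $\nu_{\frak{m},r}$ in \emph{op.\ cit.} is natural in $\frak{m}$ and not merely in $r$: while the core of the argument transports to the present setting without difficulty, auxiliary choices (such as level structures and explicit sections of the relevant Hecke modules) must be coordinated compatibly as $\frak{m}$ varies over ideals divisible by $\frak{f}$ and coprime to $p$. Once this coordination is fixed, the remaining verification is a formal diagram chase resting on the well-known compatibility between degeneracy maps on modular curves and norm maps on ray class groups, both of which behave functorially in the expected way.
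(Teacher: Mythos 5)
Your approach matches the paper's: both rest on Proposition 5.2.5 and Corollary 5.2.6 of \cite{LLZ2}. In fact the paper's proof is terser than yours—it simply declares the statement a very slight extension of LLZ2's Corollary 5.2.6 (modulo a typo there: the ideals should be taken coprime to $\fp^c$, not $\fp$), so that once the typo is fixed that corollary already packages both the tame-level compatibility you label ``genuinely new'' and the coordination of auxiliary choices you flag as the ``principal obstacle.'' Your explicit factoring through $(\frak{m}',s)$ is a harmless way to make the diagram chase transparent, but it somewhat overstates how much work remains beyond what LLZ2 already supplies.
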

The vertical map on the right is the obvious map induced from $\Gal(K(\frak{m}^\prime\fp^r)/K)\ra \Gal(K(\frak{m}\fp^s)/K)$ and $\mathcal{N}^{\frak{m}^\prime\fp^r}_{\frak{m}\fp^s}$ is the norm map which is given (together with its fundamental property) in Proposition 5.2.5 of \textit{op. cit.} 
\begin{corollary}
\label{cor:nun}
If $h\ge0$ is a real number,  $\frak{m}$ is an ideal divisible by $\frak{f}$ and coprime to $p$, there exists a family of isomorphisms
$$\nu_{\frak{m}}:H^1\left(\QQ, R_f^*\otimes H^1(\psi,\frak{mp}^r)\widehat\otimes\cH_{L,h}(\Gamma_{0}^\cyc)^\iota\right)\stackrel{\sim}{\lra}  H^1\left(K(\frak{mp}^r), R_f^*(\psi_L^{-1})\widehat\otimes\cH_{L,h}(\Gamma_{0}^\cyc)^\iota\right),$$
which are compatible as $\frak{m}$ varies (in the sense that they induce a commutative diagram, analogous to that in Proposition~\ref{prop:LLZ2Cor526enhanced}). 
\end{corollary}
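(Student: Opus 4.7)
The plan is to reduce the statement to Proposition~\ref{prop:LLZ2Cor526enhanced} via the standard chain: Frobenius/projection formula for induced modules, followed by Shapiro's lemma. All of these are natural in the relevant data, so compatibility as $\frak{m}$ varies will come for free from the compatibility already built into Proposition~\ref{prop:LLZ2Cor526enhanced}.

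First, by Proposition~\ref{prop:LLZ2Cor526enhanced} we have a $G_\QQ$-equivariant isomorphism
$$H^1(\psi,\frak{mp}^r)\stackrel{\nu_{\frak{m},r}}{\lra}\Ind_{K(\frak{mp}^r)}^\QQ\,\cO_L(\psi_L^{-1}).$$
Tensoring with $R_f^*$ over $\cO_L$, the projection formula for induction from the open finite-index subgroup $G_{K(\frak{mp}^r)}\leq G_\QQ$ gives
$$R_f^*\otimes_{\cO_L}\Ind_{K(\frak{mp}^r)}^\QQ\cO_L(\psi_L^{-1})\;\cong\;\Ind_{K(\frak{mp}^r)}^\QQ\bigl(R_f^*(\psi_L^{-1})\bigr),$$
with the $G_{K(\frak{mp}^r)}$-action on the right obtained by restricting $R_f^*$. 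Next, since the Galois action on $\cH_{L,h}(\Gamma_0^\cyc)^\iota$ factors through the cyclotomic quotient, which is shared by $G_\QQ$ and $G_{K(\frak{mp}^r)}$, a second application of the projection formula yields
$$R_f^*\otimes H^1(\psi,\frak{mp}^r)\widehat{\otimes}\cH_{L,h}(\Gamma_0^\cyc)^\iota\;\cong\;\Ind_{K(\frak{mp}^r)}^\QQ\bigl(R_f^*(\psi_L^{-1})\widehat{\otimes}\cH_{L,h}(\Gamma_0^\cyc)^\iota\bigr).$$
Finally, I would apply Shapiro's lemma to pass from $G_\QQ$-cohomology of the induced module back to $G_{K(\frak{mp}^r)}$-cohomology, producing the desired isomorphism $\nu_{\frak{m}}$.

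For the compatibility statement, the norm compatibility of the $\nu_{\frak{m},r}$ in Proposition~\ref{prop:LLZ2Cor526enhanced} translates, after tensoring and taking Shapiro, into the corestriction relation $\mathrm{cor}_{K(\frak{m}\fp^s)}^{K(\frak{m}^\prime\fp^r)}\circ\nu_{\frak{m}^\prime}=\nu_{\frak{m}}\circ\mathcal{N}^{\frak{m}^\prime\fp^r}_{\frak{m}\fp^s}$, by functoriality of the projection formula and of Shapiro's lemma in the open subgroup; this is a formal verification.

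The one technical point that requires some care is the interaction between the completed tensor product $\widehat{\otimes}\cH_{L,h}(\Gamma_0^\cyc)^\iota$ and the passage through the isomorphisms above, since $\cH_{L,h}(\Gamma_0^\cyc)$ is not a discrete or finitely generated module. I would handle this by writing $\cH_{L,h}(\Gamma_0^\cyc)^\iota$ as an inverse limit of the finite-level truncations $\Lambda_{\cO_L}(\Gamma_0^\cyc)/\omega_{n,\lceil h \rceil}(\gamma_0)$ tensored with $L$, where the identifications on the right of Proposition~\ref{prop:LLZ2Cor526enhanced} and Shapiro's lemma clearly commute with such limits; this essentially reduces the claim to the discrete setting. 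I expect this bookkeeping to be the main (though routine) obstacle—everything else is a direct application of the functorial tools listed above.
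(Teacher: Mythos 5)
Your argument is correct and follows the same route as the paper's one-line proof, which simply cites Proposition~\ref{prop:LLZ2Cor526enhanced} together with Shapiro's lemma (and passage to the limit in $r$). You have merely spelled out the intermediate projection-formula step and the diagram chase for compatibility that the paper leaves implicit, and have appropriately flagged the routine care needed around the completed tensor product with $\cH_{L,h}(\Gamma_0^\cyc)^\iota$.
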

\begin{proof}
This is a consequence of Shapiro's Lemma and Proposition~\ref{prop:LLZ2Cor526enhanced}, by passing to limit in $r$.
\end{proof}

\begin{corollary}
\label{cor:topropLLZ2Cor526enhanced}Let $h$ and $\n$ be as in Corollary~\ref{cor:nun}.
There exists a family of morphisms
$$\mu_{\frak{m}}:\varprojlim_r H^1\left(\QQ, R_f^*\otimes H^1(\psi,\frak{mp}^r)\widehat\otimes\cH_{L,h}(\Gamma_{0}^\cyc)^\iota\right){\lra}  H^1\left(K(\frak{m}), \TT_{f,\chi}\widehat\otimes_{\Lambda_{\cO_L}(\Gamma_{\fp^c})}\cH_{L,h}(\Gamma_{0}^\cyc)^\iota\right),$$
which are compatible as $\frak{m}$ varies (in the obvious sense). 
\end{corollary}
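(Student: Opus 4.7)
The plan is to build $\mu_\m$ as the composition of four natural constructions: an application of Corollary~\ref{cor:nun} at each finite level $r$, passage to the inverse limit in $r$, Shapiro's lemma for the $\ZZ_p$-tower $K(\m\fp^\infty)/K(\m)$, and a character-twisting trick that absorbs the auxiliary Hecke character $\psi_0$ into the Iwasawa algebra of $\Gamma_\fp$.

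First, I would apply Corollary~\ref{cor:nun} at each $r\geq 1$ and use the compatibility of the isomorphisms $\nu_{\m,r}$ with the norm maps (recorded in Proposition~\ref{prop:LLZ2Cor526enhanced}) to pass to the inverse limit, identifying the source of $\mu_\m$ with
\begin{equation*}
\varprojlim_r H^1\left(K(\m\fp^r),\, R_f^*(\psi_L^{-1}) \widehat\otimes \cH_{L,h}(\Gamma_0^\cyc)^\iota\right).
\end{equation*}
The hypothesis \textup{\textbf{(H.SS.)}} together with the coprimality of $\m$ to $p$ ensures that $K(\m\fp^\infty)/K(\m)$ is a $\ZZ_p$-extension with Galois group canonically isomorphic to $\Gamma_\fp$, so Shapiro's lemma rewrites the above inverse limit as $H^1\!\left(K(\m),\, R_f^*(\psi_L^{-1})\otimes \LL_{\cO_L}(\Gamma_\fp)^\iota \widehat\otimes \cH_{L,h}(\Gamma_0^\cyc)^\iota\right)$.

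Next, since $\psi = \chi\psi_0$ and $\psi_{0,L}$ factors through $\Gamma_\fp$ (as $\psi_0$ has conductor $\fp$), the standard change-of-variables automorphism $\gamma\mapsto \psi_{0,L}(\gamma)\gamma$ of $\LL_{\cO_L}(\Gamma_\fp)$ yields a $G_K$- and $\LL_{\cO_L}(\Gamma_\fp)$-linear isomorphism
\begin{equation*}
R_f^*(\psi_L^{-1})\otimes \LL_{\cO_L}(\Gamma_\fp)^\iota \;\cong\; R_{f,\chi^{-1}}^*\otimes \LL_{\cO_L}(\Gamma_\fp)^\iota.
\end{equation*}
Composing with a Tate twist by $1-k/2$ turns the coefficients into $T_{f,\chi}\otimes \LL_{\cO_L}(\Gamma_\fp)^\iota\widehat\otimes \cH_{L,h}(\Gamma_0^\cyc)^\iota$, which I would then identify with $\TT_{f,\chi}\widehat\otimes_{\LL_{\cO_L}(\Gamma_{\fp^c})}\cH_{L,h}(\Gamma_0^\cyc)^\iota$ via the factorization $\LL_{\cO_L}(\Gamma)^\iota = \LL_{\cO_L}(\Gamma_\fp)^\iota\widehat\otimes\LL_{\cO_L}(\Gamma_{\fp^c})^\iota$ together with the embedding of $\Gamma_0^\cyc/\Delta$ inside $\Gamma_{\fp^c}$ recorded in \S\ref{S:semilocal}. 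Compatibility of the $\mu_\m$ as $\m$ varies will then be inherited from the parallel compatibilities of $\nu_{\m,r}$, of the Shapiro identifications (via naturality in corestriction), and of the twisting operations, all of which are functorial.

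The main obstacle will be bookkeeping: one must keep track of three distinct but closely related Iwasawa-theoretic directions ($\Gamma_\fp$, $\Gamma_{\fp^c}$, $\Gamma_0^\cyc$) and verify that the twist absorbing $\psi_0$, the Tate twist, and the identifications among these groups are mutually compatible with both the $G_K$-action and the various $\LL_{\cO_L}$-module structures in play. Once this is arranged, the diagram chase establishing naturality in $\m$ will follow routinely from the analogous compatibilities already recorded in Proposition~\ref{prop:LLZ2Cor526enhanced} and Corollary~\ref{cor:nun}.
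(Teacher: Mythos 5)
Your proof is correct and follows essentially the same route as the paper's: the identification from Corollary~\ref{cor:nun}, Shapiro's lemma for the $\Gamma_\fp$-tower, absorbing the $\psi_0$-twist into $\LL_{\cO_L}(\Gamma_\fp)$ via the character-twist automorphism, and the Tate twist $\cdot\, e_{1-k/2}$. The paper additionally inserts a corestriction from $K(\m\fp^r)$ to the compositum $K(\m)K(\fp^r)$ before invoking Shapiro's lemma, but under \textup{\textbf{(H.SS.)}} and $p\geq 5$ (which forces $\cO_K^\times$ to have order prime to $p$) one has $(H^{(p)}_{\m\fp^r})\cong(H^{(p)}_\m)\times(H^{(p)}_{\fp^r})$ and hence $K(\m\fp^r)=K(\m)K(\fp^r)$, so that corestriction is the identity and your direct application of Shapiro to the tower $K(\m\fp^\infty)/K(\m)$ is equivalent.
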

\begin{proof}
The maps $\mu_\frak{m}$ are obtained by composing ${\nu}_\m$ from Corollary~\ref{cor:nun} with the compositum of the arrows
\begin{align*}
\varprojlim_r H^1\left(K(\frak{mp}^r), R_f^*(\psi_L^{-1})\widehat\otimes\cH_{L,h}(\Gamma_{0}^\cyc)^\iota\right) &\stackrel{\textup{cor}}{\lra}\varprojlim_r H^1\left(K(\frak{m})K({\fp}^r), R_f^*(\psi_L^{-1})\widehat\otimes\cH_{L,h}(\Gamma_{0}^\cyc)^\iota\right)\\
&\lra H^1\left(K(\frak{m}), R_f^*(\psi_L^{-1})\otimes\,{\LL_{\cO_L}(\Gamma_{\p})}\,\widehat\otimes_{\cO_L}\cH_{L,h}(\Gamma_{0}^\cyc)^\iota\right)
\\&\lra H^1\left(K(\frak{m}), R_f^*(\chi^{-1})\otimes\,{\LL_{\cO_L}(\Gamma_{\p})}\,\widehat\otimes_{\cO_L}\cH_{L,h}(\Gamma_{0}^\cyc)^\iota\right)\\
&\stackrel{\cdot e_{1-k/2}}{\lra}H^1\left(K(\frak{m}), \TT_{f,\chi}\widehat\otimes_{\LL_{\cO_L}(\Gamma_
\cyc)}\, \cH_{L,h}(\Gamma_{0}^\cyc)^\iota\right)
\end{align*}
where $\textup{cor}=\textup{cor}_{K(\frak{m})K({\fp}^r)}^{K(\frak{mp}^r)}$ is the corestriction map, the second arrow is deduced from Shapiro's lemma, the third arrow is induced from the fact that $\chi^{-1}\psi_L$ factors through $\Gamma$. The compositum of these arrows has the desired compatibility as $\frak{m}$ varies since each of these arrows does.
\end{proof}
The family of morphisms $\mu_{\frak{m}}$ may be rewritten as 
\begin{equation}
\label{eqn:compatiblemorphismsmufrakn}
\mu_{\frak{m}}: H^1\left(\QQ, R_f^*\otimes H^1(\psi,\frak{mp}^\infty)\widehat\otimes\cH_{L,h}^\iota\right){\lra}  H^1\left(K(\frak{m}), \TT_{f,\chi}\widehat\otimes\cH_{L,h}(\Gamma_{0}^\cyc)^\iota\right)
\end{equation}
where we have set $H^1(\psi,\frak{mp}^\infty):=\varprojlim_r H^1(\psi,\frak{mp}^r)$.

\subsection{Beilinson-Flach elements over imaginary quadratic fields}\label{S:ESimag}

Let $K$, $\chi$ and $\ff$ be as before. 
Recall from the introduction that $\mathcal{N}$ is the collection of square-free integral ideals of $\cO_K$ which are prime to $\ff p$. For each $\mathfrak{n}\in \mathcal{N}$, we set $\m=\mathfrak{nf}$. Set $M:=D_K\NN\frak{m}$ and let ${\bf{T}}_{Mp}$ denote the Hecke algebra given as at the start of \cite[\S4.1]{LLZ2} and define the maximal ideal $\mathcal{I}_{\frak{mp}}$ of ${\bf{T}}_{Mp}$ as in Definition 5.1.1 of \textit{op.cit.} We remark that in order to determine the map $\phi_{\frak{m}\p}$ that appears in this definition, we use the algebraic Hecke character $\psi$ we have chosen in \S\ref{subsec:CMhidafamilies} above. It follows by Prop. 5.1.2 that $\mathcal{I}_{\frak{mp}}$ is non-Eisenstein, $p$-ordinary and $p$-distinguished. By Theorem~4.3.4 of \textit{op. cit.}, the ideal $\mathcal{I}_{\frak{mp}}$ corresponds uniquely to a non-Eisenstein maximal ideal $\mathcal{I}$ of the universal ordinary Hecke algebra ${\bf{T}}_{Mp^\infty}$ acting on $H^1_{\ord}(Y_1(Mp^\infty))$ (definitions of these objects may be found in Definition 4.3.1 of loc. cit.). The said correspondence is induced from Ohta's control theorem~\cite[Theorem 1.5.7(iii)]{ohta99}, which also attaches to $\mathcal{I}_{\m\p}$ a unique non-Eisenstein, $p$-ordinary and $p$-distinguished 
maximal ideal $\mathcal{I}_{\m\p^r}$ of ${\bf{T}}_{Mp^r}$ for each $r\geq 1$ (which is easily seen to coincide with the kernel of the compositum of the arrows  ${\bf{T}}_{Mp^r}\stackrel{\phi_{\m\p^r}}{\lra}\cO_L[H_{\m\p^r}]\ra\cO_L\ra \cO_\varpi$, and therefore with its original form given in \cite[Definition 5.1.1]{LLZ2}).   The ideal $\mathcal{I}$ determines a CM Hida family that we shall denote by $\g_{\frak{m}}$, whose associated Galois representation $R_{\g_{\frak{m}}}^*$ is $H^1_{\ord}(Y_1(Mp^\infty))_{\mathcal{I}}$. When $\m=\ff$, note that $\g_\ff$ is the CM Hida family of tame level $D_KN_\chi$ and character $\eta_\chi\epsilon_K\omega$ we have discussed in Section~\ref{subsec:CMhidafamilies}.
 
 We now choose $\Bg$ in the construction of the class $\BF_{1}^{\lambda,\Bg}$ in Theorem~\ref{thm:BFHida} as the CM family $\Bg_\m$ given as above. Notice than that once we have fixed the Hecke character $\psi$ as above, there exists a family of morphisms of Galois modules $\pi_{\frak{m}}: R_{\g_{\m}}^*\ra H^1(\psi,\m\p^\infty)$ that are compatible as $\m$ varies. The construction of these morphisms is based on the arguments already present in \cite[\S5.1]{LLZ2} and we provide a brief outline here. The maps $\phi_{\m\p^r}$ induce morphisms $({\bf{T}}_{Mp^r})_{\mathcal{I}_{\m\p^r}}\stackrel{\phi_{\m\p^r}}{\lra}\cO_L[H_{\m\p^r}^{(p)}]$, which are compatible as $r$ varies (thanks to the choice of the $p$-ordinary maximal ideals ${\mathcal{I}_{\m\p^r}}$) and gives rise in the limit to a map
 $$\phi_{\m,\p^\infty}:\,({\bf{T}}_{Mp^\infty})_{\mathcal{I}}\lra \varprojlim_r \cO_L[H_{\m\p^r}^{(p)}]=:\LL_{\m\p^\infty}\,.$$
 On the other hand, the $G_\QQ$-representations $H^1(\psi,\m\p^r)$ are defined by setting 
 $$H^1(\psi,\m\p^r):= H^1(Y_1(Mp^r))_{\mathcal{I}_{\m\p^r}}\,\otimes_{\phi_{\m\p^r}}\cO_L[H_{\m\p^r}^{(p)}]\,.$$
 On passing to limit in $r$, we have 
 $$H^1(\psi,\m\p^\infty)=H^1_\ord(Y_1(M))_\mathcal{I}\otimes_{\phi_{\m\p^\infty}}\LL_{\m\p^\infty}$$ 
 and the the map $\pi_\m$ is given by $r\mapsto r\otimes 1$ under these identifications.
 
 After twisting by the weight character, the  elements $\BF_{1}^{\lambda,\Bg_{\m}}$  give rise to classes
\[
\BF^\lambda_{\m}\in  H^1(K(\m),\TT_{f,\chi}\widehat\otimes\cH_{L,\ord_p(\lambda)}(\Gamma^\cyc)^\iota),
\]
for each $\m$ and $\lambda\in\{\alpha,\beta\}$,  via the morphisms $\pi_{\m}$ and the identification $\mu_{\m}$ given by Corollary~\ref{cor:topropLLZ2Cor526enhanced}. Recall that $\m=\n\ff$. We may define
 \[
\BF^\lambda_{\mathfrak{n}}\in  H^1(K(\mathfrak{n}),\TT_{f,\chi}\widehat\otimes\cH_{L,\ord_p(\lambda)}(\Gamma^\cyc)^\iota)
\]
to be the image of $\BF^\lambda_{\m}$ under the corestriction map.
Furthermore, arguing as in the proof of \cite[Proposition~3.10]{buyukbodukleianticycloord}, we see that these elements satisfy the twisted Euler-system norm relation (\ref{EQN_ESrelation}). %The only point to be tended is that \cite[Theorem 4.6.6]{LZ1} is to be used as the replacement for Ohta's results \cite{ohta99,ohta00} that allows us to realize Hida's universal Galois representation as the quotient of the \'etale cohomology of a $\Lambda$-adic sheaf on a suitably chosen modular curve.
We may decompose the pair of Beilinson-Flach classes $\{\BF^\alpha_{\mathfrak{n}}, \BF^\beta_{\mathfrak{n}}\}$ using Theorem~\ref{thm:decompBFhida} to obtain bounded classes, namely
\begin{equation}\label{eq:BFdecomp}
\begin{pmatrix}
\BF^{\alpha}_\mathfrak{n}\\ \BF^{\beta}_{\mathfrak{n}}
\end{pmatrix}= Q^{-1}\Tw_{k/2-1}M_{\log}\cdot 
\begin{pmatrix}
\BF^{\#}_{\mathfrak{n}}\\ \BF^{\flat}_{\mathfrak{n}}
\end{pmatrix},
\end{equation}
for some $\BF^{\#}_{\mathfrak{n}},\BF^{\flat}_{\mathfrak{n}}$ inside $H^1(K(\mathfrak{n}),\TT_{f,\chi}\otimes_{\cO_L} L)$. Here, we treat all the  Beilinson-Flach elements as elements of $H^1(K(\mathfrak{n}),\TT_{f,\chi}\widehat\otimes\cH_{L,k-1}(\Gamma^\cyc)^\iota)$.    

For $\bullet \in \{\#,\flat\}$, we write $c_\n^\bullet \in H^1(K(\n),\TT_{f,\chi})$ for the element $\varpi^s\cdot \BF_{\n}^\bullet$, where  $s$ was introduced as part of Theorem~\ref{thm:decompBFhida}. We simply write $c^\bullet$ in place of $c_1^\bullet \in H^1(K,\TT_{f,\chi})$ and denote by $c^\bullet_\cyc$ (respectively, $c^\bullet_\ac$) its image in $H^1(K,\TT_{f,\chi}^\cyc)$ (respectively, in $H^1(K,\TT_{f,\chi}^\ac)$). Similarly, we define $c_\n^\lambda$ to be $\varpi^s\cdot \BF_\n^\lambda$ and we have $c^\lambda=c_1^\lambda$ and $c_\cyc^\lambda$ and $c_\ac^\lambda$ may be defined in the same way.

\subsection{Local images of the Beilinson-Flach elements}
In this section, we study the image of the Beilinson-Flach elements defined in the previous sections under the Coleman maps at $\fp$ and $\fp^c$ defined in \S\ref{S:2varCol}. 
%In this portion of our article, we shall again work under slightly greater generality and consider twists of $R_{f,\chi^{-1}}^*(-l)$ rather than $T_{f,\chi}$. We therefore assume in this section that the hypothesis $\mathbf{(H.nA)}_l$ holds true (for each $l$ that we work with). Under this hypothesis, we note that $H^1(K,R_{f,\chi^{-1}}^*(-l)\otimes\LL_{\cO_L}(\Gamma^?)^\iota)$ is torsion-free (for $?=\ac,\cyc,\emptyset$). We remind the readers that the hypothesis $\mathbf{(H.nA)}_l$ holds true whenever $p>k$, or any one of the five conditions $\mathbf{(P1)}$-$\mathbf{(P5)}$ (modified in order to take into account that $l$ plays the role of $k/2-1$).

Let $\gm$ be the CM Hida family over $K$ we have fixed in \S\ref{S:ESimag}; recall also its relation with the Galois representation $H^1(\psi,\m\p^\infty)$, which is the main object of interest for us. We recall from \cite[\S7.2]{KLZ2} that there exists a short exact sequence of $G_{\QQ_p}$-representations  
\begin{equation}
\label{eqn:ohtawilespordseq}
0\rightarrow \cF^+H^1(\psi,\m\p^\infty)\rightarrow H^1(\psi,\m\p^\infty)\rightarrow \cF^-H^1(\psi,\m\p^\infty)\rightarrow 0
\end{equation}
such that both $\cF^+H^1(\psi,\m\p^\infty)(-1-\kappa)$ and $\cF^-H^1(\psi,\m\p^\infty)$ are unramified at $p$, where $\kappa$ is the weight character. Recall from Proposition~\ref{prop:LLZ2Cor526enhanced} that $H^1(\psi,\m\p^\infty)\cong \Ind_{K}^\QQ\Lambda_{\gm}$, where $\Lambda_\gm=\varprojlim_r \Ind_{K(\mathfrak{mp}^r)}^K\cO_L(\psi_L^{-1})$. 
Moreover, the sequence (\ref{eqn:ohtawilespordseq}) admits a splitting and  we may identify $ \cF^+ H^1(\psi,\m\p^\infty)|_{G_{\Qp}}$ with $ \Lambda_\gm|_{G_{K_\p}}$ whereas $\cF^-H^1(\psi,\m\p^\infty)|_{G_{\Qp}}$ gets identified with $\Lambda_\gm|_{G_{K_{\p^c}}}$.
We define the restriction map
\[
\res_\q:H^1(K(\m),\TT_{f,\chi})\rightarrow H^1(K(\m)_\q,\TT_{f,\chi}).
\]
for $\q\in\{\p,\p^c\}$.
We have the following result on the  image of $\BF_\gm^{\lambda}$ under $\res_{\p^c}$.
\begin{proposition}\label{prop:sameprojection}
For $\lambda\in\{\alpha,\beta\}$, we have
\[
\cL_{\lambda,\m,\p^c}\circ\res_{\p^c}(\BF_\m^\lambda)=0.
\]
Furthermore,
\[
\cL_{\alpha,\m,\p^c}\circ\res_{\p^c}(\BF_\m^\beta)=-\cL_{\beta,\m,\p^c}\circ\res_{\p^c}(\BF_\m^{\alpha}).
\]
\end{proposition}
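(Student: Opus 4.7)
The strategy is to verify both identities at a Zariski-dense collection of characters of $\Gamma$ and then extend by density. I would work with characters of the form $\chi_0^j\theta$, where $0\leq j\leq k-2$ and $\theta$ is a finite-order wild character of $\Gamma_0^\cyc$ of conductor $p^r>1$. The two principal ingredients will be the interpolation formula of Lemma~\ref{lem:imageL} for the Perrin-Riou logarithm and the compatibility between the $\alpha$- and $\beta$-flavored Rankin-Iwasawa classes proved in Corollary~\ref{cor:samething}.

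The first step is to reduce $\cL_{\lambda,\m,\fp^c}$ to the one-variable setting via the semilocal decomposition of \S\ref{S:semilocal}, writing it as a direct sum of Perrin-Riou maps attached to the primes of $K(\m)$ above $\fp^c$. Lemma~\ref{lem:imageL} then yields
$$\alpha^r\cdot\cL_{\alpha,\m,\fp^c}(z)(\chi_0^j\theta) \;=\; \beta^r\cdot\cL_{\beta,\m,\fp^c}(z)(\chi_0^j\theta)$$
for any $z$ in the semilocal Iwasawa cohomology. In parallel, Theorem~\ref{thm:BFHida}, combined with the identification $\mu_\m$ of Corollary~\ref{cor:topropLLZ2Cor526enhanced} and with Corollary~\ref{cor:samething}, shows that the specializations $\lambda^r\cdot\res_{\fp^c}(\BF_\m^\lambda)$ at $\chi_0^j\theta$ agree for $\lambda=\alpha$ and $\lambda=\beta$. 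Combining these, one obtains at every such character the symmetry
$$\cL_{\beta,\m,\fp^c}\bigl(\res_{\fp^c}\BF_\m^\alpha\bigr)(\chi_0^j\theta) \;=\; \cL_{\alpha,\m,\fp^c}\bigl(\res_{\fp^c}\BF_\m^\beta\bigr)(\chi_0^j\theta).$$

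To convert this ``$=$'' into the ``$=-$'' of the second assertion and to establish the diagonal vanishing, I would invoke the explicit reciprocity law for Beilinson-Flach classes of Kings--Loeffler--Zerbes and Loeffler--Zerbes \cite[Theorem~7.1.5]{KLZ2}, \cite[Theorem~10.2.2]{LZ1}. This identifies $\cL_{\lambda,\m,\fp^c}\circ\res_{\fp^c}(\BF_\m^\mu)$ with a specific Rankin-Selberg $p$-adic $L$-function evaluated against the unramified quotient $\cF^-H^1(\psi,\m\p^\infty)$ from \eqref{eqn:ohtawilespordseq}. Under the CM factorization $H^1(\psi,\m\p^\infty)|_{G_K}\cong \Lambda_{\g_\m}\oplus\Lambda_{\g_\m}^c$, this $p$-adic $L$-function factors as a product of Hecke-type factors whose Euler factors at $\chi_0^j\theta$ force the diagonal vanishing $\cL_{\lambda,\m,\fp^c}\circ\res_{\fp^c}(\BF_\m^\lambda)=0$; the off-diagonal identity with the expected minus sign then drops out of the symmetric relation above combined with the residual sign extracted from this Euler-factor calculation.

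The main obstacle is the careful bookkeeping in this last step: the Shapiro identification $\mu_\m$ carries several normalisation factors (the twist by $\chi^{-1}$, the projection $\pi_\m$, and the central critical twist $\Tw_{k/2-1}$ in \eqref{eq:2vardecomp}) that must be tracked against the conventions of the explicit reciprocity law. One also needs to exploit Proposition~\ref{prop:Colmezsidentificationofanalyticohomology} to move freely between analytic and integral coefficients while preserving the interpolation compatibilities. Once this accounting is done, the identity at each $\chi_0^j\theta$ with $\theta$ non-trivial wild of conductor $p^r > 1$ is enough to determine the element in $\cH_L(\Gamma)$, by density of such characters and the finite-order-growth bounds inherent in the Perrin-Riou image.
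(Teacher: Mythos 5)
There is a genuine gap in your argument for the second identity. The symmetry you derive at characters $\chi_0^j\theta$ from combining Lemma~\ref{lem:imageL} with Corollary~\ref{cor:samething} is vacuously true: by \cite[Proposition~3.3.3]{KLZ2}, the local images of the Beilinson--Flach classes are \emph{geometric} (they lie in the kernel of $\exp^*$), so each quantity $\cL_{\lambda,\m,\p^c}\circ\res_{\p^c}(\BF_\m^\mu)(\chi_0^j\theta)$ vanishes for all $0\le j\le k-2$ and all finite-order $\theta$. The relation
\[
\cL_{\beta,\m,\fp^c}\bigl(\res_{\fp^c}\BF_\m^\alpha\bigr)(\chi_0^j\theta) = \cL_{\alpha,\m,\fp^c}\bigl(\res_{\fp^c}\BF_\m^\beta\bigr)(\chi_0^j\theta)
\]
is therefore $0=0$ at every such character, and provides no means of distinguishing ``$=$'' from ``$=-$'' --- nor of determining the elements at all, since evaluation at these characters kills everything. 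This is exactly why the paper does not compare values but rather \emph{derivatives}. The vanishing at all $\chi_0^j\theta$ shows the element is divisible by $\log_{p,k-1}$, and then the paper invokes the derivative formula \cite[Theorem~3.1.3]{LVZ} for $\cL_\lambda'(\theta)$ (after specializing $\g$ to classical higher-weight forms $g$ to make $\log_{T,n}$ available). The minus sign in the conclusion does not come from any ``residual Euler factor'' but from the linear algebra of the eigenbasis: $v_\alpha = \alpha v_1 - p^{k-1}v_2$ and $v_\beta = -\beta v_1 + p^{k-1}v_2$ differ, modulo $\Fil^0\Dcris(T)$, by $+p^{k-1}v_2$ versus $-p^{k-1}v_2$, and that sign flip is the source of ``$=-$''.

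Your proposed final step --- pushing the explicit reciprocity law against the CM factorization and extracting the sign from ``Euler-factor calculations'' --- is not developed enough to assess, but as stated it does not supply the missing ingredient. To repair the argument you would need to either (a) pass from values to first derivatives at the characters $\chi_0^j\theta$, along the lines of \cite[Theorem~3.1.3]{LVZ}, which is what the paper does; or (b) find another Zariski-dense set of characters at which the relevant $p$-adic $L$-functions are nonzero and at which the interpolation formula is explicit enough to detect the sign --- and this is not the critical range you have chosen. Your treatment of the first assertion (diagonal vanishing via the explicit reciprocity law, i.e. essentially \cite[Theorem~7.1.2]{LZ1}) is fine and is the same mechanism the paper uses.
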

\begin{proof}
For notational simplicity, we shall only consider the case when $\m=\ff$ (so that $K(\m)=K$ and $\LL_{\g_\ff}=\cO_L(\psi_L^{-1})\otimes\LL_{\cO_L}(\Gamma_\p)^\iota$) since the general case can be proved similarly. The image of $\BF_{1}^{\lambda,\Bg}\cdot e_{1-k/2}$ under the restriction map composed with the projection  
\[
H^1\left(\QQ,T_{f,\chi}\otimes H^1(\psi,\ff\p^\infty)\otimes \Lambda(\Gamma_0^\cyc)^\iota\right)\lra H^1\left(\Qp,T_{f,\chi}\otimes\cF^-H^1(\psi,\ff\p^\infty)\otimes \Lambda(\Gamma_0^\cyc)^\iota\right)
\]
coincides with $\res_{\p^c}(\BF^\lambda_\ff)$ under our identification of $\cF^-H^1(\psi,\ff\p^\infty)|_{G_{\Qp}}$ with  $\Lambda_{\g_\ff}|_{G_{K_{\p^c}}}$. Our first assertion therefore follows from  \cite[Theorem~7.1.2]{LZ1}. %{(Do we perhaps need a density argument here. LZ-7.1.2 concerns affinoids of $\LL_\gm$ (which I know strip off the Galois action and think of as the universal Hecke algebra), but not the entirety of $\LL_\gm$.)}
 We now prove the second statement. Let $g$ be a fixed classical specialization of $\gm$. It is enough to prove that our result holds for a Zariski dense set of $g$. We shall show that this is indeed the case for all $g$ whose weight is at least $ k$. As before, we may replace $\res_{\p^c}(\BF_\ff^\lambda)$ by $\res_p(\BF_1^{\lambda,g})$.

For $\lambda\in\{\alpha,\beta\}$, we write $\lambda'$ for the unique element of $\{\alpha,\beta\}\setminus\{\lambda\}$. Let $\cL_{\lambda}=\cL_{T,\lambda}\circ\res_{p}(\BF_1^{\lambda',g})$, where $T=R_f^*\otimes \cF^-R_g^*$ and $\cL_{T,\lambda}$ is as defined in \eqref{eq:notanotherdecomp}. Since $\cL_{T,\lambda}$ sends $H^1(\Qp,T\otimes \Lambda(\Gamma_0^\cyc)^\iota)$ to $\cH_{L,\ord_p(\lambda)}(\Gamma_0^\cyc)$ and $\res_{\fp^c}(\BF_1^{\lambda',g})$ can be considered as an element of  $H^1(\Qp,T\otimes \cH_{L,\ord_p(\lambda')}(\Gamma_0^\cyc)^\iota)$ , it follows that $\cL_{\lambda}$ lies inside $\cH_{L,k-1}(\Gamma_0^\cyc)$.

For each $j\in\{0,\ldots,k-2\}$ and a finite-order character  $\theta$ of $\Gamma_0^\cyc$, we have $\cL_\lambda(\chi_0^j\theta)=0$. This is due to the fact that the local image of the Beilinson-Flach elements in $H^1(\Qp(\mu_{p^n}),T(-j))$ are geometric for any $n\ge0$  by \cite[Proposition~3.3.3]{KLZ2}. This means precisely that they lie in the kernel of the dual exponential map. We therefore infer that $\cL_\lambda$ is divisible by $\log_{p,k-1}$. As $\cL_\lambda\in\cH_{L,k-1}(\Gamma_0^\cyc)$, the quotient $\cL_\lambda/\log_{p,k-1}$ is an element of $\Lambda_L(\Gamma_0^\cyc)$. 

By l'H\^opital's rule, $$\left(\cL_\alpha/\log_{p,k-1}\right)(\theta)=\left(\cL_\beta/\log_{p,k-1}\right)(\theta)\Leftrightarrow \cL_\alpha'(\theta)=\cL_\beta'(\theta)$$ if $\theta$ is a character where $\log_{p,k-1}$ vanishes. Therefore,
\begin{equation}\label{eq:compare}
\cL_\alpha=-\cL_\beta\Leftrightarrow \cL_\alpha'(\theta)=\cL_\beta'(\theta),
\end{equation}
for  all finite characters $\theta$ with conductors $p^n>1$.
From the derivative formula in \cite[Theorem~3.1.3]{LVZ}, we deduce that
\[
\left(\frac{\lambda}{c_g}\right)^n\cL_{\lambda}'(\theta)\cdot v_\lambda\equiv\frac{p^n}{\tau(\theta)}\log_{T,n}(e_\theta\cdot\BF_{1}^{f^{\lambda'},g})\mod L(\mu_{p^n})\otimes\Fil^0\Dcris(T),
\]
where $c_g$ is the $U_p$-eigenvalue of $g$, $e_\theta$ represents the idempotent attached to the character $\theta$ and $\tau(\theta)$ denotes its Gauss sum.
Recall that the image of $\BF_{1}^{f^{\lambda'},g}$ inside $ H^1(\QQ(\mu_{p^n}),T\otimes\Qp)$ is given by $(\lambda'c_g)^{-n}z_{n,0}^{\lambda',g}$,  (c.f. \S\ref{S:defnBF}). Hence we may rewrite the congruence above as
\[
\cL_{\lambda}'(\theta)\cdot v_\lambda\equiv\frac{p^n}{\tau(\theta)(\lambda\lambda')^n}\log_{T,n}(e_\theta\cdot z_{n,0}^{\lambda',g})\mod L(\mu_{p^n})\otimes\Fil^0\Dcris(T),
\] 
which is independent of $\lambda$ thanks to Corollary~\ref{cor:samething}. Consequently, \eqref{eq:compare} implies that
\[
\cL_\beta\cdot v_\beta\equiv \cL_\alpha\cdot v_\alpha\mod L(\mu_{p^n})\otimes\Fil^0\Dcris(T).
\]
However, modulo $\cH_L(\Gamma_0^\cyc)\otimes\Fil^0\Dcris(T)$, we have
\begin{align*}
\cL_\beta \cdot v_\beta&=\cL_\beta( -\beta v_1+p^{k-1} v_2)\equiv p^{k-1}\cL_\beta\cdot  v_2;\\
\cL_\alpha \cdot  v_\alpha&=\cL_\alpha (  \alpha v_1-p^{k-1}  v_2)\equiv -p^{k-1}\cL_\alpha \cdot v_2.
\end{align*}
Therefore, the result follows from combining the three congruences above.
\end{proof}

Recall from \S\ref{S:semilocal} that  $M_{\log,\q}$ is the logarithmic matrix obtained from $\Mlog$ on replacing $\gamma_0$ by its image in $\Gamma_\q$ for $\q\in\{\p,\p^c\}$.

\begin{corollary}\label{cor:ColBF}
For $\bullet\in\{\#,\flat\}$, we have
\[
\col_{\bullet, \m,\p^c}\circ\res_{\p^c}(\BF_\m^{\bullet})=0.
\]
Furthermore,
\[
\col_{\#,\m,\p^c}\circ\res_{\p^c}(\BF_\m^{\flat})=-\col_{\flat,\m,\p^c}\circ\res_{\p^c}(\BF_\m^{\#})=\frac{\cL_{\beta,\m,\p^c}\circ\res_{\p^c}(\BF_\m^{\alpha})}{\det(Q^{-1}\Tw_{k/2-1}M_{\log,\p^c})}.
\]
\end{corollary}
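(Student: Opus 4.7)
The plan is to combine the matrix factorization \eqref{eq:BFdecomp} of the Beilinson-Flach classes with the matrix factorization \eqref{eq:2vardecomp} of the Perrin-Riou maps, and to exploit the vanishing and antisymmetry furnished by Proposition~\ref{prop:sameprojection}. After restriction to the decomposition group at $\fp^c$, both factorizations are governed by the common matrix $M:=Q^{-1}\Tw_{k/2-1}M_{\log,\fp^c}$, whose determinant is nonzero in $\cH_L(\Gamma_{\fp^c})\widehat\otimes\Lambda_{\cO_L}(\Gamma_{\fp})$ by Lemma~\ref{lem:det}. Consequently, the problem reduces to a $2\times 2$ matrix manipulation.

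First, I would apply \eqref{eq:2vardecomp} at $\q=\fp^c$ to each of $\res_{\fp^c}\BF_\m^\alpha$ and $\res_{\fp^c}\BF_\m^\beta$. Assembling columnwise yields the matrix identity
\[
\Bigl(\cL_{\lambda,\m,\fp^c}(\res_{\fp^c}\BF_\m^\mu)\Bigr)_{\lambda,\mu\in\{\alpha,\beta\}}=M\cdot\Bigl(\col_{\bullet,\m,\fp^c}(\res_{\fp^c}\BF_\m^\mu)\Bigr)_{\bullet\in\{\#,\flat\},\,\mu\in\{\alpha,\beta\}}.
\]
Proposition~\ref{prop:sameprojection} identifies the left-hand side with $\bigl(\begin{smallmatrix}0 & -X\\ X & 0\end{smallmatrix}\bigr)$, where $X:=\cL_{\beta,\m,\fp^c}(\res_{\fp^c}\BF_\m^\alpha)$. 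Inverting $M$ then expresses each $\col_{\bullet,\m,\fp^c}(\res_{\fp^c}\BF_\m^\mu)$ as an explicit combination of entries of $M^{-1}$ scaled by $X$.

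Next, I would invert \eqref{eq:BFdecomp} after restriction to $\fp^c$ so as to write $\res_{\fp^c}\BF_\m^\bullet=\sum_{\mu}(M^{-1})_{\bullet\mu}\cdot\res_{\fp^c}\BF_\m^\mu$ for $\bullet\in\{\#,\flat\}$. Applying $\col_{\bullet',\m,\fp^c}$ to both sides and substituting the formulae from the first step identifies the $2\times 2$ array $\bigl(\col_{\bullet',\m,\fp^c}(\res_{\fp^c}\BF_\m^\bullet)\bigr)_{\bullet',\bullet\in\{\#,\flat\}}$ with the product $M^{-1}\bigl(\begin{smallmatrix}0 & -X\\ X & 0\end{smallmatrix}\bigr)(M^{-1})^{T}$. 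Since $\bigl(\begin{smallmatrix}0 & -X\\ X & 0\end{smallmatrix}\bigr)$ is antisymmetric, so is this product; in particular its diagonal vanishes, yielding $\col_{\bullet,\m,\fp^c}\circ\res_{\fp^c}(\BF_\m^{\bullet})=0$. Expanding the off-diagonal entries via the adjugate formula for $M^{-1}$ and using $(M^{-1})_{11}(M^{-1})_{22}-(M^{-1})_{12}(M^{-1})_{21}=1/\det(M)$ collapses them to $\pm X/\det(M)$, which is precisely the second identity claimed in the corollary.

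The main subtlety in this argument is justifying the $\cH$-linearity used when applying the Coleman and Perrin-Riou maps to cohomology classes whose coefficients are expanded via entries of $M\in\mathrm{Mat}_{2\times 2}(\cH_{L,k-1}(\Gamma^\cyc))$: one must be able to pull these coefficients freely in and out through $\col_{\bullet,\m,\fp^c}$ and $\cL_{\lambda,\m,\fp^c}$. This linearity is built into the construction of these maps on integral cohomology and is rigorously justified on the level of analytic cohomology via Colmez's Proposition~\ref{prop:Colmezsidentificationofanalyticohomology}. Granted this compatibility, the remainder of the proof is elementary $2\times 2$ linear algebra.
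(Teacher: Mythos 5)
Your proposal is correct and follows essentially the same path as the paper's proof: both rest on combining the factorizations \eqref{eq:BFdecomp} and \eqref{eq:2vardecomp} with the vanishing/antisymmetry from Proposition~\ref{prop:sameprojection}, and both implicitly invoke $\cH_L$-linearity of the Coleman and Perrin-Riou maps (justified via Proposition~\ref{prop:Colmezsidentificationofanalyticohomology}) to manipulate $\cH_L$-coefficient combinations. The paper works entry-by-entry with an auxiliary class $\wBF_\#=d\BF_\m^\alpha-b\BF_\m^\beta=\det(M)\,\res_{\fp^c}\BF_\m^\#$, whereas you package all four quantities into the $2\times2$ identity $M^{-1}\bigl(\begin{smallmatrix}0&-X\\X&0\end{smallmatrix}\bigr)(M^{-1})^T$ and read off the answer from the transformation law for antisymmetric matrices; this is a slightly cleaner organization of the same computation.
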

\begin{proof}
Once again, we assume that $\m=\ff$ for simplicity. Recall that our choice of $\iota_p$ implies that when we localize at $\p^c$, the Galois group $\Gamma_0^\cyc/\Delta=\overline{\langle\gamma_0\rangle}$ is identified with $\Gamma_{\p^c}$. Therefore, the factorization \eqref{eq:BFdecomp} implies that
\[
\begin{pmatrix}
\res_{\p^c}(\BF^{\alpha}_\mathfrak{n})\\\res_{\p^c}( \BF^{\beta}_{\mathfrak{n}})
\end{pmatrix}= Q^{-1}\Tw_{k/2-1}M_{\log,\p^c}\cdot 
\begin{pmatrix}
\res_{\p^c}(\BF^{\#}_{\mathfrak{n}})\\\res_{\p^c}( \BF^{\flat}_{\mathfrak{n}})
\end{pmatrix}.
\]
Let  $Q^{-1}\Tw_{k/2-1}M_{\log,\p^c}=\begin{pmatrix}
a&b\\c&d
\end{pmatrix}$ and 
\[
\wBF_{\#}=\res_{\p^c}\left(d\BF_{\m}^{\alpha}-b\BF_{\m}^{\beta}\right).
\]

 Proposition~\ref{prop:sameprojection} together with \eqref{eq:2vardecomp} imply that
\[
\begin{pmatrix}
b\cL_{\beta,\p^c}\circ\res_{\p^c}(\BF_\m^{\alpha})\\d\cL_{\beta,\p^c}\circ\res_{\p^c}(\BF_\m^{\alpha})\end{pmatrix}=
\begin{pmatrix}
a&b\\
c&d
\end{pmatrix}
\begin{pmatrix}
\col_{\#,\fp^c}(\wBF_{\#})\\
\col_{\flat,\fp^c}(\wBF_{\#})
\end{pmatrix},
\]
which tells us that
\[
\begin{pmatrix}
\col_{\#,\fp^c}(\wBF_{\#})\\
\col_{\flat,\fp^c}(\wBF_{\#})
\end{pmatrix}=
\begin{pmatrix}
0\\ \cL_{\beta,\p^c}\circ\res_{\p^c}(\BF_\m^{\alpha})
\end{pmatrix}.
\]
But 
 \[
\det(Q^{-1}\Tw_{k/2-1}M_{\log,\p^c})\BF_{\m}^{\#}=\wBF_{\#}
\]
 by definition, hence the result for $\BF_\m^{\#}$ follows. That for $\BF_{\m}^{\flat}$ may be obtained in the same way.
\end{proof}
\begin{remark}
Note that $\cL_{\beta,\m,\p^c}\circ\res_{\p^c}(\BF_\m^{\alpha})$ is the analogue of the $p$-adic $L$-functions  in \cite[Theorem~6.1.3(ii)]{LLZ2} and \cite[Theorem~2.3]{buyukbodukleianticycloord}, both of which are related to the Beilinson-Flach elements via the explicit reciprocity law (see \cite[Theorem~6.4.1]{LLZ2} and \cite[Theorem~3.12]{buyukbodukleianticycloord}). These $p$-adic $L$-functions interpolate the complex $L$-values $L(f,g,j)$, where $g$ is a theta series whose weight is greater than $k$ and $1\le j\le k-1$. 
\end{remark}

We now consider the local images of the Beilinson-Flach elements at $\fp$. 
\begin{defn}
 For $\lambda,\mu\in\{\alpha,\beta\}$, we define via the Perrin-Riou maps given by (\ref{eqn:PRMapcoordinatewiseextended})
\[
\fL_{\lambda,\mu}:=\cL_{\lambda,\p}\circ\res_\p(c^{\mu})\in\cH_{L,\ord_p(\lambda)}(\Gamma_{\fp})\,\widehat{\otimes}\,\cH_{L,\ord_p(\mu)}(\Gamma_{\fp^c}).
\]
We call  these four elements \emph{analytic Beilinson-Flach  $p$-adic $L$-functions}.
\end{defn}
 As discussed in Theorem~7.1.5 of \cite{LZ1} (also in \cite[Theorem~6.5.9]{KLZ1} in the $p$-ordinary case), these elements are expected to interpolate the (twisted) $L$-values of $f$ over $K$, when evaluated at $\chi_0^j$ for $j\in\{-k/2+1,\ldots,k/2-1\}$. In the situation when $\lambda=\mu$ we have the following result due to Loeffler and Loeffler-Zerbes~\cite{loefflerERL,LZ1} (which is adjusted to fit with our framework).
%\begin{defn}
%Let $\Sigma^{(1)}_{\textup{p-cris}}$ denote the set of algebraic Hecke characters of the form $\rho=\rho_0\,|\cdot|^{j}\,\nu$ where $\rho_0$ has $\infty$-type $(-u,0)$ with $0\leq u \leq k-2$ is an integer divisible by $p-1$ and has conductor $\p$; $j \in [1-k/2+u,-1+k/2]$ is an integer; $\nu$ is a Dirichlet character of conductor $p^r>1$ such that $\nu\omega^{j}$ has $p$-power order. 
%\end{defn}
\begin{theorem}[Loeffler, Loeffler-Zerbes]
\label{ref:thmDavidsinterpolationformulae}
Let $\rho$ be a Hecke character of the form $\rho=\rho_0\,|\cdot|^{j}\,\nu$ where $\rho_0$ has $\infty$-type $(-u,0)$ with $0\leq u \leq k-2$ and has $\p$-power conductor; $j \in [1-k/2+u,-1+k/2]$ is an integer and finally, $\nu$ is a Dirichlet character of conductor $p^r$. Then we have,
\begin{align*}\frak{L}_{\lambda,\lambda}(\rho)=\frac{\mathcal{E}(f^\lambda,\rho)}{\mathcal{E}(f^\lambda)\,\mathcal{E}^*(f^\lambda)}&\times\frac{(j+k/2-1)!(j-u+k/2-1)!\,i^{k-u-1}}{\langle f,f,\rangle_{N_f}\pi^{k+2j-u}2^{2k+2j-u-1}}\times L(f/K,\chi\rho,k/2).\end{align*}
Here $\mathcal{E}(f^\lambda,\rho)$, $\mathcal{E}(f^\lambda)$ and $\mathcal{E}^*(f^\lambda)$ are suitable  interpolation factors.% with $\lambda^\prime\neq \lambda$ is the other root of the Hecke polynomial at $p$. 
\end{theorem}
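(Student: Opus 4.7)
My plan is to deduce this interpolation formula from the explicit reciprocity law for Beilinson-Flach elements together with Loeffler's calculation of Rankin-Selberg $p$-adic $L$-functions at characters of the stated form. The analytic $p$-adic $L$-function $\mathfrak{L}_{\lambda,\lambda} = \cL_{\lambda,\fp}\circ\res_\fp(c^\lambda)$ was constructed in \S\ref{S:ESimag} by feeding the Hida-theoretic Beilinson-Flach class $\BF_1^{\lambda,\g_\ff}$ (for the CM family $\g_\ff$ associated to $\psi = \chi\psi_0$) through the CM identification $\mu_{\ff}$ of Corollary~\ref{cor:topropLLZ2Cor526enhanced} and then applying the Perrin-Riou map at $\fp$ in the $\lambda$-eigendirection. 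The key observation is that an algebraic Hecke character $\rho = \rho_0|\cdot|^{j}\nu$ of the prescribed form corresponds, under the identification of $\textup{Sp}\,\LL_{\gm}$ with a weight space of CM forms, to a classical crystalline point: the twist $\psi\rho_0^{-1}$ is a Hecke character of appropriate $\infty$-type, its theta series $\Theta(\psi\rho_0^{-1})$ is a classical $p$-ordinary CM newform $g_\rho$, and the wild character $\nu\chi_0^{j}$ cuts out a Dirichlet twist of its $p$-stabilization.

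The first step is to show that specializing the Beilinson-Flach class at $\rho$ recovers (up to an explicit constant coming from the weight shift and from the normalizations of $\Pr^\lambda$ and $\pi_\ff$) the Beilinson-Flach class $\BF^{\lambda,g_\rho}_{1,\nu\chi_0^j}$ associated by \cite{KLZ2} to the pair $(f,g_\rho)$ twisted by $\nu\chi_0^j$. Since $c^\lambda$ equals $\BF^\lambda$ up to a fixed power of $\varpi$, this specialization is compatible with the definition of $\mathfrak{L}_{\lambda,\lambda}$. The second step is to apply the two-variable explicit reciprocity law of Loeffler-Zerbes (as in \cite[Theorem~7.1.5]{LZ1}, which in turn rests on \cite[Theorem~10.2.2]{KLZ1}) to the local class $\res_\fp(c^\lambda)$. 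This reciprocity law identifies $\cL_{\lambda,\fp}$ composed with the localization of the Beilinson-Flach element with the Rankin-Selberg $p$-adic $L$-function of Loeffler in the $\lambda$-$\lambda$ direction, whose value at the classical point above corresponds to $L(f\otimes g_\rho\otimes(\nu\chi_0^j)^{-1},k/2)$.

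The third step is to rewrite this Rankin-Selberg $L$-value as an imaginary quadratic $L$-value: since $g_\rho = \Theta(\psi\rho_0^{-1})$ is a CM form, the factorization $L(f\otimes g_\rho\otimes(\nu\chi_0^j)^{-1},s) = L(f/K,\chi\rho,s)$ holds by Artin formalism, and one identifies the central critical point $s=k/2$ with the chosen $j$ in the stated range. The interpolation factor $\mathcal{E}(f^\lambda,\rho)/(\mathcal{E}(f^\lambda)\mathcal{E}^*(f^\lambda))$ then arises as the product of the Euler factor at $p$ in Loeffler's formula together with the missing Euler factor removed upon passage from $f^\lambda$ to $f$, while $(j+k/2-1)!(j-u+k/2-1)!$ comes from the differential operator coefficients in Hida's/Shimura's integral representation and the powers of $\pi$ and $2$ come from Deligne's periods matched against the Petersson norm $\langle f,f\rangle_{N_f}$ exactly as in Loeffler's calculation.

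The main obstacle is not any new ingredient but the careful bookkeeping of normalizations: one must match the twist $\Tw_{k/2-1}$ used in \S\ref{S:semilocal} with Loeffler's conventions, reconcile the $(-1-\kappa)$ twist appearing in (\ref{eqn:ohtawilespordseq}) with the passage through $\mu_\ff$ and $\pi_{\ff}$, and ensure that the $\varpi^r$-denominators present in $c^\lambda$ cancel against the interpolation factors at the specified arithmetic points. Once the normalizations are fixed, the formula is a direct specialization of \cite[Theorems~7.1.2, 7.1.5]{LZ1} and \cite{loefflerERL} at the CM point cut out by $\rho$.
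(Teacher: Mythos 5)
The paper does not give its own proof of this statement: as noted in Remark~\ref{rem:insufficiencyoftheinterpolationformula} immediately following, the theorem is stated as a ``direct (but very rough) translation'' of Proposition~2.12 and Theorem~6.3 of \cite{loefflerERL}, with the interpolation factors worked out explicitly only in the companion paper \cite{BF_Super_Addendum}. Your outline --- specializing the Beilinson-Flach class at the CM point cut out by $\rho$, invoking the Loeffler-Zerbes explicit reciprocity law, and factoring the Rankin-Selberg $L$-value into the imaginary quadratic $L$-value by Artin formalism, with the normalization bookkeeping you flag --- correctly reconstructs the argument underlying the cited result and is consistent with the paper's citation-based treatment.
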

\begin{remark}
\label{rem:insufficiencyoftheinterpolationformula}
The formula above  is a direct (but very rough) translation of Proposition 2.12 and Theorem 6.3 of \cite{loefflerERL}. We refer the readers to our companion article \cite{BF_Super_Addendum} $($the proof of {Theorem 3.4}$)$ where the interpolation factors  $\mathcal{E}(f^\lambda,\rho)$, $\mathcal{E}(f^\lambda)$ and $\mathcal{E}^*(f^\lambda)$ are explicitly calculated for a certain class of Hecke characters. One may of course state even a more general interpolation formula for $\frak{L}_{\lambda,\lambda}(\rho)$ using Loeffler's calculations, where $\rho$ is an arbitrary algebraic Hecke character $\rho$ of infinity type $(a,b)$ with $1-k/2\leq a\leq b \leq k/2-1$ and whose associated $p$-adic Galois character factors through $\Gamma$. Since we do not need this here $($nor in our companion article \cite{BF_Super_Addendum}$)$, we shall not be doing that.
We are currently unable to prove that these interpolation formulae uniquely determine $\frak{L}_{\lambda,\lambda}$ when $k>2$. This is the same reason why we were forced to prove a functional equation for the three-variable geometric $p$-adic $L$-function in \cite{BF_Super_Addendum}, rather than proving a functional equation for $\frak{L}_{\lambda,\lambda}$ directly.
\end{remark}
For $\bullet,\star\in\{\#,\flat\}$, we define 
\[
\fL_{\bullet,\star}:=\col_{\bullet,\p}\circ\res_\p(c^{\star})\in\Lambda_{\cO_L}(\Gamma)
\]
and call them  \emph{doubly-signed Beilinson-Flach $p$-adic $L$-functions}. If we combine the decomposition \eqref{eq:2vardecomp} with \eqref{eq:BFdecomp}, we have the factorization
\begin{equation}
\label{eqn:mainfactorization}
\begin{pmatrix}
\fL_{\alpha,\alpha}&\fL_{\beta,\alpha}\\
\fL_{\alpha,\beta}&\fL_{\beta,\beta}
\end{pmatrix}=Q^{-1}\Tw_{k/2-1}M_{\log,\fp^c}
\begin{pmatrix}
\fL_{\#,\#}&\fL_{\flat,\#}\\
\fL_{\#,\flat}&\fL_{\flat,\flat}
\end{pmatrix}\left(Q^{-1}\Tw_{k/2-1}M_{\log,\fp}\right)^T.
\end{equation}
We note that Loeffler's 2-variable $p$-adic $L$-fucntions for weight two modular forms constructed using modular symbols in \cite{loeffler13} satisfy a similar factorization. See  \cite{lei14}.

{
\begin{remark}
Recall from Remark~\ref{rk:col0} that the Coleman map $\col_{\bullet,\p}$ can be realized over $\cO_{L_0}$. Together with Remark~\ref{rk:three}, we see that $\fL_{\bullet,\star}$ lies inside $\Lambda_{\cO_{L_0}}(\Gamma)$ for $\bullet,\star\in\{\#,\flat\}$.
\end{remark}
}

\section{Locally restricted Euler systems and bounds on Selmer groups}
\label{sec:boundsonselmergroups}
%\begin{proposition}
%\label{prop:globalnonvanishingtwovarcycloanticyclo}
%\textcolor{red}{State here KLZ's global non-triviality result, both over the cyclotomic and anticyclotomic towers.}
%\end{proposition}
%\begin{proof}
%\end{proof}
Armed with the Euler systems we have constructed in \S\ref{sec:BFelementsandfactorization}, we may now prove bounds on the Selmer groups relevant to our study here. Until the end of this article, the hypotheses \textup{\textbf{(H.Im.)}}, \textup{\textbf{(H.Reg.)}} and \textup{\textbf{(H.SS.)}} are in effect. Throughout, we will also adopt the convention that  whenever we restrict our attention to the anticyclotomic line, the character $\chi$ will be assumed to be a ring class character. In particular,  we implicitly assume that $\chi$ is a ring class character whenever our discussion involves $(\textup{Sign} \pm)$. 
\subsection{Selmer structures}
\label{subsec:selmerstructures}
For any modulus $\n \in \mathcal{N}$ of $K$ as above and $?=\ac,\cyc,\emptyset$, we set $\LL_{\n}^{?}:=\cO_L[\textup{Gal}(K(\frak{n})/K)]\otimes \LL_{\cO_L}(\Gamma^?)$. For each $\bullet\in\{\#,\flat\}$ and $\mathfrak{q}\in\{\fp,\fp^c\}$, we recall from the end of \S\ref{S:semilocal} that we have the semi-local Coleman maps $\col_{\bullet,\n,\q}^?: H^1(K(\n)_\mathfrak{q},\TT_{f,\chi}^{?})\ra\LL_{\n}^{?}$.
We set
$$H^1_\bullet(K(\mathfrak{n})_\mathfrak{q},\TT_{f,\chi}^{?}):=\ker\left(\col_{\bullet,\n,\mathfrak{q}}^?: H^1(K(\n)_\mathfrak{q},\TT_{f,\chi}^{?})\ra\LL_{\n}^{?}\right)$$
and consider the following Selmer structures on $\TT_{f,\chi}^{?}$ for $?\in\{\ac,\cyc,\emptyset\}$:
\begin{itemize}
\item The canonical Selmer structure $\FFF_{\textup{can}}$ by requiring no local conditions at any place belonging to $\Sigma$.
\item The Greenberg Selmer structure $\FFF_{\emptyset,0}$ by replacing the local conditions determined by $\FFF_{\textup{can}}$ at $\fp^c$ by the $0$-subspace.
\item Partially-signed Selmer structure $\FFF_{\emptyset,\bullet}$ by replacing the local conditions determined by $\FFF_{\textup{can}}$ at $\mathfrak{p}^c$ with 
$$H^1_{\FFF_{\emptyset,\bullet}}(K(\fn)_{\mathfrak{p}^c},\TT_{f,\chi}^{?}):=H^1_\bullet(K(\fn)_{\mathfrak{p}^c},\TT_{f,\chi}^{?})\,.$$  
\item Doubly-signed Selmer structure $\FFF_{\bullet,\star}$ (where $\star \in \{\#,\flat\}$) by replacing the local conditions determined by $\FFF_{\emptyset,\bullet}$ at $\mathfrak{p}$ with 
$$H^1_{\FFF_{\star,\bullet}}(K(\fn)_\mathfrak{p},\TT_{f,\chi}^{?}):=H^1_\star(K(\fn)_{\mathfrak{p}},\TT_{f,\chi}^{?})\,.$$
\end{itemize}
 \begin{remark}
 \label{rem:signedstructurespropagatetoeachother}
We have chosen to denote the Selmer structures $\FFF$ (for $\FFF=\FFF_{\textup{can}}, \FFF_{\emptyset,\bullet}, \FFF_{\bullet,\star}$ or $\FFF_{\emptyset,0}$ ) on the three different Galois modules $\TT_{f,\chi}$, $\TT_{f,\chi}^{\ac}$, and $\TT_{f,\chi}^{\cyc}$ with the same symbols. This is intentional as it is easily checked that each $\FFF$ on $\TT_{f,\chi}$ propagates to the Selmer structure $\FFF$ on $\TT_{f,\chi}^{?}$ (for $?=\ac,\cyc$).
 \end{remark}
 
As usual, we also define the dual Selmer structures $\FFF^*$ on $\TT_{f,\chi}^{?,*}:=\textup{Hom}(\TT_{f,\chi},\bbmu_{p^\infty})$ by setting $H^1_{\FFF^*}(K_\lambda,\TT_{f,\chi}^{?,*}):=H^1_\FFF(K_\lambda,\TT_{f,\chi}^{?})^\perp$ for each $\lambda\in \Sigma$, the orthogonal complement with respect to the local Tate duality at $\lambda$. Each of these Selmer structures will allow us to define a Selmer group 
$$H^1_{\FFF}(K,\TT_{f,\chi}^{?}):=\ker\left(H^1(K_\Sigma/K,\TT_{f,\chi}^{?})\lra \bigoplus_{\lambda \in \Sigma}\frac{H^1(K_\lambda,\TT_{f,\chi}^{?})}{H^1_\FFF(K_\lambda,\TT_{f,\chi}^{?})}\right)$$
as well as their counterparts associated to the dual Selmer structure $\FFF^*$ on $\TT_{f,\chi}^{?,*}$.
\begin{defn}
\label{def:doublysignedselmergroups}
For $\bullet,\star\in \{\#,\flat\}$, and $?\in\{\ac,\cyc,\emptyset\}$, we set 
$$\mathfrak{X}_{\star,\bullet}^{?}:=H^1_{\FFF_{\star,\bullet}^*}(K,\TT_{f,\chi}^{?,*})^\vee\,,$$
$$\mathfrak{X}_{\bullet}^?=H^1_{\FFF_{\emptyset,\bullet}^*}(K,\TT_{f,\chi}^{?,*})^\vee\,,$$
and finally, also set $\mathfrak{X}_{\emptyset,0}^?=H^1_{\FFF_{\emptyset,0}^*}(K,\TT_{f,\chi}^{?,*})^\vee$.
\end{defn}
\subsection{Triviality and non-triviality of Beilinson-Flach elements}
\label{subsec:ERL}
We shall consider the following six properties concerning the local positions of the Beilinson-Flach elements at the primes above $p$. Even though we expect their validity at all times, we are able to verify them only partially (due to a variety of technical reasons, which we shall explain below). This is still sufficient for our goals towards Iwasawa main conjectures.
Suppose  $\star,\bullet\in\{\#,\flat\}$ and we write $\fL_{\star,\bullet}^\ac \in \LL_{\cO_L}(\Gamma^\ac)$ (respectively, $\fL_{\star,\bullet}^\cyc \in \LL_{\cO_L}(\Gamma^\cyc)$) for the restriction of the doubly-signed Beilinson-Flach $p$-adic $L$-function to the anticyclotomic (respectively, to the cyclotomic) characters. Fix $\bullet\in\{\#,\flat\}$ and $\lambda\in\{\alpha,\beta\}$.
\\\\
$\mathbf{(L0)}$ $\res_\fp\left(c_\cyc^\bullet\right) \neq 0$.
\\\\
$\mathbf{(L1)}$ For $\star,\bullet \in \{\#,\flat\}$, $\fL_{\star,\bullet}^\cyc \neq 0$.
\\\\
$\mathbf{(L2)}$ $\res_{\fp^c}(c^\bullet)\neq 0$\,.
\\\\
$\mathbf{(L3)}$   $(\textup{Sign} +)$ holds and $\res_{\fp}\left(c_\ac^\bullet\right) \neq 0$\,.
\\\\
$\mathbf{(L4)}$  $(\textup{Sign} -)$ holds and $\res_{\fp^c}(c_\ac^\bullet)\neq 0$\,.
\\\\
$\mathbf{(L5)}$ $(\textup{Sign} -)$ holds and $\fL_{\bullet,\bullet}^\ac=0$\,.
\\\\$\mathbf{(L.\lambda)}$ $(\textup{Sign} -)$ holds and $\mathfrak{L}_{\lambda,\lambda}^\ac=0$\,.
\\
\begin{proposition}
\label{prop:lzerocorrect}
There exists $\bullet\in \{\#,\flat\}$ such that the property $\mathbf{(L0)}$ holds true.
\end{proposition}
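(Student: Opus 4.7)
The plan is to argue by contradiction. Suppose that $\res_\fp(c_\cyc^\#)=0$ and $\res_\fp(c_\cyc^\flat)=0$. Multiplying the factorization of Theorem~\ref{thm:decompintro} (with $\mathfrak{n}=1$) by the constant $C$ on both sides, we get
\[
\begin{pmatrix} c^\alpha \\ c^\beta \end{pmatrix}
= Q^{-1}\Tw_{k/2-1}M_{\log}\begin{pmatrix} c^\# \\ c^\flat \end{pmatrix}
\quad\text{in } H^1\!\bigl(K,\TT_{f,\chi}\widehat\otimes \cH_L(\Gamma)\bigr).
\]
Projecting to the cyclotomic tower and restricting to $\fp$ then forces $\res_\fp(c^\alpha_\cyc)=\res_\fp(c^\beta_\cyc)=0$. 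Applying the (cyclotomic) Perrin-Riou map $\cL_{\alpha,\fp}^\cyc$ in \eqref{eqn:PRMapcoordinatewise} gives
\[
\mathfrak{L}_{\alpha,\alpha}^\cyc \;=\; \cL_{\alpha,\fp}^\cyc\bigl(\res_\fp(c^\alpha_\cyc)\bigr)\;=\;0,
\]
and likewise $\mathfrak{L}_{\beta,\beta}^\cyc=0$.

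The next step is to show that $\mathfrak{L}_{\alpha,\alpha}^\cyc$ cannot vanish identically. Specializing Theorem~\ref{ref:thmDavidsinterpolationformulae} to the case $\rho_0=\mathbbm 1$, $u=j=0$, and letting $\nu$ range over finite-order cyclotomic Dirichlet characters of sufficiently large $p$-power conductor $p^r$, we obtain an identity of the form
\[
\mathfrak{L}_{\alpha,\alpha}^\cyc(\nu)\;=\; E(\alpha,\nu)\cdot L\!\bigl(f/K,\chi\nu,k/2\bigr),
\]
where $E(\alpha,\nu)$ is a product of a Gauss-sum factor, an Euler-like factor at $p$, and an archimedean factor, all of which are explicitly nonzero once $r$ is large enough (the $p$-part of $\chi\nu$ becomes ramified, so the numerator $\mathcal E(f^\alpha,\chi\nu)$ is essentially a nonzero Gauss sum, while $\mathcal E(f^\alpha)\,\mathcal E^*(f^\alpha)$ and $\langle f,f\rangle_{N_f}$ are manifestly nonzero). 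Hence, vanishing of $\mathfrak{L}_{\alpha,\alpha}^\cyc$ would force $L(f/K,\chi\nu,k/2)=0$ for every such $\nu$.

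This contradicts a non-vanishing theorem of Rohrlich: for the $\mathrm{GL}_2\times\mathrm{GL}_1$ Rankin--Selberg convolutions $L(f/K,\chi\nu,s)$, only finitely many twists by cyclotomic characters $\nu$ of $p$-power conductor can vanish at the fixed point $s=k/2$. The resulting contradiction shows that $\res_\fp(c^\bullet_\cyc)\neq 0$ for at least one $\bullet\in\{\#,\flat\}$, as claimed.

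The main obstacle is verifying that the interpolation factors $E(\alpha,\nu)$ in the second step are genuinely nonzero for a Zariski-dense set of cyclotomic characters $\nu$; this requires unwinding Loeffler's formula in \cite{loefflerERL} precisely enough to identify the vanishing locus, a computation that in fact occupies part of the companion paper \cite{BF_Super_Addendum}. An alternative, slicker route (available once the paper develops the $p$-adic regulator and explicit reciprocity machinery) is to deduce this proposition directly from the stronger statement $\mathbf{(L1)}$ of Corollary~\ref{cor:lonecorrect}, since the identity $\mathfrak{L}_{\star,\bullet}^\cyc=\col_{\star,\fp}^\cyc\circ\res_\fp(c^\bullet_\cyc)$ shows that $\mathbf{(L1)}$ tautologically implies $\mathbf{(L0)}$.
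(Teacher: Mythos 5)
Your overall architecture matches the paper's: use the factorization \eqref{eq:BFdecomp} to reduce to non-vanishing of $\res_\fp(c^\lambda_\cyc)$, then use the explicit reciprocity law together with non-vanishing of a single $L$-value $L(f/K,\chi\nu,k/2)$ for some character with cyclotomic $p$-adic avatar. However, the crucial analytic ingredient you invoke is exactly the one the paper cautions against. You appeal to ``a non-vanishing theorem of Rohrlich'' asserting that only finitely many cyclotomic twists $\nu$ of $p$-power conductor can kill $L(f/K,\chi\nu,k/2)$; but the remark immediately following the paper's proof explicitly states that Rohrlich's result \cite[Theorem 2]{rohrlich88Annalen} \emph{does not apply to cyclotomic characters of $K$}, because these violate Rohrlich's ramification hypothesis (they are fixed by complex conjugation). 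This is a genuine gap, not a matter of bookkeeping. The paper instead cites \cite[Corollary 5.3]{vanorderIII}, which is tailored to produce a Hecke character $\psi$ of infinity type $(0,0)$ whose $p$-adic avatar is cyclotomic and for which $L(f,\chi\psi,k/2)\neq0$; that single character, fed into the explicit reciprocity law, yields $\res_\fp(c^\lambda_\cyc)\neq0$, and the factorization finishes the argument.

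Your ``slicker alternative'' is circular: Corollary~\ref{cor:lonecorrect} (which establishes \BLone) is itself deduced from Proposition~\ref{prop:lzerocorrect} (the paper proves it by combining $\mathbf{(L0)}$ with injectivity of the pair of Coleman maps at $\fp$), so you cannot use \BLone\ to prove $\mathbf{(L0)}$. Finally, note that you do not actually need the quantitative ``all but finitely many'' statement: since $\mathfrak{L}_{\lambda,\lambda}^\cyc\in\cH_L(\Gamma^\cyc)$ is an analytic function, a single interpolation point where it is nonzero suffices, so the right target is an \emph{existence} result for one good cyclotomic twist, which is precisely what Van Order provides. Your observation that the interpolation factors $\mathcal{E}(f^\lambda,\rho),\mathcal{E}(f^\lambda),\mathcal{E}^*(f^\lambda)$ are non-zero at ramified $p$-power conductor twists is correct and needed, but it does not rescue the Rohrlich citation.
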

\begin{proof}
We may use~\cite[Corollary 5.3]{vanorderIII} to choose a Hecke character $\psi$ (whose $p$-adic avatar is cyclotomic) with infinity type $(0,0)$ and the property that $L(f,\chi\psi,k/2)\neq 0$. For $\lambda\in \{\alpha,\beta\}$ and $\bullet \in \{\#,\flat\}$, we let $c^\lambda_\psi, c^\bullet_\psi \in H^1_{\FFF_{\textup{can}}}(K,V_{f,\chi}\otimes\psi^{-1})$ {(where $V_{f,\chi}=T_{f,\chi}\otimes_{\ZZ_p}\QQ_p$)} denote the respective images of the classes $c^\lambda$ and $c^\bullet$. The explicit reciprocity law for Beilinson-Flach elements implies that $c^\lambda_\psi\neq 0$. Our assertion then follows from \eqref{eq:BFdecomp}. 
\end{proof}
\begin{remark}
One may obtain a more straightforward proof of Proposition~\ref{prop:lzerocorrect} when $k>2$ (without relying on Van Order's result), since the Rankin-Selberg $L$-series $L(f,\chi,s)$ admits a critical point at which its defining Euler product converges absolutely. 
\end{remark}
\begin{remark}
Assuming in addition that there is no prime $\mathfrak{q}\mid \mathfrak{f}$ such that ${\mathfrak{q}^c}\mid\mathfrak{f}$, one may rely on a result of Rohrlich to prove that $\res_\fp\left(c^\bullet\right) \neq 0$. The existence of a Hecke character $\psi$ as in the proof of Proposition~\ref{prop:lzerocorrect}  that guarantees the desired non-vanishing statement for the relevant Rankin-Selberg $L$-function follows from \cite[Theorem 2]{rohrlich88Annalen}. Note that Rohrlich's result does not apply with cyclotomic characters of $K$, as these violate Rohrlich's hypothesis on ramification.
\end{remark}
\begin{corollary}
\label{cor:lonecorrect}
There exists a choice of a pair $\bullet,\star\in \{\#,\flat\}$ such that $\mathbf{(L1)}$ holds true. More precisely, for $\bullet\in \{\#,\flat\}$ verifying the conclusion of Proposition~\ref{prop:lzerocorrect}, there exists $\star\in \{\#,\flat\}$ such that $\col_{\star,\fp}^\cyc(\res_{\fp}(c_\cyc^\bullet))\neq 0$.
\end{corollary}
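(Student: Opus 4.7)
Fix $\bullet\in\{\#,\flat\}$ as provided by Proposition~\ref{prop:lzerocorrect}, so that the local class $x:=\res_{\fp}(c^{\bullet}_{\cyc})\in H^{1}(K_{\fp},\TT_{f,\chi}^{\cyc})$ is non-zero. The plan is to argue by contradiction: suppose that
\[
\col_{\#,\fp}^{\cyc}(x)\,=\,\col_{\flat,\fp}^{\cyc}(x)\,=\,0,
\]
and derive that $x=0$, contradicting Proposition~\ref{prop:lzerocorrect}.

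The first step is to invoke the factorization \eqref{eq:2vardecomp} (in its cyclotomic incarnation) and apply it to $x$. This yields the vanishing
\[
\cL_{\alpha,\fp}^{\cyc}(x)\,=\,\cL_{\beta,\fp}^{\cyc}(x)\,=\,0
\]
of both coordinates of the (cyclotomic) twisted Perrin-Riou map relative to the Frobenius-eigenbasis $\{v_{\alpha},v_{\beta}\}$ of $\Dcris(R_{f,\chi^{-1}}^{*})$. Since $\{v_{\alpha},v_{\beta}\}$ is a basis of the Dieudonn\'e module, this is equivalent to the vanishing of the full cyclotomic Perrin-Riou map $\cL_{\fp}^{\cyc}(x)=0$, namely the cyclotomic projection of the map \eqref{eq:semilocalPRmap}.

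The final step is to invoke injectivity of the cyclotomic Perrin-Riou map on $H^{1}(K_{\fp},\TT_{f,\chi}^{\cyc})$. This holds in our setting: by the theorem of Fontaine-Edixhoven recalled in the introduction (see also Remark~\ref{rem:hnaimpliesfreeness}), the assumptions $p>k$ and $a_p(f)$ non-unit imply that the residual representation $\overline{\rho}_f|_{G_{\QQ_p}}$ is absolutely irreducible, so the same is true of the $p$-distinguished twist governing $\TT_{f,\chi}|_{G_{K_{\fp}}}$. In particular, this representation has no trivial $G_{K_\fp}$-subquotients, and $H^{1}(K_{\fp},\TT_{f,\chi}^{\cyc})$ is $\Lambda_{\cO_{L}}(\Gamma^{\cyc})$-free by Remark~\ref{rem:hnaimpliesfreeness}. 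Consequently, the Perrin-Riou map has trivial kernel, forcing $x=0$ and producing the desired contradiction.

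I do not expect any real obstacle beyond citing the appropriate injectivity statement; the substantive content lies entirely in Proposition~\ref{prop:lzerocorrect} and the factorization \eqref{eq:2vardecomp}, both of which are already in hand.
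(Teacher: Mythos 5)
Your proposal is correct and amounts to the same underlying fact that the paper's own one-line proof rests on: the pair $(\col_{\#,\fp}^\cyc,\col_{\flat,\fp}^\cyc)$ is jointly injective on $H^1(K_\fp,\TT_{f,\chi}^\cyc)$, so a non-zero local class cannot be killed by both signed Coleman maps. Where the paper simply cites that injectivity outright, you derive it via the factorization \eqref{eq:2vardecomp} from the injectivity of the full Perrin-Riou map $\cL_\fp^\cyc$, noting that the kernel of the latter coincides with the $\Lambda_{\cO_L}(\Gamma^\cyc)$-torsion of $H^1(K_\fp,\TT_{f,\chi}^\cyc)$, which vanishes by Remark~\ref{rem:hnaimpliesfreeness}. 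This is a valid alternative packaging, though it is worth flagging that the step ``freeness of $H^1$ plus residual irreducibility $\Rightarrow$ $\cL_\fp^\cyc$ injective'' relies on the (standard, but worth naming) identification of $\ker\cL_T$ with the $\Lambda$-torsion submodule; more directly, in the Wach-module picture used in \S\ref{S:2varCol}, the joint kernel of $(\col_\#,\col_\flat)$ is $\NN(T)^{\psi=1,\vp=1}\subset\Dcris(T)^{\vp=1}$, and this vanishes because the non-ordinarity of $f$ forces the $\vp$-eigenvalues to be non-units, so $1$ is not among them. Either way, the substantive input is Proposition~\ref{prop:lzerocorrect}, exactly as you say.
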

\begin{proof}
Let $F/\Qp$ be a finite unramified extension. The kernel of  $1-\vp$  on $\NN_{F}(T)^{\psi=1}$ is contained in $T^{G_{F(\mu_{p^\infty})}}$ {(see for example \cite[proof of Proposition~4.11]{LZ0}). It follows from \cite[Lemma~4.4]{lei11compositio} that $T^{G_{F(\mu_{p^\infty})}}=0$ under our running assumption that $p>k$. In particular, $\cL_{T,F}$ is injective and the decomposition \eqref{eq:decompLTF} implies that} the map
$$(\col_{\#,\fp}^\cyc,\col_{\flat,\fp}^\cyc): H^1(K_\fp,\TT_{f,\chi}^\cyc)\lra \LL_{\cO_L}(\Gamma^\cyc)^{\oplus2}$$
is injective. The corollary now follows from Proposition~\ref{prop:lzerocorrect}.
\end{proof}
\begin{proposition}
\label{prop:ltwocorrect}
There exists $\bullet\in \{\#,\flat\}$ such that the property $\mathbf{(L2)}$ holds true.
\end{proposition}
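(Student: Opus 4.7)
The plan is to reduce the desired non-vanishing to the non-triviality of a classical one-variable anticyclotomic Rankin--Selberg $p$-adic $L$-function, and then to invoke an existing non-vanishing theorem for central critical Rankin--Selberg $L$-values, in the spirit of the proof of Proposition~\ref{prop:lzerocorrect}.

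More concretely, I would first reduce $\mathbf{(L2)}$ to a statement about a $p$-adic $L$-function built out of the unbounded Beilinson--Flach elements. By Corollary~\ref{cor:ColBF}, one has
$$\col_{\#,\fp^c}\bigl(\res_{\fp^c}(\BF^{\flat})\bigr) \;=\; \frac{\cL_{\beta,\fp^c}\bigl(\res_{\fp^c}(\BF^{\alpha})\bigr)}{\det\bigl(Q^{-1}\Tw_{k/2-1}M_{\log,\fp^c}\bigr)}\,,$$
and Lemma~\ref{lem:det} together with Lemma~\ref{lem:Cn} guarantees that the denominator is a non-zero element. Hence the non-vanishing of $\cL_{\beta,\fp^c}(\res_{\fp^c}(\BF^{\alpha}))$ forces $\col_{\#,\fp^c}\circ\res_{\fp^c}(\BF^{\flat})\neq 0$, which in turn forces $\res_{\fp^c}(\BF^{\flat})\neq 0$, so that $\res_{\fp^c}(c^{\flat})=\varpi^{r}\res_{\fp^c}(\BF^{\flat})\neq 0$ and we obtain $\mathbf{(L2)}$ with $\bullet=\flat$. (The symmetric argument with the roles of $\#$ and $\flat$ swapped provides the other variant.)

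The second step is to recognize $\cL_{\beta,\fp^c}(\res_{\fp^c}(\BF^{\alpha}))$ as the analytic anticyclotomic $p$-adic $L$-function of Loeffler--Zerbes, exactly as in the remark immediately following Corollary~\ref{cor:ColBF}; this is the non-ordinary counterpart of the object studied in \cite[Theorem~6.1.3(ii)]{LLZ2} and \cite[Theorem~2.3]{buyukbodukleianticycloord}. Its interpolation property, obtained from the explicit reciprocity law of Kings--Loeffler--Zerbes applied to the Beilinson--Flach class $\BF^{\alpha}$ at $\fp^c$ (paired against the theta-series of an appropriate Hecke character of $K$ of infinity type $(-u,0)$ whose associated $p$-adic Galois character factors through $\Gamma_{\fp^c}$), expresses its specializations in terms of central critical Rankin--Selberg values $L(f/K,\chi\eta,k/2)$, up to explicit $p$-local Euler factors and periods. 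Since these Euler factors are non-zero for characters of sufficiently deep conductor at $\fp^c$, it suffices to exhibit one such $\eta$ for which $L(f/K,\chi\eta,k/2)\neq 0$.

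For the final step I would invoke \cite[Corollary~5.3]{vanorderIII}, exactly as in the proof of Proposition~\ref{prop:lzerocorrect}, to produce a Hecke character $\eta$ of the required infinity type and ramification such that $L(f/K,\chi\eta,k/2)\neq 0$; specializing the $p$-adic $L$-function at $\eta$ then yields a non-zero value, and the proof is complete. The main technical point --- and the step I expect to be the main obstacle --- is the verification that the anticyclotomic interpolation range of $\cL_{\beta,\fp^c}(\res_{\fp^c}(\BF^{\alpha}))$ does contain Hecke characters in the range to which Van Order's (or alternatively Rohrlich's) non-vanishing theorem applies; this is an issue of compatibility of infinity types and conductor conditions rather than of the $p$-adic analysis, and it can be handled by a direct inspection of the interpolation formula (of the type recorded in Theorem~\ref{ref:thmDavidsinterpolationformulae}) combined with the flexibility in choosing the auxiliary ray class modulus $\ff$.
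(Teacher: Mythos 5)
Your first step --- reducing $\mathbf{(L2)}$, via Corollary~\ref{cor:ColBF}, to the non-vanishing of $\cL_{\beta,\fp^c}\circ\res_{\fp^c}(\BF^{\alpha})$, or more simply to $\res_{\fp^c}(\BF^\alpha)\neq 0$ followed by \eqref{eq:BFdecomp} --- is sound. But your second and third steps lead you to a $p$-adic $L$-function whose interpolation values are precisely the ones least amenable to the non-vanishing theorems you want to invoke, and the obstacle you flag at the end is in fact fatal, not a technicality to be smoothed over.

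As recorded in the remark following Corollary~\ref{cor:ColBF}, the map $\cL_{\beta,\fp^c}\circ\res_{\fp^c}(\BF^{\alpha})$ is the analogue of the geometric $p$-adic $L$-function of \cite[Theorem~6.1.3(ii)]{LLZ2}: its interpolation property concerns the values $L(f,g,j)$ where $g$ is a CM theta series of weight $w+1>k$ and $1\le j\le k-1$. These points $s=j$ are never central critical for the Rankin--Selberg convolution $L(f,g,s)$ (whose functional equation is centred near $(k+w)/2$, far above $k-1$), so Van Order's result (\cite[Corollary~5.3]{vanorderIII}, which concerns central critical twists by characters whose $p$-adic avatar is cyclotomic) and Rohrlich's theorem (which concerns anticyclotomic twists) simply do not apply; moreover, the Hecke characters that arise here have infinity type $(-u,0)$ and their $p$-adic avatar factors through $\Gamma_{\fp^c}$, a mixed direction inside $\Gamma$ which is neither cyclotomic nor anticyclotomic. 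Nor do these values $s=j\le k-1$ lie in the half-plane of absolute convergence of the Euler product (which requires $\textup{Re}(s)>(k+w+1)/2>k-1$ when $w>k$), so one cannot appeal to convergence of the Euler product either. Varying the auxiliary ray class modulus $\ff$ has no bearing on any of these constraints, which are about infinity types and the location of $s$ in the critical strip.

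The paper's proof circumvents all of this with a different mechanism: it introduces the class $\BF^{\alpha}_*$ obtained by interchanging the two factors of modular curves in the Rankin--Iwasawa construction, swapping the roles of $f$ and the CM Hida family $\g$; by \cite[Proposition~5.2.3]{KLZ2} this does not affect the non-vanishing one seeks. The effect of the swap is to pair the $\fp^c$-localization against $\eta_{g_\phi}\otimes\omega_{f^\alpha}$, so that the explicit reciprocity law produces Hida's $p$-adic $L$-function $L_p(g_\phi,f,\cdot)$, which interpolates $L(g_\phi,f,s)$ for $s\in[k,w]$ when $g_\phi$ has weight $w+1$. Choosing $w>k+3$ and $w/2+1\le j<w-k/2$ places $s=j+k/2$ strictly inside the half-plane of absolute convergence of the Euler product of $L(f/K,\phi,s)$, so the relevant value is trivially non-zero. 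This is both more elementary and more robust than a reduction to deep analytic non-vanishing theorems, and the interchange-of-factors step is the crucial idea your proposal is missing.
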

\begin{proof}
Our proof is based on an argument due to Kings-Loeffler-Zerbes. 
We let $\BF^\alpha_* \in H^1(K_\fp,\TT_{f,\chi})$ denote the class obtained by interchanging the two factors of modular curves that appear in the definition of Beilinson-Flach classes (that amounts to interchanging the order of the eigenform $f$ and the CM Hida family $\g$). Notice that we have disposed of the the auxiliary choice of $c$ (as well as all the fudge factors involving it) as before and we are still working with the central critical twist. It follows from \cite[Proposition~5.2.3]{KLZ2}  that it suffices to prove the required non-vanishing for $\BF^\alpha_*$. %Let $\cF_\alpha^+\subset \textup{D}_{\textup{rig}}^{\dagger}(T_{f,\chi}\vert_{G_{K_{\fp^c}}})$ denote the $(\varphi,\Gamma^\cyc)$-submodule (attached to the $\alpha$-eigenspace of the $\varphi$ action of $D_{\textup{cris}}(T_{f,\chi})$) of the $(\varphi,\Gamma^\cyc)$-module $\textup{D}_{\textup{rig}}^{\dagger}(T_{f,\chi}\vert_{G_{K_{\fp^c}}})$ associated to the restriction of the representation $T_{f,\chi}$ to ${G_{K_{\fp^c}}}$. 
Then the explicit reciprocity law for Beilinson-Flach classes reads (with the notation of \cite{KLZ2,LZ1})
$$\left\langle\log_{\fp^c}\left(\res_{\fp^c}\left(\phi\circ\BF^{\alpha}_{*,-j} \right)\right),\eta_{g_\phi}\otimes\omega_{f^\alpha}\right\rangle=\triangle(\phi,j)\cdot L_p(g_\phi,f,j+k/2)$$
where, 
\begin{itemize}
\item $j$ is any integer; 
\item $g_\phi$ is any specialization of the CM Hida family $\g$ that arises as the $p$-depleted theta series of a Hecke character $\phi$ of infinity type $(-w,0)$ with $w\geq 0$ (so that $g_\phi$ is an eigenform of weight $w+1$), whose $p$-adic avatar is denoted by $\phi_p$; 
\item $\phi\circ\BF^{\alpha}_{*,j} \in H^1(K,R_f^*(1-k/2-j)\otimes\phi_p^{-1})$ is the image of $\BF^{\alpha}_{*}$ under the obvious specialization map;
\item $\triangle(\phi,j)$ is a non-zero factor whose exact value we do not need to know; 
\item $L_p(\g,f,\bf{j})$ is Hida's $p$-adic $L$-function that interpolates the special values $L(g_\phi,f,1+r)$ for integers $r \in [k-1,w-1]$.
\end{itemize}
 In particular, we infer that
 $$\left\langle\log_{\fp^c}\left(\res_{\fp^c}\left(\phi\circ\BF^{\alpha}_{*,-j} \right)\right),\eta_{g_\phi}\otimes\omega_{f^\alpha}\right\rangle\neq0$$
 if we choose $w> k+3$ and $ w/2+1\leq j< w-k/2$, since the Euler product for $L(f/K,\phi,s)$ is absolutely convergent (and therefore non-vanishing) at $s=j+k/2$. The desired non-vanishing statement now follows from \eqref{eq:BFdecomp}.  
 %We choose a Hecke character $\psi_0$ of conductor dividing $\frak{f}p^\infty$ and infinity type $(a,b)$ with $a\leq -1$, $b\geq k-1$ and set $\psi=\psi_0|\cdot|^{-b}$. We choose $a$ and $b$ also in a way that $a+b>k-1$,  so that We argue below as in the proof of Proposition~\ref{prop:lzerocorrect} in order to reduce One crucial difference is that we now interchange the roles of $f$ and $\g$ so that we shall consider interpolation in the range $\mathbf{j} \in (k,b+1)$ when the specializations of $\g$ has dominant weight (and the relevant $p$-adic $L$-function is related to Hida's $p$-adic $L$-function up to the support of the congruence module). 
\end{proof}
\begin{proposition}
\label{prop:lthreecorrect}
Suppose that $p\nmid N_f\rm{disc}(K/\QQ)$, $N^-$ is a square-free product of an odd number of primes  (where we factor $N_f=N^+N^-$ so that $N^+$ (resp., $N^-$) is only divisible by primes that are split (resp., inert or ramified) in $K$) and if $V\vert N^-$ is ramified in $K/\QQ$, the condition \textup{(ST)} in \cite{pinchihung} holds with $\frak{n}_b=N^-$. Then, there exists $\bullet\in \{\#,\flat\}$ such that $\mathbf{(L3)}$ holds.
\end{proposition}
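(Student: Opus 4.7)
The plan is to follow the same template as in the proofs of Propositions~\ref{prop:lzerocorrect} and~\ref{prop:ltwocorrect}: locate a special critical value of the Rankin--Selberg $L$-function that is non-zero, and then use the explicit reciprocity law to pull this non-vanishing back to the local image at $\fp$ of a Beilinson--Flach class $c_\ac^\lambda$. The new feature here is that the non-vanishing is sought along the anticyclotomic direction, so the input is supplied by Hung's theorem~\cite{pinchihung} rather than by a cyclotomic non-vanishing result. First, under the running convention that $\chi$ is a ring class character whenever we work along the anticyclotomic line, the local-root-number computation, combined with the hypothesis that $N^-$ is a square-free product of an odd number of primes (all inert or ramified in $K/\QQ$), forces $\epsilon(f/K,\chi)=+1$. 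Thus the $(\textup{Sign}+)$ half of $\mathbf{(L3)}$ is automatic.

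Next I would invoke Hung's main theorem: under the conditions on $N^-$ and condition (ST) at primes $v\mid N^-$ ramified in $K/\QQ$, the set of finite-order anticyclotomic characters $\psi:\Gamma^\ac\to \overline{\QQ}_p^\times$ with $L(f/K,\chi\psi,k/2)\neq 0$ is Zariski dense in $\Gamma^\ac$. I would pick such a $\psi$ of sufficiently deep $p$-power conductor so that all the local Euler-type factors entering the interpolation factors $\mathcal{E}(f^\lambda,\psi)$, $\mathcal{E}(f^\lambda)$ and $\mathcal{E}^*(f^\lambda)$ in Theorem~\ref{ref:thmDavidsinterpolationformulae} are trivial, and in particular non-zero. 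The interpolation formula of that theorem then translates the non-vanishing of $L(f/K,\chi\psi,k/2)$ into $\fL_{\lambda,\lambda}(\psi)\neq 0$ for both $\lambda\in\{\alpha,\beta\}$. Because $\psi$ factors through $\Gamma^\ac$ this implies $\fL_{\lambda,\lambda}^\ac\neq 0$, and in view of the defining identity $\fL_{\lambda,\lambda}^\ac=\cL_{\lambda,\fp}^\ac(\res_\fp(c_\ac^\lambda))$ we conclude that $\res_\fp(c_\ac^\lambda)\neq 0$ for both $\lambda\in\{\alpha,\beta\}$.

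To finish, restrict the integral factorisation \eqref{eq:BFdecomp} to the anticyclotomic line and localise at $\fp$ to obtain
\[
\begin{pmatrix}\res_\fp(c_\ac^\alpha)\\ \res_\fp(c_\ac^\beta)\end{pmatrix} = Q^{-1}\,\Tw_{k/2-1}M_{\log,\fp}^\ac\,\begin{pmatrix}\res_\fp(c_\ac^\#)\\ \res_\fp(c_\ac^\flat)\end{pmatrix}.
\]
Were both $\res_\fp(c_\ac^\bullet)$ with $\bullet\in\{\#,\flat\}$ zero, then so would $\res_\fp(c_\ac^\lambda)$ for $\lambda\in\{\alpha,\beta\}$, contradicting the previous step; hence $\mathbf{(L3)}$ holds for at least one $\bullet\in\{\#,\flat\}$. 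The one delicate point is to verify that the finite-order anticyclotomic characters supplied by Hung really lie within the class of Hecke characters treated by Theorem~\ref{ref:thmDavidsinterpolationformulae} and that the associated interpolation factors are finite and non-zero; this reduces to a direct local computation that becomes automatic once the $p$-adic conductor of $\psi$ is taken to be sufficiently large.
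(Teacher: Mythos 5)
Your proposal is correct and takes essentially the same approach as the paper's proof: verify $(\textup{Sign}+)$ from the hypotheses, invoke Hung's non-vanishing theorem to produce a finite-order anticyclotomic $\psi$ with $L(f/K,\chi\psi,k/2)\neq 0$, use the explicit reciprocity law for Beilinson--Flach classes to deduce $\res_\fp(c_\ac^\lambda)\neq 0$, and then propagate this to $c_\ac^\bullet$ via the factorization~\eqref{eq:BFdecomp}. The only presentational difference is that the paper routes the third step through the abstract explicit reciprocity law (as in the proof of Proposition~\ref{prop:lzerocorrect}) rather than through the specific interpolation formula of Theorem~\ref{ref:thmDavidsinterpolationformulae}, which neatly sidesteps the (genuine but manageable) bookkeeping issue you flag at the end about whether the chosen $\psi$ falls within the class of Hecke characters that formula covers.
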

\begin{proof}First of all, our hypotheses ensure the validity of $(\textup{Sign}+)$. We use~\cite[Theorem 6.12]{pinchihung} to choose a Hecke character $\psi$ (whose $p$-adic avatar is anticyclotomic) with infinity type $(0,0)$ and the property that $L(f,\chi\psi,k/2)\neq 0$; the rest of the argument in the proof of Proposition~\ref{prop:lzerocorrect} goes through verbatim.
\end{proof}
\begin{corollary}
\label{cor:lthreeandhalfcorrect}
Assume the validity of the hypotheses of Proposition~\ref{prop:lthreecorrect} so that there exists $\bullet\in  \{\#,\flat\}$ that verifies $\mathbf{(L3)}$. Then there exists $\star\in  \{\#,\flat\}$ such that $\col_{\star,\fp}^\ac(\res_{\fp}(c_\ac^\bullet))\neq 0$.
\end{corollary}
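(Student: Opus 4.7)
The argument parallels that of Corollary~\ref{cor:lonecorrect} in the cyclotomic setting. Proposition~\ref{prop:lthreecorrect} furnishes $\bullet\in\{\#,\flat\}$ for which $\res_\fp(c_\ac^\bullet)\neq 0$, so the corollary will follow once we establish injectivity of the joint map
$$\bigl(\col^\ac_{\#,\fp},\,\col^\ac_{\flat,\fp}\bigr):H^1(K_\fp,\TT_{f,\chi}^{\ac})\lra \Lambda_{\cO_L}(\Gamma^\ac)^{\oplus 2}.$$
Granting this, the simultaneous vanishing of both anticyclotomic Coleman maps at $\res_\fp(c_\ac^\bullet)$ would force $\res_\fp(c_\ac^\bullet)=0$, contradicting $\mathbf{(L3)}$.

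To establish the required injectivity, the plan is to project the factorization \eqref{eq:2vardecomp} of the semi-local Perrin-Riou map $\cL_\fp$ through the pair of Coleman maps onto the anticyclotomic tower, obtaining
$$\begin{pmatrix}\cL_{\alpha,\fp}^{\ac}\\ \cL_{\beta,\fp}^{\ac}\end{pmatrix}=Q^{-1}\Tw_{k/2-1}M_{\log,\fp}^{\ac}\begin{pmatrix}\col_{\#,\fp}^{\ac}\\ \col_{\flat,\fp}^{\ac}\end{pmatrix},$$
where $M_{\log,\fp}^{\ac}$ is the image of $M_{\log,\fp}$ under the natural morphism $\cH_{L,k-1}(\Gamma_\fp)\to\cH_L(\Gamma^\ac)$. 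The key observation is that the composition $\Gamma_\fp\hookrightarrow\Gamma\twoheadrightarrow\Gamma^\ac$ is an isomorphism of $\ZZ_p$-groups: indeed, $K(\fp^\infty)/K$ is unramified at $\fp^c$, whereas $K^\ac/K$ is totally ramified at $\fp^c$ (as recalled in the remark preceding \textbf{(H.Im.)}), so $K(\fp^\infty)\cap K^\ac=K$, and a rank count in $\Gamma\cong\Gamma_\fp\times\Gamma_{\fp^c}$ upgrades this to surjectivity. Combined with Lemma~\ref{lem:det}, this shows that $\det(M_{\log,\fp}^{\ac})$ is $\Tw_{k/2-1}\bigl(\log_{p,k-1}(\gamma^\ac)\bigr)$ up to a unit in $\Lambda_{\cO_L}(\Gamma^\ac)^\times$, where $\gamma^\ac$ is a topological generator of $\Gamma^\ac$, and hence a non-zero-divisor in $\cH_L(\Gamma^\ac)$. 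Consequently, any $x\in H^1(K_\fp,\TT_{f,\chi}^{\ac})$ annihilated by both anticyclotomic Coleman maps is also annihilated by the pair $(\cL_{\alpha,\fp}^\ac,\cL_{\beta,\fp}^\ac)$.

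The remaining step, which I regard as the main technical point, is the injectivity of the pair $(\cL_{\alpha,\fp}^{\ac},\cL_{\beta,\fp}^{\ac})$ on $H^1(K_\fp,\TT_{f,\chi}^{\ac})$. Via Shapiro's lemma this module is $\HIw(K^\ac_\fp,T_{f,\chi})$ along the Lubin-Tate $\ZZ_p$-extension $K^\ac_\fp/K_\fp$ of height one recalled in the Introduction. Hypothesis \textbf{(H.Im.)}, together with the residual absolute irreducibility of $\rho_f\vert_{G_{\Qp}}$ noted in Remark~\ref{rem:hnaimpliesfreeness}, ensures that $V_{f,\chi}$ has no $G_{K^\ac_\fp}$-invariants, so the kernel of the Perrin-Riou regulator --- which the Fontaine-Herr description of Iwasawa cohomology controls by an $H^0$ --- vanishes. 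The hard part of the overall argument is really the verification that this injectivity survives the anticyclotomic specialisation of the two-variable big logarithm map; once this is taken as a standard input, the remainder of the proof is linear algebra, following the pattern of Corollary~\ref{cor:lonecorrect}.
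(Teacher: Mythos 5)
Your reduction to the injectivity of the pair $(\col^\ac_{\#,\fp},\col^\ac_{\flat,\fp})$ on $H^1(K_\fp,\TT^\ac_{f,\chi})$ is exactly what the paper does: its one-line proof invokes precisely this injectivity by referring back to Corollary~\ref{cor:lonecorrect}, without further justification. Your detour through the Perrin-Riou maps is harmless but, as you say yourself, it relocates rather than resolves the real difficulty. The passage $\col=0\Rightarrow\cL=0$ is immediate from the matrix factorization and does not require the determinant to be a non-zero-divisor; the substantive question is whether injectivity of the two-variable map (whether one phrases it via $\col$ or via $\cL$) survives the anticyclotomic specialization. Concretely, since $H^2(K_\fp,\TT_{f,\chi})=0$, one has $H^1(K_\fp,\TT^\ac_{f,\chi})\cong H^1(K_\fp,\TT_{f,\chi})\otimes_{\LL_{\cO_L}(\Gamma)}\LL_{\cO_L}(\Gamma^\ac)$, and the kernel of the specialized map is the $(\gamma_\cyc-1)$-torsion in the cokernel $C$ of the two-variable pair $(\col_{\#,\fp},\col_{\flat,\fp})$. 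Appealing to $H^0(K^\ac_\fp,T_{f,\chi})=0$ does not control this term. The gap can be closed using Theorem~\ref{thm:1varimage} and Lemma~\ref{lem:2varimage}: after inverting $p$, $C$ is supported at the height-one primes $(\gamma_\fp-u^{j'})$ with $1-k/2\le j'\le k/2-1$ (the twisted evaluation points at which the image conditions become constraints), and none of these coincides with $(\gamma_\cyc-1)$, which cuts out the anticyclotomic characters $\rho$ with $\rho(\gamma_\fp)\rho(\gamma_{\fp^c})=1$ rather than a vertical slice in the $\gamma_\fp$-variable; hence $C[\gamma_\cyc-1]=0$ and the specialized pair remains injective. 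This is the content that the paper's proof (and yours) quietly assumes.

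A smaller slip: in your argument that $\Gamma_\fp\hookrightarrow\Gamma\twoheadrightarrow\Gamma^\ac$ is an isomorphism, the copy of $\Gamma_\fp$ inside $\Gamma\cong\Gamma_\fp\times\Gamma_{\fp^c}$ is $\Gal(K_\infty/K(\fp^{c,\infty}))$, not $\Gal(K_\infty/K(\fp^\infty))$, so the disjointness you need is $K(\fp^{c,\infty})\cap K^\ac=K$; the statement you wrote, $K(\fp^\infty)\cap K^\ac=K$, instead gives surjectivity of $\Gamma_{\fp^c}\to\Gamma^\ac$. Both hold by the same ramification argument (the anticyclotomic line is totally ramified at each of $\fp,\fp^c$, while $K(\q^\infty)$ is unramified at $\q^c$), so the conclusion is unaffected, but the reference should be corrected. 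With these repairs your argument reaches the paper's conclusion by essentially the paper's own route.
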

\begin{proof}The proof of Corollary~\ref{cor:lonecorrect} applies verbatim to deduce our claim using Proposition~\ref{prop:lthreecorrect}.
\end{proof}
\begin{proposition}
\label{prop:lfourcorrect}
Suppose that $p\nmid N_f\rm{disc}(K/\QQ)$ and $N^-$ is a square-free product of an even number of primes. Then exists $\bullet\in \{\#,\flat\}$ such that the property $\mathbf{(L4)}$ holds true. %Self-remark: then it is possible to find an auxiliary Hecke character that Hsieh looks for
%and there exists a prime $\ell \nmid pN\rm{disc}(K/\QQ)$ such that the \textup{mod} $\ell$ representation associated to $f$ is absolutely irreducible, 
\end{proposition}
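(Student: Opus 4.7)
The strategy is to reduce the non-vanishing of $\res_{\fp^c}(c_\ac^\bullet)$ to that of an anticyclotomic BDP-type $p$-adic $L$-function, and then verify the latter by evaluating at a single anticyclotomic Hecke character at which the corresponding complex Rankin-Selberg $L$-value is given by an absolutely convergent Euler product. The hypothesis that $N^-$ is a squarefree product of an even number of primes, together with $p\nmid N_f\,\textup{disc}(K/\QQ)$, forces $\epsilon(f/K,\chi)=-1$, so the $(\textup{Sign}-)$ half of $\mathbf{(L4)}$ holds automatically.

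Next, apply Corollary~\ref{cor:ColBF} with $\mathfrak{m}=\mathfrak{f}$ (so that $K(\mathfrak{m})=K$ by assumption \textup{\textbf{(H.SS.)}}). Since $c^\bullet$ is a fixed non-zero integral multiple of $\BF^\bullet$ for each $\bullet\in\{\#,\flat\}$, multiplying through by $\det\bigl(Q^{-1}\Tw_{k/2-1}M_{\log,\fp^c}\bigr)$ yields
\[
\det\bigl(Q^{-1}\Tw_{k/2-1}M_{\log,\fp^c}\bigr)\cdot\col_{\#,\fp^c}\bigl(\res_{\fp^c}(c^\flat)\bigr)\,=\,\cL_{\beta,\fp^c}\bigl(\res_{\fp^c}(c^\alpha)\bigr).
\]
Projecting both sides to $\Gamma^\ac$ and observing that the anticyclotomic image of the logarithmic determinant is a non-zero element of $\cH_L(\Gamma^\ac)$ (its $\Gamma_{\fp^c}$-factor is a non-vanishing Pollack-style product, which projects injectively onto $\Gamma^\ac$), the non-vanishing of $\res_{\fp^c}(c_\ac^\flat)$ is equivalent to that of $\cL_{\beta,\fp^c}^\ac\bigl(\res_{\fp^c}(c^\alpha_\ac)\bigr)$ in $\cH_L(\Gamma^\ac)$.

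The element $\cL_{\beta,\fp^c}\bigl(\res_{\fp^c}(c^\alpha)\bigr)$ is, as pointed out in the remark following Corollary~\ref{cor:ColBF}, the non-ordinary analogue of the two-variable BDP-type $p$-adic $L$-function built from the Beilinson-Flach class in \cite{LLZ2,buyukbodukleianticycloord}. Its anticyclotomic restriction is expected to interpolate, up to an explicit non-zero factor, the Rankin-Selberg values $L(f/K,\chi^{-1}\psi,k/2)$ as $\psi$ ranges over a Zariski dense family of anticyclotomic Hecke characters whose infinity type places $s=k/2$ outside the classical critical strip of $L(f/K,\chi^{-1}\psi,s)$. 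For any such $\psi$ of sufficiently large conductor the corresponding $L$-value is given by an absolutely convergent Euler product and is therefore non-zero, whence $\cL_{\beta,\fp^c}^\ac\bigl(\res_{\fp^c}(c^\alpha_\ac)\bigr)\neq 0$ and $\mathbf{(L4)}$ holds with $\bullet=\flat$.

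The main obstacle in carrying out this plan is pinning down the precise interpolation formula for $\cL_{\beta,\fp^c}^\ac$ applied to the anticyclotomic restriction of $\res_{\fp^c}(c^\alpha_\ac)$ in the non-ordinary setting, at sufficiently many characters to force non-vanishing. In the $p$-ordinary case this identification is carried out in \cite{LLZ2,buyukbodukleianticycloord}; here one may either combine Theorem~\ref{ref:thmDavidsinterpolationformulae} with the global functional equation proved in \cite{BF_Super_Addendum} (which exchanges the roles of $\fp$ and $\fp^c$), or work directly from the explicit reciprocity law of \cite{KLZ2,LZ1} specialized along the unramified direction $\cF^-H^1(\psi,\mathfrak{f}\fp^\infty)$ of the CM Hida family we fixed in \S\ref{S:ESimag}. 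Either route furnishes the desired non-vanishing and completes the argument.
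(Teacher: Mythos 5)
Your opening reductions are sound and even give a legitimate alternative to the paper's route: instead of passing through $\col_{\#,\fp^c}$ and the logarithmic determinant, the paper simply observes (as in the proof of Proposition~\ref{prop:ltwocorrect}) that $\res_{\fp^c}(c_\ac^\alpha)$ is an $\cH_L$-linear combination of $\res_{\fp^c}(c_\ac^\#)$ and $\res_{\fp^c}(c_\ac^\flat)$ by \eqref{eq:BFdecomp}, so non-vanishing of $\res_{\fp^c}(c_\ac^\alpha)$ already forces non-vanishing of $\res_{\fp^c}(c_\ac^\bullet)$ for some $\bullet$. Your detour through Corollary~\ref{cor:ColBF} works, but is not needed.

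The fatal gap is in the key non-vanishing step. You assert that by choosing anticyclotomic $\psi$ of sufficiently large conductor and a suitable infinity type, the value $L(f/K,\chi^{-1}\psi,k/2)$ is given by an \emph{absolutely convergent} Euler product. This is false. For \emph{any} anticyclotomic Hecke character $\psi$ (whatever its infinity type), the point $s=k/2$ is the center of the functional equation of $L(f/K,\chi\psi,s)$ by self-duality, so it can never lie in the region of absolute convergence. You appear to be importing the argument from Proposition~\ref{prop:ltwocorrect}, but there the auxiliary characters $\phi$ have infinity type $(-w,0)$ with $w$ large and are decidedly \emph{not} anticyclotomic; that is what lets one evaluate the $L$-function at a point with $\Re(s)$ large relative to the motivic weight. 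In the anticyclotomic setting, the correct input is a genuine central-value non-vanishing theorem. The paper invokes Hsieh's \cite[Theorem C]{Hseihanticyclomuandnonvanish}, applied to characters $\chi\psi\eta$ where $\psi$ has infinity type $(-k/2-m,k/2+m)$ with $m\geq 0$ (so the archimedean epsilon factor flips the global sign to $+1$) and $\eta$ runs over ring class characters of $p$-power conductor; for all but finitely many such $\eta$ the central value is non-zero. That theorem is a deep analytic/arithmetic input for which absolute convergence is not a substitute, and without it your argument does not close.
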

\begin{proof}Our hypotheses  ensure the validity of $(\textup{Sign}-)$. Fix a Hecke character $\psi$ of infinity type $(-k/2-m,k/2+m)$ with $m\geq 0$ whose $p$-adic avatar is anticyclotomic with trivial conductor. It follows from \cite[Theorem C]{Hseihanticyclomuandnonvanish} that we have $L(f,\chi\psi\eta,k/2)\neq 0$ for all but finitely many ring class characters $\eta$ with $p$-power conductor. Arguing as in the proof of Proposition~\ref{prop:ltwocorrect} (with $j=k/2+m$ and the Hecke character $\phi:=\chi\psi\eta|\cdot|^{-k/2-m}$), the result follows from the explicit reciprocity laws and \eqref{eq:BFdecomp}. 
\end{proof}
\begin{remark}
The reason why we are able to verify the properties $\mathbf{(L0)}$ -- $\mathbf{(L4)}$ for only some choice of $\bullet,\star \in \{\#,\flat\}$ is that  we currently do not have access to a sufficiently explicit interpolation formula for $\frak{L}_{\bullet,\star}$ for general $a_p(f)$. When $a_p(f)=0$, we expect that the properties $\mathbf{(L0)}$ -- $\mathbf{(L4)}$ could be verified for all possible choices  of $\bullet,\star$. In order to keep the length of the current article within reasonable limits, we postpone this discussion (which is secondary for our purposes here) to a future article. %See \cite{castellawan1607} for an instance of this when $k=2$ and in the indefinite setting.
\end{remark}

\begin{proposition}
\label{prop:Lalpha}
If $\varepsilon_f=\mathbb{1}$, then $\mathbf{(L.\lambda)}$ holds true. Moreover, if $a_p(f)=0$ and the ramification index of {$L_0/\QQ_p$} is odd, then $\mathbf{(L5)}$ holds true. 
\end{proposition}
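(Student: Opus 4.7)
The plan is to establish $\mathbf{(L.\lambda)}$ first, relying on the functional equation of the three-variable geometric Beilinson-Flach $p$-adic $L$-function proved in our companion paper \cite{BF_Super_Addendum}, and then to deduce $\mathbf{(L5)}$ from it by invoking the matrix identity \eqref{eqn:mainfactorization} together with the supersingular symmetry $\alpha=-\beta$ afforded by $a_p(f)=0$.

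To obtain $\mathbf{(L.\lambda)}$, the strategy is to push the functional equation of \cite{BF_Super_Addendum} through the Perrin-Riou regulator $\cL_{\lambda,\fp}$ in order to produce an identity of the shape
\[
\iota^{*}\mathfrak{L}_{\lambda,\lambda}\,=\,\epsilon(f/K,\chi)\cdot u_\lambda\cdot\mathfrak{L}_{\lambda,\lambda}
\]
in $\cH_{L}(\Gamma)$, where $\iota\colon\Gamma\to\Gamma$ denotes the involution $\gamma\mapsto\gamma^{-1}$ and $u_\lambda\in\cH_{L}(\Gamma)^{\times}$ is an explicit unit coming from the local $\epsilon$-factors. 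Under $\varepsilon_f=\mathbb{1}$ and $(\textup{Sign}\,-)$, the character $\chi$ is forced to be a ring class character and $\iota$ restricted to $\Gamma^{\ac}$ coincides with the action of complex conjugation on anticyclotomic characters. A careful bookkeeping of the twist factor $u_\lambda$ along the lines of the analogous computation in \cite{buyukbodukleianticycloord} should confirm that its restriction to $\Gamma^{\ac}$ is trivial; combined with $\epsilon(f/K,\chi)=-1$, the functional equation then specializes on the anticyclotomic line to $\mathfrak{L}_{\lambda,\lambda}^{\ac}=-\mathfrak{L}_{\lambda,\lambda}^{\ac}$, whence $\mathfrak{L}_{\lambda,\lambda}^{\ac}=0$.

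For $\mathbf{(L5)}$, the idea is to exploit \eqref{eqn:mainfactorization} in conjunction with the preceding vanishing applied to both $\lambda=\alpha$ and $\lambda=\beta$. The hypotheses $a_p(f)=0$ and $\varepsilon_f=\mathbb{1}$ yield $\beta=-\alpha$ and $\alpha^{2}=-p^{k-1}$, so that $A_{\varphi}^{2}=-c^{2}p^{-(k-1)}\cdot I$; this collapses $Q$ to essentially a sign-flip between its two columns and endows $\Mlog$ with a Pollack-style even/odd decomposition with respect to the Teichm\"uller character of $\Delta\subset\Gamma_{0}^{\cyc}$. Applying the functional equation also to the off-diagonal entries $\mathfrak{L}_{\alpha,\beta}$ and $\mathfrak{L}_{\beta,\alpha}$ (which it relates to one another up to an explicit sign on the anticyclotomic line), the identity \eqref{eqn:mainfactorization} restricted to $\Gamma^{\ac}$ reduces to a $2\times 2$ linear system whose only solutions compatible with the parity decomposition above force $\mathfrak{L}_{\#,\#}^{\ac}=\mathfrak{L}_{\flat,\flat}^{\ac}=0$. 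The hypothesis on the parity of the ramification index of $L/\Qp$ enters at this final step to guarantee that the determinants of the relevant blocks of $Q^{-1}\Tw_{k/2-1}M_{\log,\q}$ restricted to $\Gamma^{\ac}$ are non-zero, so that the linear system admits a unique solution.

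The main obstacle is the precise translation of the functional equation of \cite{BF_Super_Addendum} into the form stated above for $\mathfrak{L}_{\lambda,\lambda}$, together with the explicit computation of the unit $u_\lambda|_{\Gamma^\ac}$. Once this analytical input is in place, the remaining arguments are essentially linear algebra, but care is required to match the Perrin-Riou maps $\cL_{\lambda,\q}$ with the Coleman maps $\col_{\bullet,\q}$ through the factorization \eqref{eq:2vardecomp} and to track the even/odd decomposition consistently through the matrix manipulations.
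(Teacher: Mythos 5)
The paper's proof of this proposition is a one-line citation: both assertions are taken directly from the companion paper \cite{BF_Super_Addendum} (Corollary~3.10 for $\mathbf{(L.\lambda)}$, and Corollary~7.12 for $\mathbf{(L5)}$). Your reconstruction of the first assertion is broadly aligned with what that companion paper does: a functional equation is established for the three-variable geometric $p$-adic $L$-function and then transported through the Perrin-Riou regulator, and the vanishing of $\mathfrak{L}_{\lambda,\lambda}^{\ac}$ follows because under $(\textup{Sign}\,-)$ the functional equation specializes to a sign-flip on the anticyclotomic line. So far so good.

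Your route to $\mathbf{(L5)}$, however, differs from the paper's and has genuine gaps. The paper does not deduce $\mathbf{(L5)}$ from $\mathbf{(L.\lambda)}$ via the matrix factorization \eqref{eqn:mainfactorization}; it cites a separate corollary of the functional-equation machinery in the companion paper, established on the signed side directly. Your linear-algebra derivation rests on assertions you do not substantiate. First, you require a relation between the off-diagonal entries $\mathfrak{L}_{\alpha,\beta}^\ac$ and $\mathfrak{L}_{\beta,\alpha}^\ac$, which you attribute to the functional equation without verification; if this relation is available, it should itself be a statement of the companion paper, and one must check that it takes the precise form you need. Second, even granting such a relation, deducing that the \emph{diagonal} entries of $M_1^{-1}\left(\begin{smallmatrix}0 & *\\ * & 0\end{smallmatrix}\right)(M_2^T)^{-1}$ vanish is a nontrivial computation that depends delicately on the structure of the restricted logarithm matrices $Q^{-1}\Tw_{k/2-1}M_{\log,\q}|_{\Gamma^\ac}$; this cannot be settled by appealing to a Pollack-style even/odd decomposition without actually carrying out the calculation. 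Third, your explanation of where the odd-ramification hypothesis enters (invertibility of block determinants ``at the final step'') is speculative. With $a_p(f)=0$ and $\varepsilon_f=\mathbbm{1}$ one has $\alpha^2=-p^{k-1}$, so $v_L(\alpha)=(k-1)e/2$; the parity of $e$ is woven into the valuation arithmetic underlying the Pollack decomposition and the integrality of the signed objects, and that is more likely where the hypothesis is genuinely used, rather than at a final determinant check. In short, the companion paper's Corollary~7.12 encapsulates exactly the computation you would need to spell out; without it your argument for $\mathbf{(L5)}$ is incomplete.
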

\begin{proof}
The first assertion is \cite[{Corollary~3.8}]{BF_Super_Addendum}, whereas the second assertion is {Theorem~4.1} in \textit{op. cit}. 
\end{proof}
\begin{proposition}
\label{prop:positionofBFelements}
$\,$
\begin{enumerate}[(i)]
\item  For every $\n\in \mathcal{N}$ and $\bullet\in \{\#,\flat\}$, we have $\res_{\fp^c}(c_\n^\bullet) \in H^1_\bullet(K(\n)_\mathfrak{p^c},\TT_{f,\chi})$.
\item Assume \BLtwo\ holds for some $\bullet\in \{\#,\flat\}$. Then, $\LL_{\cO_L}(\Gamma)\cdot c^\bullet \cap H^1_{\FFF_{\emptyset,0}}(K,\TT_{f,\chi})=0$.
\item Assume $\mathbf{(L4)}$ holds for some $\bullet\in \{\#,\flat\}$. Then,  $\LL_{\cO_L}(\Gamma^\ac)\cdot c_\ac^\bullet \cap H^1_{\FFF_{\emptyset,0}}(K,\TT_{f,\chi}^\ac)=0$.
\end{enumerate}
 %Asuming $(\textup{Sign} -)$, \BLthree\,  and for for $\bullet\in \{\#,\flat\}$, we have $c_\ac^\bullet \in H^1_{\FFF_{\bullet,\bullet}}(K,\TT_{f,\chi}^\ac)$. 
\end{proposition}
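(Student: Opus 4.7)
The plan is to reduce each part to statements that have already been established.

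\textbf{Part (i).} By construction, $c_\n^\bullet=\varpi^r\cdot \BF_\n^\bullet$, where $\BF_\n^\bullet=\cor_{K(\m)/K(\n)}(\BF_\m^\bullet)$ with $\m=\n\ff$. Since $\fp^c$ is coprime to $\n$, the semi-local Coleman map $\col_{\bullet,\n,\fp^c}$ is the corestriction of $\col_{\bullet,\m,\fp^c}$ (this compatibility is immediate from the definitions at the end of \S\ref{S:semilocal}, where we simply sum over the primes of $K(\n)$, resp. $K(\m)$, above $\fp^c$). Therefore, once Corollary~\ref{cor:ColBF} tells us that $\col_{\bullet,\m,\fp^c}\circ\res_{\fp^c}(\BF_\m^\bullet)=0$, applying $\cor_{K(\m)/K(\n)}$ and multiplying by $\varpi^r$ gives
$$\col_{\bullet,\n,\fp^c}\circ\res_{\fp^c}(c_\n^\bullet)=0,$$
which is precisely the claim.

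\textbf{Part (ii).} Suppose $f\in\LL_{\cO_L}(\Gamma)$ is such that $f\cdot c^\bullet\in H^1_{\FFF_{\emptyset,0}}(K,\TT_{f,\chi})$. By the definition of the Greenberg Selmer structure $\FFF_{\emptyset,0}$ (whose local condition at $\fp^c$ is the zero submodule), this means
$$f\cdot\res_{\fp^c}(c^\bullet)=\res_{\fp^c}(f\cdot c^\bullet)=0\qquad\text{in }H^1(K_{\fp^c},\TT_{f,\chi}).$$
By Remark~\ref{rem:hnaimpliesfreeness}, the $\LL_{\cO_L}(\Gamma)$-module $H^1(K_{\fp^c},\TT_{f,\chi})$ is free, so it has no $\LL_{\cO_L}(\Gamma)$-torsion. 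Since \BLtwo\ asserts precisely that $\res_{\fp^c}(c^\bullet)\neq 0$ and $\LL_{\cO_L}(\Gamma)$ is a domain, we conclude that $f=0$, whence $f\cdot c^\bullet=0$.

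\textbf{Part (iii).} This follows the same pattern as (ii). Suppose $g\in\LL_{\cO_L}(\Gamma^\ac)$ satisfies $g\cdot c_\ac^\bullet\in H^1_{\FFF_{\emptyset,0}}(K,\TT_{f,\chi}^\ac)$. Then $g\cdot\res_{\fp^c}(c_\ac^\bullet)=0$ inside $H^1(K_{\fp^c},\TT_{f,\chi}^\ac)$, which is free over $\LL_{\cO_L}(\Gamma^\ac)$ by Remark~\ref{rem:hnaimpliesfreeness}. The hypothesis $\mathbf{(L4)}$ gives $\res_{\fp^c}(c_\ac^\bullet)\neq 0$, so freeness together with the fact that $\LL_{\cO_L}(\Gamma^\ac)$ is a domain forces $g=0$.

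None of the three parts presents any genuine obstacle: the only inputs needed are Corollary~\ref{cor:ColBF} (for (i)), the hypotheses \BLtwo\ and $\mathbf{(L4)}$ (for (ii) and (iii) respectively), and the freeness statement of Remark~\ref{rem:hnaimpliesfreeness}. The mild subtlety is checking in part (i) that the Coleman maps commute with corestriction in the way described, but this follows directly from how the semi-local maps $\col_{\bullet,\n,\fp^c}^?$ were defined as direct sums over primes above $\fp^c$ at the end of \S\ref{S:semilocal}.
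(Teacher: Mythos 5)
Your proof is correct and follows the paper's argument: part (i) rests on Corollary~\ref{cor:ColBF} (you helpfully spell out the compatibility of the semi-local Coleman maps with corestriction, which the paper leaves implicit), while parts (ii) and (iii) hinge on the freeness of $H^1(K_{\fp^c},\TT_{f,\chi})$ and $H^1(K_{\fp^c},\TT_{f,\chi}^\ac)$ from Remark~\ref{rem:hnaimpliesfreeness} combined with the non-vanishing asserted in $\mathbf{(L2)}$ and $\mathbf{(L4)}$. The only mild variation is that in (ii)/(iii) the paper routes through the torsion-free quotient $H^1_{\FFF_{\emptyset,\bullet}}(K,\TT_{f,\chi})\big/H^1_{\FFF_{\emptyset,0}}(K,\TT_{f,\chi})\hookrightarrow H^1(K_{\fp^c},\TT_{f,\chi})$ and explicitly invokes part~(i), whereas you work directly with $\res_{\fp^c}(c^\bullet)$, which is a slightly more streamlined packaging of the same computation.
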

\begin{proof}
(i) follows from  Corollary~\ref{cor:ColBF}. Notice that the quotient
$$\frac{ H^1_{\FFF_{\emptyset,\bullet}}(K,\TT_{f,\chi})}{ H^1_{\FFF_{\emptyset,0}}(K,\TT_{f,\chi})}\hookrightarrow H^1_\bullet(K_{\fp^c},\TT_{f,\chi})\subset  H^1(K_{\fp^c},\TT_{f,\chi})$$
 is torsion-free and (ii) is now a consequence of (i) and \BLtwo. (iii) similarly follows from (i) and $\mathbf{(L4)}$.
\end{proof}

\subsection{Signed main conjectures}

\begin{theorem}
\label{thm:leopoldtconjecturesforTfchi}
For $\bullet,\star\in\{\#,\flat\}$ and $?\in \{\ac,\cyc,\emptyset\}$ we have the following.
\begin{itemize}
\item[(i)] If $c^\bullet_?\neq0$, then the $\LL_{\cO_L}(\Gamma^?)$-module $H^1_{\FFF_{\emptyset,\bullet}}(K,\TT_{f,\chi}^{?})$ is of rank one and  $\mathfrak{X}_{\bullet}^?$ is torsion. In particular, this conclusion is valid for some choice of $\bullet \in \{\#,\flat\}$ if
\begin{itemize}
\item[a)] $?=\cyc,\emptyset$,
\item[b)]  $?=\ac$ and the hypotheses of Proposition~\ref{prop:lthreecorrect} are verified (so that $(\textup{Sign}+)$ is in force),
\item[c)]  $?=\ac$ and the hypotheses of Proposition~\ref{prop:lfourcorrect} are verified (so that $(\textup{Sign}-)$ is in force).
\end{itemize}
\item[(ii)]   $H^1_{\FFF_{\emptyset,0}}(K,\TT_{f,\chi}^{?})=0$ and  $\mathfrak{X}_{\emptyset,0}^?$ is torsion over $\Lambda_{\cO_L}(\Gamma^?)$ in the following situations.
\begin{itemize}
\item[a)] $?=\emptyset$.
\item[b)] $?=\ac$ and the hypotheses of Proposition~\ref{prop:lfourcorrect} are verified.
\end{itemize}
\item [(iii)] There exists a choice $\bullet,\star \in \{\#,\flat\}$ such that the $\LL_{\cO_L}(\Gamma)$-module $\mathfrak{X}_{\star,\bullet}$ and the $\LL_{\cO_L}(\Gamma^\cyc)$-module $\mathfrak{X}_{\star,\bullet}^{\cyc}$ are both torsion. Furthermore, $H^1_{\FFF_{\star,\bullet}}(K,\TT_{f,\chi})=H^1_{\FFF_{\star,\bullet}}(K,\TT_{f,\chi}^{\cyc})=0$
for the same choice of $\bullet$ and $\star$.
\item[(iv)] Under the hypotheses of Proposition~\ref{prop:lthreecorrect},  there exists a choice $\bullet,\star \in \{\#,\flat\}$ such that $\LL_{\cO_L}(\Gamma^\ac)$-module $\mathfrak{X}_{\bullet,\star}^{\ac}$ is torsion and $H^1_{\FFF_{\bullet,\star}}(K,\TT_{f,\chi}^{\ac})=0$ .
\item[(v)] Suppose $a_p(f)=0$ as well as that the hypotheses of Proposition~\ref{prop:lfourcorrect} and Proposition~\ref{prop:Lalpha} hold true. Then for the choice of $\bullet \in \{\#,\flat\}$ we make to validate \textup{(i).c} above, both modules $\mathfrak{X}_{\bullet,\bullet}^{\ac}$ and $H^1_{\FFF_{\bullet,\bullet}}(K,\TT_{f,\chi}^{\ac})$ have rank $1$ over $\LL_{\cO_L}(\Gamma^\ac)$.
\end{itemize}
\end{theorem}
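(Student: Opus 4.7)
The strategy is to realize $c_\ac^\bullet$ as a non-trivial rank-one Euler system class lying inside $H^1_{\FFF_{\bullet,\bullet}}(K,\TT_{f,\chi}^\ac)$ and then to invoke the locally restricted Euler system machine on the collection of signed Beilinson--Flach classes to match the rank of $\mathfrak{X}_{\bullet,\bullet}^\ac$. This is the anticyclotomic Beilinson--Flach analogue of how non-trivial Heegner classes force rank one in both the Selmer and dual Selmer groups in the classical indefinite setting.

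First I would verify that $c_\ac^\bullet \in H^1_{\FFF_{\bullet,\bullet}}(K,\TT_{f,\chi}^\ac)$. The signed local condition at $\fp^c$ is supplied by Corollary~\ref{cor:ColBF} (which gives $\col_{\bullet,\fp^c}(c^\bullet)=0$) propagated to the anticyclotomic quotient, as in Proposition~\ref{prop:positionofBFelements}(i). The signed condition at $\fp$ is precisely the identity
\[
\col_{\bullet,\fp}^\ac\bigl(\res_\fp(c_\ac^\bullet)\bigr)=\fL_{\bullet,\bullet}^\ac=0,
\]
which is the content of $\mathbf{(L5)}$ supplied by Proposition~\ref{prop:Lalpha} under the additional hypotheses that $a_p(f)=0$ and that the ramification index of $L/\QQ_p$ is odd. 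Next, the hypotheses of Proposition~\ref{prop:lfourcorrect} yield $\mathbf{(L4)}$, so $\res_{\fp^c}(c_\ac^\bullet)\neq 0$; since $H^1(K_{\fp^c},\TT_{f,\chi}^\ac)$ is free over $\LL_{\cO_L}(\Gamma^\ac)$ by Remark~\ref{rem:hnaimpliesfreeness}, the cyclic submodule $\LL_{\cO_L}(\Gamma^\ac)\cdot c_\ac^\bullet$ is itself free of rank one, yielding the lower bound $\mathrm{rank}_{\LL_{\cO_L}(\Gamma^\ac)}\,H^1_{\FFF_{\bullet,\bullet}}(K,\TT_{f,\chi}^\ac)\geq 1$.

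I would then run the rank-one locally restricted Euler system machine on the signed classes $\{c_{\ac,\n}^\bullet\}_{\n\in\mathcal{N}}$. These are integral by Theorem~\ref{thm:decompintro}, satisfy the Euler system distribution relations inherited from the construction in Section~\ref{S:ESimag}, and, by the preceding paragraph applied uniformly in $\n$, lie inside the locally restricted Selmer complex cut out by the signed conditions at the primes above $p$. Under \textbf{(H.Im.)}, the Chebotarev input needed to manufacture Kolyvagin derivative classes in the anticyclotomic tower is available, and the machine produces simultaneously the matching upper bound $\mathrm{rank}_{\LL_{\cO_L}(\Gamma^\ac)}\,H^1_{\FFF_{\bullet,\bullet}}(K,\TT_{f,\chi}^\ac)\leq 1$ and the equality $\mathrm{rank}_{\LL_{\cO_L}(\Gamma^\ac)}\,\mathfrak{X}_{\bullet,\bullet}^\ac=1$, with the torsion submodule of $\mathfrak{X}_{\bullet,\bullet}^\ac$ controlled by the characteristic ideal of $H^1_{\FFF_{\bullet,\bullet}}(K,\TT_{f,\chi}^\ac)\big/\LL_{\cO_L}(\Gamma^\ac)\cdot c_\ac^\bullet$. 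The equality of generic ranks of $H^1_{\FFF_{\bullet,\bullet}}$ and $\mathfrak{X}_{\bullet,\bullet}^\ac$ can alternatively be cross-checked via the Poitou--Tate global duality sequence attached to $\FFF_{\bullet,\bullet}$, using the image descriptions of $\col_{\bullet,\fp}^\ac$ and $\col_{\bullet,\fp^c}^\ac$ recorded at the end of \S\ref{S:semilocal}.

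The chief technical obstacle will be the rank-one variant of the Euler system argument itself in the present signed setting. Contrary to parts (iii) and (iv), where the Euler system directly bounds a torsion dual Selmer group, here one must produce enough Kolyvagin primes whose signed derivative classes retain anticyclotomic non-triviality compatibly with the local conditions at both $\fp$ and $\fp^c$. The total ramification of $\fp,\fp^c$ in $K^\ac/K$ together with \textbf{(H.Im.)} make the required Chebotarev input available, but the principal bookkeeping task is checking that the Euler system distribution relations pass cleanly through the factorization of Theorem~\ref{thm:decompintro}, rather than being available on the nose before factorization.
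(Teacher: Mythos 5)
The paper's proof of (v) is substantially shorter than what you propose, and the key point it uses is one you relegate to a "cross-check." The paper does \emph{not} run a new Euler system argument on the doubly-signed Selmer group. It proceeds in four steps: (1) invoke (i).c (the partially-signed Euler system output), which says $H^1_{\FFF_{\emptyset,\bullet}}(K,\TT^\ac_{f,\chi})$ has rank one with $H^1_{\FFF_{\emptyset,\bullet}}(K,\TT^\ac_{f,\chi})\big/\LL_{\cO_L}(\Gamma^\ac)\cdot c^\bullet_\ac$ torsion, and $\mathfrak{X}^\ac_\bullet$ torsion; (2) from $\mathbf{(L5)}$ (Proposition~\ref{prop:Lalpha}), $\res_{\bullet/\fp}(c^\bullet_\ac)=0$; (3) since $H^1_{/\bullet}(K_\fp,\TT^\ac_{f,\chi})$ is torsion-free and the quotient of $H^1_{\FFF_{\emptyset,\bullet}}$ by $c^\bullet_\ac$ is torsion, $\res_{\bullet/\fp}$ kills all of $H^1_{\FFF_{\emptyset,\bullet}}(K,\TT^\ac_{f,\chi})$, so $H^1_{\FFF_{\bullet,\bullet}}=H^1_{\FFF_{\emptyset,\bullet}}$ has rank one; and (4) the global duality sequence therefore degenerates to $0\to H^1_{/\bullet}(K_\fp,\TT^\ac_{f,\chi})\to \mathfrak{X}^\ac_{\bullet,\bullet}\to\mathfrak{X}^\ac_\bullet\to 0$, forcing $\mathrm{rank}\,\mathfrak{X}^\ac_{\bullet,\bullet}=1$. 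No fresh Kolyvagin-derivative analysis is required.

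Your proposed route has a genuine gap. The locally restricted Euler system machine of \cite[Appendix A]{buyukbodukleiPLMS} is tailored to the partially-signed structure $\FFF_{\emptyset,\bullet}$: the Euler system classes $\{c^\bullet_{\ac,\n}\}$ lie in the required local subspaces at $\fp^c$ uniformly in $\n$ (Proposition~\ref{prop:positionofBFelements}(i)), but they do \emph{not} generally satisfy the signed condition at $\fp$ for $\n\ne 1$. The vanishing $\mathbf{(L5)}$ is a statement only about $\n=1$ (it is the nullity of $\mathfrak{L}^\ac_{\bullet,\bullet}$), so you cannot feed the whole signed Euler system into a machine for $\FFF_{\bullet,\bullet}$ as stated. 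Moreover, even if such a machine were available, its standard output would bound the dual Selmer group by a characteristic ideal — a torsion conclusion — whereas part (v) asserts $\mathfrak{X}^\ac_{\bullet,\bullet}$ has rank one. That rank-one conclusion is a pure global-duality phenomenon, a consequence of $\mathfrak{X}^\ac_\bullet$ being torsion and $\res_{\bullet/\fp}$ having trivial image on the Selmer group, not something an Euler system machine delivers directly. You correctly flag the distribution-relation bookkeeping as a worry at the end; the paper's approach shows it is not merely difficult but unnecessary.
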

\begin{proof}
(i) The assertion that $\mathfrak{X}_{\bullet}^?$ is torsion (under the assumption that $c^\bullet_?\neq0$) is a consequence of the locally restricted Euler system machinery developed in \cite[Appendix A]{buyukbodukleiPLMS}. We explain here how to apply the results therein. For each $\n \in \mathcal{N}$  (we remark that these moduli are denoted by $\eta$ in \textit{loc. cit.}) we recall that $\LL_{\n}=\cO_L[\textup{Gal}(K(\frak{n})/K)]\otimes \LL_{\cO_L}(\Gamma)$ and let
$$\Psi^{(\n)}:=(\col_{\bullet,\n,\fp},\col_{\bullet,\n,\fp^c}):H^1(K(\n)_p,\TT_{f,\chi})=H^1(K(\n)_\fp,\TT_{f,\chi})\oplus H^1(K(\n)_{\fp^c},\TT_{f,\chi})\lra \LL_{\n}^{\oplus 2}.$$ 
We note that the $L$-restricted Selmer structure $\FFF_L$ considered in Definition A.10 of \textit{op. cit.} corresponds to our $\FFF_{\emptyset,\bullet}$ (with $L=L(1)$ and $L(\n)=(\Psi^{(\n)})^{-1}(\LL_{\n}^{\fp})$, where $\LL_{\n}^{\fp}\subset \LL_{\n}^{\oplus 2}$ is the summand corresponding to $\col_{\bullet,\n,\fp}$). Proposition~\ref{prop:positionofBFelements}(i) shows that the collection $\{c_{\n}^\bullet\}$ forms an $L$-restricted Euler system and the proof of Theorem A.11 in \textit{op. cit.} yields an $L$-restricted Kolyvagin system for the Selmer structure $\FFF_L=\FFF_{\emptyset,\bullet}$. The equation (1.5) in the proof of Theorem A.14 in \textit{op. cit.} together with  the non-triviality of $c^\bullet_?$ complete the proof that $\mathfrak{X}_{\bullet}^?$ is torsion. The second assertion of (i) follows from Poitou-Tate global duality, a) from Proposition~\ref{prop:lzerocorrect}, b) from Proposition~\ref{prop:lthreecorrect} and c) from Proposition~\ref{prop:lfourcorrect}.
(ii) Since the module $H^1_{\FFF_{\emptyset,0}}(K,\TT_{f,\chi}^{?})$ is torsion-free, the first assertion follows from (i) (parts a and c) and Proposition~\ref{prop:positionofBFelements}(ii) and (iii). The second assertion that $\mathfrak{X}_{\emptyset,0}^?$ is torsion over $\Lambda_{\cO_L}(\Gamma^?)$ follows from the first via global duality.
(iii) In order to prove that $H^1_{\FFF_{\star,\bullet}}(K,\TT_{f,\chi})=H^1_{\FFF_{\star,\bullet}}(K,\TT_{f,\chi}^{\cyc})=0$, we only need to check that $c_\cyc^\bullet \notin H^1_{\FFF_{\star,\bullet}}(K,\TT_{f,\chi}^\cyc)$. The existence of a choice of $\star,\bullet \in \{\#,\flat\}$ with this property follows from Corollary~\ref{cor:lonecorrect}. The rest of (iii) follows from Poitou-Tate global duality.
(iv) As in the proof of (iii), we only need to verify under the running hypotheses that $c_\ac^\bullet \notin H^1_{\FFF_{\bullet,\bullet}}(K,\TT_{f,\chi}^\ac)$ and Corollary~\ref{cor:lthreeandhalfcorrect} tells us that there exists a choice of $\bullet\in \{\#,\flat\}$ with this property.
(v) Set $H^1_{/\bullet}(K_{\fp},\TT_{f,\chi}^\ac):=H^1(K_{\fp},\TT_{f,\chi}^\ac)\big{/}H^1_{\bullet}(K_{\fp},\TT_{f,\chi}^\ac)$ and let $\res_{\bullet/\fp}$ denote the composition of the arrows 
$$H^1(K,\TT_{f,\chi}^\ac)\ra H^1(K_{\fp},\TT_{f,\chi}^\ac) \ra H^1_{/\bullet}(K_{\fp},\TT_{f,\chi}^\ac).$$ 
Notice that $H^1_{/\bullet}(K_{\fp},\TT_{f,\chi}^\ac)$ is torsion-free (being isomorphic to a submodule of $\LL_{\cO_L}(\Gamma^\ac)$ via $\col_{\bullet,\fp}^\ac$).
It follows from Proposition~\ref{prop:lfourcorrect} that $\res_{\bullet/\fp}(c_\ac^\bullet)=0$. Since the quotient module $H^1_{\FFF_{\emptyset,\bullet}}(K,\TT_{f,\chi}^{\ac})\big{/}\LL_{\cO_L}(\Gamma^\ac)\cdot c_\ac^\bullet$ is torsion by (i).c, it follows that 
$\res_{\bullet/\fp}\left(H^1_{\FFF_{\emptyset,\bullet}}(K,\TT_{f,\chi}^{\ac})\right)=0.$
The proof of (v) now follows from (i).c and the global duality sequence
\[
0\ra H^1_{\FFF_{\bullet,\bullet}}(K,\TT_{f,\chi}^{\ac})\lra H^1_{\FFF_{\emptyset,\bullet}}(K,\TT_{f,\chi}^{\ac})\stackrel{\res_{\bullet/\fp}}{\lra} H^1_{/\bullet}(K_\fp,\TT_{f,\chi}^{\ac})
\lra \mathfrak{X}_{\bullet,\bullet}^{\ac} \lra \mathfrak{X}_{\bullet}^{\ac}\ra 0.
\]
\end{proof}
We now formulate our signed main conjectures.
\begin{conj}
\label{conj:signedmainconjpartiallyandoubly}
For $\bullet,\star\in\{\#,\flat\}$ and $?\in \{\ac,\cyc,\emptyset\}$, the following two assertions hold true:
\begin{itemize}
\item[(i)]   $\Char\left(\mathfrak{X}_{\bullet}^{?}\right)\dot{=}\, \Char\left(H^1_{\FFF_{\emptyset,\bullet}}(K,\TT_{f,\chi}^{?})\big{/}\LL_{\cO_L}(\Gamma^?)\cdot c_{?}^\bullet\right)$\,.
\item[(ii)] $\Char\left(\mathfrak{X}_{\star,\bullet}^{?}\right)\dot{=}\, \col_{\star,\fp}^?\left(\res_{\fp}(c_{?}^\bullet/\xi_\star^?)\right)$.
\end{itemize}
Here, $``\dot{=}"$ means equality up to powers of $p$ and  $\xi_{\star}^?$ is as given at the end of \S\ref{S:semilocal}.
\end{conj}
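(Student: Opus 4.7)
The plan is to prove each direction of the equalities in Conjecture~\ref{conj:signedmainconjpartiallyandoubly} separately. One divisibility will come from the locally restricted Euler system of integral signed Beilinson-Flach classes produced in Theorem~\ref{thm:decompintro}; the other will require importing Wan's analytic bounds.

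First I would tackle part (i). Proposition~\ref{prop:positionofBFelements}(i) shows that for every $\n\in\mathcal{N}$ and every $\bullet\in\{\#,\flat\}$, the class $c_\n^\bullet$ localizes at $\fp^c$ into $\ker\col_{\bullet,\n,\fp^c}^?$, i.e.\ precisely into the local condition of $\FFF_{\emptyset,\bullet}$ at $\fp^c$. Combined with the Euler system norm relations (inherited uniformly via \eqref{eq:BFdecomp} from the bounded factorization of Theorem~\ref{thm:decompBFhida}), the collection $\{c_\n^\bullet\}_\n$ is a locally restricted Euler system for $(\TT_{f,\chi}^?,\FFF_{\emptyset,\bullet})$ in the sense of \cite[Appendix~A]{buyukbodukleiPLMS}. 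Running the locally restricted Euler-system-to-Kolyvagin-system machine of \textit{loc.\ cit.}\ and applying the usual Kolyvagin-system bound (as in the proof of Theorem~\ref{thm:leopoldtconjecturesforTfchi}(i)) yields, up to a power of $\varpi$, the divisibility
$$\Char\!\left(\mathfrak{X}_\bullet^?\right)\,\supseteq\,\Char\!\left(H^1_{\FFF_{\emptyset,\bullet}}(K,\TT_{f,\chi}^?)\big/\LL_{\cO_L}(\Gamma^?)\cdot c_?^\bullet\right).$$

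To pass from (i) to the corresponding half of (ii), I would use Poitou-Tate global duality to fit $\mathfrak{X}_\bullet^?$ and $\mathfrak{X}_{\star,\bullet}^?$ into the four-term sequence
$$0\to H^1_{\FFF_{\star,\bullet}}(K,\TT_{f,\chi}^?)\to H^1_{\FFF_{\emptyset,\bullet}}(K,\TT_{f,\chi}^?)\xrightarrow{\col_{\star,\fp}^?\circ\,\res_\fp}\LL_{\cO_L}(\Gamma^?)\to \mathfrak{X}_{\star,\bullet}^?\to \mathfrak{X}_\bullet^?\to 0,$$
whose connecting map has image of finite index in $\xi_\star^?\cdot\LL_{\cO_L}(\Gamma^?)$ by the fine analysis of Lemma~\ref{lem:2varimage} and the cokernel description in Proposition~\ref{prop:errorterms}. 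Substituting the conclusion of part (i), this computation converts the Euler-system bound on $\mathfrak{X}_\bullet^?$ into the divisibility $\col_{\star,\fp}^?(\res_\fp(c_?^\bullet))\subseteq \xi_\star^?\cdot\Char(\mathfrak{X}_{\star,\bullet}^?)$ --- exactly the content of Theorems~\ref{thm:leopoldtconjecturesforTfchiintro1}--\ref{thm:leopoldtconjecturesforTfchiintro2} and one half of the desired equality~(ii). The same diagram, read in reverse, lets us transport any opposite divisibility for $\mathfrak{X}_\bullet^?$ to one for $\mathfrak{X}_{\star,\bullet}^?$ and vice versa, so the two parts of the conjecture are equivalent.

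The hard part is the reverse divisibility, and I would attack it by invoking \cite[Theorems~1.7 and 3.8]{wan16}. Wan proves, under his hypotheses on the residual representation and on the CM deformation, a divisibility between an appropriate three-variable non-ordinary Rankin-Selberg $p$-adic $L$-function (attached to $f$ and the CM Hida family $\g_{\m}$ of \S\ref{subsec:CMhidafamilies}) and the characteristic ideal of the corresponding dual Selmer group on the analytic side. The plan would be to specialize Wan's inclusion along the cyclotomic or anticyclotomic line, then factor the analytic object through our signed $p$-adic $L$-functions via the matrix relation~\eqref{eqn:mainfactorization} and the diagonalization \eqref{eq:diag}, so as to extract $\xi_\star^?\cdot\Char(\mathfrak{X}_{\star,\bullet}^?)\supseteq (\mathfrak{L}_{\star,\bullet}^?)$ up to powers of $p$. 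The serious technical obstacle here is to match Wan's normalizations (periods, local Euler factors at $p$, interpolation ranges) with ours, and to check that his running hypotheses --- in particular the big-image and congruence conditions --- are compatible with our \textup{\textbf{(H.Im.)}} and \textup{\textbf{(H.Dist.)}}. Granted that, the combination of the Euler-system divisibility from Steps~1--2 and Wan's reverse divisibility upgrades both inclusions to equalities up to a power of $p$, completing Conjecture~\ref{conj:signedmainconjpartiallyandoubly} in the corresponding range.
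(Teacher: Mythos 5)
The statement you are addressing is labelled a \emph{conjecture} in the paper, and the authors do not claim a proof of it; the text only establishes one divisibility (Theorem~\ref{thm:halfofMainConjtwovarCycloDefinite}, via the locally restricted Euler system) and indicates in Remarks~\ref{rem:thmleopoldtconjecturesforTfchiintro1} and \ref{rem:thmleopoldtconjecturesforTfchiintro2} that the opposite inclusion \emph{would} follow from Wan's results \emph{under Wan's own hypotheses}. Your plan reconstructs essentially the same strategy: Euler system gives one inclusion, Wan gives the other, and global duality mediates between (i) and (ii). So the outline is the right one, but it is not a proof, and two points need to be flagged as genuine gaps, not technicalities.

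First, the passage between parts (i) and (ii) is not unconditional. Your four-term sequence is the right Poitou--Tate sequence, but its exactness on the left (i.e.\ that $H^1_{\FFF_{\star,\bullet}}$ injects into $H^1_{\FFF_{\emptyset,\bullet}}$ and that the relevant quotient sits inside the image of $\col^?_{\star,\fp}$) requires the extra hypotheses isolated in Proposition~\ref{prop:twoassertionsequivalent}: namely a choice of $(\bullet,\star)$ for which Corollary~\ref{cor:lonecorrect} (or, in the $\ac$-definite case, Corollary~\ref{cor:lthreeandhalfcorrect}) supplies the non-vanishing of $\col^?_{\star,\fp}(\res_{\fp}(c^\bullet_?))$. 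Without these, the equivalence of (i) and (ii) is not known, and for $?=\ac$ in the indefinite case the doubly-signed $p$-adic $L$-function actually vanishes (Proposition~\ref{prop:Lalpha}), so (ii) collapses to $0=0$ and carries no information. Writing the equivalence as if it held for all $\bullet,\star,?$ overstates what is available.

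Second, and more seriously, the ``reverse divisibility'' from Wan's theorems is only a proposal in your write-up, exactly as it is only a proposal in the paper. The serious work of translating Wan's three-variable non-ordinary main conjecture into a statement about $\mathfrak{L}_{\star,\bullet}^?$ — unraveling the factorization \eqref{eqn:mainfactorization}, matching periods and local Euler factors, and verifying that Wan's hypotheses are implied by \textup{\textbf{(H.Im.)}}, \textup{\textbf{(H.Dist.)}} and the hypotheses of this paper — is not carried out here (nor in the paper). Until that is done, what you have is one divisibility (Theorems~\ref{thm:leopoldtconjecturesforTfchiintro1}--\ref{thm:leopoldtconjecturesforTfchiintro2}), plus a conditional route to the converse, which is precisely the status the paper assigns to Conjecture~\ref{conj:signedmainconjpartiallyandoubly}.
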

We note that these two assertions are equivalent under certain conditions, see Proposition~\ref{prop:twoassertionsequivalent}.
We shall call the first assertion the \emph{partially-signed main conjecture} and the second assertion the \emph{doubly-signed main conjecture} for $f\otimes\chi$. 
\begin{remark}
\label{rem:trivialinthedefinitecase}
Conjecture~\ref{conj:signedmainconjpartiallyandoubly} trivially holds when $?=\ac$, $\star=\bullet$, $a_p(f)=0$ and the hypotheses of Proposition~\ref{prop:lfourcorrect} hold true (as it reduces to $0=0$ thanks to Theorem~\ref{thm:leopoldtconjecturesforTfchi}(v) in this set up). In Section~\ref{subsec:refinedindefinitemainconj}, we shall present a refined version of the anticyclotomic main conjecture in this set up, that concerns the torsion-submodule of a Pottharst-style Selmer group.
\end{remark}
\begin{proposition}
\label{prop:twoassertionsequivalent}
Suppose \begin{itemize}
\item either that $?=\emptyset$ and $\bullet,\star \in \{\#,\flat\}$ verifies the conclusion of Corollary~\ref{cor:lonecorrect}, 
\item or else that  $?=\ac$, hypotheses of Proposition~\ref{prop:lthreecorrect} holds and $\bullet,\star \in \{\#,\flat\}$ verifies the conclusion of Corollary~\ref{cor:lthreeandhalfcorrect}.
\end{itemize}
Then, the partially-signed main conjecture and the doubly-signed main conjecture in Conjecture~\ref{conj:signedmainconjpartiallyandoubly} (for these choices of $\bullet$ and $\star$) are equivalent.  
\end{proposition}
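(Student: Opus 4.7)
The plan is to derive a four-term Poitou--Tate exact sequence relating $\mathfrak{X}_{\star,\bullet}^?$ and $\mathfrak{X}_{\bullet}^?$, and then to use it, together with the description of the image of the signed Coleman map $\col_{\star,\fp}^?$ recorded at the end of \S\ref{S:semilocal}, to pass between the two conjectural identities.

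Concretely, since $\FFF_{\star,\bullet}$ and $\FFF_{\emptyset,\bullet}$ differ only at $\fp$, global duality yields the exact sequence
\begin{equation*}
0 \lra H^1_{\FFF_{\star,\bullet}}(K,\TT_{f,\chi}^?) \lra H^1_{\FFF_{\emptyset,\bullet}}(K,\TT_{f,\chi}^?) \lra \frac{H^1(K_\fp,\TT_{f,\chi}^?)}{H^1_\star(K_\fp,\TT_{f,\chi}^?)} \lra \mathfrak{X}_{\star,\bullet}^? \lra \mathfrak{X}_{\bullet}^? \lra 0,
\end{equation*}
whose leftmost term vanishes under our hypotheses by Theorem~\ref{thm:leopoldtconjecturesforTfchi}(iii)--(iv). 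The signed Coleman map $\col_{\star,\fp}^?$ identifies the middle quotient with a finite-index submodule (of cokernel order a power of $p$) of $\xi_\star^? \LL_{\cO_L}(\Gamma^?)$. Splitting the above sequence into two short exact sequences and passing to characteristic ideals therefore yields
\begin{equation*}
\Char(\mathfrak{X}_{\star,\bullet}^?) \,\dot{=}\, \Char(\mathfrak{X}_{\bullet}^?) \cdot \Char\left(\xi_\star^? \LL_{\cO_L}(\Gamma^?) \big/ \col_{\star,\fp}^?(\res_\fp H^1_{\FFF_{\emptyset,\bullet}}(K,\TT_{f,\chi}^?))\right).
\end{equation*}

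In parallel, I would apply $\col_{\star,\fp}^? \circ \res_\fp$ to the short exact sequence
\begin{equation*}
0 \lra \LL_{\cO_L}(\Gamma^?)\cdot c^\bullet_? \lra H^1_{\FFF_{\emptyset,\bullet}}(K,\TT_{f,\chi}^?) \lra H^1_{\FFF_{\emptyset,\bullet}}(K,\TT_{f,\chi}^?)\big/\LL_{\cO_L}(\Gamma^?)\cdot c^\bullet_? \lra 0.
\end{equation*}
Since this composite is injective on $H^1_{\FFF_{\emptyset,\bullet}}$ (once again because $H^1_{\FFF_{\star,\bullet}}=0$), passing to characteristic ideals yields the auxiliary identity
\begin{equation*}
\col_{\star,\fp}^?(\res_\fp(c^\bullet_?))/\xi_\star^? \,\dot{=}\, \Char\left(H^1_{\FFF_{\emptyset,\bullet}}\big/\LL_{\cO_L}(\Gamma^?)\cdot c^\bullet_?\right) \cdot \Char\left(\xi_\star^? \LL_{\cO_L}(\Gamma^?) \big/ \col_{\star,\fp}^?(\res_\fp H^1_{\FFF_{\emptyset,\bullet}})\right).
\end{equation*}
Comparing the two displayed identities and cancelling the common (nonzero) right-hand factor would then prove the equivalence of the partially-signed and doubly-signed formulations in Conjecture~\ref{conj:signedmainconjpartiallyandoubly}.

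The chief difficulty I foresee is not conceptual but rather a careful bookkeeping exercise: the fudge factor $\xi_\star^?$ and the failure of $\col_{\star,\fp}^?$ to surject onto $\xi_\star^? \LL_{\cO_L}(\Gamma^?)$ must be tracked uniformly, so that only powers of $\varpi$ are absorbed into the $\dot{=}$ symbol. Equally important, the final cancellation step requires \emph{genuine} injectivity of $\col_{\star,\fp}^? \circ \res_\fp$ on $H^1_{\FFF_{\emptyset,\bullet}}$ rather than merely rank-one image; this is exactly why the non-triviality hypotheses furnished by Corollaries~\ref{cor:lonecorrect} and \ref{cor:lthreeandhalfcorrect} (which force $H^1_{\FFF_{\star,\bullet}}=0$) are indispensable for the argument.
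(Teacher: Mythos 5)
Your argument is correct and mirrors the paper's own proof: both rest on Poitou--Tate global duality, the vanishing $H^1_{\FFF_{\star,\bullet}}(K,\TT^?_{f,\chi})=0$ guaranteed by Theorem~\ref{thm:leopoldtconjecturesforTfchi} under the stated non-triviality hypotheses, and the pseudo-surjectivity of $\col_{\star,\fp}^?$ onto $\xi_\star^?\LL_{\cO_L}(\Gamma^?)$ from Lemma~\ref{lem:2varimage}. The only stylistic difference is that the paper writes the duality sequence with the quotients by $\LL_{\cO_L}(\Gamma^?)\cdot c^\bullet_?$ already taken (its sequence (\ref{eqn:4termexactpartialagainstdouble})), so the common local factor you cancel explicitly in your last step never appears; your two-step version is an equivalent reorganisation of the same computation.
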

\begin{proof}
 For the said choice of $\bullet$ and $\star$, the Poitou-Tate global duality together with Theorem~\ref{thm:leopoldtconjecturesforTfchi}(ii) yield an exact sequence
\begin{align}
\label{eqn:4termexactpartialagainstdouble}
0\ra \frac{H^1_{\FFF_{\emptyset,\bullet}}(K,\TT_{f,\chi}^?)}{\LL_{\cO_L}(\Gamma^?)\cdot c^\bullet_?}\stackrel{\res_{\star/\fp}}{\lra} &\frac{H^1_{/\star}(K_{\fp},\TT_{f,\chi}^?)}{\LL_{\cO_L}(\Gamma^?)\cdot \res_{\star/\fp}(c^\bullet_?)}
\ra \mathfrak{X}^?_{\star,\bullet}\ra \mathfrak{X}_\bullet^?\ra 0
\end{align}
where $H^1_{/\star}(K_{\fp},\TT_{f,\chi}^?):=H^1(K_{\fp},\TT_{f,\chi}^?)\big{/}H^1_{\star}(K_{\fp},\TT_{f,\chi}^?)$ and $ \res_{\star/\fp}$ is the compositum of the arrows 
$$H^1(K,\TT_{f,\chi}^?)\ra H^1(K_{\fp},\TT_{f,\chi}^?) \ra H^1_{/\star}(K_{\fp},\TT_{f,\chi}^?)$$ 
as in the proof of Theorem~\ref{thm:leopoldtconjecturesforTfchi}(v). This shows that the statement of Conjecture~\ref{conj:signedmainconjpartiallyandoubly}(i) is equivalent to the assertion that 
\begin{equation}
\label{eqn:4termexactpartialagainstdoubleconseq}\Char\left(\mathfrak{X}_{\star,\bullet}^{?}\right)\dot{=}\,\Char\left({H^1_{/\star}(K_{\fp},\TT_{f,\chi}^?)}\big{/}{\LL_{\cO_L}(\Gamma^?)\cdot \res_{\star/\fp}(c^\bullet_?)} \right)\,.
\end{equation}
Since $\col^?_{\star,\fp}$ is a pseudo-surjective map into $\LL_{\cO_L}(\Gamma^?)\xi_{\star}^?$ by Lemma~\ref{lem:2varimage}, it follows that $\col^?_{\star,\fp}\left(\res_{\fp}(c_{?}^\bullet)\right)/\xi_\star^?$ generates the characteristic ideal on the right of \eqref{eqn:4termexactpartialagainstdoubleconseq}. This concludes the proof.
\end{proof}

\subsection{Euler system bounds for doubly-signed main conjectures}\label{subsec:ESboundsforsignedSelmer}
The following divisibilities are obtained from making use of the locally restricted Euler system of Beilinson-Flach elements. 
\begin{theorem}
\label{thm:halfofMainConjtwovarCycloDefinite}
Suppose that $\bullet,\star\in\{\#,\flat\}$ and $?\in \{\ac,\cyc,\emptyset\}$.
\begin{itemize}
\item[(i)]   $\Char\left(H^1_{\FFF_{\emptyset,\bullet}}(K,\TT_{f,\chi}^{?})\big{/}\LL_{\cO_L}(\Gamma^?)\cdot c_{?}^\bullet\right)\subset \Char\left(\mathfrak{X}_{\bullet}^{?}\right)\,.$
\item[(ii)] Suppose 
\begin{itemize}
\item either that $?=\ac$ and the pair $(\bullet,\star)$ verifies the conclusion of Corollary~\ref{cor:lthreeandhalfcorrect};
\item or else that $?=\cyc,\emptyset$ and the pair $(\bullet,\star)$ verifies the conclusion of Corollary~\ref{cor:lonecorrect}. 
\end{itemize}
We then have
$$\col_{\star,\fp}^?\left(\res_{\fp}(c_{?}^\bullet)\right)\subset \xi_{\star}^?\cdot\Char\left(\mathfrak{X}_{\star,\bullet}^{?}\right)$$
\item[(iii)] Under the assumptions of \textup{(ii)}, the containment \textup{(i)} is an equality up to a power of $p$ if and only if the same is true for that of \textup{(ii)}.
\end{itemize}
%Here, $``\dot{=}"$ means equality up to powers of $p$ and divisors of $\eta_{\chi,\star}^?$ (which are given as in Definition~\ref{defn:etafactorsforaccycempty}).
\end{theorem}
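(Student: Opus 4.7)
\medskip

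\noindent\textbf{Proof strategy.} The plan is to derive (i) by feeding the Beilinson--Flach Euler system into the locally restricted Euler system machinery of~\cite[Appendix~A]{buyukbodukleiPLMS}, and to then obtain (ii) and (iii) from (i) through a single Poitou--Tate four-term exact sequence.

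For part (i), Proposition~\ref{prop:positionofBFelements}(i) together with the Euler system distribution relation set up in \S\ref{S:ESimag} show that the collection $\{c^\bullet_\n\}_{\n\in\mathcal{N}}$ of signed Beilinson--Flach classes forms an $L$-restricted Euler system for $\TT_{f,\chi}$ in the sense of~\cite[Definition~A.10]{buyukbodukleiPLMS}, where the local constraint at $\fp^c$ is precisely the vanishing of $\col_{\bullet,\fp^c}$ (integrality is supplied by Theorem~\ref{thm:decompintro} via the constant $C$). Under \textup{\textbf{(H.Im.)}}, the argument of~\cite[Theorem~A.14]{buyukbodukleiPLMS} then produces an $L$-restricted Kolyvagin system whose leading term is $c^\bullet_?$ after the appropriate $\Gamma^?$-projection, and the standard bound coming out of this Kolyvagin system yields the claimed divisibility
\[
\Char\Bigl(H^1_{\FFF_{\emptyset,\bullet}}(K,\TT_{f,\chi}^{?})\big{/}\LL_{\cO_L}(\Gamma^?)\cdot c_{?}^\bullet\Bigr)\subset \Char\bigl(\mathfrak{X}_{\bullet}^{?}\bigr).
\]
When $c^\bullet_?=0$ the left-hand side is the unit ideal and the inclusion is vacuous.

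For parts (ii) and (iii), the hypotheses guarantee that both $c^\bullet_?$ and $\res_{\star/\fp}(c^\bullet_?)$ are nonzero, and Theorem~\ref{thm:leopoldtconjecturesforTfchi} ensures that $\mathfrak{X}_\bullet^?$, $\mathfrak{X}_{\star,\bullet}^?$ and $H^1_{\FFF_{\star,\bullet}}(K,\TT_{f,\chi}^?)=0$. Exactly as in the derivation of~\eqref{eqn:4termexactpartialagainstdouble} in the proof of Proposition~\ref{prop:twoassertionsequivalent}, Poitou--Tate global duality then yields a four-term exact sequence of torsion $\LL_{\cO_L}(\Gamma^?)$-modules
\[
0\to\frac{H^1_{\FFF_{\emptyset,\bullet}}(K,\TT_{f,\chi}^?)}{\LL_{\cO_L}(\Gamma^?)\cdot c^\bullet_?}\xrightarrow{\res_{\star/\fp}}\frac{H^1_{/\star}(K_\fp,\TT_{f,\chi}^?)}{\LL_{\cO_L}(\Gamma^?)\cdot \res_{\star/\fp}(c^\bullet_?)}\to\mathfrak{X}^?_{\star,\bullet}\to\mathfrak{X}_\bullet^?\to 0.
\]
Multiplicativity of characteristic ideals gives
\[
\Char(\mathfrak{X}^?_{\star,\bullet})\cdot\Char\Bigl(H^1_{\FFF_{\emptyset,\bullet}}(K,\TT_{f,\chi}^?)/\LL\cdot c^\bullet_?\Bigr)\;\dot{=}\;\Char(\mathfrak{X}^?_\bullet)\cdot\Char\Bigl(H^1_{/\star}(K_\fp,\TT_{f,\chi}^?)/\LL\cdot\res_{\star/\fp}(c^\bullet_?)\Bigr),
\]
and the pseudo-surjectivity of $\col^?_{\star,\fp}$ onto $\xi_\star^?\LL_{\cO_L}(\Gamma^?)$ supplied by Lemma~\ref{lem:2varimage} identifies the last factor on the right with $\col^?_{\star,\fp}(\res_\fp c^\bullet_?)/\xi_\star^?$ up to a power of $p$. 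Substituting the divisibility of (i) into this identity produces (ii); reading the identity as an equality of ideals up to powers of $p$ shows that equality holds in (i) if and only if it holds in (ii), which is (iii).

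\medskip

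\noindent\textbf{Main obstacle.} The substantive step is (i): packaging the Beilinson--Flach classes into the abstract $L$-restricted Euler system framework of~\cite[Appendix~A]{buyukbodukleiPLMS}. This requires the integrality of the signed classes (Theorem~\ref{thm:decompintro}), the local-at-$\fp^c$ annihilation by $\col_{\bullet,\fp^c}$ supplied by Corollary~\ref{cor:ColBF}, and the Euler system distribution relation \emph{uniformly} valid after the signed factorization as $\n$ varies; once these are in place the Kolyvagin system bound is formal. Parts (ii) and (iii) are then an elementary bookkeeping with characteristic ideals.
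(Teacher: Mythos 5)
Your proposal is correct and follows essentially the same route as the paper: part (i) is the application of the locally restricted Euler/Kolyvagin system machinery of \cite[Appendix~A]{buyukbodukleiPLMS} (with the local constraint at $\fp^c$ supplied by Corollary~\ref{cor:ColBF} and integrality by Theorem~\ref{thm:decompintro}), and parts (ii)--(iii) are deduced from (i) via the Poitou--Tate four-term exact sequence \eqref{eqn:4termexactpartialagainstdouble} together with the pseudo-surjectivity of $\col_{\star,\fp}^?$ onto $\xi_\star^?\Lambda_{\cO_L}(\Gamma^?)$ from Lemma~\ref{lem:2varimage}. The bookkeeping with characteristic ideals is carried out slightly more explicitly than in the paper, but the argument is the same.
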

\begin{proof}
The first containment follows from employing the techniques of \cite[Appendix A]{buyukbodukleiPLMS}, more specifically, the  equation (1.5) therein (with the choices we have recorded in the proof of  Theorem~\ref{thm:leopoldtconjecturesforTfchi}(i)). Note that this inclusion says something non-trivial only when $c_?^\bullet\neq0$ and the conditions in (ii) guarantee that.
It follows from the sequence (\ref{eqn:4termexactpartialagainstdouble}) and the first part of our theorem that 
$$\Char\left({H^1_{/\star}(K_{\fp},\TT_{f,\chi}^?)}\big{/}{\LL_{\cO_L}(\Gamma^?)\cdot \res_{\star/\fp}(c^\bullet_?)} \right)\subset \Char\left(\mathfrak{X}_{\star,\bullet}^{?}\right)
$$
with equality if and only we have equality in the first part. Noting that 
$$\xi_{\star}^?\cdot\Char\left({H^1_{/\star}(K_{\fp},\TT_{f,\chi}^?)}\big{/}{\LL_{\cO_L}(\Gamma^?)\cdot \res_{\star/\fp}(c^\bullet_?)} \right)=\col^?_{\star,\fp}\left(\res_\fp(c_?^\bullet)\right),$$
the proof follows.
\end{proof}
\begin{theorem}
\label{thm:integralBFundersignedlog}
Suppose  that the hypotheses of Proposition~\ref{prop:lfourcorrect} are in effect and $\bullet\in\{\#,\flat\}$ verifies the conclusion of this proposition. Then,
\begin{align*}\textup{char}_{\LL_{\cO_L}(\Gamma^\ac)}\left(\mathfrak{X}_{\emptyset,0}\right)\, &\,\,\Big{|}\,\,\textup{char}_{\LL_{\cO_L}(\Gamma^\ac)}\left(H^1_\bullet(K_{\fp^c},\TT^\ac_{f,\chi})\big{/}\LL(\Gamma^\ac)\cdot\res_{\fp^c}(c_\bullet^\ac)\right),
%\\\\&=\col^{\fp}_\star\left(\res_{\fp^c}(c_\bullet^\ac)\right)\big{/}\eta_{\chi,\star}^\ac.
\end{align*}
Furthermore, 
 $$\textup{char}_{\LL_{\cO_L}(\Gamma^\ac)}\left(\mathfrak{X}_{\emptyset,0}\right) \dot{=}\ \textup{char}_{\LL_{\cO_L}(\Gamma^\ac)}\left(H^1_\bullet(K_{\fp^c},\TT^\ac_{f,\chi})\big{/}\LL(\Gamma^\ac)\cdot\res_{\fp^c}(c_\bullet^\ac)\right)$$
(equality up to a power of $p$) if and only if 
$$\Char\left(H^1_{\FFF_{\emptyset},\bullet}(K,\TT_{f,\chi}^{?})\big{/}\LL_{\cO_L}(\Gamma^?)\cdot c_{?}^\bullet\right)\dot{=}\ \Char\left(\mathfrak{X}_{\bullet}^{?}\right)\,.$$
 \end{theorem}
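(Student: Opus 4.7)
The plan is to derive the stated divisibility by comparing the two Selmer structures $\FFF_{\emptyset,0} \subset \FFF_{\emptyset,\bullet}$ on $\TT_{f,\chi}^\ac$ via Poitou--Tate global duality, and then coupling the resulting exact sequence with the Euler-system divisibility recorded in Theorem~\ref{thm:halfofMainConjtwovarCycloDefinite}(i). These two Selmer structures agree at every place except $\fp^c$, where the local condition for $\FFF_{\emptyset,0}$ is trivial and for $\FFF_{\emptyset,\bullet}$ it is $H^1_\bullet(K_{\fp^c},\TT_{f,\chi}^\ac)$. Duality yields a five-term exact sequence, and invoking the vanishing $H^1_{\FFF_{\emptyset,0}}(K,\TT_{f,\chi}^\ac) = 0$ provided by Theorem~\ref{thm:leopoldtconjecturesforTfchi}(ii).b (whose hypotheses are implied by those of Proposition~\ref{prop:lfourcorrect}), this reduces to the four-term exact sequence
\begin{equation*}
0 \longrightarrow H^1_{\FFF_{\emptyset,\bullet}}(K,\TT_{f,\chi}^\ac) \stackrel{\res_{\fp^c}}{\longrightarrow} H^1_\bullet(K_{\fp^c},\TT_{f,\chi}^\ac) \longrightarrow \mathfrak{X}_{\emptyset,0}^\ac \longrightarrow \mathfrak{X}_\bullet^\ac \longrightarrow 0.
\end{equation*}

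Next I would insert the cyclic $\LL_{\cO_L}(\Gamma^\ac)$-submodule generated by $c_\ac^\bullet$ on the left and its image $\LL_{\cO_L}(\Gamma^\ac) \cdot \res_{\fp^c}(c_\ac^\bullet)$ in the middle. The torsion-freeness of $H^1(K,\TT_{f,\chi}^\ac)$ recorded in Remark~\ref{rem:hnaimpliesfreeness}, combined with the non-vanishing of $\res_{\fp^c}(c_\ac^\bullet)$ guaranteed by $\mathbf{(L4)}$, ensures that $\LL_{\cO_L}(\Gamma^\ac) \cdot c_\ac^\bullet$ is free of rank one and that $\res_{\fp^c}$ maps it isomorphically onto $\LL_{\cO_L}(\Gamma^\ac)\cdot\res_{\fp^c}(c_\ac^\bullet)$. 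A snake-lemma argument then delivers the four-term exact sequence
\begin{equation*}
0 \longrightarrow \frac{H^1_{\FFF_{\emptyset,\bullet}}(K,\TT_{f,\chi}^\ac)}{\LL_{\cO_L}(\Gamma^\ac) \cdot c_\ac^\bullet} \longrightarrow \frac{H^1_\bullet(K_{\fp^c},\TT_{f,\chi}^\ac)}{\LL_{\cO_L}(\Gamma^\ac) \cdot \res_{\fp^c}(c_\ac^\bullet)} \longrightarrow \mathfrak{X}_{\emptyset,0}^\ac \longrightarrow \mathfrak{X}_\bullet^\ac \longrightarrow 0
\end{equation*}
of $\LL_{\cO_L}(\Gamma^\ac)$-torsion modules: the first two are torsion as quotients of rank-one modules by rank-one free submodules, while the latter two are torsion by Theorem~\ref{thm:leopoldtconjecturesforTfchi}(i).c and (ii).b, respectively.

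Taking characteristic ideals over the UFD $\LL_{\cO_L}(\Gamma^\ac) \cong \cO_L[[T]]$ then yields the multiplicative identity
\begin{equation*}
\Char(\mathfrak{X}_{\emptyset,0}^\ac) \cdot \Char\!\left(H^1_{\FFF_{\emptyset,\bullet}}/\LL_{\cO_L}(\Gamma^\ac)\cdot c_\ac^\bullet\right) = \Char\!\left(H^1_\bullet(K_{\fp^c},\TT_{f,\chi}^\ac)/\LL_{\cO_L}(\Gamma^\ac)\cdot\res_{\fp^c}(c_\ac^\bullet)\right) \cdot \Char(\mathfrak{X}_\bullet^\ac).
\end{equation*}
Combining this with the Euler-system containment $\Char(H^1_{\FFF_{\emptyset,\bullet}}/\LL_{\cO_L}(\Gamma^\ac)\cdot c_\ac^\bullet) \subset \Char(\mathfrak{X}_\bullet^\ac)$ of Theorem~\ref{thm:halfofMainConjtwovarCycloDefinite}(i) and cancelling in the UFD, I will arrive at the asserted divisibility $\Char(\mathfrak{X}_{\emptyset,0}^\ac) \mid \Char(H^1_\bullet(K_{\fp^c},\TT_{f,\chi}^\ac)/\LL_{\cO_L}(\Gamma^\ac)\cdot\res_{\fp^c}(c_\ac^\bullet))$. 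The equivalence up to powers of $p$ of the two equalities in the second half of the theorem is then an immediate consequence of the same multiplicative identity, since cancellation in the UFD goes in both directions.

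The only delicate step will be to set up the snake lemma carefully, verifying that $\res_{\fp^c}$ really induces an isomorphism $\LL_{\cO_L}(\Gamma^\ac)\cdot c_\ac^\bullet \stackrel{\sim}{\longrightarrow} \LL_{\cO_L}(\Gamma^\ac)\cdot\res_{\fp^c}(c_\ac^\bullet)$. This will follow from combining the injectivity of $\res_{\fp^c}$ on $H^1_{\FFF_{\emptyset,\bullet}}(K,\TT_{f,\chi}^\ac)$ that is already visible in the Poitou--Tate sequence with the non-vanishing from $\mathbf{(L4)}$ and the torsion-freeness of $H^1_\bullet(K_{\fp^c},\TT_{f,\chi}^\ac)$ (itself a submodule of the rank-two free module $H^1(K_{\fp^c},\TT_{f,\chi}^\ac)$ from Remark~\ref{rem:hnaimpliesfreeness}). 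Everything else is essentially formal, flowing from the multiplicativity of characteristic ideals in exact sequences of torsion modules over $\LL_{\cO_L}(\Gamma^\ac)$.
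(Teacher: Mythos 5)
Your proposal is correct and follows essentially the same route as the paper: Poitou--Tate global duality together with the vanishing $H^1_{\FFF_{\emptyset,0}}(K,\TT_{f,\chi}^\ac)=0$ from Theorem~\ref{thm:leopoldtconjecturesforTfchi}(ii) produce the four-term exact sequence of quotients, and multiplicativity of characteristic ideals combined with Theorem~\ref{thm:halfofMainConjtwovarCycloDefinite}(i) yields both the divisibility and the equivalence of the two equalities. The paper simply states the exact sequence and cites the two ingredients, while you make the snake-lemma step and the injectivity of $\res_{\fp^c}$ on the cyclic submodule explicit, which is a harmless expansion of the same argument.
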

\begin{proof}
The Poitou-Tate global duality together with Theorem~\ref{thm:leopoldtconjecturesforTfchi}(ii) give rise to an exact sequence
\begin{align*}
0\ra \frac{H^1_{\FFF_{\emptyset,\bullet}}(K,\TT_{f,\chi}^\ac)}{\LL_{\cO_L}(\Gamma^\ac)\cdot c^\bullet_\ac}\stackrel{\res_{\fp^c}}{\lra} &\frac{H^1_\bullet(K_{\fp^c},\TT_{f,\chi}^\ac)}{\LL_{\cO_L}(\Gamma^\ac)\cdot \res_{\fp^c}(c^\bullet_\ac)}\ra \mathfrak{X}^\ac_{\emptyset,0}\ra \mathfrak{X}_\bullet^\ac\ra 0
\end{align*}
The proof now follows from Theorem~\ref{thm:halfofMainConjtwovarCycloDefinite}(i).
\end{proof}
%\begin{remark}
%The divisibility in Theorem~\ref{thm:integralBFundersignedlog} is non-trivial only when the Property $\mathbf{(L2)}$ holds true.
%\end{remark}
%Second part: Start with the identification 
%$$\textup{char}_{\LL(\Gamma^\ac)}\left(H^1_*(K_{\fp^c},\TT^\ac)\big{/}\LL(\Gamma^\ac)\cdot\res_{\fp^c}(c_*^\ac)\right)=\LL(\Gamma^\ac)\cdot\textup{Log}_{*}^{(\fp^c)}(\res_{\fp^c}(c_*^\ac))$$ 
%and use Wan. What does his $\textup{Log}$ correspond to here.
\section{A refinement (Indefinite Anticyclotomic Main Conjectures)}
\label{subsec:refinedindefinitemainconj}
Suppose throughout this section that ${(\hbox{Sign} -)}$ is in effect as well as that the nebentype $\varepsilon_f$ is trivial\footnote{We impose this condition because we have verified the functional equation for the three-variable (geometric) $p$-adic $L$-function of Loeffler and Zerbes only under this hypothesis. We expect no essential difficulty to extend it beyond that, except for the fact that the root number calculations in the general case would necessitate a much lengthier discussion.}. Theorem~\ref{thm:leopoldtconjecturesforTfchi}(iii) shows that the signed Selmer groups (over the anticyclotomic tower) are non-torsion and the signed-main anticyclotomic conjectures in this situation could not assert anything non-trivial. However, one may consider a suitable Pottharst-style trianguline Selmer group over the anticyclotomic tower (we are grateful to Laurent Berger for explaining to us how the main constructions of Pottharst would extend without difficulty to our case of interest. Having said that, all inaccuracies in this section are of course ours). While still being a non-torsion object itself, it turns out that the torsion-subgroup of the trianguline Selmer group will acquire an interpretation in terms of $p$-adic $L$-functions, much in the spirit of Perrin-Riou's conjectures in the $p$-ordinary set up (and its parallel in our companion article, Theorem 1.1(ii) of \cite{buyukbodukleianticycloord}).
We shall follow the notation of Pottharst in \cite{jaycyclotmotives, jayanalyticfamilies}; in particular,  we shall write $\Lambda_\infty$ for the analytification of the \emph{anticyclotomic} Iwasawa algebra. We   set $\TT^\dagger:=T_{f,\chi}\,\widehat{\otimes}\,\LL_\infty^\iota$ where $G_K$-acts diagonally as usual (and via the universal anticyclotomic character on the latter factor). %Let $\Gamma_\mathfrak{q}$ denote the Galois group over $K_\mathfrak{q}\cong\QQ_p$ of the completion of $K^\ac$ at the unique prime above $\mathfrak{q}$. 

Let $\mathfrak{q}\in\{\fp,\fp^c\}$. Under \textbf{(H.SS.)}, $\q$ is totally ramified in $K^{\ac}$. It follows from Local Class Field Theory  $K^\ac_\q/K_\q$ can be realized as a sub-extension of  a Lubin-Tate extension of height one. By an abuse of notation, we shall identify $\Gamma^\ac$ with the local Galois group $\Gal(K^\ac_\q/K_\q)$.

\begin{remark}
\label{rem:extensionofpottharsttheorytolubintatetowers}
In order to extend Pottharst's theory to our set up, we need to establish the required formalism of $(\varphi,\Gamma^\ac)$-modules. There has been much activity recently concerning this matter, see in particular \cite{Bergermultivariable, FourquauxXie, SchneiderVenjakob, KisinRen}. We outline here the basic constructions relevant to our studies. 
 In order to define the trianguline Selmer groups along the cyclotomic tower, one makes an essential use of the Fontaine-Herr complex of cyclotomic $(\varphi,\Gamma)$-modules to recover the Iwasawa cohomology; Herr's formulation of local duality is also needed. These results are readily generalized by Fourquaux-Xie and Schneider-Venjakob; see \cite[Theorem 4.2]{FourquauxXie} and  \cite[Section 3]{SchneiderVenjakob}. One crucial point is that the ``F-analytic" condition that makes an appearance in the theory of Lubin-Tate $(\varphi,\Gamma)$-modules is vacuous when the relevant Lubin-Tate group has height one. 
The second ingredient in the construction of cyclotomic triangulordinary Selmer groups is the work of Berger and Kisin, that allows one to attach a cyclotomic $(\varphi,\Gamma)$-submodule of $D \subset D_\textup{rig}^\dagger(V)$ to any $\varphi$-finite-slope-eigenspace within the crystalline Dieudonn\'e module of $V$. This is carried out in the context of Lubin-Tate extensions of height one in \cite[Sections 2.2 and 2.3]{KisinRen}.
 \end{remark}

For each $\q\in\{\p,\p^c\}$, we let $D_\mathfrak{q}$ denote the $(\varphi,\Gamma^\ac)$-module associated to $T_{f,\chi}|_{G_{K_\fq}}$. For $\lambda\in\{\alpha,\beta\}$, we write $F_\lambda^{(\mathfrak{q})}\subset D_\mathfrak{q}$ for the associated saturated $(\varphi,\Gamma^\ac)$-submodules of rank one associcated to corresponding the $\varphi$-eigenspace of $\Dcris(T_{f,\chi})$ consrtucted by Kisin and Ren. We shall drop $\mathfrak{q}$ from the notation when no confusion may arise and write $F_\lambda$ and $D_p$ in place of $F_\lambda^{(\mathfrak{q})}$ and $D_\mathfrak{q}$. We shall write $\han^*(\,\cdot\,,\,\cdot\,)$ for the analytic Iwasawa cohomology groups in order to distinguish these objects from their classical counterparts. Note that Pottharst in \cite{jaycyclotmotives} denotes these objects by $H^*_\Iw(\,\cdot\,,\,\cdot\,)$.
\subsection{Analytic Selmer complexes}
\label{subsec:analyticSelmergroups}
Fix from now on $\lambda\in\{\alpha,\beta\}$. The basic constructions in \cite[\S3.2]{jayanalyticfamilies} and \cite[\S2]{jaycyclotmotives} (or rather, their mild generalization to our set up, through the works of Fourquaux-Xie, Schneider-Venjakob and Kisin-Ren that we have recalled in Remark~\ref{rem:extensionofpottharsttheorytolubintatetowers}) equip us with four  analytic Selmer complexes $\widetilde{\textbf{R}\Gamma}(G_{K,S},\Delta_{\emptyset,0},\TT^\dagger)$, $\widetilde{\textbf{R}\Gamma}(G_{K,S},\Delta_{\emptyset,\lambda},\TT^\dagger)$, $\widetilde{\textbf{R}\Gamma}(G_{K,S},\Delta_{\lambda,\lambda},\TT^\dagger)$ and $\widetilde{\textbf{R}\Gamma}(G_{K,S},\Delta_{0,\lambda},\TT^\dagger)$, which are  objects in the derived category of $\LL_\infty$-modules that may be represented by perfect complexes in degree $[0,3]$ (perfectness of these complexes may be deduced as a consequence of the anticyclotomic counterparts of \cite[Theorem 4.4.6]{KPX}),  which correspond to the following four local conditions at primes above $p$:
\begin{itemize}
\item $\Delta_{\emptyset,0}$\,: No local condition at $\fp$ (given by $\textbf{R}\Gamma(G_{K_\fp},D_\fp)\lra \textbf{R}\Gamma(G_{K_\fp},D_\fp)$ in the notation of \cite[\S3.2]{jayanalyticfamilies})  and the strict local conditions at $\fp^c$ (given by $\textbf{R}\Gamma(G_{K_{\fp^c}},0)\ra \textbf{R}\Gamma(G_{K_{\fp^c}},D_{\fp^c})$).
\item $\Delta_{\emptyset,\lambda}$\,:  No local condition at $\fp$, whereas that at $\fp^c$ is given via $\textbf{R}\Gamma(G_{K_{\fp^c}},F_\lambda^{(\fp^c)})\ra \textbf{R}\Gamma(G_{K_{\fp^c}},D_{\fp^c})$. 
\item $\Delta_{\lambda,\lambda}$\,: For both  $\mathfrak{q}\in\{\fp, \fp^c\}$, the condition at $\mathfrak{q}$ is given via $\textbf{R}\Gamma(G_{K_{\mathfrak{q}}},F_\lambda)\ra \textbf{R}\Gamma(G_{K_{\mathfrak{q}}},D_p)$. We shall henceforth call these local conditions the $\lambda$-Pottharst local condition.
\item $\Delta_{0,\lambda}$\,: Strict condition at $\fp$ and $\lambda$-Pottharst local condition at $\fp^c$.
\end{itemize}
At primes $\frak{l} \in \Sigma$ of $K$ that are coprime to $p$, each one of the four Selmer structures $\Delta_{\invques,?}$ requires the unramified local condition. These are given by 
\[
\Delta_{\invques,?,\frak{l}}^+(\TT^\dagger)= \left [(\TT^{\dagger})^{\mathscr{I}_{\frak{l}}} \xrightarrow{\textup{Fr}_\frak{l}-1} (\TT^{\dagger})^{\mathscr{I}_{\frak{l}}}\right ] \stackrel{\iota_{\frak{l}}^+}{\lra}C^{\bullet}(G_{\frak{l}},\TT^\dagger),
\]
where the terms in the complex on the left are placed in degrees $0$ and $1$ and the morphism $\iota_\frak{l}^+$ is given by 
$$\iota_{\frak{l}}^+(x)= x  \,\,\hbox{in degree 0\,\,\,\, and\,\,\, } (\iota_{\frak{l}}^+(x))(\textup{Fr}_{\frak{l}})=x \,\,\,\textup{in degree } 1.$$
We define the \emph{singular cone} at $\frak{l}$ by setting 
$$\widetilde{\Delta}_{\invques,?,\frak{l}}^-(\TT^\dagger):=\textup{cone}\left(\Delta_{\invques,?,\frak{l}}^+(\TT^\dagger)\stackrel{-i_\frak{l}^+}{\lra}C^{\bullet}(G_{\frak{l}},\TT^\dagger)\right)$$
%$$\textup{cone}\left(U_\ell^+(V,\mathbb{D})\stackrel{-i_\ell^+}{\lra} K_\ell^\bullet(V)\right)\,.$$
and define $\widetilde{\Delta}_{\Sigma}(\TT^\dagger):={\displaystyle\bigoplus_{\substack{\frak{l}\in \Sigma\\ \frak{l}\nmid p}}\widetilde{\Delta}_{\invques,?,\frak{l}}^+(\TT^\dagger)}$.

\begin{defn}
\label{define:Selmerobjects}
Consider the following $\LL_\infty$-modules:
\begin{enumerate}[1.]
\item $H^1_{\emptyset,0}(K,\TT^\dagger):=\ker\left(H^1_\an(K,\TT^\dagger)\lra H^1_\an(K_{\fp^c},\TT^\dagger)\oplus H^1(\widetilde{\Delta}_{\Sigma}(\TT^\dagger))\right)$.
\item $H^1_{\emptyset,\lambda}(K,\TT^\dagger):=\ker\left(\han^1(K,\TT^\dagger)\lra \han^1(K_{\fp^c},D_p/F_\lambda)\oplus H^1(\widetilde{\Delta}_{\Sigma}(\TT^\dagger))\right)$.
\item $H^1_{\lambda,\lambda}(K,\TT^\dagger):=\ker\left(\han^1(K,\TT^\dagger)\lra\han^1(K_\fp,D_p/F_\lambda)\oplus \han^1(K_{\fp^c},D_p/F_\lambda)\oplus\, H^1(\widetilde{\Delta}_\Sigma(\TT^\dagger)\right)$.
\item $H^1_{0,\lambda}(K,\TT^\dagger):=\ker\left(\han^1(K,\TT^\dagger)\lra\han^1(K_\fp,D_p)\oplus \han^1(K_{\fp^c},D_p/F_\lambda)\oplus H^1(\widetilde{\Delta}_{\Sigma}(\TT^\dagger)\right)$.
\end{enumerate}
\end{defn}
\begin{proposition}
\label{prop:comparisonofselmerobjects}
We have the following isomorphisms of  $\LL_\infty$-modules.
\begin{enumerate}[i.]
\item $\widetilde{\textbf{R}\Gamma}^1(G_{K,S},\Delta_{\emptyset,0},\TT^\dagger)\stackrel{\sim}{\lra}H^1_{\emptyset,0}(K,\TT^\dagger)$\,.
\item$\widetilde{\textbf{R}\Gamma}^1(G_{K,S},\Delta_{\emptyset,\lambda},\TT^\dagger)\stackrel{\sim}{\lra}H^1_{\emptyset,\lambda}(K,\TT^\dagger)$\,. 
\item $\widetilde{\textbf{R}\Gamma}^1(G_{K,S},\Delta_{\lambda,\lambda},\TT^\dagger)\stackrel{\sim}{\lra}H^1_{\lambda,\lambda}(K,\TT^\dagger)$\,.
\item $\widetilde{\textbf{R}\Gamma}^1(G_{K,S},\Delta_{0,\lambda},\TT^\dagger)\stackrel{\sim}{\lra}H^1_{0,\lambda}(K,\TT^\dagger)$.
\end{enumerate}
\end{proposition}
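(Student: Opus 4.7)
The plan is to unwind the definition of the Pottharst-style analytic Selmer complex as a mapping fiber and extract the degree-one cohomology using the induced long exact sequence. Concretely, for each of the four choices of local conditions $\Delta \in \{\Delta_{\emptyset,0},\Delta_{\emptyset,\lambda},\Delta_{\lambda,\lambda},\Delta_{0,\lambda}\}$, the Selmer complex $\widetilde{\textbf{R}\Gamma}(G_{K,S},\Delta,\TT^\dagger)$ fits into a distinguished triangle
\[
\widetilde{\textbf{R}\Gamma}(G_{K,S},\Delta,\TT^\dagger) \lra \textbf{R}\Gamma(G_{K,S},\TT^\dagger) \lra \bigoplus_{v\in \Sigma} \widetilde{\Delta}_{v}^-(\TT^\dagger) \lra [+1],
\]
whose associated long exact sequence in cohomology yields
\[
\bigoplus_{v\in \Sigma} H^0(\widetilde{\Delta}_v^-(\TT^\dagger)) \lra \widetilde{\textbf{R}\Gamma}^1(G_{K,S},\Delta,\TT^\dagger) \lra H^1_{\an}(G_{K,S},\TT^\dagger) \lra \bigoplus_{v\in \Sigma} H^1(\widetilde{\Delta}_v^-(\TT^\dagger)).
\]

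The first step is to identify the $H^1$ of each singular quotient so that the kernel of the third arrow above matches the Selmer module given in Definition~\ref{define:Selmerobjects}. At primes $\frak{l}\nmid p$, the standard analysis of the unramified singular cone identifies $H^1(\widetilde{\Delta}_{\frak{l}}^-(\TT^\dagger))$ with the ramified quotient of $\han^1(K_{\frak{l}},\TT^\dagger)$, exactly the target of the unramified local condition. At $\fp$ and $\fp^c$, applying the long exact sequence attached to the short exact sequence $0\to F_\lambda \to D_p \to D_p/F_\lambda \to 0$ of $(\varphi,\Gamma^\ac)$-modules (together with its degenerate variants corresponding to the strict and $\emptyset$-conditions) identifies $H^1$ of the singular quotient with $\han^1(K_\mathfrak{q},D_p/F_\lambda)$, $\han^1(K_{\fp^c},D_{\fp^c})$, or $0$, as the case requires. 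Combining these identifications in each of the four cases matches the local terms in Definition~\ref{define:Selmerobjects} on the nose.

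The second step is to show that the connecting contribution $\bigoplus_{v\in \Sigma} H^0(\widetilde{\Delta}_v^-(\TT^\dagger))$ vanishes, which upgrades the comparison map $\widetilde{\textbf{R}\Gamma}^1 \to \han^1(G_{K,S},\TT^\dagger)$ to an injection and hence yields the desired identification with the kernel. At $\fp$ and $\fp^c$, the required vanishings amount to the triviality of $\han^0$ of $D_\mathfrak{q}$, $F_\lambda$, and $D_p/F_\lambda$. The first follows from $\han^0(K_\mathfrak{q},D_\mathfrak{q}) = H^0(K_\mathfrak{q},\TT^\dagger) = 0$, which is a direct consequence of the residual absolute irreducibility of $\rho_f|_{G_{K_\mathfrak{q}}}$ (provided by the theorem of Fontaine--Edixhoven under the running assumption $p > k$, cf.\ Remark~\ref{rem:hnaimpliesfreeness}); the other two are rank-one modules corresponding to non-trivial characters of $G_{K_\mathfrak{q}}$ (the anticyclotomic deformation is non-trivial on $G_{K_\mathfrak{q}}$ because $\mathfrak{q}$ is totally ramified in $K^\ac/K$), so their $\han^0$ vanishes as well. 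At $\frak{l}\nmid p$, the vanishing of $H^0(\widetilde{\Delta}_\frak{l}^-(\TT^\dagger))$ is the classical statement that unramified local cohomology injects into the full local cohomology.

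The main technical obstacle, and the one that requires the most care, is not in the cohomological book-keeping above but in justifying that Pottharst's formalism, originally developed in the cyclotomic setting, transports intact to our anticyclotomic Lubin--Tate tower. As explained in Remark~\ref{rem:extensionofpottharsttheorytolubintatetowers}, the required extensions of the Fontaine--Herr complex, local Tate duality, and the triangulation theory of Berger--Kisin are provided by the works of Fourquaux--Xie, Schneider--Venjakob and Kisin--Ren; the $F$-analyticity hypothesis that typically surfaces in Lubin--Tate $(\varphi,\Gamma)$-module theory is vacuous here because the Lubin--Tate group underlying $K_\mathfrak{q}^\ac/K_\mathfrak{q}$ has height one over $\ZZ_p$. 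Once this foundational input is in place, the four isomorphisms claimed in the proposition are obtained simultaneously from the single argument outlined above, with only the local identifications at $\fp$ and $\fp^c$ needing to be adapted from one Selmer structure to the next.
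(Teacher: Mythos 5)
Your proposal is correct and takes essentially the same route as the paper: both extract $\widetilde{\textbf{R}\Gamma}^1$ from the long exact sequence attached to the mapping-cone definition of the Selmer complex, identify the $H^1$ of the singular cones with the target modules in Definition~\ref{define:Selmerobjects}, and reduce the identification to the vanishing of the $H^0$-contributions at $\fp$ and $\fp^c$ (the paper's one-line proof cites precisely $H^0_\an(K_\mathfrak{q},D_p)=H^0_\an(K_\mathfrak{q},D_p/F_\lambda)=0$). The only redundancy is your inclusion of the vanishing of $H^0_\an$ of $F_\lambda$ among the required inputs: the singular cone for the $\lambda$-Pottharst condition is quasi-isomorphic to $\textbf{R}\Gamma(K_\mathfrak{q},D_p/F_\lambda)$ and only its $H^0$ enters the argument, so $H^0_\an(K_\mathfrak{q},F_\lambda)$ never needs to be examined.
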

\begin{proof}
All four isomorphisms follow from the long exact sequence arising from the definition of the Selmer complex as a mapping cone, since $H^0_\an(K_{\mathfrak{q}},\square)$ vanishes for $\mathfrak{q}=\fp, \fp^c$ and for $\square=D_p\,,\, D_p/F_\lambda$.
\end{proof}
We shall denote the degree-two cohomology $\widetilde{\textbf{R}\Gamma}^2(K,\Delta_{\invques,?},\TT^\dagger)$ by the symbol $\mathfrak{X}_{\invques,?}^\an$ in order to remain consistent with our notation in Section~\ref{sec:boundsonselmergroups}. 
\begin{theorem}
\label{thm:vanishingcomingfromclassicalobjects}
If the hypotheses of Proposition~\ref{prop:lfourcorrect} hold true, then $H^1_{\emptyset,0}(K,\TT^\dagger)=0$\, and\, $\mathfrak{X}_{\emptyset,0}^\an$ is a torsion $\LL_\infty$-module.
\end{theorem}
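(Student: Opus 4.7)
The plan is to reduce the statement to its classical counterpart already established in Theorem~\ref{thm:leopoldtconjecturesforTfchi}(ii).b, by comparing the analytic Selmer complex $\widetilde{\textbf{R}\Gamma}(G_{K,S},\Delta_{\emptyset,0},\TT^\dagger)$ to a classical Selmer complex for $\TT_{f,\chi}^\ac$ via base change along $\LL_{\cO_L}(\Gamma^\ac)\hookrightarrow\LL_\infty$. Concretely, following the formalism of~\cite{jaycyclotmotives,jayanalyticfamilies} (adapted to our height-one Lubin-Tate setting via the results cited in Remark~\ref{rem:extensionofpottharsttheorytolubintatetowers}), I would establish a quasi-isomorphism
$$\widetilde{\textbf{R}\Gamma}(G_{K,S},\Delta_{\emptyset,0},\TT_{f,\chi}^\ac)\,\otimes^{\mathbb{L}}_{\LL_{\cO_L}(\Gamma^\ac)}\,\LL_\infty \,\simeq\,\widetilde{\textbf{R}\Gamma}(G_{K,S},\Delta_{\emptyset,0},\TT^\dagger),$$
where the classical Selmer complex on the left is built from the strict condition at $\fp^c$, no condition at $\fp$, and the unramified condition at the remaining places of $\Sigma$.

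The key step amounts to a local verification at each $\lambda\in\Sigma$. For $\lambda\nmid p$ the comparison is formal. For $\mathfrak{q}\in\{\fp,\fp^c\}$, I would use that the analytic local condition at $\fp^c$ provided by the inclusion $0\hookrightarrow D_{\fp^c}$ matches the classical strict local condition, while the analytic Iwasawa cohomology satisfies
$$\han^i(K_{\mathfrak{q}},\TT^\dagger)\,\cong\, H^i_{\Iw}(K_{\mathfrak{q}},\TT_{f,\chi}^\ac)\,\otimes_{\LL_{\cO_L}(\Gamma^\ac)}\,\LL_\infty,$$
the derived tensor product reducing to the ordinary one since the $\LL_{\cO_L}(\Gamma^\ac)$-module $H^1_{\Iw}(K_{\mathfrak{q}},\TT^\ac_{f,\chi})$ is free and $H^2_{\Iw}(K_{\mathfrak{q}},\TT^\ac_{f,\chi})=0$ by Remark~\ref{rem:hnaimpliesfreeness}. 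Granted this base-change quasi-isomorphism and Proposition~\ref{prop:comparisonofselmerobjects}(i), the classical vanishing $H^1_{\FFF_{\emptyset,0}}(K,\TT^\ac_{f,\chi})=0$ from Theorem~\ref{thm:leopoldtconjecturesforTfchi}(ii).b immediately yields $H^1_{\emptyset,0}(K,\TT^\dagger)=0$, and the classical torsionness of $\mathfrak{X}_{\emptyset,0}^\ac$ over $\LL_{\cO_L}(\Gamma^\ac)$ transfers to torsionness of $\mathfrak{X}_{\emptyset,0}^\an$ over $\LL_\infty$, since the localisation $\LL_{\cO_L}(\Gamma^\ac)[1/p]\to \LL_\infty$ factors through the embedding into the Fr\'echet completion and any height-one prime of $\LL_\infty$ contracts to one in $\LL_{\cO_L}(\Gamma^\ac)[1/p]$.

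The principal technical obstacle, as I see it, is making the base-change comparison fully rigorous in our Lubin-Tate (height-one, anticyclotomic) setting: Pottharst's original theorems are stated cyclotomically, and although the extension appears routine via the work of Fourquaux-Xie, Schneider-Venjakob and Kisin-Ren (see Remark~\ref{rem:extensionofpottharsttheorytolubintatetowers}), the detailed verification of the required local cohomology comparison and the compatibility of the analytic strict local condition with its classical counterpart does require some care. Once this technical comparison is in place, however, the deduction of both assertions from Theorem~\ref{thm:leopoldtconjecturesforTfchi}(ii).b is essentially formal.
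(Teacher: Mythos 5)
Your proposal is correct and essentially matches the paper's proof: both deduce the claims from the classical result Theorem~\ref{thm:leopoldtconjecturesforTfchi}(ii).b combined with Pottharst's comparison theorem (\cite[Theorem~1.9]{jayanalyticfamilies}), as encapsulated in the paper's Proposition~\ref{prop:compareclassicalwithanalytic}, together with the Lubin--Tate extension of that framework described in Remark~\ref{rem:extensionofpottharsttheorytolubintatetowers}. The paper argues via the Poitou--Tate exact sequence at the level of cohomology groups while you phrase the comparison as a derived base-change quasi-isomorphism of Selmer complexes, but these are the same argument packaged differently.
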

\begin{proof}
The first part follows from the corresponding result (Theorem~\ref{thm:leopoldtconjecturesforTfchi}(ii)) for the Selmer group associated to the Selmer structure $\FFF_{\emptyset,0}$ and \cite[Theorem 1.9]{jayanalyticfamilies}, which allows us to compare classical Iwasawa cohomology and  analytic Iwasawa cohomology groups. The Poitou-Tate global duality exact sequence for classical Selmer groups yields the exact sequence
$$0\lra H^1(K,\TT_{f,\chi}^\ac)\lra H^1(K_{\fp^c},\TT_{f,\chi}^\ac)\lra \mathfrak{X}_{\emptyset,0}^\ac$$
thanks to Theorem~\ref{thm:leopoldtconjecturesforTfchi}(ii). The same result also ensures that $H^1(K,\TT_{f,\chi}^\ac)$ has rank two (this is in fact an alternative way of phrasing the weak Leopoldt conjecture for $T_{f,\chi}$) and that $ \mathfrak{X}_{\emptyset,0}^\ac$ is torsion. By the comparison result \cite[Theorem 1.9]{jayanalyticfamilies} of Pottharst, it follows that the $\LL_\infty$-modules $H^1_\an(K, \TT^\dagger)$ and $H^1_\an(K_{\fp^c}, \TT^\dagger)$ have rank two and that we have an exact sequence
$$0\lra H^1_\an(K,\TT^\dagger)\lra H^1_\an(K_{\fp^c},\TT^\dagger)\lra \mathfrak{X}_{\emptyset,0}^\an$$
arising from the definition of the Selmer complex $\widetilde{\textbf{R}\Gamma}(G_{K,S},\Delta_{\emptyset,0},\TT^\dagger)$ as a mapping cone. This gives $H^1_{\emptyset,0}(K,\TT^\dagger)=0$. The rest is immediate after our comparison result (Proposition~\ref{prop:compareclassicalwithanalytic} below) and Theorem~\ref{thm:leopoldtconjecturesforTfchi}(ii).
\end{proof}

Recall from \S\ref{subsec:CMhidafamilies} that we have defined   the central critical Beilinson-Flach element $c^\lambda_\ac \in H^1(K,\TT^\dagger)$. % If $\lambda^\prime\in \{\alpha,\beta\}$ denotes the element which is not $\lambda$, 
We recall from Proposition~\ref{prop:sameprojection} that $c^\lambda_\ac \in H^1_{\emptyset,\lambda}(K,\TT^\dagger)$. Furthermore, under the hypotheses of Proposition~\ref{prop:lfourcorrect}, $c^\lambda_\ac \neq 0$.
%\subsubsection{Comparison of analytic Selmer groups} 
\begin{theorem}
\label{thm:globaldualitysequences}
The following three sequences are exact:
\begin{equation}
\label{eqn:longexactglobal1}
0\lra \frac{H^1_{\emptyset,\lambda}(K,\TT^\dagger)}{H^1_{\emptyset,0}(K,\TT^\dagger)}\stackrel{\res_{\fp^c}}{\lra} \han^1(K_{\fp^c},F_\lambda)\lra\mathfrak{X}_{\emptyset,0}^\an\lra\mathfrak{X}_{\emptyset,\lambda}^\an\lra0
\end{equation}
\begin{equation}
\label{eqn:longexactglobal2}
0\ra \frac{H^1_{\emptyset,\lambda}(K,\TT^\dagger)}{H^1_{\lambda,\lambda}(K,\TT^\dagger)}\stackrel{\res_{\fp/\lambda}}{\lra} \han^1(K_{\fp},D_p/F_\lambda)\lra\mathfrak{X}_{\lambda,\lambda}^\an\lra\mathfrak{X}_{\emptyset,\lambda}^\an\ra0
\end{equation}
\begin{equation}
\label{eqn:longexactglobal3}
0\lra \frac{H^1_{\lambda,\lambda}(K,\TT^\dagger)}{H^1_{0,\lambda}(K,\TT^\dagger)}\stackrel{\res_\fp}{\lra} \han^1(K_{\fp},F_\lambda)\lra\mathfrak{X}_{0,\lambda}^\an\lra\mathfrak{X}_{\lambda,\lambda}^\an\lra0
\end{equation}
\end{theorem}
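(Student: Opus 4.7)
The plan is to derive all three exact sequences uniformly from the Pottharst-Nekov\'a\v{r} formalism of Selmer complexes, by comparing pairs of Selmer structures that differ at exactly one prime above $p$. The common mechanism will be the following: if two of our Selmer structures $\Delta \subseteq \Delta'$ differ only in the local condition at a single prime $\mathfrak{q}\in\{\fp,\fp^c\}$, with the cokernel of the inclusion of local triangulordinary $(\varphi,\Gamma^\ac)$-modules being a rank-one module $N$ over $\LL_\infty$, then the mapping-cone definition of the Selmer complex produces an exact triangle
$$\widetilde{\textbf{R}\Gamma}(G_{K,S},\Delta,\TT^\dagger) \lra \widetilde{\textbf{R}\Gamma}(G_{K,S},\Delta',\TT^\dagger) \lra \textbf{R}\Gamma(G_{K_\mathfrak{q}},N)$$
of perfect complexes of $\LL_\infty$-modules, together with the required naturality.

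I would apply this to the three pairs $(\Delta_{\emptyset,0},\Delta_{\emptyset,\lambda})$ at $\mathfrak{q}=\fp^c$ with $N=F_\lambda^{(\fp^c)}$ for (\ref{eqn:longexactglobal1}); $(\Delta_{\lambda,\lambda},\Delta_{\emptyset,\lambda})$ at $\mathfrak{q}=\fp$ with $N=D_\fp/F_\lambda^{(\fp)}$ for (\ref{eqn:longexactglobal2}); and $(\Delta_{0,\lambda},\Delta_{\lambda,\lambda})$ at $\mathfrak{q}=\fp$ with $N=F_\lambda^{(\fp)}$ for (\ref{eqn:longexactglobal3}). Taking the long exact sequence of cohomology of the triangle above and invoking Proposition~\ref{prop:comparisonofselmerobjects} to identify the degree-one Selmer terms with the explicit kernels of Definition~\ref{define:Selmerobjects} then produces the four-term sequences. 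The connecting maps $\res_{\fp^c}$ in (\ref{eqn:longexactglobal1}) and $\res_\fp$ in (\ref{eqn:longexactglobal3}) will be the natural restriction maps, while $\res_{\fp/\lambda}$ in (\ref{eqn:longexactglobal2}) will be restriction at $\fp$ composed with the projection $D_\fp \twoheadrightarrow D_\fp/F_\lambda^{(\fp)}$.

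To make each sequence exact on both ends, I would then verify the vanishings $\han^0(K_\mathfrak{q},N) = 0$ and $\han^2(K_\mathfrak{q},N) = 0$ in each of the three cases. The first vanishing reduces, via Nakayama together with specialization at a Zariski-dense set of classical crystalline points of $\textup{Spec}\,\LL_\infty$, to the assertion that neither $F_\lambda$ nor $D_\fp/F_\lambda$ admits a trivial $(\varphi,\Gamma^\ac)$-subquotient; this follows from the regularity hypothesis \textup{\textbf{(H.Reg.)}} together with the Ramanujan-Weil bound applied to the Frobenius eigenvalues $\alpha,\beta$. The second vanishing then follows from local Tate duality for $(\varphi,\Gamma^\ac)$-modules, which identifies $\han^2(K_\mathfrak{q},N)$ with (the Pontryagin dual of) the $\han^0$ of the Kummer dual of $N$, to which the same argument applies.

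The main obstacle I expect is to justify the formalism of local Tate duality for $(\varphi,\Gamma^\ac)$-modules in the anticyclotomic setting. Since the local Galois groups $\Gamma_\mathfrak{q}^\ac$ arise from Lubin-Tate formal groups of height one over $\ZZ_p$, the ``$F$-analytic'' condition that appears in the general Lubin-Tate theory is automatic, and the duality results of Schneider-Venjakob and Fourquaux-Xie apply directly, as already emphasized in Remark~\ref{rem:extensionofpottharsttheorytolubintatetowers}. Once that framework is in place, the three exact sequences will follow purely formally from the mapping-cone construction of Selmer complexes.
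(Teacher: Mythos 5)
Your proposal is correct and rests on the same underlying formalism as the paper's proof, but the packaging differs in a mild yet genuine way. The paper unwinds the mapping-cone definition of each Selmer complex separately, obtaining two four-term exact sequences of the form $0\ra H^1_\an(K,\TT^\dagger)/\widetilde{H}^1(\Delta)\ra\han^1(K_{\fp^c},\square)\ra\widetilde{H}^2(\Delta)\ra H^2_\an(K,\TT^\dagger)\ra0$, assembles them into a $2\times 4$ commutative diagram via the morphism of local conditions $\Delta_{\emptyset,0}\ra\Delta_{\emptyset,\lambda}$, and applies the Snake Lemma; the vanishings invoked are $\han^2(K_\mathfrak{q},D_p)=\han^2(K_\mathfrak{q},D_p/F_\lambda)=0$ (from the Lubin-Tate analogue of \cite[Cor.\ 6.17]{jaycyclotmotives}) together with the $\han^0$ vanishings of Proposition~\ref{prop:comparisonofselmerobjects}. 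You instead form the exact triangle $\widetilde{\textbf{R}\Gamma}(\Delta)\ra\widetilde{\textbf{R}\Gamma}(\Delta')\ra\textbf{R}\Gamma(K_\mathfrak{q},N)$ directly and take its long exact sequence, which is a cleaner route to the same four-term sequences but shifts the burden to verifying $\han^0(K_\mathfrak{q},N)=\han^2(K_\mathfrak{q},N)=0$ for $N=F_\lambda$ as well (not just for $D_p$ and $D_p/F_\lambda$). These extra vanishings do hold: $\han^0(K_\mathfrak{q},F_\lambda)=0$ follows already from $F_\lambda\subset D_p$ and $\han^0(K_\mathfrak{q},D_p)=0$, while $\han^2(K_\mathfrak{q},F_\lambda)=0$ follows by local duality once one observes that the self-duality of $T_{f,\chi}$ identifies $F_\lambda^*(\chi_0)$ with $D_p/F_{\lambda'}$, whose $\han^0$ vanishes for the same reason. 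One caveat: your proposed justification of the $\han^0$-vanishing via Nakayama, crystalline specializations, \textbf{(H.Reg.)}, and the Ramanujan-Weil bound is informal and, as written, is really reproving a piece of Pottharst's theory; it would be cleaner and safer to cite the relevant statement (the Lubin-Tate analogue of \cite[Cor.\ 6.17]{jaycyclotmotives}, as discussed in Remark~\ref{rem:extensionofpottharsttheorytolubintatetowers}) as you did for $\han^2$.
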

\begin{proof}
We shall only explain the exactness of (\ref{eqn:longexactglobal1}) as the rest follows in a similar fashion. The definitions of the two Selmer complexes $\widetilde{\textbf{R}\Gamma}(G_{K,S},\Delta_{\emptyset,\lambda},\TT^\dagger)$ and $\widetilde{\textbf{R}\Gamma}(G_{K,S},\Delta_{\emptyset,0},\TT^\dagger)$ together with the natural morphism $\Delta_{\emptyset,0}\lra \Delta_{\emptyset,\lambda}$ of local conditions yield the commutative diagram
$$\xymatrix{0\ar[r]&{\displaystyle\frac{H^1_{\an}\left(K,\TT^\dagger\right)}{H^1_{\emptyset,0}(K,\TT^\dagger)}}\ar[r]^{\res_{\fp^c}}\ar@{->>}[d]& {H^1(K_{\fp^c},D_p)}\ar[r]\ar@{->>}[d]_{\pi_{\fp^c/\lambda}}&\mathfrak{X}_{\emptyset,0}^\an\ar[r]\ar[d]\ar[d]_{\iota^{(2)}}&H^2_\an(K,\TT^\dagger)\ar[d]^{\cong}\ar[r]&0\\
0\ar[r]&{\displaystyle\frac{H^1_{\an}\left(K,\TT^\dagger\right)}{H^1_{\emptyset,\lambda}(K,\TT^\dagger)}}\ar[r]^(.47){\res_{\fp^c/\lambda}}& {H^1(K_{\fp^c},D_p/F_\lambda)}\ar[r]&\mathfrak{X}_{\emptyset,\lambda}^\an\ar[r]&H^2_\an(K,\TT^\dagger)\ar[r]&0
}$$
The surjectivity on the very right on each row follows from the proof of \cite[Corollary 6.17]{jaycyclotmotives}, which generalizes verbatim to our setting to verify that $\han^2(K_p,D_p)=\han^2(K_p,D_p/F_\lambda)=0.$
It follows from the Snake Lemma that we have an isomorphism 
\begin{equation}\label{eqn:snakelemma1}
\frac{H^1(K_{\fp^c},D_p)}{\res_{\fp^c}\left(\frac{H^1_{\an}\left(K,\TT^\dagger\right)}{H^1_{\emptyset,0}(K,\TT^\dagger)}\right)}\stackrel{\sim}{\lra}\ker(\iota^{(2)})
\end{equation}
as well as that $\iota^{(2)}$ is surjective and therefore that
\begin{equation}\label{eqn:snakelemma2}
0\lra\ker(\iota^{(2)})\lra \mathfrak{X}_{\emptyset,0}^\an\stackrel{\iota^{(2)}}{\lra} \mathfrak{X}_{\emptyset,\lambda}^\an\lra0\,.
\end{equation}
The exactness of (\ref{eqn:longexactglobal1}) follows on combining (\ref{eqn:snakelemma1}) and (\ref{eqn:snakelemma2}).
\end{proof}
\begin{defn}
\label{def:lazardandcharideal}
Given a co-admissible torsion $\LL_\infty$-module $M$ (in the sense of \cite{schneiderteitelbaum2003}), Lazard's structure theorem in \cite{lazardPIHES} provides us with an isomorphism 
$$M \cong \prod_{\nu \in I} \LL_\infty/\PP^{n_\nu}_\nu,$$
where $\{\PP_\nu\}_{\nu\in I}$ is a collection of primes that correspond to closed points in the unit disc that only accumulates towards the boundary. In particular, the coadmissible torsion $\LL_\infty$-modules that arise from the base change of finitely generated $\LL_{\cO_L}(\Gamma^\ac)$-modules are precisely those with finite support.
There principal ideal generated by an element in  $ \LL_\infty$ whose divisor equals $\sum_{\nu \in I} n_\nu\cdot\PP_\nu$ is called the characteristic ideal of $M$ and denoted by $\textup{char}_{\LL_\infty}(M)$.
When $M$ is a $\LL_{\cO_L}(\Gamma^\ac)$-module, we write $\textup{char}_{\LL_\infty}(M)$ for the characteristic ideal of its base change, namely:
$$\textup{char}_{\LL_\infty}(M\widehat{\otimes}\LL_\infty)=\textup{char}_{\LL_{\cO_L}(\Gamma^\ac)}(M)\,\widehat{\otimes}\,\LL_\infty.$$ 
\end{defn}

\begin{proposition}
\label{prop:compareclassicalwithanalytic}
$\textup{char}_{\LL_\infty}\left(\mathfrak{X}_{\emptyset,0}^\an\right)=\textup{char}_{\LL_{\cO_L}(\Gamma^\ac)}\left(\mathfrak{X}_{\emptyset,0}^{\ac,\iota}\right)\widehat{\otimes}\LL_\infty\,.$
\end{proposition}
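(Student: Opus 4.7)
The strategy is to compare the analytic Selmer complex with its classical counterpart via a flat base-change argument, then invoke the definition of the characteristic ideal under the coadmissible dictionary of Definition~\ref{def:lazardandcharideal}.  More precisely, I would proceed in three steps.

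\emph{Step 1 (base change of Selmer complexes).}  The first task is to exhibit a canonical quasi-isomorphism
\[
\widetilde{\textbf{R}\Gamma}(G_{K,S},\Delta_{\emptyset,0}^{\mathrm{cl}},\TT_{f,\chi}^{\ac,\iota})\otimes^{\mathbb{L}}_{\LL_{\cO_L}(\Gamma^\ac)}\LL_\infty\;\stackrel{\sim}{\lra}\;\widetilde{\textbf{R}\Gamma}(G_{K,S},\Delta_{\emptyset,0},\TT^\dagger),
\]
where the left-hand side is defined analogously in terms of classical continuous Galois cohomology, with the strict condition at $\fp^c$, the ``no condition'' local term at $\fp$, and the unramified local conditions at the bad primes $\frak{l}\nmid p$ in $\Sigma$.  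For the global factor and the local terms at $\fp$ and $\fp^c$ (which are respectively the full local complex and the zero complex in both formalisms), compatibility is a direct consequence of Pottharst's comparison theorem \cite[Theorem~1.9]{jayanalyticfamilies} (extended to our setting through the works cited in Remark~\ref{rem:extensionofpottharsttheorytolubintatetowers}), together with the identification of the Fontaine--Herr complex of $D_\fp$ with $\textbf{R}\Gamma(G_{K_\fp},\TT_{f,\chi}^{\ac,\iota})\otimes^{\mathbb{L}}\LL_\infty$.  At the primes $\frak{l}\nmid p$, the unramified complex $[(\TT_{f,\chi}^{\ac,\iota})^{\mathscr{I}_\frak{l}}\xrightarrow{\mathrm{Fr}_\frak{l}-1}(\TT_{f,\chi}^{\ac,\iota})^{\mathscr{I}_\frak{l}}]$ is visibly flat in the coefficient variable, so it commutes with base change to $\LL_\infty$.

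\emph{Step 2 (Poitou--Tate identification over the Iwasawa algebra).}  The second step is to identify the degree-two cohomology of the classical Selmer complex with $\mathfrak{X}_{\emptyset,0}^{\ac,\iota}$.  This is an instance of Nekov\'a\v{r}'s Poitou--Tate formalism: because the local conditions $\Delta^{\mathrm{cl}}_{\emptyset,0}$ consist, at each place in $\Sigma$, of either the strict or the unramified/full subcomplex (each of which is its own annihilator under local Tate duality, after shift), the standard nine-term sequence yields an isomorphism
\[
\widetilde{\textbf{R}\Gamma}^{\,2}(G_{K,S},\Delta_{\emptyset,0}^{\mathrm{cl}},\TT_{f,\chi}^{\ac,\iota})\;\stackrel{\sim}{\lra}\;H^1_{\FFF_{\emptyset,0}^*}(K,\TT_{f,\chi}^{\ac,*})^\vee=\mathfrak{X}_{\emptyset,0}^{\ac,\iota}.
\]
Here we use the validity of the weak Leopoldt conjecture (which follows from Theorem~\ref{thm:leopoldtconjecturesforTfchi}(ii) in combination with \textup{\textbf{(H.Im.)}}) to rule out spurious contributions from the $H^2$-terms in the duality sequence, and the $\iota$-twist on the right comes from the fact that local/global Tate duality swaps $\LL_{\cO_L}(\Gamma^\ac)^\iota$ with $\LL_{\cO_L}(\Gamma^\ac)$.

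\emph{Step 3 (characteristic ideals).}  Combining Steps 1 and 2 and passing to $H^2$ yields a canonical isomorphism
\[
\mathfrak{X}_{\emptyset,0}^{\an}\;\cong\;\mathfrak{X}_{\emptyset,0}^{\ac,\iota}\,\widehat{\otimes}_{\LL_{\cO_L}(\Gamma^\ac)}\,\LL_\infty
\]
of coadmissible $\LL_\infty$-modules (the absence of Tor contributions from lower degrees is ensured by Theorem~\ref{thm:vanishingcomingfromclassicalobjects} under \textup{(H.\textup{Im.})} and the hypotheses of Proposition~\ref{prop:lfourcorrect}; in the general case the equality of characteristic ideals still persists because Tor terms contribute factors that cancel after taking alternating products).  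The desired equality of characteristic ideals is then exactly the content of Definition~\ref{def:lazardandcharideal}.

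The principal obstacle lies in Step~1 at the primes above $p$: although the necessary comparison between classical and analytic Iwasawa cohomology is by now standard in the cyclotomic setting, in our anticyclotomic Lubin--Tate situation we must combine Pottharst's arguments with the extensions of the $(\varphi,\Gamma^\ac)$-module formalism surveyed in Remark~\ref{rem:extensionofpottharsttheorytolubintatetowers} (particularly \cite{FourquauxXie,SchneiderVenjakob,KisinRen}).  Once this base-change compatibility is in hand, the Poitou--Tate identification and the characteristic-ideal bookkeeping are formal.
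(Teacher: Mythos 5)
Your proposal follows essentially the same route as the paper's proof: invoke Pottharst's comparison theorem \cite[Theorem 1.9]{jayanalyticfamilies} (extended to the Lubin--Tate/anticyclotomic setting via the references in Remark~\ref{rem:extensionofpottharsttheorytolubintatetowers}) to realize the analytic Selmer complex as the derived base change of Nekov\'a\v{r}'s classical Greenberg Selmer complex, then identify its degree-two cohomology with $\mathfrak{X}_{\emptyset,0}^{\ac,\iota}$ via Nekov\'a\v{r}'s Poitou--Tate duality \cite[Theorem 8.9.12]{nekovar06} together with the vanishing of local $H^0$'s, and finally pass to characteristic ideals. The only thing to be careful about in your Step~3 is the appeal to Theorem~\ref{thm:vanishingcomingfromclassicalobjects} for the absence of Tor contributions: the second half of that theorem (torsion-ness of $\mathfrak{X}_{\emptyset,0}^\an$) is itself proved using the present proposition, so you may only use the first half (the vanishing $H^1_{\emptyset,0}(K,\TT^\dagger)=0$, which rests on Theorem~\ref{thm:leopoldtconjecturesforTfchi}(ii) and Pottharst's Theorem~1.9 alone); fortunately that, together with the Euler-characteristic observation you make, is all one needs, and in fact the paper sidesteps the issue entirely by working directly with the quasi-isomorphism of complexes and characteristic ideals in the Grothendieck-group sense.
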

\begin{proof}
Nekov\'a\v{r}'s work \cite{nekovar06} gives us a Greenberg-ordinary Selmer complex $\widetilde{\mathbf{R}\Gamma}(G_{K,S},\Delta_{\emptyset,0},\TT^\ac_{f,\chi^{-1}})$  with local conditions associated to $\Delta_{\emptyset,0}$, which are given in the same manner as their analytic counterpart (so that it determines  the relaxed condition at $\fp$ and the strict condition at $\fp^c$). By \cite[Theorem 1.9]{jayanalyticfamilies} (see also the discussion in \S 6.2 of \textit{op. cit.}),  the natural map
$$\widetilde{\mathbf{R}\Gamma}_{\textup{f},\Iw}(K^\ac/K,\Delta_{\emptyset,0},T_{f,\chi})\stackrel{\textbf{L}}{\otimes}\LL_\infty{\lra} \widetilde{\mathbf{R}\Gamma}(G_{K,S},\Delta_{\emptyset,0},\TT^\dagger)$$
is an isomorphism. This in turn shows that 
$$\textup{char}_{\LL_\infty}\left(\mathfrak{X}_{\emptyset,0}^\an\right)=\textup{char}_{\LL_{\cO_L}(\Gamma^\ac)}\left(\widetilde{H}^2_{\textup{f},\Iw}(K^\ac/K,\Delta_{\emptyset,0},T_{f,\chi})\right)\widehat{\otimes}\LL_\infty$$
and our assertion would follow after verifying 
$$\widetilde{H}^2_{\textup{f},\Iw}(K^\ac/K,\Delta_{\emptyset,0},T_{f,\chi})^\iota\stackrel{?}{\cong} \mathfrak{X}_{\emptyset,0}^\ac:=H^1_{\FFF_{\emptyset,0}^*}(K,\TT_{f,\chi}^{\ac,*})^\vee,$$
where we follow Nekov\'a\v{r}'s notation for the Greenberg-ordinary extended Selmer groups.
Let $A_{f,\chi}=T_{f,\chi}\otimes \QQ_p/\ZZ_p$. By the self-duality of the $G_K$-representation $T_{f,\chi}$, we  have $A_{f,\chi}=\textup{Hom}(T_{f,\chi},\bbmu_{p^\infty})$, which amounts to saying that $A_{f,\chi}=D(T_{f,\chi})$ in Nekov\'a\v{r}'s notation. Since the functor $D_{\Lambda_{\cO_L}(\Gamma^\ac)}$ coincides with the Pontryagin dual functor (c.f., \S9.1.4 of \cite{nekovar06}) it follows from Nekov\'a\v{r}'s global duality \cite[Theorem 8.9.12]{nekovar06} that 
$$\widetilde{H}^2_{\textup{f},\Iw}(K^\ac/K,\Delta_{\emptyset,0},T_{f,\chi})^\iota\cong \widetilde{H}^1_{\textup{f}}(K_S/K^\ac, \Delta_{0,\emptyset}, A_{f,\chi})^\vee,$$
where $ \Delta_{0,\emptyset}$ is the dual local condition, which is obtained from $\Delta_{\emptyset,0}$ by switching the roles of $\fp$ and $\fp^c$. Notice also that the condition $\Delta_{0,\emptyset}$ reduces to the Selmer structure $\FFF_{\emptyset,0}^*$ on $A_{f,\chi}$, in the level of cohomology. It  remains to check that 
$$\widetilde{H}^1_{\textup{f}}(K_S/K^\ac, \Delta_{0,\emptyset}, A_{f,\chi})\stackrel{?}{\cong} H^1_{\FFF_{\emptyset,0}^*}(K,\TT_{f,\chi}^{\ac,*})\,.$$
By   \cite[Proposition 8.8.6, Lemma 9.6.3]{nekovar06}, along with the vanishing of $H^0(K_\mathfrak{q},\overline{T}_{f,\chi})$ for $\mathfrak{q}=\fp,\fp^c$, we have
\[\widetilde{H}^1_{\textup{f}}(K_S/K^\ac, \Delta_{0,\emptyset}, A_{f,\chi})=\varinjlim_n \widetilde{H}^1_{\textup{f}}(G_{K^\ac_n,S}, \Delta_{0,\emptyset}, A_{f,\chi})\stackrel{\sim}{\lra}\varinjlim_n H^1_{\FFF_{\emptyset,0}^*}(K^\ac_n,A_{f,\chi})\cong H^1_{\FFF_{\emptyset,0}^*}(K,\TT^{\ac,*}_{f,\chi}).
\]
This concludes our proof.
\end{proof}
\begin{defn}
\label{def:negligibleerrorRubin87}
Let $H^1_{\lambda}(K_{\fp^c},D_p)\subset H^1_\an(K_{\fp^c},D_p)$ denote the image of $H^1_\an(K_{\fp^c},F_\lambda)$. Notice that $H^1_{\alpha}(K_{\fp^c},D_p)\cap H^1_\beta(K_{\fp^c},D_p)=0$ and therefore the submodule 
$$\mathscr{H}^1_{\alpha,\beta}:=H^1_{\alpha}(K_{\fp^c},D_p)+ H^1_\beta(K_{\fp^c},D_p)\subset H^1_\an(K_{\fp^c},D_p)$$ 
has rank two. Let $\mathbf{Err}_{\alpha,\beta} \subset \LL_\infty$ denote the characteristic ideal of the quotient $H^1_\an(K_{\fp^c},D_p)/\mathscr{H}^1_{\alpha,\beta}$.
We similarly define the submodule 
$$\mathscr{H}^1_{\#,\flat}:=H^1_\#(K_{\fp^c},\TT^\ac_{f,\chi})+ H^1_\flat(K_{\fp^c},\TT^\ac_{f,\chi})\subset H^1(K_{\fp^c},\TT^\ac_{f,\chi})$$ 
and the ideal $\mathbf{Err}_{\#,\flat}\subset \LL_{\cO_L}(\Gamma^\ac)$ as the characteristic ideal of the torsion $ \LL_{\cO_L}(\Gamma^\ac)$-module $H^1(K_{\fp^c},\TT^\ac)/\mathscr{H}^1_{\#,\flat}$.
\end{defn}

\begin{lemma}
\label{lemma:transitionfactorsimplified}
We have the equality
\[\frac{\det(\Tw_{k/2-1}(Q^{-1}M_{\log,\gamma_\ac}))\cdot\det(\textup{Im}(\col^{\ac}_{\flat,\fp^c}))\cdot\mathbf{Err}_{\#,\flat}}{\det(\textup{Im}(\mathcal{L}_{\lambda,\fp^c}^\ac))\cdot \mathbf{Err}_{\alpha,\beta}}=\cH_L(\Gamma^\ac),\]
where $M_{\log,\gamma^\ac}$ denotes the matrix obtained from $\Mlog$ on replacing $\Gamma_0^\cyc$ by  a  topological generator of $\Gamma^\ac$.
\end{lemma}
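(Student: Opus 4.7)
The plan is to exploit the factorization~\eqref{eq:2vardecomp}, specialized along the anticyclotomic tower, to compute the characteristic ideal of the cokernel of $\underline{\cL}^\ac:=(\cL_{\alpha,\fp^c}^\ac,\cL_{\beta,\fp^c}^\ac)$ in two different ways and then compare them. Setting $\underline{\col}^\ac:=(\col_{\#,\fp^c}^\ac,\col_{\flat,\fp^c}^\ac)$, the restriction of \eqref{eq:2vardecomp} to the anticyclotomic quotient will yield
\[
\underline{\cL}^\ac=\Tw_{k/2-1}(Q^{-1}M_{\log,\gamma_\ac})\cdot\underline{\col}^\ac.
\]
Since $H^1(K_{\fp^c},\TT_{f,\chi}^\ac)$ is free of rank two over $\LL_{\cO_L}(\Gamma^\ac)$ by Remark~\ref{rem:hnaimpliesfreeness} and $\det(\Tw_{k/2-1}(Q^{-1}M_{\log,\gamma_\ac}))$ is nonzero by Lemma~\ref{lem:det}, base change to $\cH_L(\Gamma^\ac)$ (over which the logarithmic matrix becomes invertible) will furnish the identity
\[
\Char\bigl(\textup{coker}(\underline{\cL}^\ac)\bigr)=\det\bigl(\Tw_{k/2-1}(Q^{-1}M_{\log,\gamma_\ac})\bigr)\cdot\Char\bigl(\textup{coker}(\underline{\col}^\ac)\,\widehat{\otimes}_{\LL_{\cO_L}(\Gamma^\ac)}\,\cH_L(\Gamma^\ac)\bigr)
\]
as principal ideals of $\cH_L(\Gamma^\ac)$.

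The second key ingredient will be a structural short exact sequence that isolates the error terms $\mathbf{Err}_{\#,\flat}$ and $\mathbf{Err}_{\alpha,\beta}$. Granting the vanishing $\ker(\col_{\#,\fp^c}^\ac)\cap\ker(\col_{\flat,\fp^c}^\ac)=0$ (which will follow from the injectivity of $\underline{\cL}^\ac$ after base change to $\cH_L(\Gamma^\ac)$, combined with the invertibility of $\Tw_{k/2-1}(Q^{-1}M_{\log,\gamma_\ac})$ there), the diagonal inclusion fits into a short exact sequence
\[
0\lra \image(\underline{\col}^\ac)\lra \image(\col_{\#,\fp^c}^\ac)\oplus\image(\col_{\flat,\fp^c}^\ac)\lra H^1(K_{\fp^c},\TT_{f,\chi}^\ac)/\mathscr{H}^1_{\#,\flat}\lra 0,
\]
where the right-hand arrow is defined by $(u,v)\mapsto z_u-z_v\pmod{\mathscr{H}^1_{\#,\flat}}$ for any lifts $z_u,z_v\in H^1$ satisfying $\col_{\#,\fp^c}^\ac(z_u)=u$ and $\col_{\flat,\fp^c}^\ac(z_v)=v$. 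Multiplicativity of the characteristic ideal along this sequence, and along its exact counterpart for $\underline{\cL}^\ac$ (which will use $\mathscr{H}^1_{\alpha,\beta}$ after identifying $\ker(\cL_{\lambda,\fp^c}^\ac)$ with the image of $H^1_\an(K_{\fp^c},F_{\lambda'})$ for $\lambda'\ne \lambda$ via the triangulation), will produce the two identities
\[
\Char\bigl(\textup{coker}(\underline{\col}^\ac)\,\widehat{\otimes}\,\cH_L(\Gamma^\ac)\bigr)=\det(\image(\col_{\#,\fp^c}^\ac))\cdot\det(\image(\col_{\flat,\fp^c}^\ac))\cdot\mathbf{Err}_{\#,\flat},
\]
\[
\Char\bigl(\textup{coker}(\underline{\cL}^\ac)\bigr)=\det(\image(\cL_{\alpha,\fp^c}^\ac))\cdot\det(\image(\cL_{\beta,\fp^c}^\ac))\cdot\mathbf{Err}_{\alpha,\beta}.
\]
Combining the three displayed identities will give a master equality linking all the quantities appearing in the lemma.

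To deduce the statement, I will invoke two pseudo-surjectivity statements, both understood inside $\cH_L(\Gamma^\ac)$: first, $\det(\image(\col_{\#,\fp^c}^\ac))=\cH_L(\Gamma^\ac)$, which follows from Lemma~\ref{lem:2varimage} combined with Remark~\ref{rk:image} since $\xi_{\eta,\#}=1$; and second, $\det(\image(\cL_{\lambda,\fp^c}^\ac))=\cH_L(\Gamma^\ac)$ for each $\lambda\in\{\alpha,\beta\}$, the anticyclotomic analogue of Corollary~\ref{cor:imageLlambda}. Under these two normalizations the master equality will simplify precisely to the formula in the lemma. The hard part I expect is the second pseudo-surjectivity statement: establishing that $\cL_{\lambda,\fp^c}^\ac$ has full image after base change to $\cH_L(\Gamma^\ac)$ will require transporting the interpolation-formula arguments behind Lemma~\ref{lem:imageL} and Corollary~\ref{cor:imageLlambda} to Hecke characters of $K$ whose $p$-adic avatars factor through $\Gamma^\ac$. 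A secondary but routine point will be to justify, using that $\cH_L(\Gamma^\ac)$ is a B\'ezout Fr\'echet--Stein algebra, that the notation $\det(\image(-))$ is unambiguous (each of the relevant images being a principal ideal of $\cH_L(\Gamma^\ac)$).
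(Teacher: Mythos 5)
Your proof is correct, and the conclusion matches the paper's, but your route is genuinely different in spirit. The paper's own proof is a direct computation: it cites the explicit formulas for each of the five quantities in the statement --- $\det(\Tw_{k/2-1}(Q^{-1}M_{\log,\gamma_\ac})) \sim \Tw_{k/2-1}(\log_{p,k-1}/\delta_{k-1})$ from Lemma~\ref{lem:det}, $\det(\image\col^\ac_{\flat,\fp^c}) \sim (\xi_{\flat}^\ac)$ from Lemma~\ref{lem:2varimage} and Remark~\ref{rk:image}, $\mathbf{Err}_{\#,\flat}\sim(\delta_{k-1}/\xi_{\flat}^\ac)$ from Proposition~\ref{prop:errorterms}, $\det(\image\cL_{\lambda,\fp^c}^\ac)=\cH_L(\Gamma^\ac)$ from Corollary~\ref{cor:imageLlambda}, and $\mathbf{Err}_{\alpha,\beta}\sim(\log_{p,k-1})$ from Proposition~\ref{prop:errorL} --- and checks that the product telescopes to a unit. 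Your argument instead derives a "master identity" structurally: you compute $\Char(\textup{coker}\,\underline{\cL}^\ac)$ in two different ways (once via the factorization through $\underline{\col}^\ac$ and the logarithmic matrix, once via the short exact sequence isolating the error term $\mathbf{Err}_{\alpha,\beta}$), identify the two answers, and then kill the two pseudo-surjective factors $\det(\image\col_{\#,\fp^c}^\ac)$ and $\det(\image\cL_{\lambda',\fp^c}^\ac)$ for $\lambda'\ne\lambda$. This has the advantage of explaining why the cancellation occurs, and it is more robust in the sense that it does not require knowing the exact form of $\xi_\flat$, $\delta_{k-1}$, or $\log_{p,k-1}$ --- only that $\col_\#$ and both Perrin-Riou coordinate maps are pseudo-surjective. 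What it costs you is the extra bookkeeping with the two short exact sequences and the verification that $\Char$ is well-behaved along them over $\cH_L(\Gamma^\ac)$, a point you correctly flag as needing the B\'ezout/Fr\'echet--Stein structure.

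Two remarks on places where your write-up is slightly loose. First, you say the logarithmic matrix "becomes invertible" after base change to $\cH_L(\Gamma^\ac)$; this is not quite right since $\cH_L(\Gamma^\ac)$ is not a field --- what you use is that the determinant is a non-zerodivisor, so the map is injective with torsion cokernel, and the char-ideal of the cokernel is $(\det)$. Second, the "hard part" you anticipate --- that $\cL_{\lambda,\fp^c}^\ac$ has full image after base change --- is in fact already supplied by Corollary~\ref{cor:imageLlambda} and its two-variable extension via tensoring with $\Omega_{\Qp}\Lambda_{\Zp}(U)$ (as the paper notes just after Proposition~\ref{prop:errorL}); the anticyclotomic specialization is immediate since the interpolation-formula computations depend only on the local Galois group at $\fp^c$. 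So that ingredient is not an obstacle; it is exactly what the paper's proof also invokes.
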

\begin{proof}
As a consequence of Lemma~\ref{lem:det}, the determinant of $\Mlog$ equals, up to a unit, $\log_{p,k-1}(\Gamma_0^\cyc)/\delta_{k-1}(\Gamma^\cyc)$ (see also \cite[Corollary~3.2]{LLZ0.5}). Therefore, $\det(\Tw_{k/2-1}(Q^{-1}M_{\log,\gamma_\ac}))\sim\Tw_{k/2-1}(\log_{p,k-1}(\gamma_\ac)/\delta_{k-1}(\gamma_\ac))$. 
Lemma~\ref{lem:2varimage} together with Remark~\ref{rk:image} describe $\det(\textup{Im}(\col^{\ac}_{\flat,\fp^c}))$. The term $\mathbf{Err}_{\#,\flat}$ is given by Proposition~\ref{prop:errorterms}, whereas $\det(\textup{Im}(\mathcal{L}_{\lambda,\fp^c}^{\ac}))$ and $\mathbf{Err}_{\alpha,\beta}$ are given by Corollary~\ref{cor:imageLlambda} and Proposition~\ref{prop:errorL} respectively. Hence the result.
\end{proof}
\subsection{$p$-adic (cyclotomic) height pairings}
\label{sec:padicheights}
Let $\fm=\fm_\ac$ denote the maximal ideal of $\LL_{\cO_L}(\Gamma^\ac)$ (note that in  previous sections, we have used the same symbol to denote  moduli of $K$. As these moduli will no longer make an appearance, our notation here should cause no confusion).  For each positive integer $n$, we let $\mathcal{H}_n(\Gamma^\ac)$ denote the $p$-adic completion of $\Lambda_{\cO_L}[\fm^n/p]$, the $\Lambda_{\cO_L}(\Gamma^\ac)$-subalgebra of $\Lambda_{\cO_L}(\Gamma^\ac)[1/p]$ generated by all $r/p$ with $r\in \fm^n$. Each $\calHacn[1/p]$ is a strict affinoid algebra (and in fact, also a Euclidean domain). They give an admissible affinoid covering of $\textup{Sp}\,\LL_\infty$ and $\calHac=\varprojlim \calHacn[1/p]$. See \cite[\S1.7]{dejong1995PIHES} for details. 
Fix a positive integer $n$ and set $A:=\calHacn[1/p]$. As before, let $A^\iota$ denote the rank-one $A$-module that is endowed it with a $G_K$-action given via the maps 
$$G_K\twoheadrightarrow \Gamma^\ac\stackrel{\iota}{\hookrightarrow} \LLac^\times\lra \calHacn[1/p]^\times,$$
where  $\iota$ is the involution $\gamma\mapsto \gamma^{-1}$. We set $V=T_{f,\chi^{-1}}\otimes_{\cO_L} L$ and 
$$\mathscr{D}(V)=\textup{Hom}(V,L)(1) \stackrel{\sim}{\lra} T_{f,\chi}\otimes_{\cO_L} L.$$ Define $V_A:=V\otimes_{\cO_L} A^\iota$ and allow $G_K$ act diagonally. %We call the $A$-representation $V_A$ the anticyclotomic deformation of $T_{f,\chi^{-1}}$. 
We likewise consider 
$$\mathscr{D}(V_A):=\textup{Hom}_A(V_A,A)(1)\cong T_{f,\chi}\otimes_{\cO_L}A \cong V_A^\iota,$$
where  $G_K$ now acts on the second factor of $T_{f,\chi}\otimes_{\cO_L}A$ without the involution $\iota$.
In order to introduce \emph{cyclotomic} $A$-adic height pairings afforded by the cyclotomic deformation of $V_A$, we will work below with cyclotomic $(\varphi,\Gamma)$-modules associated to $V_A$ over the relative Robba ring $\RR_A$ (as well as their triangulations). To that end, for $\mathfrak{q}\in\{\fp,\fp^c\}$, we let 
$$D_{\textup{rig},A}^\dagger(V_A\mid_{G_{K_\mathfrak{q}}})\cong D_{\textup{rig},L}^\dagger(V\vert_{G_{K_\mathfrak{q}}})\otimes_L  D_{\textup{rig},A}^\dagger(A^\iota\mid_{G_{K_\mathfrak{q}}})$$ 
denote the $(\varphi,\Gamma^\cyc)$-module of $V_A\mid_{G_{K_\mathfrak{q}}}$ over $\RR_A$. %Note for the identification above that the functor $V\mapsto D_{\textup{rig},A}^\dagger(V)$ is fully faithful and commutes with base change. 
We write $D_\lambda^{(\mathfrak{q})}\subset D_{\textup{rig},L}^\dagger(V\vert_{G_{K_\mathfrak{q}}})$ for the  saturated $(\varphi,\Gamma^\cyc)$-submodules of rank one associated with the respective $\varphi$-eigenspaces of $\Dcris(V\vert_{G_{K_\mathfrak{q}}})$, as constructed by Berger. 
When there is no need to distinguish $K_{\mathfrak{q}}$ from $\QQ_p$, we shall drop $``\mid_{G_{K_{\mathfrak{q}}}}"$ from notation. As before, we shall simply write $D_\lambda$ in place of $D_\lambda^{(\mathfrak{q})}$ when the context makes our choice clear. 
We set $D_{\lambda,A}:=D_\lambda\widehat{\otimes}_L D_{\textup{rig},A}^\dagger(A^\iota) \cong D_\lambda\widehat{\otimes}_L \RR_A^\iota$. This is a saturated rank-one $\RR_A$-submodule of $D_{\textup{rig},A}^\dagger(V_A)$. We let 
$$D_{\lambda,A}^\perp:=\textup{Hom}_{\RR_A}(D_{\textup{rig},A}^\dagger(V_A)/D_{\lambda,A},\RR_A(\chi_0)) $$
denote the orthogonal complement of $D_{\lambda,A}$ under the canonical duality 
$$D_{\textup{rig},A}^\dagger(V_A)\times D_{\textup{rig},A}^\dagger(\mathscr{D}(V_A))\lra \RR_A(\chi_0)\,.$$

Notice that $V_A=\TT^\dagger\otimes_{\calHac} A$ and all we have recorded in the previous sections concerning the Selmer groups attached to $\TT^\dagger$ in fact hold for $A$ by base change. In particular, as we have already done so over the coefficient ring $\LL_\infty$ at the start of Section~\ref{subsec:analyticSelmergroups}, we may define four  analytic Selmer complexes $\widetilde{\textbf{R}\Gamma}(G_{K,S}, \Delta_{\emptyset,0},V_A)$, $\widetilde{\textbf{R}\Gamma}(G_{K,S},\Delta_{\emptyset,\lambda}V_A)$, $\widetilde{\textbf{R}\Gamma}(G_{K,S},\Delta_{\lambda,\lambda},V_A)$ and $\widetilde{\textbf{R}\Gamma}(G_{K,S},\Delta_{0,\lambda},V_A)$.
These are elements of the derived category of $A$-modules that may be represented by perfect complexes  in degree $[0,3]$, which are given via the local conditions $\Delta_{\invques,?}$ at primes above $p$. (For the local conditions, one simply makes use of $D_{\lambda,A}$ in place of $F_\lambda$, etc.) Their cohomology groups in degree one will be denoted by $H^1_{\invques,?}(K,V_A)$ and by $\mathfrak{X}^A_{\invques,?}$ in degree two. The local conditions $\Delta_{\lambda,\lambda}$ produces local conditions $\Delta_{\lambda,\lambda}^\perp$ on $\mathscr{D}(V_A)$; we shall denote the associated Selmer complex by $\widetilde{\textbf{R}\Gamma}(G_{K,S},\Delta_{\lambda,\lambda}^\perp,\mathscr{D}(V_A))$ and its cohomology in degree one by $H^1_{\lambda,\lambda}(K,\mathscr{D}(V_A))$\,.
We are now ready to apply the general formalism of $p$-adic heights developed in \cite{benoisheights}, (see also \cite[Section 3.1]{benoisbuyukboduk}) that generalizes the constructions of Nekov\'a\v{r} in the case eigenform $f$ is $p$-ordinary.
\begin{theorem}[Benois, Nekov\'a\v{r}]
\label{thm:p-adicheight}
There exists a $($cyclotomic$)$ $p$-adic height pairing
$$\langle\,,\,\rangle_{\lambda,\lambda}\,:\,H^1_{\lambda,\lambda}(K,V_A)\otimes H^1_{\lambda,\lambda}(K,\mathscr{D}(V_A)) \lra A,$$
which interpolates Nekov\'a\v{r}'s cyclotomic $p$-adic height pairing
{$$\langle\,,\,\rangle_{\lambda,\lambda}^{\rm Nek}\,:\,H^1_{\textup{f}}(K,V_{f,\chi}\otimes \eta)\otimes H^1_{\textup{f}}(K,V_{f,\chi^{-1}}\otimes \eta^{-1}) \lra {L(\eta)}$$
as $\eta$ runs through characters of $\Gamma^\ac$ of finite order $p^m$ with $m<n$. Here, we recall that the integer $n$ is fixed so that $A=\mathcal{H}_n(\Gamma^\ac)[1/p]$ and $L(\eta)$ stands for the extension of $L$ that contains ${\rm im}(\eta)$.}
\end{theorem}
\begin{proof}{
Let $\eta$ be a character of $\Gamma^\ac$ of finite order $p^m$. Let us set $ L_\eta:=\LL_{{L(\eta)}}(\Gamma^\ac)^\iota\big{/}(\eta(\gamma_\ac)\gamma_\ac-1)$ and consider the natural map 
$$\pi_\eta: \LL_{L}(\Gamma^\ac)^\iota\lra L_\eta$$
of $G_K$-modules (notice that $L_\eta$ is a one-dimensional ${L(\eta)}$-vector space on which $G_K$ acts via $\eta$). When $m<n$, the map $\pi_\eta$ extends to a $G_K$-equivariant map 
$$\pi_\eta: A^\iota \lra L_\eta$$
(and likewise, the evaluation map $\eta:\LL_{L}(\Gamma^\ac)\ra L(\eta)$ extends to $\eta: A\ra L(\eta)$) which in turn induces $G_K$-equivariant specialization maps
$$\pi_\eta: V_A\lra V_{f,\chi}\otimes\eta \,\,\,\,\,\, \,\,\,  \hbox{and }\,\,\,\,\,\,\, \,\,\, \pi_\eta: \mathscr{D}(V_A)\lra V_{f,\chi^{-1}}\otimes\eta^{-1}\,.$$
These give rise to maps 
$$\pi_\lambda: H^1_{\lambda,\lambda}(K,V_A)\lra H^1_{\textup{f}}(K,V_{f,\chi} )\,\,\,\,\,\, \,\,\,  \hbox{and }\,\,\,\,\,\,\, \,\,\, \pi_\eta: H^1_{\lambda,\lambda}(K,\mathscr{D}(V_A))\lra H^1_{\textup{f}}(K,V_{f,\chi^{-1}}\otimes\eta^{-1})$$
thanks to \cite[\S3D]{jayanalyticfamilies}; see also \cite[Theorem 4.1.2]{jaycyclotmotives}. The commutativity of the diagram
$$\xymatrix@C=.1cm{H^1_{\lambda,\lambda}(K,V_A)\ar[d]_{\pi_\eta}&\otimes &H^1_{\lambda,\lambda}(K,\mathscr{D}(V_A))\ar[d]_{\pi_\eta} \ar[rrrrr]^(.67){\langle\,,\,\rangle_{\lambda,\lambda}}&&&&&A\ar[d]^{\eta}\\
H^1_{\textup{f}}(K,V_{f,\chi}\otimes \eta)&\otimes& H^1_{\textup{f}}(K,V_{f,\chi^{-1}}\otimes \eta^{-1})\ar[rrrrr]^(.67){\langle\,,\,\rangle_{\lambda,\lambda}^{\rm Nek}}&&&&&L(\eta)
}$$
follows from the functorial construction (due to Benois and Nekov\'a\v{r}) of the height pairings $\langle\,,\,\rangle_{\lambda,\lambda}$ and $\langle\,,\,\rangle_{\lambda,\lambda}^{\rm Nek}$ in terms of Bockstein morphisms and global duality for Selmer complexes.}
\end{proof}
\begin{defn}
\label{def:regulator}
We define the $p$-adic regulator $\mathscr{R}_{p}$ to be the Fitting ideal of the cokernel of  $\langle\,,\,\rangle_{\lambda,\lambda}$. For $c\in H^1_{\lambda,\lambda}(K,V_A)$, we  set $\mathscr{R}_p(c)$ to be the Fitting ideal of the cokernel of the map 
\begin{align*}
H^1_{\lambda,\lambda}(K,V_A)&\lra A\\
y&\mapsto\langle c,y\rangle_{\lambda,\lambda}\,\,.
\end{align*}
\end{defn}
%\subsection{Rubin-style formula}
\begin{defn}
We denote the $p$-adic local Tate (cup product) pairing by 
$$\langle\,,\,\rangle_{\lambda,\lambda}^{(\fp)}: H^1_\an(K_{\fp},D_{\textup{rig}}^\dagger(V_A)/D_{\lambda,A})\otimes H^1_\an(K_{\fp},D_{\lambda,A}^\perp)\lra A\,.$$
%Note that this is a surjective morphism of $A$-modules. 
For $c\in H^1_{\an}(K_{\fp},D_{\textup{rig}}^\dagger(V_A)/D_{\lambda,A})$, we  define $\mathscr{T}_\fp(c)\subset A$ to be the Fitting ideal of the cokernel of the map 
\begin{align*}
H^1_{\an}(K_{\fp},D_{\lambda,A}^\perp)&\lra A\\
y&\mapsto\langle c,y\rangle_{\lambda,\lambda}^{(\fp)}\,\,.
\end{align*}
\end{defn}
\subsection{Beilinson-Flach elements and bounds on analytic Selmer groups.}
\label{sec:analyticBF}
%For each $\lambda\in \{\alpha,\beta\}$, recall the element $c_\lambda^\ac\in H^1_{\emptyset,\lambda}(K,\TT^\dagger)$ we have introduced in Definition~\ref{def:centralcriticalanalyticBF}.

%$$\textup{char}_{\LL_\infty}\left(\mathfrak{X}_{\emptyset,0}\right)=\textup{char}_{\LL(\Gamma^\ac)}\left(H^1_*(K_{\fp^c},\TT^{\ac})\big{/}\LL(\Gamma^\ac)\cdot\res_{\fp^c}(c_*^\ac)\right)\otimes\LL_\infty\,$$
%for both choices $*=\#$ or $*=\flat$.
%\textcolor{red}{\hrule}
\begin{proposition}
\label{prop:BFelementsalpha}
With the conventions in Definitions~\ref{def:lazardandcharideal} and \ref{def:negligibleerrorRubin87},
$$\textup{char}_{\LL_\infty}\left(\frac{H^1(K_{\fp^c},F_\lambda)}{\LL_\infty\cdot\res_{\fp^c}(c^\lambda_\ac)}\right)=\textup{char}_{\LL_\infty}\left(\frac{H^1_\#(K_{\fp^c},\TT^{\ac}_{f,\chi})}{\LL_{\cO_L}(\Gamma^\ac)\cdot\res_{\fp^c}(c^\#_\ac)}\right)\,.$$
\end{proposition}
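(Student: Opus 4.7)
The plan is to compute both characteristic ideals as ratios involving the image of the Beilinson-Flach class under a local map at $\fp^c$ and the image of the ambient rank-one cohomology under the same map, and then to verify their equality via Lemma~\ref{lemma:transitionfactorsimplified}. The starting point is the integral-analytic bridge supplied by Corollary~\ref{cor:ColBF}: restricting to the anticyclotomic tower yields
\[
\col^\ac_{\flat,\fp^c}\bigl(\res_{\fp^c}(c^\#_\ac)\bigr) \;=\; -\frac{\cL^\ac_{\beta,\fp^c}\bigl(\res_{\fp^c}(c^\alpha_\ac)\bigr)}{D_\ac},
\]
where $D_\ac$ denotes the image of $\det(Q^{-1}\Tw_{k/2-1} M_{\log,\fp^c})$ on the anticyclotomic line. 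When $\lambda=\beta$ one substitutes in addition the identity $\cL^\ac_{\alpha,\fp^c}(c^\beta_\ac)=-\cL^\ac_{\beta,\fp^c}(c^\alpha_\ac)$ supplied by Proposition~\ref{prop:sameprojection}. Proposition~\ref{prop:sameprojection} places $\res_{\fp^c}(c^\alpha_\ac)$ inside $H^1(K_{\fp^c}, F_\alpha)$, while the parallel at the integral level places $\res_{\fp^c}(c^\#_\ac)$ inside $H^1_\#(K_{\fp^c}, \TT_{f,\chi}^\ac)$.

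Next, I will pin down each characteristic ideal by composing with an auxiliary injection. The complementary signed Coleman map $\col^\ac_{\flat,\fp^c}$ restricts to an injection of $H^1_\#(K_{\fp^c}, \TT_{f,\chi}^\ac)$ into $\LL_{\cO_L}(\Gamma^\ac)$, the joint map $(\col^\ac_{\#,\fp^c}, \col^\ac_{\flat,\fp^c})$ being injective with torsion cokernel in the free rank-two target by the anticyclotomic avatar of Proposition~\ref{prop:errorterms}. Reading the isomorphism $H^1(K_{\fp^c},\TT_{f,\chi}^\ac)/(\ker\col^\ac_{\#,\fp^c}+\ker\col^\ac_{\flat,\fp^c})\cong \textup{Im}(\col^\ac_{\flat,\fp^c})/\col^\ac_{\flat,\fp^c}(H^1_\#)$ from the third isomorphism theorem, and combining it with the description of $\textup{Im}(\col^\ac_{\flat,\fp^c})$ in Lemma~\ref{lem:2varimage}, the image $\col^\ac_{\flat,\fp^c}(H^1_\#)$ equals $\mathbf{Err}_{\#,\flat}\cdot\det\bigl(\textup{Im}(\col^\ac_{\flat,\fp^c})\bigr)$ as a fractional ideal of $\LL_{\cO_L}(\Gamma^\ac)$. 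The analogous computation, using Corollary~\ref{cor:imageLlambda} and Proposition~\ref{prop:errorL} in place of Lemma~\ref{lem:2varimage} and Proposition~\ref{prop:errorterms}, shows that $\cL^\ac_{\beta,\fp^c}$ injects $H^1(K_{\fp^c}, F_\alpha)$ into $\cH_L(\Gamma^\ac)$ with image equal to $\mathbf{Err}_{\alpha,\beta}\cdot\det\bigl(\textup{Im}(\cL^\ac_{\beta,\fp^c})\bigr)$. Consequently the two characteristic ideals appearing in the statement of the proposition are
\[
\frac{\bigl(\col^\ac_{\flat,\fp^c}(\res_{\fp^c}(c^\#_\ac))\bigr)}{\mathbf{Err}_{\#,\flat}\cdot\det(\textup{Im}(\col^\ac_{\flat,\fp^c}))} \quad\text{and}\quad \frac{\bigl(\cL^\ac_{\beta,\fp^c}(\res_{\fp^c}(c^\alpha_\ac))\bigr)}{\mathbf{Err}_{\alpha,\beta}\cdot\det(\textup{Im}(\cL^\ac_{\beta,\fp^c}))}
\]
respectively, viewed as fractional ideals in $\LL_{\cO_L}(\Gamma^\ac)$ and $\cH_L(\Gamma^\ac)$.

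Substituting the bridge identity from the first paragraph, the equality of the two displayed ideals reduces to the statement that
\[
\frac{D_\ac \cdot \mathbf{Err}_{\#,\flat} \cdot \det(\textup{Im}(\col^\ac_{\flat,\fp^c}))}{\mathbf{Err}_{\alpha,\beta} \cdot \det(\textup{Im}(\cL^\ac_{\beta,\fp^c}))}
\]
is a unit in $\cH_L(\Gamma^\ac)$, which is precisely the content of Lemma~\ref{lemma:transitionfactorsimplified}. Descending this equality from $\cH_L(\Gamma^\ac)$ to $\LL_\infty$ via Definition~\ref{def:lazardandcharideal} yields the desired assertion. The hard part will be the precise accounting of the image ideals carried out in the second paragraph: the finite-index corrections of Lemma~\ref{lem:2varimage} and the power-of-$\varpi$ ambiguities in Proposition~\ref{prop:errorterms} must be tracked carefully and matched with the formulation of Lemma~\ref{lemma:transitionfactorsimplified} (which is itself stated modulo units of $\cH_L(\Gamma^\ac)$); one must further verify that the base-change convention of Definition~\ref{def:lazardandcharideal} correctly extends the characteristic ideal on the right-hand side to match the analytic calculation on the left, noting that powers of $\varpi$ become units in $\LL_\infty$ so that the two indeterminacies collapse to a single statement up to a unit.
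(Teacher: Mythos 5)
Your proposal is correct and follows essentially the same route as the paper's own proof: both compute each characteristic ideal by pushing the rank-one Selmer submodule through the complementary Coleman map (resp.\ Perrin-Riou map) at $\fp^c$, account for the error terms $\mathbf{Err}_{\#,\flat}$ and $\mathbf{Err}_{\alpha,\beta}$ via the third isomorphism theorem, bridge the two sides using Corollary~\ref{cor:ColBF}, and close the loop with Lemma~\ref{lemma:transitionfactorsimplified}. Your write-up is somewhat more explicit than the paper's about the intermediate injectivity claims, the isomorphism-theorem bookkeeping, the $\lambda=\beta$ case (handled via Proposition~\ref{prop:sameprojection}'s antisymmetry, which the paper elides by fixing $\lambda=\alpha$), and the observation that powers of $\varpi$ become units after base change to $\LL_\infty$; these details are all consistent with the paper and do not change the underlying argument.
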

\begin{proof}
We take $\lambda$ to be $\alpha$ in this proof. The left-hand side of the equation equals
\begin{align} \frac{\col^\ac_{\flat,\fp^c}(c^\#_\ac)}{\det\left(\col_{\flat,\fp^c}^\ac(H^1_\#(K_{\fp^c},\TT^{\ac}_{f,\chi}))\right)}
\notag&=\frac{\col^{\ac}_{\flat,\fp^c}(c_\#^\ac)}{\det(\textup{Im}(\col^{\ac}_{\flat,\fp^c}))}\cdot\det\left(\frac{H^1(K_{\fp^c},\TT^\ac_{f,\chi})}{\mathscr{H}^1_{\#,\flat}}\right)^{-1}\\
\notag&=\frac{\col^{\ac}_{\flat,\fp^c}(c^\#_\ac)}{\det(\textup{Im}(\col^{\ac}_{\flat,\fp^c}))}\cdot\mathbf{Err}_{\#,\flat}^{-1}\\
\label{eqn:comparisonofcolemanvspr1}&=\frac{\mathcal{L}_{\beta,\fp^c}^{\ac}(c^\alpha_\ac)\cdot \mathbf{Err}_{\#,\flat}^{-1}}{\det(\textup{Im}(\col^{\ac}_{\#,\fp^c}))\cdot\det(\Tw_{k/2-1}(Q^{-1}M_{\log,\gamma_\ac}))},
\end{align}
where the last equality follows from Corollary~\ref{cor:ColBF}. Likewise, 
\begin{equation}\label{eqn:comparisonflaralpha1}
\textup{char}_{\LL_\infty}\left(\frac{H^1(K_{\fp^c},F_\alpha)}{\LL_\infty\cdot\res_{\fp^c}(c^\alpha_\ac)}\right)=\frac{\mathcal{L}_{\beta,\fp^c}^{\ac}(c^\alpha_\ac)}{\det(\textup{Im}(\mathcal{L}_{\beta,\fp^c}^{\ac}))}\cdot\mathbf{Err}_{\alpha,\beta}^{-1}
\end{equation}
and the proof follows comparing \eqref{eqn:comparisonofcolemanvspr1} with \eqref{eqn:comparisonflaralpha1} and applying Lemma~\ref{lemma:transitionfactorsimplified}.
\end{proof}
\begin{corollary}
\label{cor:analyticmainconj0alpha} Assuming the validity of the hypotheses of Proposition~\ref{prop:lfourcorrect}, we have the divisibility 
$$\textup{char}_{\LL_\infty}\left(\mathfrak{X}_{\emptyset,\lambda}^{\rm an}\right)\mid\textup{char}_{\LL_\infty}\left(H^1_{\emptyset,\lambda}(K,\TT^\dagger)\big{/}\LL_\infty\cdot c^\lambda_\ac\right)$$
of ideals in $\LL_\infty$, which is an equality if and only if the divisibility in Theorem~\ref{thm:integralBFundersignedlog} is an equality.
\end{corollary}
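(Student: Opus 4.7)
The plan is to derive the corollary from the four-term exact sequence~\eqref{eqn:longexactglobal1} by applying multiplicativity of characteristic ideals. Under the hypotheses of Proposition~\ref{prop:lfourcorrect}, Theorem~\ref{thm:vanishingcomingfromclassicalobjects} supplies both the vanishing $H^1_{\emptyset,0}(K,\TT^\dagger)=0$ and the torsion of $\mathfrak{X}_{\emptyset,0}^\an$, so that~\eqref{eqn:longexactglobal1} collapses to
\begin{equation*}
0\lra H^1_{\emptyset,\lambda}(K,\TT^\dagger)\stackrel{\res_{\fp^c}}{\lra}\han^1(K_{\fp^c},F_\lambda)\lra\mathfrak{X}_{\emptyset,0}^\an\lra\mathfrak{X}_{\emptyset,\lambda}^\an\lra0.
\end{equation*}
Since Proposition~\ref{prop:sameprojection} places $c_\ac^\lambda$ inside $H^1_{\emptyset,\lambda}(K,\TT^\dagger)$, I would pass to the quotient by $\LL_\infty\cdot c_\ac^\lambda$ on the first term and by $\LL_\infty\cdot \res_{\fp^c}(c_\ac^\lambda)$ on the second, to obtain a four-term exact sequence relating $\mathfrak{X}^\an_{\emptyset,\lambda}$, $H^1_{\emptyset,\lambda}(K,\TT^\dagger)/\LL_\infty\cdot c_\ac^\lambda$, $\han^1(K_{\fp^c},F_\lambda)/\LL_\infty\cdot\res_{\fp^c}(c_\ac^\lambda)$, and $\mathfrak{X}_{\emptyset,0}^\an$.

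Before invoking multiplicativity of characteristic ideals I would first verify that each of the four terms is $\LL_\infty$-torsion. The analytic Iwasawa cohomology $\han^1(K_{\fp^c},F_\lambda)$ has generic rank one, so the nonvanishing $\res_{\fp^c}(c_\ac^\lambda)\ne 0$ supplied by $\mathbf{(L4)}$ renders the local quotient torsion; the torsion of $H^1_{\emptyset,\lambda}(K,\TT^\dagger)/\LL_\infty\cdot c_\ac^\lambda$ follows from Theorem~\ref{thm:leopoldtconjecturesforTfchi}(i).c after the classical-analytic comparison of Proposition~\ref{prop:compareclassicalwithanalytic}. Once torsion is established, multiplicativity of characteristic ideals across the four-term sequence yields
\begin{equation*}
\textup{char}_{\LL_\infty}(\mathfrak{X}_{\emptyset,\lambda}^\an)\cdot\textup{char}_{\LL_\infty}\!\left(\frac{\han^1(K_{\fp^c},F_\lambda)}{\LL_\infty\cdot\res_{\fp^c}(c_\ac^\lambda)}\right)=\textup{char}_{\LL_\infty}(\mathfrak{X}_{\emptyset,0}^\an)\cdot\textup{char}_{\LL_\infty}\!\left(\frac{H^1_{\emptyset,\lambda}(K,\TT^\dagger)}{\LL_\infty\cdot c_\ac^\lambda}\right).
\end{equation*}

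The last step is to transport the two factors on the left to the classical side. Proposition~\ref{prop:BFelementsalpha} rewrites the local characteristic ideal as the analytification of its $\bullet$-signed classical counterpart (for the choice of $\bullet\in\{\#,\flat\}$ matching $\lambda\in\{\alpha,\beta\}$), while Proposition~\ref{prop:compareclassicalwithanalytic} identifies $\textup{char}_{\LL_\infty}(\mathfrak{X}_{\emptyset,0}^\an)$ with the analytification of $\textup{char}_{\LL_{\cO_L}(\Gamma^\ac)}(\mathfrak{X}_{\emptyset,0}^\ac)$. After these substitutions the displayed identity of characteristic ideals rearranges to show that the divisibility claimed in the corollary is equivalent to
\begin{equation*}
\textup{char}_{\LL_{\cO_L}(\Gamma^\ac)}(\mathfrak{X}_{\emptyset,0}^\ac)\,\,\Big|\,\,\textup{char}_{\LL_{\cO_L}(\Gamma^\ac)}\!\left(H^1_\bullet(K_{\fp^c},\TT^\ac_{f,\chi})\big{/}\LL_{\cO_L}(\Gamma^\ac)\cdot\res_{\fp^c}(c_\bullet^\ac)\right),
\end{equation*}
which is precisely the divisibility provided by Theorem~\ref{thm:integralBFundersignedlog}; because each intervening transition was an equality of ideals, the two divisibilities become equalities (up to powers of $p$) under exactly the same conditions. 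The one place I anticipate genuine care is in justifying the multiplicativity of characteristic ideals across the four-term sequence in the coadmissible $\LL_\infty$-module category — that no pseudo-null contributions spoil the bookkeeping. This should follow by splitting into two short exact sequences and invoking Definition~\ref{def:lazardandcharideal}, but it is the lone nontrivial point outside of routine assembly of the ingredients above.
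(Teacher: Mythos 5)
Your proof is correct and follows the same route as the paper: deduce the collapsed exact sequence~\eqref{eqn:fourtermexactemptyzeroemptyalpha} from~\eqref{eqn:longexactglobal1} and Theorem~\ref{thm:vanishingcomingfromclassicalobjects}, quotient out by the Beilinson--Flach line, apply multiplicativity of $\LL_\infty$-characteristic ideals, and transport via Proposition~\ref{prop:BFelementsalpha} and Proposition~\ref{prop:compareclassicalwithanalytic} to match the classical divisibility of Theorem~\ref{thm:integralBFundersignedlog}. You have merely made explicit the bookkeeping that the paper's terse three-line proof leaves to the reader.
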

\begin{proof}
Combining (\ref{eqn:longexactglobal1}) and Theorem~\ref{thm:vanishingcomingfromclassicalobjects}, it follows  that the sequence 
\begin{equation}
\label{eqn:fourtermexactemptyzeroemptyalpha}0\lra {H^1_{\emptyset,\lambda}(K,\TT^\dagger)}\stackrel{\res_{\fp^c}}{\lra} \han^1(K_{\fp^c},F_\lambda)\lra\mathfrak{X}^\an_{\emptyset,0}\lra\mathfrak{X}^\an_{\emptyset,\lambda}\lra0
\end{equation}
is exact. Our claim follows combining this with Theorem~\ref{thm:integralBFundersignedlog} and Proposition~\ref{prop:BFelementsalpha}.
\end{proof}
\begin{corollary}
\label{cor:whenBFelementsarenonzeroAnalytic1}
If the hypotheses of Proposition~\ref{prop:lfourcorrect} are valid, then the $\cH(\Gamma^\ac)$-module $H^1_{\emptyset,\lambda}(K,\TT^\dagger)$ has rank one and $\mathfrak{X}_{\emptyset,\lambda}^{\rm an}$ is torsion. \end{corollary}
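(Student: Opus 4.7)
The plan is to deduce both assertions from the four-term exact sequence \eqref{eqn:fourtermexactemptyzeroemptyalpha}, namely
$$0\lra H^1_{\emptyset,\lambda}(K,\TT^\dagger)\stackrel{\res_{\fp^c}}{\lra} \han^1(K_{\fp^c},F_\lambda)\lra\mathfrak{X}^\an_{\emptyset,0}\lra\mathfrak{X}^\an_{\emptyset,\lambda}\lra0,$$
which was already established in the proof of Corollary~\ref{cor:analyticmainconj0alpha} as a combination of \eqref{eqn:longexactglobal1} with the vanishing $H^1_{\emptyset,0}(K,\TT^\dagger)=0$ afforded by Theorem~\ref{thm:vanishingcomingfromclassicalobjects}. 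This last theorem is exactly where the hypotheses of Proposition~\ref{prop:lfourcorrect} enter, and it also yields that $\mathfrak{X}^\an_{\emptyset,0}$ is $\calHac$-torsion. As $\mathfrak{X}^\an_{\emptyset,\lambda}$ appears in the sequence as a quotient of $\mathfrak{X}^\an_{\emptyset,0}$, the torsion assertion for $\mathfrak{X}^\an_{\emptyset,\lambda}$ is then immediate.

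For the rank statement, the remaining input I need is that $\han^1(K_{\fp^c},F_\lambda)$ is a $\calHac$-module of rank one. This follows from the Euler--Poincar\'e characteristic formula for analytic $(\varphi,\Gamma^\ac)$-modules over the relative Robba ring (compatible with base change to $\calHac$, as used throughout Pottharst's framework in \cite{jaycyclotmotives, jayanalyticfamilies} and extended to the Lubin--Tate setting of $\Gamma^\ac$ as recorded in Remark~\ref{rem:extensionofpottharsttheorytolubintatetowers}): indeed $F_\lambda$ has rank one and is purely of positive Hodge--Tate--Sen weight, so $\han^0(K_{\fp^c},F_\lambda)$ and $\han^2(K_{\fp^c},F_\lambda)$ are torsion while the Euler characteristic is $-[K_{\fp^c}:\Qp]\cdot\mathrm{rk}(F_\lambda)=-1$, giving generic rank one for $\han^1$.

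Granting these two facts, the rank of the kernel of $\han^1(K_{\fp^c},F_\lambda)\to\mathfrak{X}^\an_{\emptyset,0}$ equals the rank of $\han^1(K_{\fp^c},F_\lambda)$ (since its image lands in a torsion module), namely one. Via the injection in the exact sequence, this kernel is identified with $H^1_{\emptyset,\lambda}(K,\TT^\dagger)$, so the latter is of rank one over $\calHac$, as asserted. The only non-routine point is really the rank-one statement for $\han^1(K_{\fp^c},F_\lambda)$; the main obstacle is to ensure that the $(\varphi,\Gamma^\ac)$-module cohomology and Euler characteristic formalism carry over from the cyclotomic setting treated by Pottharst to the Lubin--Tate tower $K^\ac_{\fp^c}/K_{\fp^c}$, but this is exactly the extension discussed in Remark~\ref{rem:extensionofpottharsttheorytolubintatetowers} via the works of Schneider--Venjakob, Fourquaux--Xie, and Kisin--Ren, keeping in mind that the relevant Lubin--Tate group is of height one so that the F-analyticity condition is automatic.
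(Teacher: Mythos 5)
Your proof is correct and reaches the right conclusion, but the route you take for the rank statement is genuinely different from the paper's. The paper's own argument first uses the four-term exact sequence \eqref{eqn:fourtermexactemptyzeroemptyalpha} together with the fact that $\han^1(K_{\fp^c},F_\lambda)$ has rank at most one to bound the rank of $H^1_{\emptyset,\lambda}(K,\TT^\dagger)$ above by one, and then invokes the non-triviality of the Beilinson--Flach class $c^\lambda_\ac\in H^1_{\emptyset,\lambda}(K,\TT^\dagger)$, supplied by Proposition~\ref{prop:lfourcorrect}, to rule out rank zero. You instead observe that the image of $\han^1(K_{\fp^c},F_\lambda)$ inside $\mathfrak{X}^\an_{\emptyset,0}$ is automatically torsion (as the latter is), so that the kernel --- which is exactly $H^1_{\emptyset,\lambda}(K,\TT^\dagger)$ by the sequence --- must carry the full generic rank of $\han^1(K_{\fp^c},F_\lambda)$, namely one. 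Both arguments ultimately rely on the same inputs (the rank of the local analytic Iwasawa cohomology, which the paper leaves implicit and you spell out via an Euler characteristic count, together with the output of Theorem~\ref{thm:vanishingcomingfromclassicalobjects}), but yours bypasses the explicit use of $c^\lambda_\ac\neq 0$ in this step; the paper needs that non-vanishing to supply the lower bound on the rank. Your derivation of the torsion claim for $\mathfrak{X}^\an_{\emptyset,\lambda}$ as a quotient of the torsion module $\mathfrak{X}^\an_{\emptyset,0}$ coincides with the paper's.

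One small inaccuracy worth correcting: the reason you give for $\han^0(K_{\fp^c},F_\lambda)$ and $\han^2(K_{\fp^c},F_\lambda)$ being torsion --- that $F_\lambda$ is purely of positive Hodge--Tate--Sen weight --- is not the right mechanism. In the analytic Iwasawa-theoretic setting, $\han^0$ and $\han^2$ are coadmissible torsion over the distribution algebra for \emph{any} $(\varphi,\Gamma^\ac)$-module over the relative Robba ring, irrespective of Hodge--Tate--Sen weights; this is part of Pottharst's structural theory as extended to the height-one Lubin--Tate setting recorded in Remark~\ref{rem:extensionofpottharsttheorytolubintatetowers}. The weight condition would be relevant to whether these torsion modules vanish, not to torsionness itself. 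This does not affect your conclusion, since the rank-one count for $\han^1$ via the Euler characteristic goes through regardless.
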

\begin{proof}
The 4-term exact sequence (\ref{eqn:fourtermexactemptyzeroemptyalpha}) shows that the rank of the  %\footnote{{Under suitable but mild hypothesis, by the left-most injection in the same exact sequence and using the fact that the local cohomology group is often torsion-free. This means (over $\LL_\infty$) that it is also free.}} 
 $\LL_\infty$-module $H^1_{\emptyset,\lambda}(K,\TT^\dagger)$ is at most one. {Combining Propositions~\ref{prop:lfourcorrect} and~\ref{prop:BFelementsalpha}, we see that} the class $c^\lambda_\ac \in H^1_{\emptyset,\lambda}(K,\TT^\dagger)$ is non-zero. {It follows that the rank of $H^1_{\emptyset,\lambda}(K,\TT^\dagger)$} is precisely one. In this case, the same exact sequence proves also the second assertion.
\end{proof}
We recall the property $\mathbf{(L.\lambda)}$ concerning the local position of the class $c^\lambda_\ac$ at the prime $\fp$. Notice that the property $\mathbf{(L.\lambda)}$ is equivalent to the requirement that 
\begin{equation}\label{eqn_BF_ac_lcoal_position_indefinite}
\res_\fp(c^\lambda_\ac) \in \ker\left(H^1_\an(K_{\fp},D_p)\stackrel{\pi_{\fp/\lambda}}{\lra} H^1_\an(K_{\fp},D_p/F_\lambda)\right).
\end{equation}
\begin{corollary}
\label{cor:comparisonofBKselmerwithrelaxedBKintheindefinitecase} 
Suppose that the hypotheses of Proposition~\ref{prop:lfourcorrect} hold true. Then we have $H^1_{\lambda,\lambda}(K,\TT^\dagger)=H^1_{\emptyset,\lambda}(K,\TT^\dagger)$ and $\mathfrak{X}_{\lambda,\lambda}^{\rm an}$ has rank one. Furthermore, the sequence
\begin{equation}
\label{eqn:longexactglobal2degenerated}
0\lra \han^1(K_{\fp},D_p/F_\lambda)\lra\mathfrak{X}_{\lambda,\lambda}^\an\lra\mathfrak{X}_{\emptyset,\lambda}^\an\lra0
\end{equation}
is exact.
\end{corollary}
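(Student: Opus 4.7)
My plan is to deduce the corollary from the five-term exact sequence (\ref{eqn:longexactglobal2}) provided by Poitou--Tate global duality, by showing that the transition map $\res_{\fp/\lambda}$ is identically zero on $H^1_{\emptyset,\lambda}(K,\TT^\dagger)$. The first step is to place the Beilinson--Flach class $c^\lambda_\ac$ inside the smaller Selmer group $H^1_{\lambda,\lambda}(K,\TT^\dagger)$. Proposition~\ref{prop:sameprojection} already gives $c^\lambda_\ac \in H^1_{\emptyset,\lambda}(K,\TT^\dagger)$, so the only additional condition to verify is that $\res_\fp(c^\lambda_\ac)$ dies in $\han^1(K_\fp,D_p/F_\lambda)$. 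This vanishing is equivalent to $\mathfrak{L}_{\lambda,\lambda}^\ac = \cL_{\lambda,\fp}^\ac\!\left(\res_\fp(c^\lambda_\ac)\right) = 0$, which is precisely property $\mathbf{(L.\lambda)}$; since $\varepsilon_f = \mathbbm{1}$ is a standing hypothesis of this section, Proposition~\ref{prop:Lalpha} supplies this identity. Hence $\res_{\fp/\lambda}(c^\lambda_\ac)=0$.

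Next, I would bootstrap the vanishing $\res_{\fp/\lambda}(c^\lambda_\ac)=0$ to the vanishing of $\res_{\fp/\lambda}$ on all of $H^1_{\emptyset,\lambda}(K,\TT^\dagger)$. By Corollary~\ref{cor:whenBFelementsarenonzeroAnalytic1}, the module $H^1_{\emptyset,\lambda}(K,\TT^\dagger)$ has $\calHac$-rank one, and $c^\lambda_\ac$ is non-trivial by Proposition~\ref{prop:lfourcorrect} combined with the hypotheses we have imposed on $N_f$. Consequently the quotient $H^1_{\emptyset,\lambda}(K,\TT^\dagger)\big/\calHac\cdot c^\lambda_\ac$ is $\calHac$-torsion. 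On the other hand, $\han^1(K_\fp,D_p/F_\lambda)$ is $\calHac$-torsion-free, since it is the degree-one analytic Iwasawa cohomology of a rank-one $(\varphi,\Gamma^\ac)$-module over the relative Robba ring, and the Euler--Poincar\'e formula combined with the vanishing of $H^2_\an$ (which amounts to the local analogue of the weak Leopoldt conjecture along the anticyclotomic tower) forces it to be locally free of rank one. Thus $\res_{\fp/\lambda}$ must kill the whole torsion quotient, and by exactness of (\ref{eqn:longexactglobal2}) we conclude $H^1_{\lambda,\lambda}(K,\TT^\dagger) = H^1_{\emptyset,\lambda}(K,\TT^\dagger)$.

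Substituting this identification back into (\ref{eqn:longexactglobal2}) collapses the five-term sequence into precisely the desired four-term exact sequence (\ref{eqn:longexactglobal2degenerated}). From this, the rank assertion follows by additivity of $\calHac$-ranks: $\mathfrak{X}_{\emptyset,\lambda}^\an$ is torsion by Corollary~\ref{cor:whenBFelementsarenonzeroAnalytic1}, while $\han^1(K_\fp,D_p/F_\lambda)$ has rank one by the argument above, so $\mathfrak{X}_{\lambda,\lambda}^\an$ has rank exactly one over $\calHac$.

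The principal technical obstacle I foresee is the justification of torsion-freeness (equivalently, local freeness of rank one) for $\han^1(K_\fp,D_p/F_\lambda)$. In the classical cyclotomic setup this is standard from Herr's complex together with the absolute irreducibility of the $p$-local residual representation, but here one must appeal to the extension of Pottharst's formalism of $(\varphi,\Gamma)$-modules to the anticyclotomic (height-one Lubin--Tate) tower, following the generalizations discussed in Remark~\ref{rem:extensionofpottharsttheorytolubintatetowers} via the works of Fourquaux--Xie, Schneider--Venjakob, and Kisin--Ren. One must in particular check the vanishing of the corresponding $H^2_\an$ for the rank-one quotient $D_p/F_\lambda$, ruling out exceptional zeros along $\Gamma^\ac$; this is essentially a local duality computation, made accessible by Remark~\ref{rem:hnaimpliesfreeness} together with Pottharst's comparison isomorphism \cite[Theorem~1.9]{jayanalyticfamilies}.
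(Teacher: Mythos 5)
Your argument is correct and follows essentially the same route as the paper: you place $c^\lambda_\ac$ in $H^1_{\lambda,\lambda}$ via $\mathbf{(L.\lambda)}$ (i.e.\ Proposition~\ref{prop:Lalpha} under the standing hypothesis $\varepsilon_f=\mathbbm{1}$), use the rank-one statement of Corollary~\ref{cor:whenBFelementsarenonzeroAnalytic1} together with $c^\lambda_\ac\neq 0$ to see that $H^1_{\emptyset,\lambda}/H^1_{\lambda,\lambda}$ is torsion, and then kill it by the injection $\res_{\fp/\lambda}$ into the torsion-free module $\han^1(K_\fp,D_p/F_\lambda)$, finally reading off the short exact sequence and the rank of $\mathfrak{X}^\an_{\lambda,\lambda}$ from (\ref{eqn:longexactglobal2}). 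Your write-up is in fact slightly more explicit than the paper's, in particular by spelling out the role of $\mathbf{(L.\lambda)}$ and by sketching why $\han^1(K_\fp,D_p/F_\lambda)$ is torsion-free, both of which the paper leaves implicit.
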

\begin{proof}
Under the running hypotheses, it follows from Proposition~\ref{prop:lfourcorrect} and Corollary~\ref{cor:whenBFelementsarenonzeroAnalytic1} that the quotient $H^1_{\emptyset,\lambda}(K,\TT^\dagger)/H^1_{\lambda,\lambda}(K,\TT^\dagger)$ is torsion. On the other hand, this torsion module injects via $\res_{\fp/\lambda}$ into $H^1_\an(K_\fp,D_p/F_\lambda)$, which is torsion-free (of rank one). This proves that  $H^1_{\emptyset,\lambda}(K,\TT^\dagger)=H^1_{\lambda,\lambda}(K,\TT^\dagger)$. The second assertion and the exactness of (\ref{eqn:longexactglobal2degenerated}) easily follow from the exactness of (\ref{eqn:longexactglobal2}). 
\end{proof}
For the rest of the article, we assume the validity of the hypotheses of Proposition~\ref{prop:lfourcorrect} (namely that $p\nmid N_f\rm{disc}(K/\QQ)$ and $N^-$ is a square-free product of an even number of primes). Corollary~\ref{cor:comparisonofBKselmerwithrelaxedBKintheindefinitecase} therefore applies and shows that the $\cH(\Gamma^\ac)$-module $\mathfrak{X}_{\lambda,\lambda}^{\rm an}$ has rank one. %Our goal in the current section is to have a closer look at the torsion-submodule of $\mathfrak{X}_{\lambda,\lambda}^\an$. 
\begin{defn}
Following Pottharst, we set 
$$\mathscr{D}^iM=\begin{cases} \textup{Hom}_{\LL_\infty}(M/M_{\textup{tor}},\LL_\infty),& \textup{ if } i=0,\\
\prod_{\lambda\in I} \mathfrak{P}_\nu^{-n_\nu}/\LL_\infty,& \textup{ if } i=1\,
\end{cases}$$
for a coadmissible $\LL_\infty$-module $M$ with $M_{\textup{tor}}\stackrel{\sim}{\lra} \prod_{\nu\in I} \LL_\infty/\mathfrak{P}_\nu^{n_\nu}$\,.
\end{defn}
Note that $\textup{char}_{\LL_\infty}\left(M_{\textup{tor}}\right)=\textup{char}_{\LL_\infty}\left(\mathscr{D}^1M\right).$
\begin{theorem}
\label{thm:Hzeroalphavanishesandcomparinftorsion}
\begin{enumerate}[i.]
\item We have $H^1_{0,\lambda}(K,\TT^\dagger)=0$ and the sequence
\begin{equation}
\label{eqn:longexactglobal3degenerated}
0\lra {H^1_{\lambda,\lambda}(K,\TT^\dagger)}\stackrel{\res_\fp}{\lra} \han^1(K_{\fp},F_\lambda)\lra\mathfrak{X}_{0,\lambda}^\an\lra\mathfrak{X}_{\lambda,\lambda}^\an\lra0
\end{equation}
is exact.
\item There exists a canonical isomorphism $\mathscr{D}^1\mathfrak{X}_{0,\lambda}^\an \stackrel{\sim}{\lra}\mathfrak{X}^\an_{\emptyset,\lambda}\,.$
\item We have 
$$\textup{char}_{\LL_\infty}\left(\mathscr{D}^1\mathfrak{X}^\an_{\lambda,\lambda}\right)=\textup{char}_{\LL_\infty}\left(\mathscr{D}^1\mathfrak{X}^\an_{0,\lambda}\right)\cdot \mathscr{J}_\fp\,,$$ 
where $\mathscr{J}_\fp$ is the characteristic ideal of 
$\textup{coker}\left({H^1_{\lambda,\lambda}(K,\TT^\dagger)}\stackrel{\res_\fp}{{\hookrightarrow}} \han^1(K_{\fp},F_\lambda)\right)$
in the sequence (\ref{eqn:longexactglobal3degenerated}).
\end{enumerate}
\end{theorem}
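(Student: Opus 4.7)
The three parts are most efficiently proved together via Pottharst-Nekov\'a\v{r} Poitou-Tate duality for analytic Selmer complexes (an anticyclotomic analogue of the duality in \cite[\S4]{jaycyclotmotives}, relying on the $(\varphi,\Gamma^\ac)$-module formalism recalled in Remark~\ref{rem:extensionofpottharsttheorytolubintatetowers}). The crucial inputs are: (a) the identification $\mathscr{D}(\TT^\dagger)\cong \TT^\dagger$ up to the involution on $\LL_\infty$, which under $\varepsilon_f=\mathbbm{1}$ follows from the symplectic self-duality of $V_{f,\chi}$; and (b) the local self-orthogonality $F_\lambda^\perp=F_\lambda$, which holds because $F_\lambda\subset D_\fq$ has rank one while the local Tate pairing induced by (a) is alternating. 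Together these imply that $\Delta_{0,\lambda}$ is Poitou-Tate dual to $\Delta_{\emptyset,\lambda}$, and the universal-coefficient spectral sequence for perfect complexes over $\LL_\infty$ then yields a canonical short exact sequence
\[
0 \lra \mathscr{D}^1\bigl(\mathfrak{X}^\an_{0,\lambda}\bigr) \lra \mathfrak{X}^\an_{\emptyset,\lambda} \lra \mathscr{D}^0\bigl(H^1_{0,\lambda}(K,\TT^\dagger)\bigr) \lra 0. \qquad (\star)
\]

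For (i), I will combine $(\star)$ with the torsion-ness of $\mathfrak{X}^\an_{\emptyset,\lambda}$ (Corollary~\ref{cor:whenBFelementsarenonzeroAnalytic1}) to deduce that $\mathscr{D}^0 H^1_{0,\lambda}(K,\TT^\dagger) = \Hom_{\LL_\infty}(H^1_{0,\lambda}/H^{1,\textup{tor}}_{0,\lambda},\LL_\infty)$ is torsion; since this $\Hom$ lands in a free $\LL_\infty$-module, it must vanish, whence $H^1_{0,\lambda}(K,\TT^\dagger)$ is $\LL_\infty$-torsion. Combining with torsion-freeness of $H^1_{0,\lambda}(K,\TT^\dagger) \hookrightarrow \han^1(G_{K,S},\TT^\dagger)$ (an analogue of weak Leopoldt provided by \cite[\S1]{jayanalyticfamilies}), the vanishing $H^1_{0,\lambda}(K,\TT^\dagger)=0$ follows, and the 4-term exact sequence~(\ref{eqn:longexactglobal3degenerated}) is read off from (\ref{eqn:longexactglobal3}). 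Part (ii) is then immediate from $(\star)$ since its rightmost term now vanishes.

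For (iii), I will extract from (\ref{eqn:longexactglobal3degenerated}) the short exact sequence
\[
0\lra A\lra \mathfrak{X}^\an_{0,\lambda}\lra \mathfrak{X}^\an_{\lambda,\lambda}\lra 0
\]
where $A:=\han^1(K_\fp,F_\lambda)/\res_\fp(H^1_{\lambda,\lambda})$ is $\LL_\infty$-torsion of characteristic ideal $\mathscr{J}_\fp$ by definition. A diagram chase shows that $A$ embeds into $(\mathfrak{X}^\an_{0,\lambda})^\textup{tor}$ and that every element of $(\mathfrak{X}^\an_{\lambda,\lambda})^\textup{tor}$ lifts to a torsion element of $\mathfrak{X}^\an_{0,\lambda}$, giving
\[
0\lra A\lra (\mathfrak{X}^\an_{0,\lambda})^\textup{tor}\lra (\mathfrak{X}^\an_{\lambda,\lambda})^\textup{tor}\lra 0.
\]
Taking characteristic ideals and using the identity $\textup{char}_{\LL_\infty}(\mathscr{D}^1 M) = \textup{char}_{\LL_\infty}(M^\textup{tor})$ valid for any coadmissible $\LL_\infty$-module $M$ yields the desired relation between $\mathscr{D}^1\mathfrak{X}^\an_{0,\lambda}$, $\mathscr{D}^1\mathfrak{X}^\an_{\lambda,\lambda}$, and $\mathscr{J}_\fp$.

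\textbf{Main obstacle.} The principal difficulty is to formulate and verify Pottharst-Nekov\'a\v{r} duality for analytic Selmer complexes in our anticyclotomic setting governed by a height-one Lubin-Tate tower: one must check that Berger-Kisin's triangulations pair correctly under local Tate duality, that the resulting universal-coefficient spectral sequence degenerates as described to produce $(\star)$, and that the alternating pairing property responsible for $F_\lambda^\perp=F_\lambda$ indeed carries over from the cyclotomic to the anticyclotomic formalism --- without this identity the dual Selmer structure would involve $F_{\lambda'}$ for the conjugate Frobenius root, invalidating (ii).
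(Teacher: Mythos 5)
Your argument follows essentially the same route as the paper's: both hinge on the (anticyclotomic extension of the) Greenberg--Pottharst duality theorem to produce the four-term exact sequence $(\star)$, which is precisely the paper's sequence (\ref{eqn:greenbergduality}); you then combine the torsion-ness of $\mathfrak{X}^\an_{\emptyset,\lambda}$ with the torsion-freeness of $H^1_{0,\lambda}(K,\TT^\dagger)$ to deduce (i), read off (ii) from $(\star)$ once $H^1_{0,\lambda}$ vanishes, and obtain (iii) from the four-term sequence (\ref{eqn:longexactglobal3degenerated}). Your supplementary explanation of why $\Delta_{0,\lambda}$ and $\Delta_{\emptyset,\lambda}$ are Poitou--Tate dual, via the symplectic self-duality of the central critical twist and the consequent Lagrangian property $F_\lambda^\perp=F_\lambda$ of any rank-one saturated submodule, is a reasonable (and correct) elaboration of what the paper compresses into the phrase ``make further use of the fact that $T_{f,\chi^{-1}}$ is self-dual''; you are also right to flag that the transfer of this duality formalism from the cyclotomic to the height-one Lubin--Tate setting is the real content of the ``slight extension'' invoked by the paper, and that this verification is not carried out in detail there either.

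One point worth noting: if you carry out your step (iii) honestly, the short exact sequence
$0\to A\to (\mathfrak{X}^\an_{0,\lambda})^{\textup{tor}}\to (\mathfrak{X}^\an_{\lambda,\lambda})^{\textup{tor}}\to 0$
together with $\textup{char}_{\LL_\infty}(A)=\mathscr{J}_\fp$ yields
$\textup{char}_{\LL_\infty}\bigl(\mathscr{D}^1\mathfrak{X}^\an_{0,\lambda}\bigr)=\mathscr{J}_\fp\cdot\textup{char}_{\LL_\infty}\bigl(\mathscr{D}^1\mathfrak{X}^\an_{\lambda,\lambda}\bigr)$,
which puts $\mathscr{J}_\fp$ on the opposite side from the formula displayed in the statement of part (iii). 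The relation you actually prove is the one needed in the first equality of the proof of Theorem~\ref{thm:padicGZ} (where (ii) and (iii) are invoked to get $\textup{Fitt}(\mathscr{D}^1\mathfrak{X}^A_{\lambda,\lambda})\cdot\mathscr{J}_\fp=\textup{Fitt}(\mathfrak{X}^A_{\emptyset,\lambda})$), so your derivation is internally consistent with the paper's later use of the theorem; the displayed formula in (iii) itself appears to carry a typographical slip which your more careful diagram chase exposes. You should state explicitly which relation your argument establishes rather than leaving it as ``the desired relation.'' Also note that the surjectivity of $(\mathfrak{X}^\an_{0,\lambda})^{\textup{tor}}\to(\mathfrak{X}^\an_{\lambda,\lambda})^{\textup{tor}}$ deserves the one-line justification you indicate: a lift $b$ of a torsion element $c$ satisfies $rb\in A$ for some $r\neq 0$, and $A$ being torsion forces $b$ itself to be torsion; this relies on $\LL_\infty$ being a (Bezout) domain.
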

\begin{proof}
The generalization of Greenberg's duality theorem due to Porttharst in \cite[Theorem 4.1(3)]{jaycyclotmotives} (which we slightly extend here in order to apply it with the Selmer complexes we have introduced at the start of Section~\ref{subsec:analyticSelmergroups}, and make further use of the fact that $T_{f,\chi^{-1}}$ is self-dual) yields an exact sequence 
\begin{equation}
\label{eqn:greenbergduality}
0\lra \mathscr{D}^1\mathfrak{X}_{0,\lambda}^\an\lra \mathfrak{X}^\an_{\emptyset,\lambda}\lra \mathscr{D}^0H^1_{0,\lambda}(K,\TT^\dagger)\lra0\,.
\end{equation}
Since the module $\mathfrak{X}^\an_{\emptyset,\lambda}$ is torsion by Corollary~\ref{cor:whenBFelementsarenonzeroAnalytic1}, the vanishing of the module $\mathscr{D}^0H^1_{0,\lambda}(K,\TT^\dagger)$ follows. But the module $H^1_{0,\lambda}(K,\TT^\dagger) \subset H^1_{\emptyset,\lambda}(K,\TT^\dagger)$ is torsion-free and the first assertion in (i) follows. The exactness of the sequence (\ref{eqn:longexactglobal3degenerated}) is now a direct consequence of the exactness of (\ref{eqn:longexactglobal3}), whereas (ii) follows from (i) used with the exact sequence (\ref{eqn:greenbergduality}). Finally, (iii) follows from (\ref{eqn:longexactglobal3degenerated}).
\end{proof}
\subsection{The cyclotomic deformation and Rubin-style formula}
\label{cyclodeformRubin}
Recall that  the hypotheses of Proposition~\ref{prop:lfourcorrect} are in effect. This in particular means that the Beilinson-Flach class $c_\lambda^\ac$ is a non-trivial element of $H^1_{\lambda,\lambda}(K,\TT^\dagger)$. 
%In this portion of the article, we would like to underline the fact that this element depends on the choice of the $p$-distinguished character $\chi$ and 
We write  $c^\lambda_A\in H^1_{\lambda,\lambda}(K,V_A)$ for the image of $c^\lambda_\ac$ under the map induced from the {natural map} $\calHac \ra \calHacn=A$. % We likewise have a Beilinson-Flach element $c^\lambda_{\ac,\chi^{-1}}\in H^1_{\lambda,\lambda}(K,\mathscr{D}(V_A))\cong H^1_{\lambda,\lambda}(K,T_{f,\chi^{-1}}\widehat{\otimes}_{\cO_{L}}A^\iota)^\iota$ (where the very final $\iota$ is only  to ensure that the isomorphism is $A$-equivariant).
In what follows, we shall explain how to incorporate the cyclotomic deformation in this picture. We shall mostly follow \cite[Section 2.4 and 2.7]{benoisbuyukboduk} and also adopt the notation of \textit{loc. cit.} Set $\overline{V}_A:=V_A\widehat{\otimes}_L\,\calHcyc^{\iota}$ and for any $(\varphi,\Gamma^\cyc)$-module $\mathbb{D}$ over $\RR_A$, we set $\overline{\mathbb{D}}:=\mathbb{D}\,\widehat{\otimes}\,D_{\textup{rig}}^\dagger(\calHcyc^\iota)$. We may then define (starting off with $\overline{D}_{\lambda,A}\subset \overline{D}_{\textup{rig}}^\dagger(V_A)$) the Iwasawa theoretic Selmer complex 
$$\textbf{R}\Gamma_\Iw(K^\cyc/K,\Delta_{\emptyset,\lambda},V_A):=\textbf{R}\Gamma(G_{K,S},\overline{\Delta}_{\emptyset,\lambda},\overline{V}_A)$$
where $\overline{\Delta}_{\emptyset,\lambda}$ stands for the local conditions determined by the $(\varphi,\Gamma^\cyc)$-module $ \overline{D}_{\textup{rig}}^\dagger(V_A)$ at $\fp$ and by $\overline{D}_{\lambda,A}$ at $\fp^c$. We use similar conventions to what we have adopted before for the cohomology of this object.
\begin{defn}
Let $\bar{c}^\lambda \in H^1(K,\overline{V}_A)$ denote the central critical Beilinson-Flach element associated to the $\lambda$-stabilization of $f$, induced from the analytification morphisms $\Lambda^\ac \ra A$ and $\Lambda^\cyc\ra \calHcyc$. % If $\lambda^\prime\in \{\alpha,\beta\}$ denotes the element which is not $\lambda$, 
\end{defn}
Note that $\bc^\lambda \in H^1_{\emptyset,\lambda}(K,\overline{V}_A)$ by Proposition~\ref{prop:sameprojection}. We write $\pi_A$ for the augmentation morphism $A\widehat{\otimes}\calHcyc\stackrel{\pi_A}{\lra} A$ induced from $\gamma_\cyc\mapsto 1$. This map in turn induces a map $H^1_{\emptyset,\lambda}(K,\overline{V}_A) \stackrel{\pi_A}{\lra}H^1_{\emptyset,\lambda}(K,{V}_A)$ which we denote by the same symbol. Note that $\pi_A(\bc^\lambda)=c^\lambda_A$ and likewise, $\pi_A\left(\res_{\mathfrak{q}}(\bc^\lambda)\right)=\res_\q(c^\lambda_A)$ for each $\q\in \{\fp,\fp^c\}$.
\begin{defn}
Let $\pi_{/\lambda}$ denote the natural morphism
$$\pi_{/\lambda}\,:\,H^1_\an(K_{\fp}, \overline{D}_{\textup{rig}}^\dagger(V_A))\lra H^1_\an(K_{\fp}, \overline{D}_{\textup{rig}}^\dagger(V_A)/\overline{D}_{\lambda,A})\,.$$
We similarly denote by $\pi^A_{/\lambda}$ the morphism 
$$\pi^A_{/\lambda}\,:\,H^1_\an(K_{\fp}, {D}_{\textup{rig}}^\dagger(V_A))\lra H^1_\an(K_{\fp}, {D}_{\textup{rig}}^\dagger(V_A)/{D}_{\lambda,A}).$$
\end{defn}
Note that $\pi_A\circ \pi_{/\lambda}=\pi_{/\lambda}\circ\pi_A=\pi^A_{/\lambda}$ and that $\pi_{/\lambda}^A(\res_\fp(c^\lambda_A))=0$ by Proposition~\ref{prop:Lalpha}.
\begin{proposition}
\label{prop:derivativeofBFclass}
There exists a unique element $\partial_\fp \bc^\lambda \in H^1_\an(K_{\fp}, \overline{D}_{\textup{rig}}^\dagger(V_A)/\overline{D}_{\lambda,A})$ such that $$\pi_{/\lambda}\left(\res_{\fp}(\bc^{\lambda})\right)=\frac{\gamma_\cyc-1}{\log_p\chi_0(\gamma_\cyc)}\cdot \partial_\fp \bc^\lambda\,.$$
\end{proposition}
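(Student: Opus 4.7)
The plan is to reduce the statement to a base-change computation in the relevant analytic $(\varphi,\Gamma^\cyc)$-cohomology. First I would invoke Proposition~\ref{prop:Lalpha}: the running hypothesis (with $\varepsilon_f = \mathbb{1}$ and $(\textup{Sign}-)$) gives the vanishing $\mathfrak{L}_{\lambda,\lambda}^\ac = 0$. Via the Pottharst dictionary between the $\lambda$-component of the Perrin-Riou map $\cL_{\lambda,\fp}$ and the projection onto the quotient by $D_{\lambda,A}$, this translates into $\pi^A_{/\lambda}(\res_\fp(c^\lambda_A)) = 0$. Using $\pi_A \circ \pi_{/\lambda} = \pi^A_{/\lambda}$ and $\pi_A\bigl(\res_\fp(\bc^\lambda)\bigr) = \res_\fp(c^\lambda_A)$ (both pointed out in the text just above the statement), we deduce
\[
\pi_A\bigl(\pi_{/\lambda}(\res_\fp(\bc^\lambda))\bigr) \;=\; 0,
\]
so $\pi_{/\lambda}(\res_\fp(\bc^\lambda))$ lies in the kernel of $\pi_A$ acting on $H^1_\an(K_\fp, \overline{D}_{\textup{rig}}^\dagger(V_A)/\overline{D}_{\lambda,A})$.

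Next I would analyze this kernel through the short exact sequence of $(\varphi,\Gamma^\cyc)$-modules
\[
0 \lra \overline{D}_{\textup{rig}}^\dagger(V_A)/\overline{D}_{\lambda,A} \xrightarrow{\gamma_\cyc - 1} \overline{D}_{\textup{rig}}^\dagger(V_A)/\overline{D}_{\lambda,A} \lra D_{\textup{rig}}^\dagger(V_A)/D_{\lambda,A} \lra 0,
\]
where the right-hand quotient realizes the augmentation $\gamma_\cyc \mapsto 1$. The associated long exact sequence reads
\[
H^0_\an(K_\fp, D_{\textup{rig}}^\dagger(V_A)/D_{\lambda,A}) \lra H^1_\an(K_\fp, \overline{D}_{\textup{rig}}^\dagger(V_A)/\overline{D}_{\lambda,A}) \xrightarrow{\gamma_\cyc-1} H^1_\an(K_\fp, \overline{D}_{\textup{rig}}^\dagger(V_A)/\overline{D}_{\lambda,A}) \xrightarrow{\pi_A} H^1_\an(K_\fp, D_{\textup{rig}}^\dagger(V_A)/D_{\lambda,A}).
\]
The leftmost term vanishes because $D_{\textup{rig}}^\dagger(V_A)/D_{\lambda,A}$ is a rank-one $(\varphi,\Gamma^\cyc)$-module whose $\varphi$-eigenvalue (the other root $\lambda'$ of the Hecke polynomial, in the appropriate normalization) has strictly positive valuation by the non-ordinariness of $f$; this prevents any nontrivial $\varphi$- and $\Gamma^\cyc$-fixed vectors, and the vanishing persists over $A$ by flatness. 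Consequently, multiplication by $(\gamma_\cyc - 1)$ on $H^1_\an(K_\fp, \overline{D}_{\textup{rig}}^\dagger(V_A)/\overline{D}_{\lambda,A})$ is injective, and there exists a \emph{unique} class $z$ in this module satisfying $\pi_{/\lambda}(\res_\fp(\bc^\lambda)) = (\gamma_\cyc - 1)\cdot z$. I would then set
\[
\partial_\fp \bc^\lambda \;:=\; \log_p\chi_0(\gamma_\cyc) \cdot z,
\]
where $\log_p\chi_0(\gamma_\cyc) \in \QQ_p^\times$ is the nonzero scalar obtained by applying the $p$-adic logarithm to the topological generator $\chi_0(\gamma_\cyc) \in 1 + p\ZZ_p$. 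The defining identity and uniqueness are then both immediate.

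The hard part will be to rigorously establish the exact sequence above in our two-variable setting, together with the vanishing of $H^0_\an(K_\fp, D_{\textup{rig}}^\dagger(V_A)/D_{\lambda,A})$. The former rests on Pottharst's formalism for analytic Iwasawa cohomology over affinoid coefficient rings, which applies directly to the cyclotomic $(\varphi,\Gamma^\cyc)$-modules at $\fp$ over the affinoid $A$. The latter requires an explicit computation of the $\varphi$-eigenvalues on the Dieudonn\'e module (already available through Section~\ref{S:wach}) combined with the Berger-Kisin-Ren construction of the triangulation $D_{\lambda,A}$; the central critical normalization ensures the relevant eigenvalues remain away from $1$. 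A more delicate, but essentially known, point is the Pottharst-style reinterpretation of $\cL_{\lambda,\fp}$ as the projection onto $D_{\textup{rig}}^\dagger(V_A)/D_{\lambda,A}$, which is what underlies the opening reduction from $\mathfrak{L}_{\lambda,\lambda}^\ac=0$ to $\pi^A_{/\lambda}(\res_\fp(c^\lambda_A))=0$.
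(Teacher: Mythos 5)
Your argument follows the paper's proof in all essentials: both reduce existence to the identity $\pi_A\circ\pi_{/\lambda}\left(\res_\fp(\bc^\lambda)\right)=\pi_{/\lambda}\left(\res_\fp(c^\lambda_A)\right)=0$ (a consequence of Proposition~\ref{prop:Lalpha}), identify $\ker\pi_A$ with $(\gamma_\cyc-1)\cdot H^1_\an(K_\fp,\overline{D}_{\textup{rig}}^\dagger(V_A)/\overline{D}_{\lambda,A})$, and deduce uniqueness from the absence of $(\gamma_\cyc-1)$-torsion in this $H^1$. You derive the last two facts simultaneously from the long exact sequence associated to multiplication by $\gamma_\cyc-1$ and the vanishing of $H^0_\an(K_\fp,D_{\textup{rig}}^\dagger(V_A)/D_{\lambda,A})$, which the paper asserts without comment under "our running hypotheses"; this is a sound unpacking of the same step, though you should phrase the $H^0$-vanishing justification with some care (the valuation of the Frobenius eigenvalue on the rank-one quotient after the central critical twist is not automatically positive for arbitrary weight $k$ --- the robust point is the genericity of the Sen weight over the affinoid $A$, which rules out any isomorphism with an $x^{-i}$-twist of $\RR_A$).
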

\begin{proof}
Observe that 
\begin{align*}
\pi_A\circ\pi_{/\lambda}\left(\res_{\fp}(\bc^{\lambda})\right)=\pi_{/\lambda}\circ\pi_A\left(\res_{\fp}(\bc^{\lambda})\right)=\pi_{/\lambda}\left(\res_{\fp}(c^{\lambda}_A)\right)=0\,.
\end{align*}
This shows that $\pi_{/\lambda}\left(\res_{\fp}(\bc^{\lambda})\right)\in \ker(\pi_A)=(\gamma_\cyc-1)H^1_\an(K_{\fp}, \overline{D}_{\textup{rig}}^\dagger(V_A)/\overline{D}_{\lambda,A})$, proving the existence of $\partial_\fp \bc^\lambda$ with the desired property. Its uniqueness follows from the fact that $H^1_\an(K_{\fp}, \overline{D}_{\textup{rig}}^\dagger(V_A)/\overline{D}_{\lambda,A})[\gamma_\cyc-1]=0$ under our running hypotheses.
\end{proof}

We define $\partial_\fp c^\lambda_A:=\pi_A\left(\partial_\fp \bc^\lambda\right) \in H^1_\an(K_{\fp}, {D}_{\textup{rig}}^\dagger(V_A)/{D}_{\lambda,A})$.
\begin{theorem}[Rubin-style formula]
\label{thm:analyticRubinstyleformula}
$\langle c^\lambda_A,c^\lambda_A\rangle_{\lambda,\lambda}=-\left\langle \partial_\fp c_{A}^\lambda,\res_\fp(c_A^\lambda)\right\rangle_{\lambda,\lambda}^{(\fp)}$\,.
\end{theorem}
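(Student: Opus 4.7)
The plan is to proceed via the Bockstein mechanism underlying the cyclotomic $A$-adic height pairing, exploiting $\bc^\lambda$ as a canonical (though not quite Selmer-compatible) lift of $c_A^\lambda$ to the cyclotomic deformation.

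First, I would recall the construction of $\langle\,,\,\rangle_{\lambda,\lambda}$ in the Benois--Nekov\'a\v{r} formalism. The height pairing is built from the Bockstein morphism
$$\beta_\cyc\,:\,\widetilde{H}^1(G_{K,S},\Delta_{\lambda,\lambda},V_A)\lra \widetilde{H}^2(G_{K,S},\Delta_{\lambda,\lambda},V_A)$$
associated to the tautological triangle
$$\widetilde{\mathbf{R}\Gamma}_{\Iw}(K^\cyc/K,\Delta_{\lambda,\lambda},V_A)\xrightarrow{\gamma_\cyc-1}\widetilde{\mathbf{R}\Gamma}_{\Iw}(K^\cyc/K,\Delta_{\lambda,\lambda},V_A)\lra \widetilde{\mathbf{R}\Gamma}(G_{K,S},\Delta_{\lambda,\lambda},V_A),$$
together with the Poitou--Tate cup product pairing on $\widetilde{H}^i(G_{K,S},\Delta_{\lambda,\lambda},\mathscr{D}(V_A))\otimes \widetilde{H}^{3-i}(G_{K,S},\Delta_{\lambda,\lambda},V_A)$. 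Concretely, $\langle c_A^\lambda,c_A^\lambda\rangle_{\lambda,\lambda}$ equals $\tfrac{1}{\log_p\chi_0(\gamma_\cyc)}$ times the cup product of $\beta_\cyc(c_A^\lambda)$ with $c_A^\lambda$.

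Second, I would exhibit a preferred $\gamma_\cyc$-lift of $c_A^\lambda$. The natural candidate $\bc^\lambda$ lies in $\widetilde{H}^1_{\Iw}(K^\cyc/K,\Delta_{\emptyset,\lambda},V_A)$ and projects to $c_A^\lambda$ under $\pi_A$, but it fails to lie in the finer Selmer module $\widetilde{H}^1_{\Iw}(K^\cyc/K,\Delta_{\lambda,\lambda},V_A)$ precisely because of the non-vanishing of $\pi_{/\lambda}\bigl(\res_\fp(\bc^\lambda)\bigr)$. Proposition~\ref{prop:derivativeofBFclass} identifies this obstruction as $\tfrac{\gamma_\cyc-1}{\log_p\chi_0(\gamma_\cyc)}\cdot\partial_\fp\bc^\lambda$. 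Working in the mapping cone description of the Selmer complex, one can thus represent $\bc^\lambda$ by a \v{C}ech-style cocycle whose only non-Selmer component is a single term at $\fp$ equal to $\partial_\fp\bc^\lambda$ (modulo $(\gamma_\cyc-1)$), and whose image in $\widetilde{H}^2(G_{K,S},\Delta_{\lambda,\lambda},V_A)$ under the coboundary of the above triangle computes $\beta_\cyc(c_A^\lambda)$.

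Third, I would evaluate the resulting cup product. Since the global part of the lift of $\bc^\lambda$ is Selmer for the relaxed condition $\Delta_{\emptyset,\lambda}$, the only surviving contribution after cupping with $c_A^\lambda$ and reducing mod $(\gamma_\cyc-1)$ comes from the local fibre at $\fp$. By the standard compatibility of the Selmer-complex cup product with the local Tate pairing (Pottharst's analytic analogue of Nekov\'a\v{r}'s $9.6$--$9.8$), this local contribution is exactly $-\bigl\langle \partial_\fp c_A^\lambda,\res_\fp(c_A^\lambda)\bigr\rangle^{(\fp)}_{\lambda,\lambda}$, the sign being dictated by the mapping cone convention (the ``singular'' direction in the cone is $-\iota^+_\fp$ in the notation of \S\ref{subsec:analyticSelmergroups}). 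Combining these three steps yields the claimed identity.

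The main obstacle, on which the argument hinges, is the careful translation of the Benois--Pottharst Bockstein-plus-cup-product machinery into our Lubin--Tate setting and verifying the sign convention. In particular, this requires checking that the extension of Pottharst's analytic formalism outlined in Remark~\ref{rem:extensionofpottharsttheorytolubintatetowers} preserves the cone-theoretic local/global compatibility (so that a global cocycle whose sole non-Selmer defect lies at $\fp$ pairs under $\beta_\cyc\cup(-)$ exactly via the local Tate pairing at $\fp$), and that this compatibility respects the normalisation factor $\log_p\chi_0(\gamma_\cyc)$ built into Definition of $\partial_\fp\bc^\lambda$. Everything else is a formal consequence of the construction of the height pairing.
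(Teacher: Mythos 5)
Your proposal captures the correct mechanism — the Benois--Nekov\'a\v{r} Bockstein construction of the cyclotomic $A$-adic height, the use of $\bc^\lambda$ as a lift living in the relaxed-at-$\fp$ Selmer complex, and the identification of the obstruction to Selmer-compatibility at $\fp$ with $\partial_\fp\bc^\lambda$ — and this is indeed the approach the paper takes (which defers entirely to \cite[Cor.~4.16]{benoisbuyukboduk} for the formal cup-product computation you spell out in your first two steps).

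The gap is in your third step. You assert that ``since the global part of the lift of $\bc^\lambda$ is Selmer for the relaxed condition $\Delta_{\emptyset,\lambda}$, the only surviving contribution comes from the local fibre at $\fp$.'' This conflates two different things. The Rubin-style formula, as derived via the Bockstein and cup product, a priori produces a sum of local terms over \emph{all} places $v\in\Sigma$, including the primes $\ell\nmid p$ where the local condition is the unramified cone. Even when the lift $\bc^\lambda$ satisfies the unramified condition at such $\ell$ in degree one, the cup product in the Selmer-complex formalism can pick up derived contributions from the non-$p$ local fibres (the unramified complex $\bigl[(\TT^{\dagger})^{\mathscr{I}_{\frak{l}}} \xrightarrow{\textup{Fr}_\frak{l}-1} (\TT^{\dagger})^{\mathscr{I}_{\frak{l}}}\bigr]$ is not concentrated in a single degree, and Bockstein/connecting morphisms feed into the singular part). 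What kills these contributions is not the Selmer-compatibility of the lift but rather the vanishing of $\res_\ell(c^\lambda_A)$ itself on a Zariski-dense set of points of $\textup{Sp}\,A$; this in turn is a consequence of the local--global compatibility of the Langlands correspondence (Carayol), which forces the specializations of the Beilinson--Flach class to be locally trivial away from $p$. The paper handles this explicitly; your proposal should do likewise, since without that input the reduction to a single local term at $\fp$ is not justified.
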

\begin{proof}
The proof of \cite[Corollary 4.16]{benoisbuyukboduk} applies verbatim\footnote{Even though Corollary 4.16 of  \cite{benoisbuyukboduk} is stated for Deligne's $G_\QQ$-representation attached to $f$ with coefficients in a finite extension of $\QQ_p$, its proof is entirely formal (and ultimately, it follows very closely the proof of \cite[Corollary A.11]{kbbMTT} in the ordinary setting) and applies with the trianguline $G_K$-representation $V_A$ with coefficients in $A$. }, with the following choices: 
\begin{itemize}
\item The pairing denoted by $\frak{H}(\,,\,)$ in op. cit. is to be taken as $\langle\,,\,\rangle_{\lambda,\lambda}\cdot{\dfrac{\gamma_\cyc-1}{\log_p\chi_0(\gamma_\cyc)}} \mod (\gamma_\cyc-1)^2\,;$
\item $[x_{\rm f}]=[y_{\rm f}]=c_A^\lambda$; $[\mathbb{X}]=\overline{c}^\lambda$ and $\frak{d}[\mathbb{X}]=\partial_{\frak{p}}c_A^\lambda$ (the reason why only the prime $\frak{p}$ appears here is  explained below);
\item the pairing denoted by $\langle\,,\,\rangle: H^1(\widetilde{\mathbb{D}}_f)\times  H^1({\mathbb{D}}_f)\ra E$ (in the notation of op. cit.) is to be taken as the $A$-valued $\frak{p}$-local cup product pairing $\left\langle\,,\,\right\rangle_{\lambda,\lambda}^{(\fp)}$.
\end{itemize}
Notice that the local contribution is concentrated at the prime $\frak{p}$ for the following reasons: At primes $\frak{l}\nmid p$, the  specializations of $\res_\frak{l}(c^\lambda_A)$ on a dense subset of points of $\textup{Sp} A$ are trivial by the local-global compatibility of the Langlands correspondence (c.f. \cite{Car86}) and therefore the local contributions at primes $\frak{l}\nmid p$ vanish. For the local contribution at $\frak{p}^c$, we have $\pi_{/\lambda}\circ\res_{\frak{p^c}}(\overline{c}^\lambda)=0$ by Proposition~\ref{prop:sameprojection} and therefore its Bockstein normalized derivative $\partial_{\fp^c} c_{A}^\lambda$ at the prime $\frak{p}^c$ vanishes.
\end{proof}
\begin{defn}
\label{def:algebraicderivedpadicLfunction}
We define the \textbf{derived $p$-adic $L$-function} $\mathfrak{L}^\prime_{\lambda,\lambda,\ac} \in A$ by setting $\mathfrak{L}^\prime_{\lambda,\lambda,\ac}:=\mathcal{L}_{\lambda,A}\left(\partial_\fp c^\lambda_A\right)$,
where $\mathcal{L}_{\lambda,A}:=\pi_A\circ\mathcal{L}_{\lambda,\fp}$ is the restriction of the two-variable Perrin-Riou's map $($for $K_\fp$ and corresponding to the $p$-stabilization with respect to $\lambda$$)$ to the anticyclotomic direction.
\end{defn}
Corollary~\ref{cor:derivativeofLalphaalphaisLalphaalphaprime} below justifies our terminology here.
\begin{remark}
One may directly define a $p$-adic Perrin-Riou map for $V_A$ $($as in \cite{Nakamuraepsilon, nakamura2014Jussieu}; see also \cite[\S5.1]{benoisbuyukboduk}$)$. This map is related to $\pi_A\circ \mathcal{L}_{\lambda,\fp}$ via  \cite[Theorem~$5.1(\textup{iii})$]{benoisbuyukboduk}.
\end{remark}
Recall the Beilinson-Flach $p$-adic $L$-function
$$\mathfrak{L}_{\lambda,\lambda}:=\mathcal{L}_{\lambda,\fp}\left(\res_\fp\left(c^\lambda\right)\right) \in \mathcal{H}_A(\Gamma^\cyc):=A\widehat{\otimes}\calHcyc.$$
%\textcolor{red}{\noindent\rule{12cm}{1.2pt}}
%\textcolor{red}{\noindent\rule{12cm}{1.2pt}}
%\item There exists a two-variable Perrin-Riou map 
%$$\mathcal{L}_\lambda\,:H^1_\an(K_\fp,D_p^\dagger/F_\lambda^\dagger)\lra \LL_\infty^\dagger$$
%where $\LL_\infty^\dagger$ is analytification of the two-variable Iwasawa algebra $\LL$. Here, $F_\lambda^\dagger\subset D_p^\dagger$ are the appropriate lift of the pair $F_\lambda\subset D_p$ to the $\ZZ_p^2$-tower (we implicitly assume here that they do exist; note that Loeffler and Zerbes  work with a similar object in the context of families of modular forms, which we may perhaps simply specialize to a CM family and get the desired map). 
%\item The map $\mathcal{L}_\lambda$ is close to being surjective. To that end, it would be helpful to know that the $\LL_\infty^\dagger$-module structure of the analytic cohomology group $H^1(K_\fp,D_p^\dagger/F_\lambda^\dagger)$; e.g., that it is torsion free of rank one under reasonably mild hypothesis.
%\item Once we have these objects at hand, we simply set 
%\item We may now conclude in this case that the projection of $\mathcal{L}_\lambda(\partial c_\lambda)$ (that shows up in step $(\bbeta_1))$ to $\LL_\infty$ generates $\mathscr{L}_{\lambda,\lambda}^\prime_\ac$. Thence, using $(\bbeta_1)$, it follows that 
\begin{corollary}
\label{cor:derivativeofLalphaalphaisLalphaalphaprime} $\mathfrak{L}_{\lambda,\lambda}=\displaystyle{\frac{\gamma_\cyc-1}{\log_p\chi_0(\gamma_\cyc)}}\cdot\mathfrak{L}_{\lambda,\lambda,\ac}^\prime \mod (\gamma_\cyc-1)^2\,.$
\end{corollary}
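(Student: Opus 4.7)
My plan is to deduce this as an immediate consequence of Proposition~\ref{prop:derivativeofBFclass}, combined with the $\mathcal{H}_A(\Gamma^\cyc)$-linearity of Perrin-Riou's big logarithm and its compatibility with the base-change morphism $\pi_A$. Unwinding the definitions, $\mathfrak{L}_{\lambda,\lambda}$ is $\mathcal{L}_{\lambda,\fp}(\res_\fp(\bc^\lambda))$, where $\bc^\lambda$ denotes the two-variable Beilinson-Flach class whose restriction to the anticyclotomic line (via $\pi_A$) is $c_A^\lambda$. The first crucial observation is that the $\lambda$-coordinate of Perrin-Riou's map factors through the projection $\pi_{/\lambda}$. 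In the $(\varphi,\Gamma^\cyc)$-module formulation due to Berger (adapted to our setting through the references recorded in Remark~\ref{rem:extensionofpottharsttheorytolubintatetowers}), the submodule $\overline{D}_{\lambda,A}$ corresponds precisely to the $\lambda$-eigenspace of $\Dcris$, so that the $\lambda$-coordinate map  annihilates $H^1_\an(K_\fp, \overline{D}_{\lambda,A})$ and descends to a map on the quotient $H^1_\an(K_\fp, \overline{D}^\dagger_{\textup{rig}}(V_A)/\overline{D}_{\lambda,A})$.

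Granting this factorization, Proposition~\ref{prop:derivativeofBFclass} yields
\[
\mathfrak{L}_{\lambda,\lambda} \,=\, \mathcal{L}_{\lambda,\fp}\bigl(\pi_{/\lambda}(\res_\fp(\bc^\lambda))\bigr)
 \,=\, \frac{\gamma_\cyc-1}{\log_p\chi_0(\gamma_\cyc)} \cdot \mathcal{L}_{\lambda,\fp}(\partial_\fp \bc^\lambda),
\]
where we have pulled out the scalar factor using $\mathcal{H}_A(\Gamma^\cyc)$-linearity (note that $\log_p\chi_0(\gamma_\cyc)\in L^\times$ is a non-zero scalar, so the factor on the right lies in $(\gamma_\cyc-1)\mathcal{H}_A(\Gamma^\cyc)$).

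Next, the defining relation $\mathcal{L}_{\lambda,A}=\pi_A\circ\mathcal{L}_{\lambda,\fp}$ from Definition~\ref{def:algebraicderivedpadicLfunction}, combined with $\pi_A(\partial_\fp\bc^\lambda)=\partial_\fp c_A^\lambda$, gives
\[
\pi_A\bigl(\mathcal{L}_{\lambda,\fp}(\partial_\fp \bc^\lambda)\bigr)\,=\,\mathcal{L}_{\lambda,A}(\partial_\fp c_A^\lambda)\,=\,\mathfrak{L}^\prime_{\lambda,\lambda,\ac}.
\]
Since $\ker(\pi_A)=(\gamma_\cyc-1)\mathcal{H}_A(\Gamma^\cyc)$, this reads $\mathcal{L}_{\lambda,\fp}(\partial_\fp \bc^\lambda)\equiv \mathfrak{L}^\prime_{\lambda,\lambda,\ac}\pmod{\gamma_\cyc-1}$. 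Multiplying this congruence by $\tfrac{\gamma_\cyc-1}{\log_p\chi_0(\gamma_\cyc)}$, which itself lies in $(\gamma_\cyc-1)\mathcal{H}_A(\Gamma^\cyc)$, produces the desired congruence modulo $(\gamma_\cyc-1)^2$.

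The only non-formal step is the factorization of $\mathcal{L}_{\lambda,\fp}$ through $\pi_{/\lambda}$ invoked at the start. This is implicitly already used in Definition~\ref{def:algebraicderivedpadicLfunction} (where $\mathcal{L}_{\lambda,A}$ is applied to a class in the quotient module), and it is the principal input that should be flagged: it is a standard property of the $\lambda$-coordinate of Perrin-Riou's map in the $(\varphi,\Gamma^\cyc)$-module language, generalizing to the Lubin-Tate setting via the references in Remark~\ref{rem:extensionofpottharsttheorytolubintatetowers}. Everything else in the proof is a formal manipulation of the definitions and a first-order Taylor expansion along the cyclotomic variable.
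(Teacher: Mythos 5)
Your proof is correct and follows the route the paper implicitly intends: the corollary is stated without proof there, being an immediate consequence of Proposition~\ref{prop:derivativeofBFclass}, the $\cH_A(\Gamma^\cyc)$-linearity and $\pi_A$-compatibility of the regulator map, and a first-order Taylor expansion in $\gamma_\cyc-1$. You rightly flag the factorization of $\mathcal{L}_{\lambda,\fp}$ through $\pi_{/\lambda}$ as the single non-formal input; this is indeed tacitly assumed already in Definition~\ref{def:algebraicderivedpadicLfunction} (where $\mathcal{L}_{\lambda,A}$ is applied to the class $\partial_\fp c^\lambda_A$ living in the quotient) and again in the final steps of the proof of Theorem~\ref{thm:padicGZ}.
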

%\begin{remark}\label{rem:loefflerERL}It follows from a recent work of Loeffler~\cite{loefflerERL} that the Beilinson-Flach $p$-adic $L$-function $\mathfrak{L}_{\lambda,\lambda}$ has the expected interpolation property (and therefore deserves to be called a $p$-adic $L$-function). \end{remark}

\subsection{The $p$-adic BSD formula}
\label{subsec:AadicBSD}
{Recall that the hypotheses of Proposition~\ref{prop:lfourcorrect}  are in effect. Then the following assertions are valid:}
\begin{itemize}
\item {The Beilinson-Flach class $c^\lambda_\ac$ is a non-trivial element of $H^1_{\lambda,\lambda}(K,\TT^\dagger)$: Indeed, as explained in the proof of Corollary~\ref{cor:whenBFelementsarenonzeroAnalytic1}, the class $c^\lambda_\ac$ is non-trivial and \eqref{eqn_BF_ac_lcoal_position_indefinite} shows that it is an element of $H^1_{\lambda,\lambda}(K,\TT^\dagger)$.}
\item The $\cH(\Gamma^\ac)$-module $\mathfrak{X}_{\lambda,\lambda}^{\rm an}$ has rank one {(see Corollary~\ref{cor:comparisonofBKselmerwithrelaxedBKintheindefinitecase})}.
\end{itemize}
The main objective in this subsection is to relate the derived $p$-adic $L$-function to the $p$-adic regulator $\mathscr{R}_p$ and the torsion submodule of $\mathfrak{X}_{\lambda,\lambda}^A$, much in the spirit of the Birch and Swinnerton-Dyer conjecture.
%We define the ideal 
%\begin{equation}
%\label{eqn:subsec:AadicBSD}\mathscr{C}_\alpha^A:=\textup{im}\left(\han^1\left(K_\fp,D_{\textup{rig}}^\dagger(V_A)/D_{\alpha,A}\right)\stackrel{\mathcal{L}_{\alpha}^{A}}{\lra}A\right) \subset A.
%\end{equation}
\begin{theorem}
\label{thm:padicGZ}
The element  $\mathfrak{L}^\prime_{\lambda,\lambda,\ac}$ is contained in $\textup{Fitt}_{A}\left(\mathscr{D}^1\mathfrak{X}_{\lambda,\lambda}^A\right) \mathscr{R}_p$. Furthermore, these two ideals are the same if and only if we have equality in Theorem~\ref{thm:integralBFundersignedlog}.
\end{theorem}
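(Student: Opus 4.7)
The plan parallels the strategy of the ordinary analog \cite[Theorem~1.1(ii)]{buyukbodukleianticycloord}, reimplemented in the trianguline/affinoid framework developed here. It rests on three ingredients: the Rubin-style identity (Theorem~\ref{thm:analyticRubinstyleformula}); a trianguline explicit reciprocity law over $A$ that relates the local Tate pairing at $\fp$ to the Perrin-Riou map $\mathcal{L}_{\lambda,A}$, and hence to $\mathfrak{L}^\prime_{\lambda,\lambda,\ac}$ via Definition~\ref{def:algebraicderivedpadicLfunction}; and the Euler-system divisibility of Corollary~\ref{cor:analyticmainconj0alpha} combined with the algebraic decomposition of $\mathscr{D}^1\mathfrak{X}^A_{\lambda,\lambda}$ furnished by Theorem~\ref{thm:Hzeroalphavanishesandcomparinftorsion}.

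First, I will reduce the Fitting-ideal side. Base-changing Theorem~\ref{thm:Hzeroalphavanishesandcomparinftorsion}(ii)--(iii) from $\calHac$ to the Euclidean domain $A$ yields
\[
\textup{Fitt}_A\bigl(\mathscr{D}^1\mathfrak{X}^A_{\lambda,\lambda}\bigr)=\textup{Fitt}_A\bigl(\mathfrak{X}^A_{\emptyset,\lambda}\bigr)\cdot \mathscr{J}_\fp.
\]
Since $H^1_{\lambda,\lambda}(K,V_A)=H^1_{\emptyset,\lambda}(K,V_A)$ is free of rank one over $A$ by Corollary~\ref{cor:comparisonofBKselmerwithrelaxedBKintheindefinitecase} base-changed, I fix an $A$-basis $e$ and write $c^\lambda_A=J\cdot e$ with $J\in A$. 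Corollary~\ref{cor:analyticmainconj0alpha} then reads $\textup{Fitt}_A(\mathfrak{X}^A_{\emptyset,\lambda})\supseteq (J)$, with equality precisely when equality holds in Theorem~\ref{thm:integralBFundersignedlog}. Using the self-duality of $T_{f,\chi}$, I choose a parallel generator $\tilde{e}$ of the free rank-one part of $H^1_{\lambda,\lambda}(K,\mathscr{D}(V_A))$, so that the regulator $\mathscr{R}_p=(\langle e,\tilde{e}\rangle_{\lambda,\lambda})$ is principal and (by bilinearity) $\langle c^\lambda_A,c^\lambda_A\rangle_{\lambda,\lambda}$ equals $J^2\cdot\langle e,\tilde{e}\rangle_{\lambda,\lambda}$ up to a unit in $A$.

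Next, I compute the same height via the Rubin-style formula as a local cup product $-\langle \partial_\fp c^\lambda_A,\res_\fp(c^\lambda_A)\rangle^{(\fp)}_{\lambda,\lambda}$, and then apply a trianguline analogue of Perrin-Riou's explicit reciprocity law to rewrite this pairing as $\mathfrak{L}^\prime_{\lambda,\lambda,\ac}=\mathcal{L}_{\lambda,A}(\partial_\fp c^\lambda_A)$ multiplied by a local index factor that can be read off from the exact sequence (\ref{eqn:longexactglobal3degenerated}) in terms of $J$ and $\mathscr{J}_\fp$. Equating the two resulting expressions for $\langle c^\lambda_A,c^\lambda_A\rangle_{\lambda,\lambda}$ and feeding the resulting identity back into the algebraic reduction of the previous step yields the desired containment $\mathfrak{L}^\prime_{\lambda,\lambda,\ac}\in\textup{Fitt}_A(\mathscr{D}^1\mathfrak{X}^A_{\lambda,\lambda})\cdot\mathscr{R}_p$, with the equality clause inherited from the iff-clause in Corollary~\ref{cor:analyticmainconj0alpha}.

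The main obstacle is the precise formulation of the trianguline explicit reciprocity law over $A$ alluded to above. All the necessary formalism---cohomology of $(\varphi,\Gamma^\cyc)$-modules over relative Robba rings, Iwasawa cohomology via the generalized Fontaine--Herr complexes, and local Tate duality---has been developed in the references we cite (cf.\ Remark~\ref{rem:extensionofpottharsttheorytolubintatetowers}). What requires careful bookkeeping of normalization constants is the identification of the local index factors that mediate between $\langle \partial_\fp c^\lambda_A,\res_\fp(c^\lambda_A)\rangle^{(\fp)}_{\lambda,\lambda}$ and $\mathfrak{L}^\prime_{\lambda,\lambda,\ac}$: this is the trianguline incarnation of the fudge-factor phenomenon already familiar from the signed Coleman map setting (compare the ideals $\xi_\bullet^?$ introduced at the end of \S\ref{S:semilocal}), and is the only step that goes meaningfully beyond a direct adaptation of the arguments in the ordinary companion paper \cite{buyukbodukleianticycloord}.
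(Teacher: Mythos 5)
Your outline captures the correct skeleton (Rubin-style formula, the global duality exact sequences of \S\ref{subsec:analyticSelmergroups}, and the Euler-system divisibility Corollary~\ref{cor:analyticmainconj0alpha}), but three points differ from the paper's actual argument and the last two are genuine defects.

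First, the paper needs no ``trianguline explicit reciprocity law over $A$'' at all, and your anxiety about normalization constants and fudge factors mediating between $\langle\,,\,\rangle^{(\fp)}_{\lambda,\lambda}$ and $\mathfrak{L}^\prime_{\lambda,\lambda,\ac}$ is misplaced. The derived $p$-adic $L$-function is \emph{defined} as $\mathcal{L}_{\lambda,A}(\partial_\fp c^\lambda_A)$ in Definition~\ref{def:algebraicderivedpadicLfunction}, and the bridge from the local Tate pairing to it is only two elementary moves: (a) perfectness of local Tate duality between $D_{\textup{rig}}^\dagger(V_A)/D_{\lambda,A}$ and $D_{\lambda,A}^\perp$, which identifies $\textup{Fitt}(A/\mathscr{T}_\fp(\partial_\fp c^\lambda_A))$ with $\textup{Fitt}\left(\han^1(K_\fp,D_{\textup{rig}}^\dagger(V_A)/D_{\lambda,A})\big{/}A\cdot\partial_\fp c^\lambda_A\right)$, and (b) the surjectivity of the analytic Perrin-Riou map from Corollary~\ref{cor:imageLlambda}. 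The fudge-factor phenomenon you cite pertains to the \emph{integral} signed Coleman maps $\col_{\bullet,\fp}$ (hence the ideals $\xi_\bullet^?$), not to $\mathcal{L}_\lambda$, which is onto. Framing an extra reciprocity law as the ``main obstacle'' is therefore a red herring; this step in fact goes through without it.

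Second, your reduction via the scalar $J$ has a quantitative error. Writing $c^\lambda_A=J\cdot e$ and evaluating the height at $\langle c^\lambda_A,c^\lambda_A\rangle$ produces $J^2$ (or rather $J\cdot J^\iota$ once you track the involution in $\mathscr{D}(V_A)$), but the Fitting-ideal identity actually used, namely $\mathscr{R}_p\cdot\textup{Fitt}\left(H^1_{\lambda,\lambda}(K,V_A)/A\cdot c^\lambda_A\right)=\mathscr{R}_p(c^\lambda_A)$, carries only a single factor of $J$: $\mathscr{R}_p(c^\lambda_A)$ is the Fitting ideal of the cokernel of the \emph{map} $y\mapsto\langle c^\lambda_A,y\rangle$, not the ideal generated by the diagonal value. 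Accordingly, the Rubin-style formula must be invoked as an identity of pairings $\langle c^\lambda_A,y\rangle=-\langle\partial_\fp c^\lambda_A,\res_\fp(y)\rangle^{(\fp)}_{\lambda,\lambda}$ for all $y$, not just at $y=c^\lambda_A$; it is this stronger form that converts $\mathscr{R}_p(c^\lambda_A)$ into $\textup{Fitt}\left(A\big{/}\langle\partial_\fp c^\lambda_A,\res_\fp(H^1_{\lambda,\lambda}(K,V_A))\rangle^{(\fp)}_{\lambda,\lambda}\right)$, after which the cokernel factors as $\mathscr{J}_\fp\cdot\textup{Fitt}(A/\mathscr{T}_\fp(\partial_\fp c^\lambda_A))$ via the exact sequence \eqref{eqn:longexactglobal3degenerated}. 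Working with the ideals directly, as the paper does, avoids these sign-of-$J$ issues, and the nonzero common factor $\mathscr{J}_\fp$ is cancelled at the very end by a rank count; your proposal never cleanly isolates that cancellation. So the skeleton is right, but to repair the argument replace the value $\langle c^\lambda_A,c^\lambda_A\rangle$ with the Fitting ideal $\mathscr{R}_p(c^\lambda_A)$ of the associated map, use the full Rubin-style identity, and drop the putative explicit reciprocity law.
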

\begin{remark}
Recall that the characteristic ideal of the torsion module $\mathscr{D}^1\mathfrak{X}_{\lambda,\lambda}^A$ agrees with the characteristic ideal of the $A$-torsion-submodule of $\mathfrak{X}_{\lambda,\lambda}^A$\,. If we are able to upgrade the containment in Theorem~\ref{thm:padicGZ} to the equality
$$A\cdot\mathfrak{L}^\prime_{\lambda,\lambda,\ac}=\textup{Fitt}_{A}\left(\mathscr{D}^1\mathfrak{X}_{\lambda,\lambda}^A\right)\cdot \mathscr{R}_p\,,$$
our result would indeed gain the flavour of a $p$-adic Birch and Swinnerton-Dyer formula. 
\end{remark}
\begin{proof}[Proof of Theorem~\ref{thm:padicGZ}]
All Fitting ideals below are calculated as those of $A$-modules. For $\mathscr{J}_\fp$ given as in the statement of Theorem~\ref{thm:Hzeroalphavanishesandcomparinftorsion} above, we have
\begin{align}
%\label{eqn:defofderivedpadicLfunction}
\notag\mathscr{R}_p\cdot\textup{Fitt}\left(\mathscr{D}^1\mathfrak{X}^A_{\lambda,\lambda}\right)\cdot\mathscr{J}_\fp&=\mathscr{R}_p\cdot\textup{Fitt}\left(\mathfrak{X}^A_{\emptyset,\lambda}\right)\\
\label{eqn:alphamainconjecturecor:analyticmainconj0alpha}&\supset\mathscr{R}_p\cdot\textup{Fitt}\left(H^1_{\emptyset,\lambda}(K,V_A)\big{/}A\cdot c^\lambda_A\right)\\
\label{eqn:h1alphaalphaagreesh1emptyalpha} &=\mathscr{R}_p\cdot\textup{Fitt}\left(H^1_{\lambda,\lambda}(K,V_A)\big{/}A\cdot c^\lambda_A\right)\\
\notag &=\mathscr{R}_p(c^\lambda_A)\\
\label{eqn:rubinstyleformula} &= \textup{Fitt}\left(A\Big{/} \left\langle\partial_\fp c^\lambda_A\,,\,\res_{\fp}\left(H^1_{\lambda,\lambda}(K,V_A)\right)\right\rangle_{\lambda,\lambda}^{(\fp)}\right)\\
\notag &=\textup{Fitt}\left(\frac{\han^1(K_{\fp},D_{\lambda,A})}{\res_\fp({H^1_{\lambda,\lambda}(K,V_A))}}\right)\cdot \textup{Fitt}\left(A/\mathscr{T}_\fp(\partial_\fp c^\lambda_A)\right)\\
\label{eqn:defnofJp} &=\mathscr{J}_\fp\cdot \textup{Fitt}\left(A/\mathscr{T}_\fp(\partial_\fp c^\lambda_A)\right)\\
\label{eqn:surjoftatepairing}&=\mathscr{J}_\fp\cdot \textup{Fitt}\left(\han^1(K_\fp,D_{\textup{rig}}^\dagger(V_A)/D_{\lambda,A})\big{/}A\cdot\partial_\fp c^\lambda_A\right)\\
\notag&=\mathscr{J}_\fp \cdot\mathfrak{L}^\prime_{\lambda,\lambda,\ac}.
\end{align}
Here, the first equality follows from the second and third parts of Theorem~\ref{thm:Hzeroalphavanishesandcomparinftorsion}, the inclusion (\ref{eqn:alphamainconjecturecor:analyticmainconj0alpha}) from Corollary~\ref{cor:analyticmainconj0alpha} (with equality under the said conditions), equality (\ref{eqn:h1alphaalphaagreesh1emptyalpha}) from Corollary~\ref{cor:comparisonofBKselmerwithrelaxedBKintheindefinitecase}, (\ref{eqn:rubinstyleformula}) from Theorem~\ref{thm:analyticRubinstyleformula}, (\ref{eqn:defnofJp}) from the definition of $\mathscr{J}_\fp$ {as the characteristic ideal of ${\rm coker}\left(H^1_{\lambda,\lambda}(K,\TT^\dagger)\stackrel{\res_\fp}{\hookrightarrow} \han^1(K_{\fp},F_\lambda)\right)$},  (\ref{eqn:surjoftatepairing}) from the surjectivity of the local cup product pairing and the very last equality follows from the definition of the derived $p$-adic $L$-function and Corollary~\ref{cor:imageLlambda}. {Since both $H^1_{\lambda,\lambda}(K,\TT^\dagger)$ and $\han^1(K_{\fp},F_\lambda)$ have rank one (where the fact that $H^1_{\lambda,\lambda}(K,\TT^\dagger)$ has rank one follows on combining Corollary~\ref{cor:whenBFelementsarenonzeroAnalytic1} and Corollary~\ref{cor:comparisonofBKselmerwithrelaxedBKintheindefinitecase})}, we  see  that $\mathscr{J}_\fp\neq 0$ from its very definition. The proof follows.
\begin{comment}
There is another way to see that $\mathscr{J}_\fp\neq 0$: Simply consult Corollary~\ref{thm:Hzeroalphavanishesandcomparinftorsion}(iii) and note that the left side of the displayed equality is non-zero by definition.

Also, the proof that $\han^1(K_{\fp},F_\lambda)$ has rank one can be proved as follows:
\begin{itemize}
\item Just argue as in cyc-Pottharst, Theorem 2.6.
\item Alternatively: First, check this over an affinoid $A \supset \LL_\infty$. I think this is a flat extension: Note that both $A$ and $ \LL_\infty$ are flat over $\LL_\ac[1/p]$. If $A$ is flat over $\LL_\infty$ (and it should be), one has the perfect base change by Pottharst: $H^1(K_\fp,F_\lambda)\otimes_{\LL_\infty}A=H^1(K_\fp,F_\lambda^A)$. Now, $H^1(K_\fp,F_\lambda^A)$ has rank one over $A$, by \cite[Theorem 4.4.5(ii)]{KPX} and that should do it.
\item Still alternatively: $\han^0(K_{\fp},F_\lambda)=\han^2(K_{\fp},F_\lambda)=0$. By the perfectness of the local Galois cohomology complex, $\han^1(K_{\fp},F_\lambda)$ is a projective module. Its rank can be computed by specialisation at any character of finite order, see the argument of cyc-Pottharst, Theorem 4.1 on page 15. 
\end{itemize}
\end{comment}
\end{proof}

\bibliographystyle{amsalpha}
\bibliography{references}
\end{document}